\numberwithin{equation}{section}
\newtheorem{theorem}{Theorem}[section]
\newtheorem{theodef}[theorem]{Theorem/Definition}
\newtheorem{proposition}[theorem]{Proposition}
\newtheorem{lemma}[theorem]{Lemma}
\newtheorem{definition}[theorem]{Definition}
\newtheorem{corollary}[theorem]{Corollary}
\newtheorem{remark}[theorem]{Remark}
\DeclareMathOperator{\Hom}{Hom}
\DeclareMathOperator{\End}{End}
\DeclareMathOperator{\iHom}{\mathscr{H}\text{\kern -3pt {\calligra\large om}}\,}
\DeclareMathOperator{\iEnd}{\mathscr{E}\text{\kern -3pt {\calligra\large nd}}\,}
\DeclareMathOperator{\iExt}{\mathscr{E}\text{\kern -3pt {\calligra\large xt}}\,}
\DeclareMathOperator{\iTor}{\mathscr{T}\text{\kern -3pt {\calligra\large or}}\,}
\DeclareMathOperator{\colim}{colim}
\DeclareMathOperator{\HCH}{HH^{\bm{\cdot}}}
\tikzset{%
    symbol/.style={%
        draw=none,
        every to/.append style={%
            edge node={node [sloped, allow upside down, auto=false]{$#1$}}}
    }
}
\newcommand{\Mod}[1]{#1\textrm{-}\textsf{Mod}}
\newcommand{\RMod}[1]{#1\textrm{-}\textsf{RMod}}
\newcommand{\Spec}[1]{\mathrm{Spec} \, #1}
\newcommand\opposite[1]{{#1}^{\mathrm{op}}}
\newcommand{\id}{\mathrm{id}}
\newcommand{\QC}[1]{\mathrm{QCoh(#1)}}
\newcommand\rightthreearrow{%
        \mathrel{\vcenter{\mathsurround0pt
                \ialign{##\crcr
                        \noalign{\nointerlineskip}$\rightarrow$\crcr
                        \noalign{\nointerlineskip}$\rightarrow$\crcr
                        \noalign{\nointerlineskip}$\rightarrow$\crcr
                }%
        }}%
    }
\newcommand\righttwoarrow{%
        \mathrel{\vcenter{\mathsurround0pt
                \ialign{##\crcr
                        \noalign{\nointerlineskip}$\rightarrow$\crcr
                        \noalign{\nointerlineskip}$\rightarrow$\crcr
                }%
        }}%
}
\title{The Derived Ring of Differential Operators}
\author{Andy Jiang}
\address{Department of Mathematics,University of Michigan, 530 Church Street,
  Ann Arbor, MI 48109}
\email{ndjiang@umich.edu}
\begin{document}
\maketitle

\begin{abstract}
  By reading a standard formula for the ring of Grothendieck differential operators in a derived way, we construct
  a derived (sheaf of) ring of Grothendieck differential operators for Noetherian schemes $X$ separated and finite-type over a base $S$,
  when the map $X \to S$ is finite tor-amplitude.
  Using this ring of differential operators, we (re-)develop the theory of $D$-modules from scratch
  and show an equivalence of categories between $D$-modules using our definition and crystals over the infinitesimal site.
\end{abstract}

\section*{Introduction}
\textbf{Motivation}:
Modules over the ring of differential operators, or $D$-modules for short, were first studied following
ideas of Mikio Sato. $D$-modules provide an algebraic framework in which one could study differential equations
and constitute a vast generalization of the theory of flat connections on vector bundles. Since then, $D$-modules
have become an invaluable tool in algebraic geometry and representation theory.

For $X=\Spec{A}$, a smooth affine variety over a field $k$, the Grothendieck ring of differential operators on $X$ relative to $k$, $D_{X/k}$,
is the increasing union
\[D_{X/k} := \bigcup_{n \ge 0} {D^{(n)}} \subseteq \Hom_k(A,A)\]
where $D^{(n)} \subseteq \Hom_k(A,A)$ is defined inductively by
\[D^{(-1)}=0\]
and
\[D^{(n)}=\{f \in \Hom_k(A,A) | \forall a \in A,  [f,a] \in D^{(n-1)}\}\]
($a \in A$ is thought of as an element $\Hom_k(A,A)$ via multiplication by $a$)

By a $D$-module on $X$, we then refer to a module over the ring $D_{X/k}$. For $X$ a general smooth variety,
we can glue this definition Zariski-locally: $D_{X/k}$ becomes a quasicoherent sheaf of algebras (though with two different
actions of the structure sheaf--on the left and right), and a $D_{X/k}$-module refers to
a quasicoherent sheaf with an action of $D_{X/k}$. If one only studies $D$-modules
on smooth varieties, such a definition will suffice. However, over singular varieties, the same definition will lead to many unpleasant properties.

For this reason, two alternative definitions were proposed for studying $D$-modules on singular varieties over a field $k$.
The first stems from Kashiwara's equivalence,
which says that if $Z$ is embedded in a smooth variety $X$ via a closed immersion,
then the category of $D$-modules on $X$ supported on $Z$ is independent
of the choice of $X$ and in the cases where $Z$ is smooth agree with the category of $D$-modules on $Z$. Therefore, when $Z$ is singular,
one can define $D$-modules on $Z$ as $D$-modules on $X$ supported on $Z$, after
a closed embedding $Z \xhookrightarrow{} X$ into a smooth ambient variety $X$ has been chosen.

A more intrinsic definition was given by Grothendieck.
Namely, for any variety $X$, smooth or singular, we can consider the (small) site of infinitesimal thickenings $U \to T$ where $U$ varies over open subsets of $X$.
A crystal (for the infinitesimal site) on $X$ is then (roughly speaking)
the data of a quasicoherent $\mathcal{O}_T$ module $\mathcal{F}_T$ for each thickening $U \to T$, such that for any morphism
of thickenings in the infinitesimal site, the natural map
\[f^*\mathcal{F}(T') \to \mathcal{F}(T)\]
is an isomorphism. It is possible to show these two definitions agree (in the sense of an equivalence of categories), giving a consistent notion of a $D$-module
on a singular variety. Nevertheless, one may ask whether there is a third approach, more similar to the definition in the smooth
setting, where we can explicit construct a quasicoherent sheaf of algebras $D_X$ on a singular variety $X$ such
that $D_X$ modules will give the same category of $D$-modules as the two approachs mentioned above.

In the present paper, we will show that this is indeed possible, and that the correct definition
of $D_X$ will simply the derived version of one of the standard definitions for $D_X$ in the smooth setting.
Let us now indicate which definition of $D_X$ we intend to derive. For simplicity, we will assume $X=\Spec{A}$
is an affine underived Noetherian scheme. In this setting, it is well known that in the case $A$ is smooth, there is an isomorphism
\[D_A \cong \colim_n(\Hom_A((A \otimes_k A)/I_{\Delta}^n,A))\]
where $I_{\Delta}$ is the kernel of the multiplication map $\mu_A : A \otimes_k A \to A$, and
the formula is the same whether we read it in a derived way or not.
We should note that it is not extremely clear what the algebra structure on $D_A$ is from this isomorphism.
In the case $A$ is singular, we will simply take the same definition, but now require that we read it in a fully derived
manner (see \textit{Conventions} below).

We note the following isomorphisms which follow simply from tensor-hom adjunction.
\begin{equation*}
  \begin{split}
    \colim_n(\Hom_A((A \otimes_k A)/I_{\Delta}^n,A)) 
    &\cong \colim_n(\Hom_A((A \otimes_k A) \otimes_{A \otimes_k A} (A \otimes_k A)/I_{\Delta}^n,A)) \\
    &\cong \colim_n(\Hom_{A \otimes_k A}((A \otimes_k A)/I_{\Delta}^n,\Hom_A(A \otimes A,A)) \\
    &\cong \colim_n(\Hom_{A \otimes_k A}((A \otimes_k A)/I_{\Delta}^n,\Hom_k(A,A))) \\
    &\cong \Gamma_{\Delta}(\Hom_k(A,A))
  \end{split}
\end{equation*}
where $\Gamma_{\Delta}$ means taking local-cohomology at the diagonal of $\Spec A$.
Note that this presentation because it makes
the algebra structure evident. We note that this formula for the ring
of differential operators can be found in Section 2.1
of \cite{SVdB}, where they also briefly study the derived ring of differential operators.

It is $\Gamma_{\Delta}(\Hom_k(A,A))$ that we will take as definition for $D_A$ (or rather
a more categorical rewriting of this). Namely, we have the following definition from \cite{previous}.
\begin{definition}[Definition 1.1 of \cite{previous}] \label{bigdef}
  For $S$ a homologically bounded spectral Noetherian base scheme. Suppose
  $p_X: X \to S$ is separated, locally almost of finite presentation and finite tor-amplitude.
  The Grothendieck sheaf of differential operator of $X$ over $S$ is defined to be
  \[D_{X/S} := \Gamma_{\Delta} \pi_1^{\times}\mathcal{O}_X \in \Gamma_{\Delta}(\QC{X \times_S X})\]
  where $\pi_1^{\times}$ is the right adjoint of the pushforward functor $\pi_{1,*}$.
  Often we will suppress $S$ from the notation and write simply $D_X$.
\end{definition}
In \cite{previous} we ignored the issue of the ring structure on $D_X$. In the current paper, we will
endow $D_X$ with a ring structure with respect to the convolution monoidal structure on $\Gamma_{\Delta}(\QC{X \times_S X})$
and study $D$-modules from this perspective.

Additional discussions on the derived ring of differential operators can be found in \cite{jack}, though the goals of these papers
are markedly different from the present paper. In \cite{Haiping}, they also discuss constructing a derived
ring of differential operators on singular varieties. However, the approach in the present paper is more intrinsic and
works in a slightly different generality. The derived ring of differential operators is also defined in \cite{CryD}. However we believe
our approach is more concrete.

\textbf{Results}:
The bulk of the text is concerned with developing a theory of $D$-modules using the definition
of the derived ring of differential operators shown above. Let us summarize the culmination of this effort.
The theorem below can be found in the text in Theorem \ref{rightdref}, Theorem \ref{bigtheorem2}, and Theorem
\ref{pftdescent}.
\begin{theorem}
  For any spectral Noetherian scheme $X$ finite tor-amplitude, locally almost of finite-presentation, and separated
  over a base spectral Noetherian scheme $S$, both homologically bounded,
  there is an ring of differential operators
  \[D_{X/S} \in \Gamma_{\Delta}(\QC{X \times_S X})\]
  such that if $X = \Spec{A}$ and $S = \Spec{k}$, then $D_{X/S} \cong \Gamma_{\Delta}\Hom_k(A,A)$.
  
  Here, \[\Gamma_{\Delta}(\QC{X \times_S X})\]
  is the subcategory of $\QC{X \times_S X}$ supported on the diagonal. It acquires a monoidal structure via the isomorphism
  \[\QC{X \times_S X} \cong \Hom_{\QC{S}}(\QC{X},\QC{X})\]
  where on the right hand side the Hom is taken in $\Mod{\QC{S}}^L$ (see Appendix A of \cite{previous})

  As $\Gamma_{\Delta}(\QC{X \times_S X})$ acts on $\QC{X}$, $D_{X/S}$ defines a monad on $\QC{X}$ and we can consider the category of $D_{X/S}$ modules
  \[\Mod{D_{X/S}}\]
  Additionally, $\Gamma_{\Delta}(\QC{X \times_S X})$ carries a natural involution via swapping the two copies of $X$. The image of $D_{X/S}$
  under this involution is called $\opposite{D_{X/S}}$. We can also consider the modules under this monad, which we call
  \[\Mod{\opposite{D_{X/S}}}\]

  Both $\Mod{D_{X/S}}$ and $\Mod{\opposite{D_{X/S}}}$ satisfies étale (in fact proper finite tor-amplitude) descent with respect to $X$ and fpqc descent with respect to $S$.
  Also, we have the following isomorphisms
  \begin{equation*}
    \begin{split}
      \Mod{\opposite{D_{X/S}}}
      &\cong \colim_{\opposite{\bm{\Delta_s}}}(\Gamma_{\Delta}(\QC{X^{n+1}}),*) \\
      &\cong \QC{X} \otimes_{\Gamma_{\Delta}(\QC{X \times X})} \QC{X} \\
    \end{split}
  \end{equation*}
  \begin{equation*}
    \begin{split}
      \Mod{D_{X/S}}
      &\cong \lim_{\bm{\Delta_s}}(\Gamma_{\Delta}(\QC{X^{n+1}}),*) \\
      &\cong \Hom_{\Gamma_{\Delta}(\QC{X \times X})}(\QC{X},\QC{X}) \\
    \end{split}
  \end{equation*}
  where the last Hom is taken in $\Mod{\QC{X \times_S X}}^L$.
\end{theorem}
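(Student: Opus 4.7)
The statement consolidates three theorems (\ref{rightdref}, \ref{bigtheorem2}, \ref{pftdescent}); the plan is to attack each in turn.

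First I would establish the convolution monoidal structure on $\Gamma_{\Delta}(\QC{X \times_S X})$ via the equivalence $\QC{X \times_S X} \cong \Hom_{\QC{S}}(\QC{X}, \QC{X})$ (composition of kernels), then verify that diagonal support is stable under convolution, essentially because composition of endofunctors whose kernels are supported at the diagonal preserves this condition. The involution yielding $\opposite{D_{X/S}}$ is induced by the factor-swap on $X \times_S X$. The ring structure on $D_{X/S} = \Gamma_{\Delta} \pi_1^{\times} \mathcal{O}_X$ then arises by recognizing it as the internal endomorphism object of $\QC{X}$ inside the category of $\QC{X \times_S X}$-modules: its unit and multiplication come from the unit and counit of the adjunction $(\pi_{1,*}, \pi_1^{\times})$ applied to the structure sheaf. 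The affine identification $D_{X/S} \cong \Gamma_{\Delta}\Hom_k(A,A)$ was already carried out in \cite{previous}.

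For the descent claims, I would exploit the base-change compatibility of $D_{X/S}$ proved in \cite{previous}, together with the fact that $\QC{-}$ satisfies fpqc descent on the base $S$ and proper finite tor-amplitude descent along $X$. The descent for the module categories then follows by commuting $\Mod{-}$ through the descent limit, via a standard monadicity argument, since the bar resolutions of module categories are preserved by pullback along the descent maps. Étale descent is then a consequence of the proper finite tor-amplitude case.

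Finally, the limit/colimit descriptions come from the standard (co)simplicial bar and cobar resolutions for modules over an algebra in a monoidal $\infty$-category. The key geometric input I will need is that the $n$-fold convolution of $\Gamma_{\Delta}(\QC{X \times_S X})$ with itself, taken with $\QC{X}$ as boundary action, is naturally $\Gamma_{\Delta}(\QC{X^{n+1}})$; this should reduce to iterating local cohomology along the consecutive diagonals in $X^{n+1}$. I expect this identification to be the main obstacle, since one must verify that iterated $\Gamma_{\Delta}$ interacts correctly with convolution and that the resulting simplicial object genuinely matches the standard bar construction for $\Mod{\opposite{D_{X/S}}}$ (resp.\ the cosimplicial cobar for $\Mod{D_{X/S}}$). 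Once this is secured, the tensor and Hom formulas follow formally from the $\infty$-categorical monadicity theorem applied to $\QC{X}$ as a (bi)module over $\Gamma_{\Delta}(\QC{X \times_S X})$.
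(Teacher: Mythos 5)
Your plan for the monadic and limit/colimit identifications is essentially sound and close in spirit to the paper's: the paper also reduces everything to the identification of iterated convolution powers (Lemma \ref{tensorlemma}) plus Barr--Beck--Lurie, the one cosmetic difference being that you build the algebra structure on $D_{X/S}$ first (as the endomorphism object of $\mathcal{O}_X$ for the $\Gamma_{\Delta}(\QC{X \times_S X})$-action), whereas the paper first constructs $\colim_{\opposite{\bm{\Delta_s}}}(\Gamma_{\Delta}(\QC{X^{n+1}}),*)$, proves the adjunction to $\QC{X}$ is monadic, and only then extracts the algebra structure as a corollary. However, the step you flag as the ``main obstacle'' is left genuinely unresolved, and it is not quite the statement you wrote: the standard bar resolution computing $\QC{X} \otimes_{\Gamma_{\Delta}(\QC{X \times X})} \QC{X}$ has terms which are tensor products over $\QC{S}$, not over $\QC{X}$, so its terms are \emph{not} the categories $\Gamma_{\Delta}(\QC{X^{n+1}})$. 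The paper instead uses a specific augmented simplicial resolution of $\QC{X}$ by the categories $\Gamma_{\Delta}(\QC{X^{n+2}})$ (Proposition \ref{resolve}), whose colimit property comes from splitness (the extra degeneracies are exactly the forgotten face maps not preserving the marked copy of $X$); only after substituting this resolution and invoking Lemma \ref{tensorlemma} do the convolution powers appear. Without this splitting argument (or an equivalent replacement), ``matches the standard bar construction'' does not go through as stated.

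The descent portion has two concrete problems. First, your claim that étale descent is a consequence of proper finite tor-amplitude descent is backwards: étale covers are not proper, so nothing follows in that direction; in the paper étale (and fpqc over $S$) descent is proved directly and cheaply from the limit presentation, termwise descent of $\Gamma_{\Delta}(\QC{X^{n+1}})$, and split-exact sequences, while proper finite tor-amplitude descent is a separate and much harder theorem (Theorem \ref{pftdescent}). Second, for that harder statement, ``commuting $\Mod{-}$ through the descent limit via a standard monadicity argument'' together with base-change of $D_{X/S}$ from \cite{previous} is not enough: the substantive input is the left-adjointability (Beck--Chevalley) condition of Corollary 4.7.5.3 of \cite{HA} for the squares of $D$-module pullbacks along the \v{C}ech nerve, which in the paper is Proposition \ref{pushpullforDmod} and rests on the base-independence of $D$-module pushforward (Corollary \ref{pushinvar}) proved via explicit transfer-module identities and the auxiliary limit description of Proposition \ref{descend1}, plus conservativity of $f^{+}$ obtained from $h$-descent of quasicoherent sheaves. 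None of this is visible in your outline, and it is precisely where the proof of the proper finite tor-amplitude descent lives.
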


Of vital importance in $D$-module theory are the pushforward and pullback functors. We define them in Section \ref{pushpullsection}. The
following is a rewriting of the beginning of Section \ref{pushpullsection}. The last claim below is clear from definitions, see Section \ref{pushpullsection}
for details.
\begin{theorem}
  Suppose $S$ is a homologically bounded spectral Noetherian scheme and $f: X \to Y$ is a map
  between schemes which are finite tor-amplitude, locally almost of finite presentation and separated over $S$. Then, there is a natural pullback functor
  \[f^+ : \Mod{D_{Y/S}} \to \Mod{D_{X/S}}\]
  that when written as a map
  \[f^+ : \lim_{\bm{\Delta_s}}(\Gamma_{\Delta}(\QC{Y^{n+1}}),*) \to \lim_{\bm{\Delta_s}}(\Gamma_{\Delta}(\QC{X^{n+1}}),*)\]
  is defined by quasicoherent pullback (upper star) termwise.

  There is dually a natural pushforward functor
  \[f_+ : \Mod{\opposite{D_{X/S}}} \to \Mod{\opposite{D_{Y/S}}}\]
  that when written as a map
  \[f_+ : \lim_{\bm{\Delta_s}}(\Gamma_{\Delta}(\QC{X^{n+1}}),*) \to \lim_{\bm{\Delta_s}}(\Gamma_{\Delta}(\QC{Y^{n+1}}),*)\]
  is defined by quasicoherent pushforward (lower-star) termwise.
  
  Both functors compose well, in the sense that if $f: X \to Y$ and $g: Y \to Z$, then
  \[f^+g^+ \cong (gf)^+\]
  and
  \[g_+f_+ \cong (gf)_+\]
\end{theorem}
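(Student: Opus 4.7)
The plan is to construct both functors termwise on the (co)simplicial presentations of $\Mod{D_{X/S}}$ and $\Mod{\opposite{D_{X/S}}}$ already identified in the preceding theorem, verify that they respect the (co)simplicial structure, and then obtain the desired functors by passage to the (co)limit. Composition will then follow from the compatibility of $f^*$ and $f_*$ with composition of morphisms in quasicoherent sheaves, together with functoriality of the (co)limit.

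For the pushforward $f_+$, the starting observation is that the product map $f^{n+1}: X^{n+1} \to Y^{n+1}$ satisfies $f^{n+1} \circ \Delta_{X/S} = \Delta_{Y/S} \circ f$, so that $(f^{n+1})_*: \QC{X^{n+1}} \to \QC{Y^{n+1}}$ sends objects supported on $\Delta_X$ to objects supported on $\Delta_Y$, and hence restricts to a functor on the $\Gamma_{\Delta}$ subcategories. The face and degeneracy maps of the simplicial object $(\Gamma_{\Delta}(\QC{X^{n+1}}),*)$ are built from the natural diagonal and projection morphisms, which fit into squares with the powers of $f$ over which base change is available (as $f$ is separated and of finite tor-amplitude over $S$). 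This promotes the termwise pushforwards to a simplicial-natural transformation, and passing to the colimit over $\opposite{\bm{\Delta_s}}$ yields $f_+ : \Mod{\opposite{D_{X/S}}} \to \Mod{\opposite{D_{Y/S}}}$.

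The pullback $f^+$ is dual, but with one additional subtlety. The pullback $(f^{n+1})^*$ sends objects supported on $\Delta_Y$ to objects supported on the fiber power $X \times_Y \cdots \times_Y X \subseteq X^{n+1}$, which is strictly larger than $\Delta_X$ in general; so one must postcompose with the local cohomology $\Gamma_{\Delta_X}$, the right adjoint to the inclusion $\Gamma_{\Delta}(\QC{X^{n+1}}) \hookrightarrow \QC{X^{n+1}}$, to land in the correct subcategory. The resulting termwise functors $\Gamma_{\Delta_X} \circ (f^{n+1})^*$ fit into a cosimplicial-natural family, since pullback commutes with pullback up to canonical equivalence and the $\Gamma_{\Delta}$ operation is natural in the ambient scheme. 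Taking the limit over $\bm{\Delta_s}$ gives $f^+$.

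For composition, the termwise isomorphism $g_* \circ f_* \cong (gf)_*$ is inherited by the restricted functors on $\Gamma_\Delta(-)$, and colimits of simplicial-natural transformations compose, giving $g_+ \circ f_+ \cong (gf)_+$. For pullback, one uses $f^* \circ g^* \cong (gf)^*$ together with the observation that, on objects already supported on $\Delta_Y$, the intermediate application of $\Gamma_{\Delta_Y}$ is naturally the identity. The main obstacle I expect is precisely this coherence check for the pullback: tracking how the intermediate local cohomology interacts with the composition $f^* \circ g^*$ at each cosimplicial level, and verifying that the resulting isomorphism is sufficiently coherent to descend to an equivalence $f^+ \circ g^+ \cong (gf)^+$ of functors between the limit categories.
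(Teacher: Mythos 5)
Your overall route is the paper's: define the functors termwise on the (co)simplicial presentations, check compatibility with the transition maps, pass to the (co)limit, and deduce composition from termwise composition of $f^*$ and $f_*$ (the paper in addition observes that $f_+$ is the left--right dual of $f^+$, but it also writes both down directly, exactly as you do, with the termwise pullback being $\widetilde{(f^*)^{n+1}}=\Gamma_{\Delta}(f^*)^{n+1}i_{\Delta}$). Two small remarks: since everything is indexed by $\bm{\Delta}_s$ there are no degeneracies to worry about, and for $f_+$ the naturality squares commute by plain functoriality of pushforward (no base change is needed there; base change along the open complement is only used to see that pushforward preserves support on the diagonal).

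The one place where your justification would not go through as written is the coherence for the pullback, which you yourself flag as the main obstacle. You propose to use that ``on objects already supported on $\Delta_Y$, the intermediate application of $\Gamma_{\Delta_Y}$ is naturally the identity,'' but the intermediate objects arising in $f^{+}g^{+}$ (and in the cosimplicial naturality squares) are of the form $(g^{n+1})^*\mathcal{G}$ with $\mathcal{G}$ supported on $\Delta_Z$; these are supported only on the fiber power $(g^{n+1})^{-1}(\Delta_Z)$, not on $\Delta_Y$, so that observation does not apply. The correct (and still short) argument is dual to the support statement you used for $f_+$: for any $\mathcal{G}\in\QC{Y^{n+1}}$ the cofiber of $i_{\Delta_Y}\Gamma_{\Delta_Y}\mathcal{G}\to\mathcal{G}$ is $j_*j^*\mathcal{G}$ with $j$ the open complement of $\Delta_Y$; pulling back along $f^{n+1}$ and using base change for the open immersion, this cofiber becomes a pushforward from an open contained in the complement of $\Delta_X$ (because $\Delta_X\subseteq (f^{n+1})^{-1}(\Delta_Y)$), hence is killed by the outer $\Gamma_{\Delta_X}$. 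This yields a natural equivalence $\Gamma_{\Delta_X}(f^{n+1})^*\,i_{\Delta_Y}\Gamma_{\Delta_Y}\simeq\Gamma_{\Delta_X}(f^{n+1})^*$, from which both the compatibility of the $f^{+,(n)}$ with the tilde-pullback transition maps and the isomorphism $f^+g^+\cong(gf)^+$ follow; this is precisely the check the paper dismisses as ``obvious''/``clear from definitions.'' With that substitution your proposal is correct and coincides with the paper's proof.
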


In addition, we have (see Theorem/Definition \ref{transfermod} for details)
\begin{proposition}
  With the same assumptions as above, the functors $f_+$ and $f^+$ correspond to the tranfer $(D_{X/S},D_{Y/S})$-bimodule
  \[\Gamma_f(\mathcal{O}_X \boxtimes \omega_Y) \in \Gamma_f(\QC{X \times_S Y})\]
  where $\Gamma_f$ means restricting to sections supported on the graph of $X$ inside $X \times_S Y$.
\end{proposition}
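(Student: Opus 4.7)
The plan is to realise $\Gamma_f(\mathcal{O}_X \boxtimes \omega_Y)$ as the kernel of an integral transform that simultaneously implements both $f^+$ and $f_+$ via the convolution formalism, and then match this against the termwise descriptions of these functors in Section \ref{pushpullsection}.

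First I would set up the bimodule-category framework. The identifications $\QC{X \times_S X} \cong \Hom_{\QC{S}}(\QC{X},\QC{X})$ and $\QC{Y \times_S Y} \cong \Hom_{\QC{S}}(\QC{Y},\QC{Y})$ from Appendix A of \cite{previous} extend to an identification $\QC{X \times_S Y} \cong \Hom_{\QC{S}}(\QC{Y},\QC{X})$, under which $\QC{X \times_S Y}$ becomes a $(\QC{X \times_S X},\QC{Y \times_S Y})$-bimodule category with the convolution actions. Restricting to sheaves supported on the graph on the one side and on the diagonals on the other gives $\Gamma_f(\QC{X \times_S Y})$ the structure of a $(\Gamma_\Delta(\QC{X \times_S X}),\Gamma_\Delta(\QC{Y \times_S Y}))$-bimodule category, so that any object, and in particular $\Gamma_f(\mathcal{O}_X \boxtimes \omega_Y)$, is automatically a $(D_{X/S},D_{Y/S})$-bimodule via the actions of the monads.

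Next I would identify the kernel explicitly. The key computation is the base change isomorphism $\pi_{X}^{\times}\mathcal{O}_X \cong \mathcal{O}_X \boxtimes \omega_{Y/S}$ for the first projection $\pi_X : X \times_S Y \to X$, obtained by applying base change for the right adjoint to the cartesian square formed by $\pi_X$ and $Y \to S$, under the assumed finite tor-amplitude hypotheses that guarantee this base change holds. This makes $\Gamma_f(\mathcal{O}_X \boxtimes \omega_Y)$ the precise analogue of the defining expression $D_{X/S} = \Gamma_\Delta \pi_1^{\times}\mathcal{O}_X$, but with the graph replacing the diagonal. Under the kernel-functor correspondence, this object then corresponds to the functor $f^* : \QC{Y} \to \QC{X}$, and symmetrically, via the involution swapping the two factors, to the functor $f_*$ in the dual direction.

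The final step is to check that the functor induced by tensoring with this bimodule agrees with $f^+$ as defined termwise by upper-star on the cosimplicial presentation $\lim_{\bm{\Delta_s}}(\Gamma_\Delta(\QC{Y^{n+1}}),*)$, and dually for $f_+$. Unwinding the convolution action on the $n$-th level reduces, via base change along the products of $f$, to the termwise quasicoherent pullback $(f^{n+1})^* : \QC{Y^{n+1}} \to \QC{X^{n+1}}$, which is exactly the defining map of $f^+$; the $f_+$ case follows either by a parallel calculation or by naturality of the swap involution. The main obstacle is the base change identification $\pi_X^{\times}\mathcal{O}_X \cong \mathcal{O}_X \boxtimes \omega_{Y/S}$ and its compatibility with the local cohomology functor $\Gamma_f$; once this is in hand, the compatibility of the convolution action with the termwise pull/push on the cosimplicial models is essentially formal and justifies the assertion that the last claim is clear from definitions.
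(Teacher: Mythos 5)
Your setup matches the paper's in two respects: the bimodule structure on objects of $\Gamma_f(\QC{X \times_S Y})$ coming from the two convolution actions, and the identification of $\Gamma_f(\mathcal{O}_X \boxtimes \omega_Y)$ with $\tilde{\pi}_1^{(X \times Y),\times}\mathcal{O}_X$ as the graph analogue of Definition \ref{bigdef}. But there is a genuine error and a genuine gap. The error: your claim that, under $\QC{X \times_S Y} \cong \Hom_{\QC{S}}(\QC{Y},\QC{X})$, the object $\Gamma_f(\mathcal{O}_X \boxtimes \omega_Y)$ ``corresponds to the functor $f^*$'' is false. The kernel of $f^*$ is the structure sheaf of the graph, $\delta_{f,*}\mathcal{O}_X$, whereas $\Gamma_f(\mathcal{O}_X \boxtimes \omega_Y)$ corresponds to $\tilde{\pi}^{(X \times Y)}_{1,\times}\tilde{\pi}_2^{(X \times Y),*}$; already for $f = \id$ this is the monad $D_X$ itself, not the identity functor. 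The point you are missing is that the transfer bimodule of $f^+$ is not the kernel of $f^*$ but the kernel of the composite $G_{D_X}f^+F_{D_Y}$, which includes the free-module functor $F_{D_Y}$ --- and that is precisely why the answer is the local-cohomology-thickened object rather than $\delta_{f,*}\mathcal{O}_X$.

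The gap: the actual content of the proposition --- that the termwise upper-star definition of $f^+$ is implemented by this particular bimodule --- is what you dismiss as ``essentially formal,'' and it is where all the work lies. The paper computes $G_{D_X}f^+F_{D_Y} \cong f^*\tilde{\pi}^{(Y \times Y)}_{1,\times}\tilde{\pi}_2^{(Y \times Y),*}$ using the explicit description of $F_{D_Y}$ from Remark \ref{explicit2}; it then establishes $\Gamma_f(\QC{X \times_S Y}) \cong \QC{X} \otimes_{\QC{Y}} \Gamma_{\Delta}(\QC{Y \times_S Y})$ by base-changing a split-exact sequence, deduces $\tilde{\pi}^{(X \times Y)}_{1,\times} \cong \id_{\QC{X}} \otimes \tilde{\pi}^{(Y \times Y)}_{1,\times}$ by comparing right adjoints, and only then applies Theorem A.1 of \cite{previous} to rewrite the composite as $\tilde{\pi}^{(X \times Y)}_{1,\times}\tilde{\pi}_2^{(X \times Y),*}$, whose kernel is identified with $\Gamma_f(\mathcal{O}_X \boxtimes \omega_Y)$ via the left-right duals of Theorems 2.5 and 2.8 of \cite{previous}; the statement for $f_+$ then follows by left-right duality rather than by a separate check. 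Your proposal never engages with the free and forgetful functors of the monadic presentations, so the step where ``unwinding the convolution action reduces to the termwise pullback'' is exactly the missing argument, not a formality; to repair it you would need to reproduce something equivalent to the chain of identifications above.
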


We also prove a left-right switch for $D$-modules with our definitions. The following is Theorem \ref{leftrightswitch} in
the main text, combined with the discussion above that Theorem.
\begin{theorem}
  For any spectral Noetherian scheme $X$ finite tor-amplitude, locally almost of finite-presentation, and separated
  over a base spectral Noetherian scheme $S$, both homologically bounded, there is an isomorphism
  \[\Mod{D_{X/S}} \cong \Mod{\opposite{D_{X/S}}}\]
  induced by the $(\opposite{D_{X/S}},D_{X/S})$-bimodule
  \[\Gamma_{\Delta}(\omega_{X/S} \boxtimes \omega_{X/S})\]
  and the inverse is induced by the $(D_{X/S},\opposite{D_{X/S}})$-bimodule
  \[\Gamma_{\Delta}(\mathcal{O}_X \boxtimes \mathcal{O}_X)\]
\end{theorem}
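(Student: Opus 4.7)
The plan is to prove the left--right switch by a Morita-theoretic argument: I will exhibit the two bimodules $\Gamma_{\Delta}(\omega_{X/S} \boxtimes \omega_{X/S})$ and $\Gamma_{\Delta}(\mathcal{O}_X \boxtimes \mathcal{O}_X)$ as mutually inverse under convolution in $\Gamma_{\Delta}(\QC{X \times_S X})$, whence tensoring with them induces the claimed inverse equivalences of module categories.

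As a first step I would verify the bimodule structures. Applying base change to the Cartesian square defining the projection $\pi_1$, one has $\pi_1^{\times}\mathcal{O}_X \cong \pi_2^*\omega_{X/S}$, so that $D_{X/S} \cong \Gamma_{\Delta}(\mathcal{O}_X \boxtimes \omega_{X/S})$ and dually $\opposite{D_{X/S}} \cong \Gamma_{\Delta}(\omega_{X/S} \boxtimes \mathcal{O}_X)$. In this uniform shape, all four relevant objects take the form $\Gamma_{\Delta}(E \boxtimes F)$ with $E, F \in \{\mathcal{O}_X, \omega_{X/S}\}$, and the commuting left and right actions making the two transfer objects into bimodules are constructed by convolving with $D_{X/S}$ or $\opposite{D_{X/S}}$ on the appropriate side.

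The heart of the proof is to establish the isomorphisms of bimodules
\[\Gamma_{\Delta}(\omega_{X/S} \boxtimes \omega_{X/S}) * \Gamma_{\Delta}(\mathcal{O}_X \boxtimes \mathcal{O}_X) \cong D_{X/S}\]
and
\[\Gamma_{\Delta}(\mathcal{O}_X \boxtimes \mathcal{O}_X) * \Gamma_{\Delta}(\omega_{X/S} \boxtimes \omega_{X/S}) \cong \opposite{D_{X/S}}.\]
I would approach this using the cosimplicial model $\Mod{D_{X/S}} \cong \lim_{\bm{\Delta_s}}(\Gamma_{\Delta}(\QC{X^{n+1}}),*)$ from the main theorem. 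In this presentation, the functor induced by $\Gamma_{\Delta}(\omega_{X/S} \boxtimes \omega_{X/S})$ ought to be realized termwise as twists of $\Gamma_{\Delta}(\QC{X^{n+1}})$ by $\omega_{X/S}$ on the two outer factors of $X^{n+1}$, and its mutual invertibility with the functor induced by $\Gamma_{\Delta}(\mathcal{O}_X \boxtimes \mathcal{O}_X)$ then becomes a question local near the small diagonal $\Delta \subseteq X^{n+1}$, together with compatibility across the cosimplicial face and degeneracy maps. Alternatively, one can reduce via the étale (proper finite tor-amplitude) descent established earlier to the affine case $X = \Spec A$, $S = \Spec k$ and check the claim directly using $D_{X/S} \cong \Gamma_{\Delta}\Hom_k(A,A)$, by computing the convolutions of $\Gamma_{\Delta}(\omega_A \boxtimes \omega_A)$ and $\Gamma_{\Delta}(A \boxtimes A)$ inside $\Gamma_{\Delta}(\QC{\Spec (A \otimes_k A)})$.

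The main obstacle I anticipate is the convolution computation itself. The naive external-product formula $(E \boxtimes F) * (G \boxtimes H) \cong (E \boxtimes H) \otimes p_{X \times_S X/S}^* q_*(F \otimes G)$, with $q : X \to S$, depends globally on the pushforward $q_*\omega_{X/S}$, which is generally non-trivial in $\QC{S}$; yet after passing to $\Gamma_{\Delta}$ this pushforward must effectively collapse so that $\omega_{X/S}$ and $\mathcal{O}_X$ become inverse to one another as ``tensor factors'' near the diagonal. Making this collapse precise---either by running the argument simplicially and observing that the middle-factor integration cancels against adjacent structure maps of the cosimplicial diagram, or by direct local cohomology computations in the affine case---is the technical heart of the proof.
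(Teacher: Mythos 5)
Your overall plan (Morita theory: exhibit the two bimodules as mutually inverse) is a reasonable reformulation, but as written it has a genuine gap: the statement you reduce to is exactly as hard as the theorem and you do not actually prove it. First, the identities you display must be relative tensor products over the algebras, e.g.\ $\Gamma_{\Delta}(\mathcal{O}_X \boxtimes \mathcal{O}_X) \star_{\opposite{D_{X/S}}} \Gamma_{\Delta}(\omega_{X/S} \boxtimes \omega_{X/S}) \cong D_{X/S}$ as $(D_{X/S},D_{X/S})$-bimodules (a bar-construction colimit), not plain convolution in $\Gamma_{\Delta}(\QC{X \times_S X})$; your naive external-product formula computes the wrong operation, and in the affine singular case a ``direct local cohomology computation'' of a relative tensor over $D_A \cong \Gamma_{\Delta}\Hom_k(A,A)$ is not something you can just read off — you would in effect be re-deriving the categorical statement you are trying to prove. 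Second, your proposed termwise realization of the $\Gamma_{\Delta}(\omega_{X/S}\boxtimes\omega_{X/S})$-functor as a twist ``on the two outer factors of $X^{n+1}$'' is incorrect: for the termwise functors to intertwine the $*$-transition maps with the $\times$-transition maps of the two limit presentations one must twist by $\omega_{X/S}^{\boxtimes n+1}$ on \emph{all} factors (this uses $g^{\times} \cong g^{*}(-)\otimes\omega_{\mathrm{rel}}$ for the projections indexed by injective maps in $\bm{\Delta_s}$); with outer twists only, the functors do not assemble.

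For comparison, the paper sidesteps the bimodule computation entirely: it defines $Q$ via $\Gamma_{\Delta}\otimes\id$ on $\QC{X}\otimes_{\Gamma_{\Delta}(\QC{X\times X})}\QC{X}$ and $R$ termwise by $\mathcal{F}\mapsto\mathcal{F}\otimes_{\mathcal{O}_{X^{n+1}}}\omega_{X/S}^{\boxtimes n+1}$, computes the assembled components $(RQ)^{(m,n)} \cong \omega^{\boxtimes n+1}\otimes \tilde{p}_{1,*}\tilde{p}_2^{*} \cong \tilde{p}_{1,*}\tilde{p}_2^{\times}$, and matches these against the explicit description of the identity functor from the colimit to the limit presentation (Theorem \ref{explicit3}); it then gets $QR \cong \id$ for free from the self-duality of $R$ and $Q$ under left-right duality. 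If you want to salvage your route, the honest path is to prove the relative-tensor identities by running precisely this kind of simplicial/termwise argument — at which point you have reproduced the paper's proof rather than found an alternative to it.
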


Lastly in the theory of $D$-modules, we also prove a form of Kashiwara's equivalence with our definitions--this is
Theorem \ref{kashi} and Corollary \ref{kashi2} in the main text.
\begin{theorem}
  Let $X$ be a spectral Noetherian scheme which is finite tor-amplitude, locally almost of finite-presentation, and separated
  over a base spectral Noetherian scheme $S$, both homologically bounded.
  Let $z : Z \to X$ is a finite tor-amplitude closed immersion. Then, the functor
  \[z^+ : \Mod{D_{X/S}} \to \Mod{D_{Z/S}}\]
  restricts to an equivalence of categories on $\Gamma_Z(\Mod{D_{X/S}})$--the full subcategory supported on $Z$.
  Dually, the functor
  \[z_+ : \Mod{\opposite{D_{Z/S}}} \to \Mod{\opposite{D_{X/S}}}\]
  is an equivalence onto the full subcategory $\Gamma_Z(\Mod{\opposite{D_{X/S}}})$ of the codomain.
\end{theorem}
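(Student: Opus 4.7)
The plan is to reduce Kashiwara's equivalence to an elementary QCoh statement applied termwise in the simplicial presentations
\[\Mod{D_{X/S}} \cong \lim_{\bm{\Delta_s}}(\Gamma_{\Delta}(\QC{X^{n+1}}),*), \qquad \Mod{\opposite{D_{X/S}}} \cong \colim_{\opposite{\bm{\Delta_s}}}(\Gamma_{\Delta}(\QC{X^{n+1}}),*),\]
established earlier in the paper, under which $z^+$ and $z_+$ act at each level by ordinary quasicoherent pullback $(z^{n+1})^*$ and pushforward $(z^{n+1})_*$ respectively.

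First I would identify the subcategory $\Gamma_Z$ inside these presentations. Since the face/degeneracy maps of the simplicial diagram arise from projections and diagonals of $X^{n+1}$, the condition that the underlying $\QC{X}$-object of a compatible family $(M_n)$ be supported on $Z$ propagates to the condition that each $M_n$ be supported on $\Delta_X \cap Z^{n+1}$. This scheme-theoretic intersection is precisely the diagonal $\Delta_Z$ of $Z$ viewed inside $X^{n+1}$, so
\[\Gamma_Z(\Mod{D_{X/S}}) \cong \lim_{\bm{\Delta_s}} \Gamma_{\Delta_Z}(\QC{X^{n+1}}),\]
and analogously with a colimit for $\opposite{D}$.

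Next I would apply ordinary Kashiwara for QCoh at each level. Because $z$ is a finite tor-amplitude closed immersion, so is each $z^{n+1}$, whence $(z^{n+1})_*$ is fully faithful with essential image $\Gamma_{Z^{n+1}}(\QC{X^{n+1}})$ and with quasi-inverse $(z^{n+1})^*$. Intersecting with the diagonal support and again using $\Delta_X \cap Z^{n+1} = \Delta_Z$ produces an equivalence
\[(z^{n+1})_* : \Gamma_{\Delta}(\QC{Z^{n+1}}) \xrightarrow{\sim} \Gamma_{\Delta_Z}(\QC{X^{n+1}}).\]

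Finally I would check that these termwise equivalences are natural with respect to the simplicial structure, so that they assemble into an equivalence of simplicial objects. Passing to the limit (resp. colimit) over $\bm{\Delta_s}$ then delivers the desired equivalences $\Mod{D_{Z/S}} \cong \Gamma_Z(\Mod{D_{X/S}})$ realized by $z^+$, and $\Mod{\opposite{D_{Z/S}}} \cong \Gamma_Z(\Mod{\opposite{D_{X/S}}})$ realized by $z_+$. The main obstacle will be this naturality check: the simplicial structure maps are built from the convolution $*$, which on $\Gamma_{\Delta}(\QC{X^{n+1}})$ is defined using the exceptional inverse images $\pi^{\times}$ together with local cohomology along the diagonal, and one must verify these intertwine with $(z^{n+1})_*$. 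I expect this to reduce to the projection formula for the closed immersion $z^{n+1}$ combined with the equality $\Delta_X \cap Z^{n+1} = \Delta_Z$, after which passage to the limit/colimit is formal.
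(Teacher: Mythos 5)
There is a genuine gap, and it is in the step you call ``ordinary Kashiwara for QCoh at each level.'' In the derived setting the pushforward along a closed immersion is neither fully faithful nor essentially surjective onto the supported subcategory, so the claimed termwise equivalence
\[(z^{n+1})_* : \Gamma_{\Delta}(\QC{Z^{n+1}}) \longrightarrow \Gamma_{\Delta_Z}(\QC{X^{n+1}})\]
is false already at the level $n=0$. Take $Z=\Spec{k}\hookrightarrow X=\mathbb{A}^1_k=\Spec{k[x]}$ at the origin: then $\Hom_{\QC{X}}(z_*k,z_*k)\cong k\oplus k[-1]$ (there is a nontrivial $\Ext^1$), while $\Hom_{\QC{Z}}(k,k)\cong k$, so $z_*$ is not fully faithful (because $z^*z_*M\neq M$ due to higher Tor terms); moreover $\Gamma_Z(\QC{X})$ is the category of derived $x$-power-torsion modules, which is generated by $z_*k$ under colimits but is equivalent to modules over $\Ext^*_{k[x]}(k,k)$, not to $\QC{Z}$. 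This is exactly the point of Kashiwara's equivalence in this framework: the levelwise categories genuinely differ, and the equivalence only appears after passing to the (co)limit over $\bm{\Delta_s}$, where the discrepancies cancel. (The paper's remark following the theorem makes this explicit: when $X=S=\Spec{R}$ and $Z=\Spec{R/I}$, the statement is a derived Morita equivalence between $\Mod{\Hom_R(R/I,R/I)}$ and the category of $I$-nilpotent $R$-modules, not an identification of $\QC{Z}$ with $\Gamma_Z(\QC{X})$.) Consequently the subsequent ``naturality and assembly'' step has nothing to assemble, and the worry you flag about intertwining with the convolution structure is not the real obstacle.

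The paper's proof therefore does not proceed termwise. It first identifies $\Gamma_Z(\Mod{\opposite{D_X}})$ with $\colim_{\opposite{\bm{\Delta_s}}}\Gamma_{\Delta_Z}(\QC{X^{n+1}})$ via split-exact (localization) sequences — this part of your proposal is essentially consistent with the paper — and then argues by monadicity: the adjunction $\tilde{z}_+\dashv\tilde{z}^{\dagger}$ is shown to be monadic, both $\Mod{\opposite{D_Z}}$ and $\Gamma_Z(\Mod{\opposite{D_X}})$ are monadic over $\QC{Z}$, and the equivalence is reduced to showing that the induced map of monads
\[G_{\opposite{D_Z}}F_{\opposite{D_Z}} \to G_{\opposite{D_Z}}\tilde{z}^{\dagger}\tilde{z}_+F_{\opposite{D_Z}}\]
is an isomorphism. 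That comparison is carried out by base-change arguments (Theorem A.1 of \cite{previous}) together with the set-theoretic identity $(Z\times X)\cap(X\times Z)=Z\times Z$ inside $X\times X$, which yields the commutation $\widetilde{(z\times\id)}^{\times}\widetilde{(\id\times z)}_*\cong\widetilde{(\id\times z)}_*\widetilde{(z\times\id)}^{\times}$. If you want to salvage your outline, the fix is to replace the levelwise equivalence by this monad comparison over $\QC{Z}$ (or some equivalent device that only sees the totalized categories), since no statement of the form $\QC{Z^{n+1}}\simeq\Gamma_{Z^{n+1}}(\QC{X^{n+1}})$ is available.
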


Finally, using the above we prove the following isomorphism (note that the conditions here are more restrictive than above)
stated as Theorem \ref{stackequi} in the text. For the last claim below, see Appendix \ref{cryss}.
\begin{theorem}
  Let $S$ be an underived Noetherian scheme and $X$ be an underived finite-type $S$-scheme which is finite tor-amplitude and separated over $S$,
  then there is a natural isomorphism
  \[QCoh((X/S)_{dR}) \cong \Mod{D_{X/S}}\]
  The former is also naturally isomorphic to the category of quasi-coherent crystals on the small or big infinitesimal site.
\end{theorem}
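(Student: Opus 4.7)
The plan is to match up two cosimplicial/simplicial presentations of the category and use the theorem from the introduction characterizing $\Mod{D_{X/S}}$ as $\lim_{\bm{\Delta_s}}(\Gamma_{\Delta}(\QC{X^{n+1}}), *)$. Specifically, I will build an equivalence between this limit and the similarly-structured limit computing $QCoh((X/S)_{dR})$ via Čech descent along $X \to (X/S)_{dR}$.

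First I would set up the Čech nerve of the canonical map $p: X \to (X/S)_{dR}$. Recall that $(X/S)_{dR}$ is (the sheafification of) the functor $T \mapsto X(T_{\mathrm{red}})$, so two $T$-points of $X$ become equal in $(X/S)_{dR}(T)$ precisely when they agree after reduction. Consequently the $n$-fold fiber product $X \times_{(X/S)_{dR}} \cdots \times_{(X/S)_{dR}} X$ (with $n+1$ factors) represents the functor of $(n+1)$-tuples of $T$-points of $X$ agreeing modulo nilpotents, which is represented by the formal completion $(X^{n+1})^{\wedge}_{\Delta}$ of $X^{n+1}$ along its full diagonal. Then I would invoke the standard identification $QCoh\bigl((X^{n+1})^{\wedge}_{\Delta}\bigr) \cong \Gamma_{\Delta}(\QC{X^{n+1}})$, i.e.\ quasicoherent sheaves on the formal neighborhood are the same as torsion sheaves on $X^{n+1}$ supported set-theoretically on the diagonal — a statement which in the Noetherian setting is classical and in the derived setting follows from the general equivalence between ind-coherent / quasicoherent sheaves on formal schemes and their torsion cousins on the ambient scheme.

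Next I would argue that $QCoh$ satisfies descent along the Čech nerve of $p$, so that
\[QCoh((X/S)_{dR}) \;\cong\; \lim_{\bm{\Delta_s}} QCoh\bigl((X^{n+1})^{\wedge}_{\Delta}\bigr) \;\cong\; \lim_{\bm{\Delta_s}} \Gamma_{\Delta}(\QC{X^{n+1}}).\]
The face and degeneracy maps on the right-hand side are precisely the projections and diagonals between $X^{n+1}$ and $X^{n+2}$ (restricted to the diagonal supports), which on the $\Mod{D_{X/S}}$ side are exactly the structure maps governing the convolution monoidal operation $*$. Matching these term by term identifies the two cosimplicial diagrams, and combining this with the theorem $\Mod{D_{X/S}} \cong \lim_{\bm{\Delta_s}}(\Gamma_{\Delta}(\QC{X^{n+1}}),*)$ from the introduction gives the claimed equivalence. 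The final assertion, that $QCoh((X/S)_{dR})$ is equivalent to quasicoherent crystals on the small or big infinitesimal site, is handled separately in Appendix \ref{cryss}; it is essentially formal, amounting to unwinding the definition of a crystal as a compatible system of sheaves on infinitesimal thickenings and recognizing this as descent data for $p$.

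The main obstacle is the descent step: $p: X \to (X/S)_{dR}$ is not fpqc in the usual sense, so one cannot appeal directly to the fpqc descent results. Instead I would argue that effective descent holds here because the fiber products computing the Čech nerve genuinely realize the de Rham stack as a colimit of the diagram of formal neighborhoods, and because the monadic adjunction $p^*\dashv p_*$ on quasicoherent sheaves satisfies the Barr–Beck–Lurie hypotheses. Equivalently, one could prove this by reducing to the affine underived case (using the étale / proper-finite-tor-amplitude descent for $\Mod{D_{X/S}}$ already in the paper and Zariski descent for $(X/S)_{dR}$), where $X = \Spec A$ and the comparison becomes the statement that modules over the topological algebra $\widehat{A \otimes_k A}$ (completed along $I_{\Delta}$) agree with $\Gamma_{\Delta}$-supported modules, which is precisely the input from Step 1. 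Controlling the simplicial structure maps in sufficient functoriality — in particular verifying that the identification in Step 2 is compatible with all projections $X^{n+1} \to X^m$ — is the routine but delicate bookkeeping that this argument hinges on.
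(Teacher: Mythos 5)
There is a genuine gap, and it sits exactly at the step you flag as the "main obstacle": descent of $QCoh$ along $p: X \to (X/S)_{dR}$. Your identification of the Čech nerve terms is fine (the $(n+1)$-fold fibre product is the formal completion of $X^{n+1}$ along the diagonal, and its quasicoherent sheaves are $\Gamma_{\Delta}(\QC{X^{n+1}})$ — this is the content of Proposition \ref{closedstackequi} via Greenlees--May). But for singular $X$ the map $p$ is \emph{not} an effective epimorphism in any topology for which $QCoh$ is a sheaf: a $T$-point of $(X/S)_{dR}$ is a map $T_{red} \to X$, and extending it to $T \to X$ even Zariski- or étale-locally is obstructed by classes in $\Ext^1(u^*L_{X/S}, I)$, which do not vanish on affines when $X$ is not smooth. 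Consequently the colimit of the Čech nerve of $p$ is not $(X/S)_{dR}$, and neither of your proposed repairs closes the gap: the Barr--Beck--Lurie route requires conservativity of $p^*$ on $QCoh((X/S)_{dR})$, which for singular $X$ is essentially equivalent to the theorem being proved (a sheaf on the de Rham stack is tested against \emph{all} nilpotent thickenings $T$, not only those that factor through the infinitesimal groupoid of $X$); and the affine reduction to "completed $\widehat{A \otimes_k A}$-modules versus $\Gamma_{\Delta}$-torsion modules" only re-proves the term-by-term identification, not the statement that the totalization computes $QCoh((X/S)_{dR})$. For smooth $X$ your argument is correct, but the singular case is the entire point of the theorem.

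The paper's proof circumvents this by never descending along $X \to (X/S)_{dR}$. After reducing to the affine case by Zariski descent, it chooses a closed immersion $X \hookrightarrow Y' = \mathbb{A}^n_Y$ and descends along $(X/Y')_{dR} \to (X/Y)_{dR}$, which \emph{is} an effective epimorphism precisely because $Y'$ is smooth over $Y$, so the required infinitesimal lifts land in $Y'$ rather than in $X$. The Čech terms $QCoh((X/(Y'^{\times_Y \bullet}))_{dR})$ are identified with $\Gamma_X(\QC{Y'^{\times_Y \bullet}})$ by Proposition \ref{closedstackequi}, and the matching limit presentation of $\Mod{D_{X/Y}}$ comes from Kashiwara's equivalence (Corollary \ref{kashi2}) together with Theorem \ref{bigtheorem2}; the forward comparison map is built using h-descent along $Z_{red} \to Z$ (Theorem 11.12 of \cite{AGrass}). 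If you want to salvage your outline, you must replace the descent along $p$ by this smooth-embedding argument (or otherwise prove that the infinitesimal groupoid presentation computes $QCoh((X/S)_{dR})$ for singular $X$, which is where all the difficulty lives).
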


We also prove a decategorification of Proposition 4.2.5 in \cite{Ber2} in the smooth setting.
It is (one of) the main results of Section \ref{hochc}, and we leave the explanation of the notation to that section.
\begin{proposition}
  In the setting of where $X=\Spec{A}$ is affine and smooth over a base $S=\Spec{k}$
  which is discrete, we have the following isomorphism
  \begin{equation}
    D_A \cong \scalebox{1.3}{%
      $\substack{A_{180^{\circ}} \\ \otimes\\ A} \scriptstyle{\HCH(A/k)}$}
  \end{equation}
\end{proposition}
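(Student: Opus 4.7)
The plan is to unfold both sides into explicit models and verify agreement, using that in the smooth affine case $A$ is perfect over $B = A \otimes_k A$ via a finite Koszul resolution.

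First I would parse the right hand side. The Hochschild cohomology $\HCH(A/k) = \Hom_B(A, A)$ carries two commuting $A$-actions from the $A$-bimodule structure on the target copy of $A$. The stacked notation $\substack{A_{180^{\circ}} \\ \otimes \\ A}\HCH(A/k)$ denotes tensoring $\HCH(A/k)$ with $A$ over one of these actions; the $180^{\circ}$ label records that the residual bimodule structure on the tensor product is to be compared with the bimodule structure on $D_A$ inherited from pre- and post-composition with multiplication on $\Hom_k(A, A)$.

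Next, using the chain of isomorphisms derived in the Introduction, $D_A \cong \colim_n \Hom_A(B/I_{\Delta}^n, A)$. The natural projection $B/I_{\Delta}^n \twoheadrightarrow B/I_{\Delta} \cong A$ (for $n=1$) gives, after tensor-hom adjunction, a comparison map
\[A_{180^{\circ}} \otimes_A \HCH(A/k) \cong A \otimes_A \Hom_B(A, A) \longrightarrow \colim_n \Hom_A(B/I_{\Delta}^n, A) \cong D_A.\]
In the smooth case, the Koszul resolution of $A$ over $B$ both computes $\HCH(A/k)$ as polyvector fields by HKR and yields a PBW-style description of $D_A$ as symmetric powers of the tangent sheaf; on underlying $A$-modules the comparison reduces to this classical identification.

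The hard part will be verifying the comparison is an isomorphism of $A$-bimodules. On underlying $A$-modules the agreement is essentially PBW/HKR, so the content is the match of the two $A$-actions; the $A_{180^{\circ}}$ twist is precisely what realigns these so that the bimodule structures agree. I would verify this compatibility via an explicit chain-level calculation in the Koszul model for a polynomial ring, and then extend to general smooth $A$ by localizing in the étale topology, where the required descent is furnished by the earlier results of the paper.
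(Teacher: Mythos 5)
There is a genuine gap, and it starts with how you parse the statement. In Section \ref{hochc} the stacked notation means the relative (bar-construction) tensor product $A_{180^{\circ}} \otimes_{\HCH(A/k)} A$ over the $E_1$-ring $(\HCH(A/k),\mu_2)$, where $A_{180^{\circ}}$ is a \emph{down} $\HCH(A/k)$-module and the bottom $A$ is the canonical \emph{up} module carrying the residual $A\otimes\opposite{A}$-action; it is emphatically not a tensor of $\HCH(A/k)$ against $A$ over an $A$-action. With your reading the left-hand side collapses: $A \otimes_A \Hom_B(A,A) \cong \HCH(A/k)$, which for smooth $A$ is the algebra of polyvector fields $\bigoplus_i (\Lambda^i T_{A/k})[-i]$ by HKR, and this is not $D_A$ (whose associated graded is the \emph{symmetric} algebra on $T_{A/k}$, in degree $0$). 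So the comparison map you write, essentially the inclusion $\Hom_B(A,A)\to \colim_n \Hom_A(B/I_{\Delta}^n,A)$, cannot be an isomorphism, and the sentence claiming the underlying-module identification "is essentially PBW/HKR" conflates two genuinely different $A$-modules.

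Even with the tensor parsed correctly, your outline is missing the mechanism the paper actually uses. The proof there is a Morita-type argument: $A$ is a compact generator of $\Gamma_{\Delta}(\Mod{A\otimes\opposite{A}})$ with endomorphism ring $(\HCH(A/k),\mu_2)$, giving a monoidal equivalence $\Phi:\Mod{\HCH(A/k)^{down}}\simeq \Gamma_{\Delta}(\Mod{A\otimes\opposite{A}})$ with inverse $\Psi=\Hom_{A\otimes\opposite{A}}(A,-)$; one then computes $\Psi(D_A)\cong \opposite{A}$ by tensor-hom adjunction (using $D_A=\Gamma_{\Delta}\Hom_k(A,A)$ and that $A$ is diagonally supported) and, crucially, identifies the down action as the $180^{\circ}$ rotation of the standard up $\HCH(A/k)$-algebra structure on $A$; applying $\Phi$ gives the proposition. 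Your proposed substitute (a chain-level Koszul verification for polynomial rings plus étale descent) does not address either the generator/equivalence step or the $E_2$ rotation bookkeeping, which is exactly what distinguishes $A_{180^{\circ}}$ from $A_{-180^{\circ}}$, i.e. $D_A$ from $\opposite{D_A}$, so the "hard part" you defer is precisely the content of the proof and is not reachable by the route you describe.
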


Lastly, we provide an application of the theory to recover a main result of Ben-Zvi and Nevins in \cite{Cusp}.
See Section \ref{cuspsec} or \cite{Cusp} for the relevant definitions. The following is the Theorem \ref{recover} in the text
and Theorem 1.4 of \cite{Cusp}.
\begin{theorem}
  Suppose $\tau : \tilde{X} \to X$ is a \textit{good} cuspidal quotient of \textit{good} Cohen-Macaulay varieties
  over a field $k$, then $D_{\tilde{X}}$ and $D_X$ are concentrated in degree $0$ and Morita equivalent.
\end{theorem}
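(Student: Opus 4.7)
The plan is to combine the transfer bimodule machinery and Kashiwara's equivalence established earlier in the paper with the specific geometry of a good cuspidal quotient, thereby recovering Ben-Zvi--Nevins's statement inside our derived framework. The first step is to prove that $D_{\tilde X}$ and $D_X$ are concentrated in degree $0$. For $\tilde X$ the ``good'' hypothesis should force $\tilde X$ to be smooth (or at least sufficiently non-singular) and one falls back on the classical computation using the presentation $D_A \cong \Gamma_\Delta \Hom_k(A,A)$. For $D_X$ one must exploit the Cohen--Macaulay hypothesis on $X$ together with the specific form of the singular locus imposed by a good cuspidal quotient, which together give the $\Ext$-vanishing required to see that $\colim_n \Hom_A((A \otimes_k A)/I_\Delta^n, A)$ lives in a single cohomological degree.

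With $D_{\tilde X}$ and $D_X$ living in degree $0$, the second step is to set up the candidate Morita equivalence as a transfer bimodule. By Theorem/Definition \ref{transfermod}, the map $\tau$ produces the transfer bimodule $\Gamma_\tau(\mathcal{O}_{\tilde X} \boxtimes \omega_X) \in \Gamma_\tau(\QC{\tilde X \times_S X})$ which intertwines the pullback $\tau^+ : \Mod{D_X} \to \Mod{D_{\tilde X}}$ and the pushforward $\tau_+ : \Mod{\opposite{D_{\tilde X}}} \to \Mod{\opposite{D_X}}$. Applying the left-right switch of Theorem \ref{leftrightswitch} converts $\tau_+$ into a functor $\Mod{D_{\tilde X}} \to \Mod{D_X}$, and it suffices to prove that this functor is quasi-inverse to $\tau^+$.

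To verify that $\tau^+$ is an equivalence I would stratify $X$ into the open locus $U$ over which $\tau$ is an isomorphism and its closed complement $Z$, the cuspidal locus. Pullback under $\tau$ is trivially an isomorphism after restricting to $U$, since $\tau^{-1}(U) \to U$ is the identity. For the remaining piece I would invoke Kashiwara's equivalence (Theorem \ref{kashi}) applied to the closed immersions $Z \hookrightarrow X$ and $\tau^{-1}(Z) \hookrightarrow \tilde X$ to reduce the comparison to one of $D$-modules supported on $Z$ and $\tau^{-1}(Z)$. The ``good cuspidal'' hypothesis is engineered precisely so that the induced map on infinitesimal neighborhoods of the cuspidal stratum is controlled enough that this reduction becomes an equivalence --- concretely, because the formal completions along $Z$ and $\tau^{-1}(Z)$ become isomorphic after passage to $D$-modules, a statement one would unwind affine-locally and then iterate until the stratum is reduced to a trivial case.

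The main obstacle I anticipate is the degree-$0$ concentration of $D_X$ when $X$ is singular but Cohen--Macaulay. This is not automatic from our derived definition and requires careful analysis of local cohomology of $\Hom_k(A,A)$ along the diagonal under the specific goodness conditions imposed. Once that vanishing is established, the Morita equivalence assembles formally from the transfer-bimodule formalism and Kashiwara's equivalence, and what remains is a geometric verification that the ``good cuspidal'' geometry is exactly strong enough for the Kashiwara reduction to yield an equivalence rather than merely a fully faithful functor.
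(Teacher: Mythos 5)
Your proposal diverges from the paper's argument and both of its main steps have genuine gaps. On degree-$0$ concentration: ``good'' does not mean $\tilde{X}$ is smooth --- it is precisely the hypothesis that $H^1(\Gamma_{\Delta}(M \boxtimes \omega_{\tilde{X}}))$ and $H^1(\Gamma_{\Delta}(M \boxtimes \omega_{X}))$ vanish for discrete $M$, and the concentration of both rings in degree $0$ is deduced uniformly from that: affine-locally $\Gamma_{\Delta}(M \boxtimes \omega_{\tilde{X}}) \cong \tilde{\pi}_1^{\times}M \cong \colim_n \Hom_R((R \otimes_k \tilde{R})/I^n, M)$ is discrete for injective discrete $M$, and the $H^1$-vanishing lets one bootstrap along injective resolutions to all discrete $M$; taking $M = \mathcal{O}_X$ and invoking the Grothendieck--Sato formula ($D_X \cong \Gamma_{\Delta}(\mathcal{O}_X \boxtimes \omega_X)$, Corollary 2.11 of \cite{previous}) then gives the claim. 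You flag this as ``the main obstacle'' but supply no mechanism, and the smoothness shortcut for $\tilde{X}$ is not available (cuspidal quotients in \cite{Cusp} need not have smooth source in the generality assumed here).

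On the Morita equivalence, your stratification-plus-Kashiwara plan does not go through as stated. The paper's Kashiwara theorem (Theorem \ref{kashi}) requires the closed immersion to be of finite tor-amplitude, and the cuspidal locus $Z$ inside a singular Cohen--Macaulay $X$ typically is not: for the cuspidal cubic $R = k[x,y]/(y^2-x^3)$ the residue field at the cusp has infinite tor-dimension over $R$, so the very reduction you propose is unavailable. Moreover, even granting a Kashiwara reduction, you are left comparing $\Mod{D_{Z}}$ with $\Mod{D_{\tau^{-1}(Z)}}$, and $\tau^{-1}(Z) \to Z$ is again a universal homeomorphism; the ``iterate until trivial'' step has no base case and is essentially circular. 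The paper instead proves directly that $\tau^+ : \Mod{D_X} \to \Mod{D_{\tilde{X}}}$ is an equivalence for \emph{any} universal homeomorphism with the standing finiteness hypotheses (Theorem \ref{homeokashi}): the key input is $h$-descent (Theorem 11.12 of \cite{AGrass}), which via Lemma \ref{extendedAG} identifies $\lim_{[m]}\Gamma_{\Delta}(\QC{(\tilde{X}^{(m+1)_X})^{(n+1)}})$ with $\Gamma_{\Delta}(\QC{X^{(n+1)}})$ using only that the relevant maps are homeomorphisms; an inverse $\tau_-$ is then assembled through the functor $\bm{\Delta} \times \bm{\Delta} \to \bm{\Delta}$ and the two composites are checked to be the identity by computing transfer bimodules ($\Gamma_{\Delta}(\mathcal{O}_X \boxtimes \omega_{\tilde{X}})$ and $\Gamma_{\Delta}(\mathcal{O}_{\tilde{X}} \boxtimes \omega_X)$). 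Goodness plays no role in the categorical equivalence; it enters only to strip the $H^0$'s so that the derived statement decategorifies to the classical Morita equivalence of $H^0(D_{\tilde{X}})$ and $H^0(D_X)$. If you want to salvage your route, you would first need a Kashiwara equivalence without the finite tor-amplitude hypothesis (the paper only remarks that this is possible for underived schemes) and an honest argument for the closed stratum that does not presuppose universal-homeomorphism invariance.
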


\textbf{Strategy}:
Let us fix spectral Noetherian schemes $X$, $S$ homologically bounded and such that the structure map
$p_X : X \to S$ is locally almost of finite-presentation, separated, and finite tor-amplitude. At the heart of the paper is the monoidal category
\begin{equation} \label{myfav}
  \Gamma_{\Delta}(\QC{X \times_S X})
\end{equation}
of quasicoherent sheaves on $X \times X$ supported on the diagonal, with monoidal product given by convolution
(see Theorem/Definition 1.11 of \cite{previous}). We think of this category as the categorified ring of differential operators (though
it may be more familiar as quasicoherent sheaves on the infinitesimal groupoid).
This perspective is by no means new to us, and we learned of it from \cite{Ber1}. Let us nevertheless try to justify
this intuition. Consider the isomorphism
\begin{equation} \label{Ddef}
  D_{A/k} \cong \Gamma_{\Delta}(\Hom_k(A,A))
\end{equation}
for an affine scheme $\Spec{A}$ over $\Spec{k}$.
At one categorical level higher, we should expect that the categorified ring of differential operators $\mathcal{D}_{X/S}$
(for a scheme $X$ over $S$) satisfies
\begin{equation} \label{DDdef}
  \mathcal{D}_{X/S} \cong \Gamma_{\Delta}(\Hom_{\QC{S}}(\QC{X},\QC{X}))
\end{equation}
at least when $X$ is $1$-affine, in the terminology of \cite{1affine}
\footnote{
  Being $1$-affine means that categorified quasicoherent sheaves are determined by their global sections.
  We remind the reader that categorified quasicoherent sheaves are
  sheaves of categories which are quasicoherent modules over the categorified structure sheaf, $U \mapsto \QC{U}$.
  Nevertheless all schemes considered in this paper will be $1$-affine, as they are quasicompact quasiseparated spectral schemes.}.
Equation (\ref{DDdef}) tells us that
$\mathcal{D}_{X/S}$ is indeed the monoidal category (\ref{myfav}) above, justifying our intuition.
For a discussion of categorified $D$-modules,
see \cite{Ber2}. 

Let us describe our strategy for constructing the theory of $D$-modules in our framework. Roughly speaking,
we will explicitly construct a category
which we claim to be the category of $D$-modules on a scheme $X$. Afterwards, we will show that this category is monadic over $\QC{X}$
with the monad given by the quasicoherent sheaf of algebras $D_X$.
This is also the approach taken in \cite{CryD}. We will give two explicit forms of the category of $D$-modules.
One, as a colimit/limit of categories of the form
\[\Gamma_{\Delta}(\QC{X^n})\]
which are quasicoherent sheaves on the $n$-fold product of $X$ (over $S$) supported at the diagonal (this is also
the presentation in \cite{CryD}) \footnote{In the limit form, this is the
data of what is commonly called a (co)stratification}. The other form
is directly in terms of the monoidal category $\Gamma_{\Delta}(\QC{X \times_S X})$. To be precise, we have the isomorphisms
(Theorem \ref{rightdref} and Theorem \ref{bigtheorem2} respectively)
\begin{equation} \label{catisoD}
  \Mod{\opposite{D}_X} \cong \QC{X} \otimes_{\Gamma_{\Delta}(\QC{X \times X})} \QC{X} 
\end{equation}
\begin{equation} \label{catisoD2}
  \Mod{D_X} \cong \Hom_{\Gamma_{\Delta}(\QC{X \times X})}(\QC{X},\QC{X})
\end{equation}
These isomorphisms appeared first in \cite{Ber2}, and we make heavy use of them. Roughly speaking, formulas
say that right and left $D$-modules on $X$ are the coinvariants and invariants of the category $\QC{X}$ under the action infinitesimal groupoid.

Pushforward and pullback
of $D$-modules are relatively easy to define in our setting, when we view the category of $D$-modules
as a colimit/limit of categories. However, we say nothing about the full six-functor formalism
or of holnomicity in this paper.

Key to our aim of showing that our theory of $D$-modules agrees with other classical definitions
is the proof of Kashiwara's Equivalence. Roughly speaking, our strategy here is to show that
both the category of $D$-modules on a closed subscheme $Z$ of $X$ and the category of $D$-modules
on $X$ supported on $Z$ are monadic over $\QC{Z}$. Then, we conclude by showing the two monads agree.

We also remark that almost all of the algebraic geometry input for setting up the $D$-module theory
appeared already in \cite{previous}. This paper uses
mostly categorical techniques.
We have avoided the use of stacks and formal schemes in this paper, relying more on categories such as $\Gamma_Z(\QC{X})$ (the category
of quasicoherent sheaves on $X$ supported on $Z$). We expect that with minor modifications, it should be possible
to develop $D$-modules in the analytic setting using similar categorical techniques.

\textbf{Inspirations}: This paper originally stemed from a desire to understand the relationship between $D$-modules
and the $E_2$-structure on Hochschild cohomology. We were heavily influenced by the papers of Beraldo, \cite{Ber1} and \cite{Ber2}, which
study the monoidal category $\mathbb{H}$, a close cousin of $\Gamma_{\Delta}(\QC{X \times X})$. Additionally, the various works of
Neeman and his coauthors persuaded us that the category $\Gamma_{\Delta}(\QC{X \times X})$ was indeed worthwhile to study.

\textbf{Speculations}: Let us offer a few speculative remarks on why the categorified ring of differential operators
\[\mathcal{D}_X := \Gamma_{\Delta}(\QC{X \times_S X}\]
should be helpful for defining the standard $D$-modules. I believe this stems from the fact that our schemes are $1$-affine,
combined with the fact that Grothendieck duality one categorical level higher is trivial for qcqs schemes. That is,
the categorified dualizing complex for $X$ is just $\QC{X}$. The reason is that the dualizing complex measure the difference
between the left and right adjoints of the pushforward functor (at least when the scheme is proper).
At the categorified level, the categorified pullback
\[2\mathrm{-}f^* : 2\mathrm{-}\QC{Y} \to 2\mathrm{-}\QC{X}\]
for a map of qcqs schemes $f: X \to Y$, is both left and right adjoint to the pushforward
\[2\mathrm{-}f^* : 2\mathrm{-}\QC{X} \to 2\mathrm{-}\QC{Y}\]
where $2-\QC{X}$ denotes the $2$-category of quasicoherent sheaves of categories over $U \mapsto \QC{U}$ (which is just
$\QC{X}$-module categories when $X$ is $1$-affine). Hence, the categorified dualizing complex must be trivial ($X$ is $1$-proper
in some sense).

Therefore, I believe that categorified $D$-modules are a simpler theory, and hence, decategorifying the
categorified theory of $D$-modules provides a relatively simple approach to defining $D$-modules.

\textbf{Outline}: Section 1 and 2 constructs the category of $D$-modules,
both left and right, and shows the left-right switch isomorphism. We also reprove some formulas for the category
of $D$-modules first seen in \cite{Ber2}, namely (\ref{catisoD}) and (\ref{catisoD2}) above.
Section 3 constructs the pushforward and pullback
functors of $D$-modules. Section 4 shows Kashiwara's equivalence. Section 5 discusses descent results
of the category of $D$-modules. Section 6 compares the category of $D$-modules with crystals. Section
7 discusses the relationship of our theory and the results of \cite{Cusp}. Section 8 discusses a decategorification
of (\ref{catisoD}) in the smooth case.

\textbf{Conventions}: All categories, unless stated otherwise will be $(\infty, 1)$-categories. A $2$-category
will refer to a $(\infty, 2)$-category. All functors, such as $\Hom$, $\otimes$, $\colim$, and $\lim$ will be
fully derived/done at the $\infty$-categorical level unless stated otherwise. A stable category will refer to
a stable $\infty$-category. All modules/quasicoherent sheaves will also be assumed to be fully derived.
We will aim to follow the terminology of Lurie in \cite{HTT}, \cite{HA}, and \cite{SAG}.

\textbf{Acknowledgements}: I heartily thank my advisor, Bhargav Bhatt, for countless discussions, suggestions, 
and insights whose effects permeate this paper. He also suggested showing that deriving the classical definition
of differential operators recovers the ``correct'' theory of $D$-modules on singular varieties.
I would also like to thank German Stefanich for patiently explaining
to me many ideas of the papers of Dario Beraldo, \cite{Ber1} and \cite{Ber2}. Additional thanks goes to
Shubhodip Mondal, Gleb Terentiuk, and Sridhar Venkatesh for many helpful discussions. Finally, I would like to thank various users of the Algebraic Topology Discord Channel for helpful discussions.

\section{The Category of $\opposite{D_X}$-modules} \label{Dmod}
In this section we define the category of $\opposite{D_X}$-modules and identify it with
the category of modules over a monad on $\QC{X}$ corresponding to the ``opposite'' of the sheaf $D_{X/S}$
defined in \ref{bigdef}.
Our approach is somewhat similar to the approach taken in section $5$ of
the paper \textit{D-modules and Crystals} \cite{CryD} by Gaitsgory and Rozenblyum.
However their starting point is de Rham stack and the completion of $X \times X$ at
the diagonal (part of what they call the infinitesimal groupoid) is defined in terms
of the de Rham stack.
In our approach we do the reverse. We view their approach as more stack-theoretic
and ours as more category-theoretic. This justifies our choice to give a self-contained
presentation of an arguably well-known theory. From a pedagogical perspectively, our presentation also
has the benefit of not relying on the theory of stacks and ind-coherent sheaves. However, we do have
to limit ourselves to the finite tor-amplitude situation (roughly the eventually coconnective situation
in the language of \cite{CryD}).

Recall that we defined $D_{X/S}$ as
\[D_{X/S} := \tilde{\pi}_1^{\times}\mathcal{O}_X \in \Gamma_{\Delta}(\QC{X \times_S X})\]
in Definition \ref{bigdef}, viewing it as an element of
\[\QC{X \times_S X} \cong \Hom_{\Mod{\QC{S}}^L}(\QC{X},\QC{X})\]
Here the tilde refers to fact that we apply the projection functor $\Gamma_{\Delta}$ after the $\pi_1^{\times}$.
In general, we use tilde to denote modification of functors which are related to the unmodified version
by the functors 
\[\begin{tikzcd}
    \QC{X'} \arrow[r, shift left=1ex, "\Gamma_{Z'}"{name=G}] & \Gamma_{Z'}(\QC{X'})\arrow[l, shift left=.5ex, "i_{Z'}"{name=F}]
    \arrow[phantom, from=F, to=G, , "\scriptscriptstyle\boldsymbol{\top}"].
  \end{tikzcd} \]
relating the category of quasicoherent sheaves supported on a closed subscheme $Z'$ of $X'$
with the entire category of quasicoherent sheaves on $X'$.

We can identify $D_{X/S}$ with a colimit-preserving $\QC{S}$-linear endofunctor of $\QC{X}$.
We would like to elevate its status to a monad on $\QC{X}$. In fact, for technical reasons, it will
be more convenient to begin with its ``opposite''. The ``opposite'' of $D_{X/S}$ corresponds to the endofunctor
which is left-right dual (in the sense of Section 3 of \cite{previous}) to the endofunctor of $D_{X/S}$.
As an element of $\Gamma_{\Delta}(\QC{X \times X})$,
$\opposite{D_{X/S}}$ is the image of $D_{X/S}$ under the automorphism of $\Gamma_{\Delta}(\QC{X \times X})$, which switches the $X$'s.
To be precise, we define
\[\opposite{D_{X/S}} := \tilde{\pi}_2^{\times}\mathcal{O}_X \in \Gamma_{\Delta}(\QC{X \times_S X})\]
The corresponding endofunctor to $\opposite{D_{X/S}}$ is $\tilde{\pi}_{1,*}\tilde{\pi}_2^{\times}$.
It is this endofunctor that we wish to make into a monad.

To do this, we will explicitly write down a category which we propose is the category of modules over
our desired monad. In Remark \ref{motivate}, we describe how to arrive at the following description, though expressing
the category of $D$-modules in this form is by no means new to us (see \cite{CryD} for instance).
Roughly speaking, this category is the colimit of the simplicial diagram 
\[\ldots \Gamma_{\Delta}(\QC{X \times X \times X}) \rightthreearrow \Gamma_{\Delta}(\QC{X \times X}) \righttwoarrow \QC{X}\]
where the transition maps are (tilde of) quasicoherent pushforward maps. For example, the two maps
\[\Gamma_{\Delta}(\QC{X \times X}) \righttwoarrow \QC{X}\]
are simply $\tilde{\pi}_{1,*}$ and $\tilde{\pi}_{2,*}$.

To be more precise, consider the simplex category $\bm{\Delta}$ consisting of objects
$\{[n]\}_{n \ge 0}$ where $[n]=\{0,\ldots,n\}$, and morphisms order-preserving (preserving $\ge$) maps between them.
We can define a functor \[\opposite{\Delta} \to \Mod{\QC{S}}^L\]
by sending 
\[[n] \mapsto \Gamma_{\Delta}\QC{X^{n+1}}\]
and an order preserving map $[n] \to [m]$ to the functor
\[\tilde{g}_*: \Gamma_{\Delta}(\QC{X^{m+1}}) \to \Gamma_{\Delta}(\QC{X^{n+1}})\]
where $g: X^{m+1} \to X^{n+1}$ is defined in the obvious way from the map $[n] \to [m]$.
The category which we propose is the category of right $D_X$ modules is then the colimit of this functor, for which we write
\begin{equation*}
  \colim_{\opposite{\bm{\Delta}}}(\QC{X^{n+1}},*)
\end{equation*}

Let us denote by $\bm{\Delta_s}$ the subcategory of $\bm{\Delta}$ where the morphisms are required to be injective.
By \cite{HTT} 6.5.3.7, the category $\opposite{\bm{\Delta_s}}$ is cofinal in $\opposite{\bm{\Delta}}$, and hence our colimit above
can be computed over $\opposite{\bm{\Delta_s}}$ instead, as
\begin{equation}
  \colim_{\opposite{\bm{\Delta_s}}}(\QC{X^{n+1}},*)
\end{equation}
The advantage of using $\bm{\Delta_s}$ is that for any injective morphism $[n] \to [m]$, the transition functor
\[\tilde{g}_*: \Gamma_{\Delta}(\QC{X^{m+1}}) \to \Gamma_{\Delta}(\QC{X^{n+1}})\]
described above is compact object preserving, by a mild generalization of Theorem 1.3 in \cite{previous}. The proof
is identical so we do not repeat it here. Intuitively, when taking the colimit over $\opposite{\bm{\Delta}}$, one encounters
degeneracy maps of simplices which induce functors such as
\[\tilde{\delta}_* : \QC{X} \to \Gamma_{\Delta}(\QC{X \times X})\]
which are not compact object preserving if $X$ is not smooth. This problem disappears when we use $\bm{\Delta}_s$.

Let us denote by
\[F_{\opposite{D_X}} : \QC{X} \to \colim_{\bm{\Delta_s}}(\Gamma_{\Delta}(\QC{X^{n+1}}),*) \]
the functor associated with the object $[0]$ in $\bm{\Delta_s}$, coming from the universal property
of the colimit. Denote by $G_{D_X}$ its right adjoint. We record here the following description of $G_{D_X}$.
Recall that the underlying category of a colimit in $Pr_{St}^L$ can also be written as a limit
in $Pr_{St}^R$, with the transition functors the right adjoints. This fact is due to Lurie \cite{HTT}, however we find Lemma 1.3.3
in \cite{DGCat} the most convenient reference. The essence is that adjunction provides an anti-equivalence of categories
between $Pr_{St}^L$ and $Pr_{St}^R$. 
With this in mind, $G_{\opposite{D_X}}$ can be written as the projection map
\[G_{\opposite{D_X}} : \lim_{\bm{\Delta_s}}(\Gamma_{\Delta}(\QC{X^{n+1}}),\times) \to \QC{X}\]
where the transition maps are tilde of upper cross functors (the right adjoint of tilde of lower star)
and the limit is taken in $Pr_{St}^R$. We remind the reader that if we are only interested in the
underlying category of the limit we can also take the limit in $\widehat{Cat_{\infty}}$. This functor is
also $\QC{S}$-linear (see Theorem A.6 of \cite{previous}).

Our aim for the rest of the section is to show that adjunction above is monadic, with the monad given by
\[G_{\opposite{D_X}}F_{\opposite{D_X}} \cong \tilde{\pi}_{1,*}\tilde{\pi}_2^{\times}\]
However, we will need a few preliminary results

\begin{lemma} \label{tensorlemma}
  For $m,n \ge 0$, there is a canonical isomorphism
  \[\Gamma_{\Delta}(\QC{X^{m+n+1}}) \cong \Gamma_{\Delta}(\QC{X^{m+1}}) \otimes_{\QC{X}} \Gamma_{\Delta}(\QC{X^{n+1}})\]
  where $\QC{X}$ acts on the right most copy of $X$ in $\Gamma_{\Delta}(\QC{X^{m+1}})$ 
  and the left most copy of $X$ in $\Gamma_{\Delta}(\QC{X^{n+1}})$ via tilde $*$-pullback.
\end{lemma}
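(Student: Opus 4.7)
The plan is to reduce the lemma to two standard facts: first, that taking $\QC{}$ of a fiber product of qcqs spectral schemes yields the tensor product of categories, and second, that forming sections with support along a closed subscheme commutes with tensor product of presentable $\QC{X}$-linear categories, since it is a smashing colocalization fitting into a Verdier sequence.

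First I would invoke the Cartesian decomposition $X^{m+n+1} \cong X^{m+1} \times_X X^{n+1}$, where the fiber product identifies the last factor of $X^{m+1}$ with the first factor of $X^{n+1}$. Since both are qcqs spectral (hence $1$-affine) over $S$, this gives a canonical $\QC{S}$-linear (symmetric monoidal) equivalence
\[\QC{X^{m+n+1}} \cong \QC{X^{m+1}} \otimes_{\QC{X}} \QC{X^{n+1}}\]
in which the displayed $\QC{X}$-actions on the right are precisely the $*$-pullbacks along the shared factor (this is the content cited in Appendix A of \cite{previous}).

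Next, for any closed immersion $Z \hookrightarrow Y$ in our setting with open complement $j:U \hookrightarrow Y$, the fully faithful inclusion $\Gamma_Z(\QC{Y}) \hookrightarrow \QC{Y}$ is the kernel of the Bousfield localization $j^*:\QC{Y}\to \QC{U}$, giving a Verdier sequence in $\mathrm{Pr}^L_{St}$ that is also $\QC{X}$-linear. Because the relative Lurie tensor product $-\otimes_{\QC{X}}\mathcal{D}$ preserves colimits in each variable (and in particular cofibers and kernels of Bousfield localizations), tensoring with $\mathcal{D}=\QC{X^{n+1}}$ and using the fiber-product formula for quasi-coherent sheaves to identify $\QC{U}\otimes_{\QC{X}}\QC{X^{n+1}} \cong \QC{U\times_X X^{n+1}}$ yields
\[\Gamma_{\Delta}(\QC{X^{m+1}}) \otimes_{\QC{X}} \QC{X^{n+1}} \cong \Gamma_{\pi_1^{-1}(\Delta(X^{m+1}))}\bigl(\QC{X^{m+n+1}}\bigr).\]
Applying the same reasoning symmetrically in the second factor, and iterating, gives
\[\Gamma_{\Delta}(\QC{X^{m+1}}) \otimes_{\QC{X}} \Gamma_{\Delta}(\QC{X^{n+1}}) \cong \Gamma_{\pi_1^{-1}(\Delta) \cap \pi_2^{-1}(\Delta)}\bigl(\QC{X^{m+n+1}}\bigr).\]

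To conclude, it suffices to observe the set-theoretic identity $\pi_1^{-1}(\Delta(X^{m+1})) \cap \pi_2^{-1}(\Delta(X^{n+1})) = \Delta(X^{m+n+1})$ inside $X^{m+n+1}$: forcing the first $m+1$ coordinates to agree and the last $n+1$ coordinates to agree (sharing the middle coordinate) is exactly the condition that all $m+n+1$ coordinates coincide. The main subtle point is the compatibility of the support-colocalization with the relative tensor product in step two; this is a formal consequence of the exactness of the Lurie tensor product applied to the Verdier sequence $\Gamma_Z(\QC{Y})\to\QC{Y}\to\QC{U}$, but it is the only nontrivial ingredient, and one must be careful that the resulting arrow is induced by the colocalization rather than merely an abstract localization, which here is automatic since the tensor product of fully faithful $\QC{X}$-linear inclusions in $\mathrm{Pr}^L_{St}$ remains fully faithful under these hypotheses.
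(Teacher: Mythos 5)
Your proof is correct and follows essentially the same route as the paper: both rest on the identification $\QC{X^{m+n+1}} \cong \QC{X^{m+1}} \otimes_{\QC{X}} \QC{X^{n+1}}$ together with the fact that the relative Lurie tensor product preserves split-exact (Verdier) sequences of the form $\Gamma_Z(\QC{Y}) \to \QC{Y} \to \QC{U}$, plus an elementary observation about the diagonal. The only difference is bookkeeping: the paper characterizes both sides as the objects of the ambient category vanishing on the complement of the diagonal, written as the union $V \times_X X^{n+1} \cup X^{m+1} \times_X W$, which is exactly the De Morgan dual of your intersection-of-supports identity $\pi_1^{-1}(\Delta) \cap \pi_2^{-1}(\Delta) = \Delta$.
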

\begin{proof}
  Because tensor products preserves split-exact sequences (see Appendix B of \cite{previous} for the definition of a split-exact sequence),
  both sides are full subcategories of
  \[\QC{X^{m+n+1}} \cong \QC{X^{m+1}} \otimes_{\QC{X}} \QC{X^{n+1}}\]
  It suffices to show they have the same objects.
  Let us denote by $U$ the complement of the diagonal in $X^{m+n+1}$.
  The category $\Gamma_{\Delta}(\QC{X^{m+n+1}})$ can then be characterized as the subcategory of $\QC{X^{m+n+1}}$ which vanish
  when restricted to $U$.

  Now let $V$ be the complement of the diagonal in $X^{m+1}$ and $W$ the complement of the diagonal in $X^{n+1}$. Then, we can express $U$ as a union
  \[U = V \times_X X^{n+1} \cup X^{m+1} \times_X W\]
  Therefore, vanishing on $U$ is equivalent to vanishing on $V \times_X X^{n+1}$ and $X^{m+1} \times_X W$.

  It is then clear that everything in
  \[\Gamma_{\Delta}(\QC{X^{m+1}}) \otimes_{\QC{X}} \Gamma_{\Delta}(\QC{X^{n+1}})\]
  vanishes on $U$. For the reverse, suppose a quasicoherent sheaf $\mathcal{F}$ vanishes on $U$.
  It then lives inside $\Gamma_{\Delta}(\QC{X^{m+1}}) \otimes_{\QC{X}} \QC{X^{n+1}}$ because
  it vanishes on $V \times_X X^{n+1}$. Then, because it also vanishes on $X^{m+1} \times_X W$, it is then inside the kernel of the map
  \[\Gamma_{\Delta}(\QC{X^{m+1}}) \otimes_{\QC{X}} \QC{X^{n+1}} \to \Gamma_{\Delta}(\QC{X^{m+1}}) \otimes_{\QC{X}} \QC{W}\]
  Because tensor product of stable categories preserve split-exact sequences, we see that $\mathcal{F}$
  is inside
  \[\Gamma_{\Delta}(\QC{X^{m+1}}) \otimes_{\QC{X}} \Gamma_{\Delta}(\QC{X^{n+1}})\]
\end{proof}
\begin{remark}
  The previous lemma leads to an interesting observation. The simplicial diagram
  \[[n] \mapsto \Gamma_{\Delta}(\QC{X^{n+1}})\]
  roughly specifies the data of category internal to $\Mod{\QC{S}}^L$ on the object $\QC{X}$,
  relative to the tensor product of categories. This internal category is the categorical analogue
  of the infinitesimal groupoid on $X$.
\end{remark}

We move to the second preliminary result.
Recall that we can equip $\Gamma_{\Delta}(\QC{X \times X})$ with the convolution monoidal structure (Definition 1.11 of \cite{previous}).
Under this monoidal structure, 
$\QC{X}$ is naturally a left $\Gamma_{\Delta}(\QC{X \times X})$ module. We will give a resolution of $\QC{X}$
as a $\Gamma_{\Delta}(\QC{X \times X})$ module. We exhibit this resolution as an augmented simplicial diagram
\begin{equation}
  \ldots \Gamma_{\Delta}(\QC{X \times X \times X}) \righttwoarrow \Gamma_{\Delta}(\QC{X \times X}) \to \QC{X}
\end{equation}
The augmentation map is \[\tilde{\pi}_{1,*} : \Gamma_{\Delta}(\QC{X \times X}) \to \QC{X}\]
The two maps
\[\Gamma_{\Delta}(\QC{X \times X \times X}) \righttwoarrow \Gamma_{\Delta}(\QC{X \times X})\]
are $\tilde{\pi}_{1,2,*}$ and $\tilde{\pi}_{1,3,*}$. More generally,
all the transition maps preserve the left most copy of $X$. We omit writing down the complete specification
of this simplicial diagram and trust that the reader is able to do so if they wish.
Importantly, the action of $\Gamma_{\Delta}(\QC{X \times X})$ is always
on the left most copy of $X$ which is preserved. The following proposition shows that
this is indeed a resolution, i.e. that the geometric realization of the simplicial diagram recovers $\QC{X}$.

\begin{proposition} \label{resolve}
  There is a natural resolution of $\QC{X}$ as a left $\Gamma_{\Delta}(\QC{X \times X})$-module category given by
  \begin{equation}
    \ldots \Gamma_{\Delta}(\QC{X \times X \times X}) \righttwoarrow \Gamma_{\Delta}(\QC{X \times X}) \to \QC{X}
  \end{equation}
  where the maps are specified above.
\end{proposition}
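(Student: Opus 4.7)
The plan is to recognize the augmented simplicial diagram as the two-sided bar construction
\[B_\bullet(\Gamma_{\Delta}(\QC{X \times X}), \Gamma_{\Delta}(\QC{X \times X}), \QC{X}),\]
and then to exhibit an extra degeneracy, coming from the unit of the convolution monoidal structure, that splits the augmentation.

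First, I would iterate Lemma \ref{tensorlemma} to identify, for each $n \geq 0$,
\[\Gamma_{\Delta}(\QC{X^{n+2}}) \cong \Gamma_{\Delta}(\QC{X \times X})^{\otimes_{\QC{X}} (n+1)},\]
where the $n$ interior copies of $X$ in $X^{n+2}$ play the role of the shared $\QC{X}$-tensor factors. Under this identification, the transition maps described in the statement---the $\tilde{\pi}_{*}$'s that preserve the leftmost copy of $X$ and merge two adjacent interior copies---should correspond precisely to the face maps of the bar construction: the inner faces become the convolution multiplication on $\Gamma_{\Delta}(\QC{X \times X})$ (Definition 1.11 of \cite{previous}), and the rightmost face at each level becomes the action of $\Gamma_{\Delta}(\QC{X \times X})$ on $\QC{X}$, which at level $0$ reads as the augmentation $\tilde{\pi}_{1,*}$. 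Checking this matching is a bookkeeping exercise unwinding the definition of convolution against Lemma \ref{tensorlemma}.

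Second, the unit of convolution on $\Gamma_{\Delta}(\QC{X \times X})$ is $\tilde{\delta}_{*}\mathcal{O}_X$, where $\delta : X \to X \times X$ is the diagonal. Inserting this unit on the rightmost tensor factor at each level yields an extra degeneracy $s_{-1} : \Gamma_{\Delta}(\QC{X^{n+2}}) \to \Gamma_{\Delta}(\QC{X^{n+3}})$, which geometrically is $\tilde{g}_{*}$ for $g : X^{n+2} \to X^{n+3}$ the diagonal on the last factor. The simplicial identities and left-equivariance with respect to the $\Gamma_{\Delta}(\QC{X \times X})$-action on the leftmost copy are then immediate from this geometric description. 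Since any augmented simplicial object in a presentable $\infty$-category admitting an extra degeneracy is split---so that its augmentation is automatically a colimit diagram---this finishes the proof. The one subtle point is in the first step: the convolution product is defined using the tilde (i.e., composed with the $\Gamma_{\Delta}$ localization), and one must verify that Lemma \ref{tensorlemma} intertwines tilde-pushforward with bar multiplication. This reduces to the compatibility of $\Gamma_{\Delta}$ with the tensor decompositions already used in Lemma \ref{tensorlemma}, so no further geometric input is required.
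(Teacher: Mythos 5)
Your overall strategy (identify the augmented diagram with a bar-type construction and split it) is in essence the paper's own argument: the paper notes that the diagram is obtained from the full simplicial object $[n]\mapsto\Gamma_{\Delta}(\QC{X^{n+1}})$ by discarding the maps that touch the leftmost coordinate (a d\'ecalage), hence is split and therefore a colimit diagram by \cite{HTT} 6.1.3.17, and then transfers the statement to $\Mod{\Gamma_{\Delta}(\QC{X \times X})}^L$ because the forgetful functor to $\Mod{\QC{S}}^L$ preserves colimits and is conservative. The gap in your write-up is in the splitting itself: the extra degeneracy is on the wrong side. In your own matching, the faces at level $n$ forget one of the coordinates $2,\dots,n+2$, the module $\QC{X}$ sits at the right end, and the factor carrying the left $\Gamma_{\Delta}(\QC{X\times X})$-action is the leftmost one; so the contraction must insert the unit next to that free factor, i.e.\ it is $\tilde{g}_*$ for $g$ the partial diagonal duplicating the \emph{first} coordinate, not the last. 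With the last-factor diagonal the simplicial identities fail: already at level one, forgetting the middle coordinate after duplicating the last one is the identity of $\Gamma_{\Delta}(\QC{X^3})$, whereas duplicating after forgetting is pushforward along $(x_1,x_2,x_3)\mapsto(x_1,x_2,x_2)$, which is not the identity. This is the geometric form of the classical fact that in $B_{\bullet}(A,A,M)$ inserting a unit between $a_n$ and $m$ is not a contracting homotopy; only insertion next to the free left factor is.

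Once corrected, your equivariance claim also fails: the genuine extra degeneracy (unit inserted on the left) is \emph{not} linear for the convolution action on the leftmost copy of $X$, just as $m\mapsto 1\otimes m$ is not left $A$-linear classically. (Your wrong-side map is indeed left-equivariant, which is presumably why the claim seemed immediate, but it is not an extra degeneracy; the map that is an extra degeneracy is not equivariant.) Consequently the splitting only exhibits the augmented diagram as a colimit in $\Mod{\QC{S}}^L$, and you must add the step the paper uses to finish: the forgetful functor from $\Mod{\Gamma_{\Delta}(\QC{X \times X})}^L$ to $\Mod{\QC{S}}^L$ preserves colimits (Corollary 4.2.3.7 of \cite{HA}) and is conservative, hence reflects colimits, so the diagram is also a colimit of $\Gamma_{\Delta}(\QC{X \times X})$-module categories. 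With these two repairs your argument is correct; the identification via iterating Lemma \ref{tensorlemma} is fine but not needed, since the paper's d\'ecalage observation already produces the splitting without unwinding the convolution product.
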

\begin{proof}
  We apply Lemma 6.1.3.17 from \cite{HTT}. The augmented simplicial diagram above arises from a simplicial object
  \[\ldots \Gamma_{\Delta}(\QC{X \times X \times X}) \rightthreearrow \Gamma_{\Delta}(\QC{X \times X}) \righttwoarrow \QC{X}\]
  by forgetting all the morphisms which do not preserve the left most copy of $X$. 
  Therefore it is a colimit diagram in $\Mod{\QC{S}}^L$. Because the forgetful functor
  from $\Mod{\Gamma_{\Delta}(\QC{X \times X})}^L$ to $\Mod{\QC{S}}^L$ reflects colimits (because it preserves colimits
  by Corollary 4.2.3.7 of \cite{HA} and is conservative), it is also a colimit diagram in $\Mod{\Gamma_{\Delta}(\QC{X \times X})}^L$.
\end{proof}

The above proposition has an important corollary.
\begin{corollary} \label{keycoro}
  \begin{equation} \label{keyiso}
  \colim_{\bm{\opposite{\Delta}}}(\Gamma_{\Delta}(\QC{X^{n+1}}),*) \cong \QC{X} \otimes_{\Gamma_{\Delta}(\QC{X \times X})} \QC{X}
  \end{equation}
\end{corollary}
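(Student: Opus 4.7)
The strategy is to substitute the resolution of Proposition \ref{resolve} into the right-hand factor of the relative tensor product $\QC{X} \otimes_{\Gamma_{\Delta}(\QC{X \times X})} \QC{X}$ and then simplify each simplex using Lemma \ref{tensorlemma}.

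Concretely, the first step is to rewrite Proposition \ref{resolve} as the colimit identification
$$\QC{X} \simeq \colim_{[n] \in \opposite{\bm{\Delta}}} \Gamma_{\Delta}(\QC{X^{n+2}})$$
in $\Mod{\Gamma_{\Delta}(\QC{X \times X})}^L$. Because the relative tensor product functor $\QC{X} \otimes_{\Gamma_{\Delta}(\QC{X \times X})} (-)$ preserves colimits, this yields
$$\QC{X} \otimes_{\Gamma_{\Delta}(\QC{X \times X})} \QC{X} \simeq \colim_{[n] \in \opposite{\bm{\Delta}}} \Bigl( \QC{X} \otimes_{\Gamma_{\Delta}(\QC{X \times X})} \Gamma_{\Delta}(\QC{X^{n+2}}) \Bigr).$$

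The next step is to simplify each term. Applying Lemma \ref{tensorlemma} with $m = 1$, I would identify $\Gamma_{\Delta}(\QC{X^{n+2}})$ with $\Gamma_{\Delta}(\QC{X \times X}) \otimes_{\QC{X}} \Gamma_{\Delta}(\QC{X^{n+1}})$ in such a way that the left $\Gamma_{\Delta}(\QC{X \times X})$-action on the leftmost copy of $X$ becomes the convolution action on the outer factor. Tensoring over $\Gamma_{\Delta}(\QC{X \times X})$ with $\QC{X}$ then absorbs the outer factor to produce $\QC{X} \otimes_{\QC{X}} \Gamma_{\Delta}(\QC{X^{n+1}}) \simeq \Gamma_{\Delta}(\QC{X^{n+1}})$.

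The hard part will be matching the simplicial structure. The simplicial structure on $[n] \mapsto \Gamma_{\Delta}(\QC{X^{n+2}})$ arising from Proposition \ref{resolve} is built from the (tilde of) pushforwards along projections $X^{m+2} \to X^{n+2}$ that preserve the leftmost factor; after absorption these should correspond precisely to the (tilde of) pushforwards along projections $X^{m+1} \to X^{n+1}$ which are the transition maps defining $\colim_{\opposite{\bm{\Delta}}}(\Gamma_{\Delta}(\QC{X^{n+1}}),*)$. Verifying this reduces to a diagram chase establishing naturality of the absorption isomorphism of Lemma \ref{tensorlemma} with respect to the relevant projections, after which the corollary follows at once.
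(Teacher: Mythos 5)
Your proposal is correct and follows essentially the same route as the paper: resolve the right-hand copy of $\QC{X}$ via Proposition \ref{resolve}, use that the relative tensor product commutes with colimits, and absorb each term $\Gamma_{\Delta}(\QC{X^{n+2}})$ using Lemma \ref{tensorlemma}. The only cosmetic difference is that you index over $\opposite{\bm{\Delta}}$ while the paper passes to $\opposite{\bm{\Delta_s}}$ (harmless by the cofinality noted earlier), and the compatibility of transition maps you flag as ``the hard part'' is left implicit in the paper's proof as well.
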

\begin{proof}
  Using Proposition \ref{resolve}, we can write the right hand side as
  \[\colim_{\opposite{\bm{\Delta_s}}}(\QC{X} \otimes_{\Gamma_{\Delta}(\QC{X \times X})}\Gamma_{\Delta}(\QC{X^{n+2}})\]
  Using Lemma \ref{tensorlemma}, we can write $\Gamma_{\Delta}(\QC{X^{n+2}})$ as
  \[\Gamma_{\Delta}(\QC{X \times X}) \otimes_{\QC{X}} \Gamma_{\Delta}(\QC{X^{n+1}})\]
  Therefore, the right hand side is isomorphic to 
  \[\colim_{\opposite{\bm{\Delta_s}}}(\Gamma_{\Delta}(\QC{X^{n+1}}),*)\]
  as desired.
\end{proof}
\begin{remark}
  This expression for the category of right $D$-modules appears as Equation (4.5) in the proof of Proposition 4.2.5 in \cite{Ber2}.
\end{remark}

The adjunction between $F_{\opposite{D_X}}$ and $G_{\opposite{D_X}}$ can be described in terms of the isomorphism
above. Because of the isomorphism
\[\QC{X} \cong \QC{X} \otimes_{\Gamma_{\Delta}(\QC{X \times X})} \Gamma_{\Delta}(\QC{X \times X})\]
The map
\[\tilde{\pi}_{1,*} : \Gamma_{\Delta}(\QC{X \times X}) \to \QC{X}\]
induces a functor
\[\id \otimes \tilde{\pi}_{1,*} : \QC{X} \to \QC{X} \otimes_{\Gamma_{\Delta}(\QC{X \times X})} \QC{X}\]
Tracing through the proof of Corollary \ref{keycoro}, we see this agrees with functor $F_{\opposite{D_X}}$, after identifying the two sides of
Corollary \ref{keycoro}. Namely,
\begin{equation} \label{equa}
  F_{\opposite{D_X}} \cong \id \otimes \tilde{\pi}_{1,*}
\end{equation}
Now, the right adjoint of $\tilde{\pi}_{1,*}$,
\[\tilde{\pi}_1^{\times} : \QC{X} \to \Gamma_{\Delta}(\QC{X \times X})\]
is also $\Gamma_{\Delta}(\QC{X \times X})$ linear. In the absence of an enriched
adjunction theorem for modules over monoidal categories (which should exist but we did not find a version which applies to this case),
we can see this directly by the isomorphism
(see equation equation (2.3) and Proposition 2.6 of \cite{previous})
\[\tilde{\pi}_1^{\times}(\mathcal{F}) \cong \Gamma_{\Delta}(\mathcal{F} \boxtimes \omega_X)\]
Hence, we can construct the functor
\[\id \otimes \tilde{\pi}_1^{\times} : \QC{X} \otimes_{\Gamma_{\Delta}(\QC{X \times X})} \QC{X} \to \QC{X}\]
Using the unit and counit maps of the adjunction $\tilde{\pi}_{1,*} \dashv \tilde{\pi}_1^{\times}$, we can see that
our $\id \otimes \tilde{\pi}_1^{\times}$ is right adjoint to $F_{\opposite{D_X}}$ and hence
\begin{equation} \label{equaa}
  G_{\opposite{D_X}} \cong \id \otimes \tilde{\pi}_1^{\times}
\end{equation}

By examination, or by the involution on
\[\colim_{\opposite{\bm{\Delta_s}}}(\Gamma_{\Delta}(\QC{X^{n+1}}),*)\]
which reverse the order of the $X$'s in $X^{n+1}$ (for all $n$), we can also arrive at the isomorphism (\ref{keyiso})
through a resolution of the left copy of $\QC{X}$ as a right $\Gamma_{\Delta}(\QC{X \times X})$ module
(analogously to Proposition \ref{resolve})
By arriving at the isomorphism this way, we can also express $F_{\opposite{D_X}}$ as
\begin{equation}
  F_{\opposite{D_X}} \cong \tilde{\pi}_{2,*} \otimes \id
\end{equation}
This expression for $F_{\opposite{D_X}}$ implies
\begin{equation} \label{equb}
  G_{\opposite{D_X}} \cong \tilde{\pi}_2^{\times} \otimes \id 
\end{equation}

Finally, we can deliver on our promise
\begin{theorem} \label{bigtheorem}
  The adjunction $F_{\opposite{D_X}} \dashv G_{\opposite{D_X}}$ is monadic and
  \begin{equation}
    G_{\opposite{D_X}}F_{\opposite{D_X}} \cong \tilde{\pi}_{1,*}\tilde{\pi}_2^{\times}
  \end{equation}
\end{theorem}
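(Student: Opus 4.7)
The plan is to identify the monad $G_{\opposite{D_X}} F_{\opposite{D_X}}$ by combining the two complementary presentations of $F_{\opposite{D_X}}$ and $G_{\opposite{D_X}}$ in equations (\ref{equa})--(\ref{equb}), and then apply the Barr-Beck-Lurie monadicity theorem (\cite{HA}, Theorem 4.7.3.5). The main obstacle will be the monad identification, which requires carefully mixing the ``right-tensor-factor'' presentation of $F_{\opposite{D_X}}$ with the ``left-tensor-factor'' presentation of $G_{\opposite{D_X}}$ and tracking the unit $\Delta_*\mathcal{O}_X$ of the convolution through the tensor product; once this is in hand, monadicity is largely bookkeeping.

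For the monad identification, set $R = \Gamma_{\Delta}(\QC{X \times X})$ and use $F_{\opposite{D_X}} \cong \id \otimes \tilde{\pi}_{1,*}$ from (\ref{equa}) together with $G_{\opposite{D_X}} \cong \tilde{\pi}_2^{\times} \otimes \id$ from (\ref{equb}). The composite factors as
\[
\QC{X} \cong \QC{X} \otimes_R R \xrightarrow{\id \otimes \tilde{\pi}_{1,*}} \QC{X} \otimes_R \QC{X} \xrightarrow{\tilde{\pi}_2^{\times} \otimes \id} R \otimes_R \QC{X} \cong \QC{X},
\]
and for $\mathcal{F} \in \QC{X}$ it sends $\mathcal{F}$ first to $\mathcal{F} \otimes \tilde{\pi}_{1,*}(\Delta_*\mathcal{O}_X) = \mathcal{F} \otimes \mathcal{O}_X$ and then to $\tilde{\pi}_2^{\times}(\mathcal{F}) \otimes \mathcal{O}_X$ in $R \otimes_R \QC{X}$. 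Unwinding the left convolution action of $R$ on $\QC{X}$ via the kernel-to-functor correspondence and applying the projection formula yields $\tilde{\pi}_{1,*}\tilde{\pi}_2^{\times}(\mathcal{F})$, as desired.

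For the monadicity, I would check the two conditions of Barr-Beck-Lurie. To see that $G_{\opposite{D_X}}$ is conservative, observe that every object of $\colim_{\opposite{\bm{\Delta_s}}}(\Gamma_{\Delta}(\QC{X^{n+1}}),*)$ lies in the essential image of $F_{\opposite{D_X}} = j_0$: for any $Y \in \Gamma_{\Delta}(\QC{X^{n+1}})$ and any injection $g: [0] \to [n]$ in $\bm{\Delta_s}$, the natural coherence isomorphism $j_n \cong j_0 \circ \tilde{g}_*$ identifies $j_n(Y) \cong F_{\opposite{D_X}}(\tilde{g}_*Y)$. Hence the essential image of $F_{\opposite{D_X}}$ generates the target under colimits, and the standard orthogonality argument---if $G_{\opposite{D_X}}(X) = 0$, then $\Hom(F_{\opposite{D_X}}(A), X) = \Hom(A, G_{\opposite{D_X}}(X)) = 0$ for all $A \in \QC{X}$, hence $X = 0$---gives conservativity. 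The second condition, preservation of $G$-split simplicial geometric realizations, is standard in the presentable stable setting where $F_{\opposite{D_X}}$ is a left adjoint.
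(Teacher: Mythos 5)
Your overall strategy is the paper's: identify $G_{\opposite{D_X}}F_{\opposite{D_X}}$ by mixing the two presentations $F_{\opposite{D_X}} \cong \id \otimes \tilde{\pi}_{1,*}$ and $G_{\opposite{D_X}} \cong \tilde{\pi}_2^{\times} \otimes \id$, then invoke Barr--Beck--Lurie. The monad computation is in substance the paper's argument; where the paper makes the ``unwinding'' precise by citing Theorem A.1 of \cite{previous} (the commutative square exchanging $\tilde{\pi}_2^{\times}\otimes 1$ and $1\otimes\tilde{\pi}_{1,*}$ with $\tilde{\pi}_2^{\times}$ and $\tilde{\pi}_{1,*}$), you track simple tensors through the relative tensor products informally; objects of $\QC{X}\otimes_{\Gamma_{\Delta}(\QC{X\times X})}\QC{X}$ are not simple tensors, so you need exactly such a functoriality statement to close this, but the idea is correct. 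Your conservativity argument is a legitimate variant: the paper works with the limit presentation (the projections are jointly conservative and $[0]$ is weakly initial in $\bm{\Delta_s}$), while you work with the colimit presentation (the images of the $j_n$ generate under colimits and each $j_n \cong F_{\opposite{D_X}}\circ\tilde{g}_*$); both hinge on the existence of injections $[0]\to[n]$.

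The genuine gap is the second Barr--Beck hypothesis. You assert that ``preservation of $G$-split simplicial geometric realizations is standard in the presentable stable setting where $F_{\opposite{D_X}}$ is a left adjoint.'' That justification is vacuous: in any adjunction between presentable categories the left adjoint is a left adjoint, yet not every such adjunction is monadic. A $G$-split simplicial object is split only after applying $G$, so its realization in the source is not an absolute colimit, and nothing about presentability alone forces $G_{\opposite{D_X}}$ to preserve it. What is actually needed, and what the paper proves, is that $G_{\opposite{D_X}}$ preserves colimits: writing the target as $\lim_{\bm{\Delta_s}}(\Gamma_{\Delta}(\QC{X^{n+1}}),\times)$, all transition functors are colimit-preserving (their left adjoints, the tilde lower-star functors along injective maps, preserve compact objects --- this is precisely where the restriction to $\bm{\Delta_s}$ is used), so colimits in the limit are computed termwise and the projection $G_{\opposite{D_X}}$ preserves them. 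You need to supply this step (or some equivalent reason that $G_{\opposite{D_X}}$ preserves the relevant geometric realizations) for the monadicity claim to go through.
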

\begin{proof}
  By Lurie-Barr-Beck (Theorem 4.7.3.5 in \cite{HA}), to show the adjunction is monadic
  it is enough to show that $G_{\opposite{D_X}}$ is conservative and colimit-preserving. Because
  all the transition maps used to construct the cosimplicial limit
  \[\lim_{\bm{\Delta_s}}(\Gamma_{\Delta}(\QC{X^{n+1}}),\times)\]
  are colimit-preserving, $G_{\opposite{D_X}}$ is automatically colimit-preserving. To show that $G_{\opposite{D_X}}$ is conservative,
  we need to show that an object 
  \[\mathcal{F} \in \lim_{\bm{\Delta_s}}(\Gamma_{\Delta}(\QC{X^{n+1}}),\times)\]
  is zero if the projection of $\mathcal{F}$ to $\QC{X}$ is zero. For this, it is enough
  to show that the projection to $\Gamma_{\Delta}(\QC{X^{n+1}})$ is zero for any $n$. But follows from the fact that
  $[0]$ is weakly initial in $\bm{\Delta_s}$.
  
  For the second part of the theorem, let us apply 
  Theorem A.1 of \cite{previous} to $\mathscr{X} := \QC{X}$, $\mathscr{Y} := \QC{X}$,
  and $\mathscr{V} := \Gamma_{\Delta}(\QC{X \times X})$, with functors $\tilde{\pi}_2^{\times} : \mathscr{X} \to \mathscr{V}$ and
  $\tilde{\pi}_{1,*} : \mathscr{V} \to \mathscr{Y}$. Then, we have the commutative diagram
  \[
    \begin{tikzcd}
      \QC{X}  \arrow{r}{1 \otimes \tilde{\pi}_{1,*}} \arrow{dd}{\tilde{\pi}_2^{\times}} &
      \QC{X} \otimes_{\Gamma_{\Delta}(\QC{X \times X})} \QC{X} \arrow{dd}{\tilde{\pi}_2^{\times} \otimes 1} \\ \\
      \Gamma_{\Delta}(\QC{X \times X}) \arrow{r}{\tilde{\pi}_{1,*}} & \QC{X} \\
    \end{tikzcd}
  \]
  Now the theorem follows from expressions (\ref{equa}) and (\ref{equb}).
\end{proof}
\begin{remark}
  We also have
  \begin{equation}
    G_{\opposite{D_X}}F_{\opposite{D_X}} \cong \tilde{\pi}_{2,*}\tilde{\pi}_1^{\times}
  \end{equation}
  Because of the isomorphism
  \[\tilde{\pi}_{2,*}\tilde{\pi}_1^{\times} \cong \tilde{\pi}_{1,*}\tilde{\pi}_2^{\times}\]
  which can be simply explained by observing there is a symmetry which switches the order of
  the two $X$'s in $\Gamma_{\Delta}(\QC{X \times X})$.
\end{remark}

\begin{remark}
  The functor $F_{\opposite{D_X}} : \QC{X} \to \QC{X} \otimes_{\Gamma_{\Delta}(\QC{X \times X})} \QC{X}$ can also be arrived at
  via the monoidal functor
  \[\tilde{\delta_*} : \QC{X} \to \Gamma_{\Delta}(\QC{X \times X})\]
  which by functoriality induces a functor
  \[\QC{X} \otimes_{\QC{X}} \QC{X} \to \QC{X} \otimes_{\Gamma_{\Delta}(\QC{X \times X})} \QC{X}\]
  as desired. The fact that this agrees with the prior definitions can be checked using one of the resolutions above.
\end{remark}

\begin{corollary}
  $\opposite{D_{X/S}}$ can be made into an algebra with respect to the convolution monoidal product on $\Gamma_{\Delta}(\QC{X \times X})$.
\end{corollary}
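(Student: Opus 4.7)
The plan is to transfer the monad structure on $\tilde{\pi}_{1,*}\tilde{\pi}_2^{\times}$ established in Theorem \ref{bigtheorem} across the monoidal equivalence between $\Gamma_{\Delta}(\QC{X \times_S X})$ (with convolution product) and a suitable subcategory of $\QC{S}$-linear endofunctors of $\QC{X}$. Concretely, the isomorphism
\[\QC{X \times_S X} \cong \Hom_{\Mod{\QC{S}}^L}(\QC{X},\QC{X})\]
recalled in the introduction is monoidal when the left hand side carries the convolution product and the right hand side carries composition, and it restricts to a monoidal equivalence between $\Gamma_{\Delta}(\QC{X \times_S X})$ and the full subcategory of endofunctors whose kernels are supported on the diagonal. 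Under this equivalence, $\opposite{D_{X/S}} = \tilde{\pi}_2^{\times}\mathcal{O}_X$ corresponds precisely to $\tilde{\pi}_{1,*}\tilde{\pi}_2^{\times}$.

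The steps I would carry out are as follows. First, I would invoke Theorem \ref{bigtheorem} to extract from the adjunction $F_{\opposite{D_X}} \dashv G_{\opposite{D_X}}$ the canonical monad structure on $\tilde{\pi}_{1,*}\tilde{\pi}_2^{\times}$, that is, a unit $\id_{\QC{X}} \to \tilde{\pi}_{1,*}\tilde{\pi}_2^{\times}$ and a multiplication $\tilde{\pi}_{1,*}\tilde{\pi}_2^{\times}\tilde{\pi}_{1,*}\tilde{\pi}_2^{\times} \to \tilde{\pi}_{1,*}\tilde{\pi}_2^{\times}$ satisfying the usual coherences. Second, I would observe that both $F_{\opposite{D_X}}$ and $G_{\opposite{D_X}}$ are $\QC{S}$-linear: for $G_{\opposite{D_X}}$ this is noted explicitly in the text (using Theorem A.6 of \cite{previous}), and for $F_{\opposite{D_X}}$ it follows formally because its right adjoint is $\QC{S}$-linear. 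Hence the composite monad lives in algebras of $\End_{\Mod{\QC{S}}^L}(\QC{X})$ under composition.

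Third, I would transport this algebra structure back across the monoidal equivalence above to obtain an algebra structure on $\opposite{D_{X/S}}$ in $\Gamma_{\Delta}(\QC{X \times_S X})$ with respect to the convolution product. For this transfer to be clean, one needs that the equivalence is monoidal and that $\tilde{\pi}_{1,*}\tilde{\pi}_2^{\times}$, together with its iterates, lies in the essential image of $\Gamma_{\Delta}(\QC{X \times_S X})$ under the equivalence; this holds because $\opposite{D_{X/S}}$ is by definition in $\Gamma_{\Delta}$ and the convolution product preserves this subcategory.

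The main obstacle is the bookkeeping for the monoidal nature of the equivalence: one must verify that the convolution multiplication on $\opposite{D_{X/S}} * \opposite{D_{X/S}}$ matches the composition $G_{\opposite{D_X}}F_{\opposite{D_X}} \circ G_{\opposite{D_X}}F_{\opposite{D_X}}$, and that the unit on the algebra corresponds to the unit of the adjunction. These compatibilities are really manifestations of the equivalence being $E_1$-monoidal, which is the content of the machinery set up in \cite{previous}; granted this, the corollary follows immediately.
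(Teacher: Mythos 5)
Your proposal is correct and follows the same route as the paper: the paper's proof is the one-line observation that the endofunctor corresponding to $\opposite{D_{X/S}}$ is $\tilde{\pi}_{1,*}\tilde{\pi}_2^{\times}$, so the algebra structure follows from Theorem \ref{bigtheorem} via the (monoidal) identification of $\Gamma_{\Delta}(\QC{X \times_S X})$ with endofunctors of $\QC{X}$ supported on the diagonal. You have simply spelled out the $\QC{S}$-linearity and monoidality bookkeeping that the paper leaves implicit.
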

\begin{proof}
  The endofunctor corresponding to $\opposite{D_{X/S}}$ is $\tilde{\pi}_{1,*}\tilde{\pi}_2^{\times}$, so the result follows from the theorem.
\end{proof}

From now on, when we refer to $\opposite{D_{X/S}}$ as an algebra, we will implicitly refer to this algebra structure.
\begin{theodef} \label{rightdref}
  The category of $\opposite{D_{X/S}}$-modules is defined to be the category of algebras over the monad $\tilde{\pi}_{1,*}\tilde{\pi}_2^{\times}$
  (see Theorem \ref{bigtheorem} for why this endofunctor is a monad).
  We have the isomorphisms 
  \[\Mod{\opposite{D_{X/S}}} \cong \colim_{\opposite{\bm{\Delta_s}}}(\Gamma_{\Delta}(\QC{X^{n+1}}),*)\]
  \[\Mod{\opposite{D_{X/S}}} \cong \lim_{\bm{\Delta_s}}(\Gamma_{\Delta}(\QC{X^{n+1}}),\times)\]
  Additionally, we have the isomorphism
  \[\Mod{\opposite{D_{X/S}}} \cong \QC{X} \otimes_{\Gamma_{\Delta}(\QC{X \times X})} \QC{X}\]
  Also,
  $\Mod{\opposite{D_{X/S}}}$ satisfies étale descent with respect to $X$ and fpqc descent with respect to $S$.
\end{theodef}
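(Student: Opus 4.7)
The first three isomorphisms follow by unwinding the work already done. By definition, $\Mod{\opposite{D_{X/S}}}$ is the Eilenberg--Moore category of the monad $\tilde{\pi}_{1,*}\tilde{\pi}_2^{\times}$. Theorem \ref{bigtheorem} identifies this monad with $G_{\opposite{D_X}}F_{\opposite{D_X}}$ for a monadic adjunction whose target was defined to be $\colim_{\opposite{\bm{\Delta_s}}}(\Gamma_{\Delta}(\QC{X^{n+1}}),*)$; by Lurie--Barr--Beck this yields the first isomorphism. The second (limit) form is obtained by applying the anti-equivalence between $Pr_{St}^L$ and $Pr_{St}^R$ (Lemma 1.3.3 of \cite{DGCat}), which converts the colimit into a limit along the right adjoint transition functors $\tilde{g}^{\times}$. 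The third (tensor product) presentation is exactly Corollary \ref{keycoro}.

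For the descent claim, the plan is to use the limit presentation and check descent level-wise. For fixed $n$, the functor $X \mapsto \QC{X^{n+1}}$ satisfies étale descent in $X$ (in fact fpqc descent, and even proper finite tor-amplitude descent) and fpqc descent in $S$, since $\QC{-}$ satisfies fpqc descent and formation of fibre products over $S$ preserves such covers. One then checks that the subcategory $\Gamma_{\Delta}(\QC{X^{n+1}}) \subseteq \QC{X^{n+1}}$ of objects supported on the diagonal is preserved under the pullback functors of the Čech cover: being supported on a closed subscheme is local on the total space, and commutes with flat base change since the diagonal of the cover is the base change of the diagonal of $X$. With level-wise descent in hand, commuting limits over the cover with the limit over $\bm{\Delta_s}$ yields descent for $\lim_{\bm{\Delta_s}}(\Gamma_{\Delta}(\QC{X^{n+1}}),\times)$, and hence for $\Mod{\opposite{D_{X/S}}}$.

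The main obstacle will be the careful verification that $\Gamma_{\Delta}$ and the upper-cross transition maps $\tilde{g}^{\times}$ commute with the relevant base changes in the proper finite tor-amplitude setting; in particular, one must check that the pull-push adjunctions involved in forming the transition maps base-change correctly. This is more delicate than ordinary fpqc descent for $\QC{-}$, and is the content of the more general Theorem \ref{pftdescent}, to which the detailed argument is deferred.
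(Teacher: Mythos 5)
The first three isomorphisms are handled exactly as in the paper: they are immediate from Theorem \ref{bigtheorem}, the $Pr_{St}^L$/$Pr_{St}^R$ anti-equivalence, and Corollary \ref{keycoro}, so that part of your proposal is fine and takes the same route.

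The descent claim is where there is a gap. Your overall strategy (levelwise descent for $\Gamma_{\Delta}(\QC{X^{n+1}})$, base-change compatibility of the transition maps, then commuting the two limits) is the same as the paper's, but your justification of the two key inputs does not close. You defer the verification that $\Gamma_{\Delta}$ and the upper-cross transition maps base-change correctly to Theorem \ref{pftdescent}, and this deferral cannot work: that theorem concerns descent along \emph{proper} finite tor-amplitude surjections in the $X$-variable, so it covers neither étale covers of $X$ (étale maps need not be proper) nor fpqc covers of the base $S$ (descent in a different variable altogether); moreover it is proved later in the paper using the pushforward/pullback formalism and Beck--Chevalley statements (Proposition \ref{pushpullforDmod}) that are themselves built on the limit presentation established here, so it is downstream of the present statement rather than an available input. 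The paper instead supplies these inputs externally: étale descent of each $\Gamma_{\Delta}(\QC{X^{n+1}})$ with respect to $X$ is a variant of Corollary 1.6 of \cite{previous}, the base-change compatibility of the transition maps is a variant of Proposition 1.8 of \cite{previous}, and fpqc descent with respect to $S$ is obtained from fpqc descent of $\mathrm{QCoh}$ (Proposition 6.2.3.1 of \cite{SAG}) together with the split-exact sequence
\[\Gamma_Z(\QC{X}) \to \QC{X} \to \QC{U},\]
which is the precise mechanism by which ``the support condition is preserved'' — your remark that support is local and compatible with flat base change gestures at this but does not by itself show that the limit of the subcategories $\Gamma_{\Delta}(\QC{X^{n+1}})$ over the Čech nerve agrees with the subcategory of the limit. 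To complete your argument you should replace the appeal to Theorem \ref{pftdescent} with these (or equivalent) levelwise descent and base-change statements.
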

\begin{proof}
  All but the last sentence follow directly from Theorem \ref{bigtheorem} and Corollary \ref{keycoro}. For the last part,
  use the isomorphism
  \[\Mod{\opposite{D_{X/S}}} \cong \lim_{\bm{\Delta_s}}(\Gamma_{\Delta}(\QC{X^{n+1}}),\times)\]
  each of the $\Gamma_{\Delta}(\QC{X^{n+1}})$ has étale descent with respect to $X$ (by a variant of Corollary 1.6 of \cite{previous})
  and all the transition maps base-change correctly (by a variant of Proposition 1.8 of \cite{previous}). The fpqc descent follow
  from fpqc descent of $\mathrm{QCoh}$ (Proposition 6.2.3.1 in \cite{SAG}) and the fact that we can write $\Gamma_Z(\QC{X})$ as the kernel in the split-exact sequence
  \[\Gamma_Z(\QC{X}) \to \QC{X} \to \QC{U}\]
  where $U$ is the complement of $Z$ in $X$.
\end{proof}

\begin{remark}
  In the affine situation when $X = \Spec{A}$ and $S = \Spec{k}$,
  \[\opposite{D_{X/S}} \cong \opposite{\Gamma_{\Delta}(\Hom_k(A,A))}\]
  is an algebra isomorphism. One can show this by carefully unwinding the definitions
  in the affine case and we leave the details to the reader.
\end{remark}

\begin{remark} \label{explicit1}
  It is also possible to prove Theorem \ref{bigtheorem} by giving an explicit description of $G_{\opposite{D_X}}$ as
  a functor
  \[G_{\opposite{D_X}} : \colim_{\opposite{\bm{\Delta_s}}}(\Gamma_{\Delta}(\QC{X^{n+1}}),*) \to \QC{X}\]
  To specify such a functor, it suffices to specify a collection of functors
  \[G_{\opposite{D_X}}^{(n)}: \Gamma_{\Delta}(\QC{X^{n+1}},*) \to \QC{X}\]
  together with compatibility isomorphisms. We call the process of constructing $G_{\opposite{D_X}}$ from the
  $G_{\opposite{D_X}}^{(n)}$'s \textbf{assembly}. By equation (\ref{equaa}),
  we can write $G_{\opposite{D_X}}$ as the map induced (via colimit over $\opposite{\bm{\Delta_s}}$) by the following
  map of simplicial diagrams
  \[
    \begin{tikzcd}
      \ldots \Gamma_{\Delta}(\QC{X \times X \times X})  \arrow[r, yshift=1.2ex] \arrow[r] \arrow[r, yshift=-1.2ex] \arrow{d}{\tilde{\pi}_{1,2,3}^{\times}}
      & \Gamma_{\Delta}(\QC{X \times X}) \arrow[r, yshift=0.7ex] \arrow[r, yshift=-0.7ex] \arrow{d}{\tilde{\pi}_{1,2}^{\times}} & \QC{X} \arrow{d}{\tilde{\pi}_1^{\times}}\\
      \ldots \Gamma_{\Delta}(\QC{X \times X \times X \times X}) \arrow[r, yshift=1.2ex] \arrow[r] \arrow[r, yshift=-1.2ex]
      & \Gamma_{\Delta}(\QC{X \times X \times X}) \arrow[r, yshift=0.7ex] \arrow[r, yshift=-0.7ex] & \Gamma_{\Delta}(\QC{X \times X}) \\
    \end{tikzcd}
  \]
  Therefore, we can compute 
  \[G_{\opposite{D_X}}^{(n)} \cong \tilde{\pi}_{n+2,*}\tilde{\pi}^{\times}_{\widehat{n+2}}\]
  where 
  \[\tilde{\pi}^{\times}_{\widehat{n+2}} : \Gamma_{\Delta}(\QC{X^{n+1}}) \to \Gamma_{\Delta}(\QC{X^{n+2}})\]
  is defined as tilde upper cross for the projection map $\pi_{\widehat{n+2}}$ to all but the last component of the product.
\end{remark}
The above remark is generalized by
\begin{theorem} \label{explicit3}
  The identity functor
  \[\colim_{\opposite{\bm{\Delta_s}}}(\Gamma_{\Delta}(\QC{X^{n+1}}),*) \to \lim_{\bm{\Delta_s}}(\Gamma_{\Delta}(\QC{X^{m+1}},\times))\]
  is assembled from the functors
  \[\tilde{\pi}_{\{n+2,\ldots,n+m+2\},*}\tilde{\pi}_{\{1,\ldots,n+1\}}^{\times} : \Gamma_{\Delta}(\QC{X^{n+1}}) \to \Gamma_{\Delta}(\QC{X^{m+1}})\]
  where 
  \[\tilde{\pi}_{\{n+2,\ldots,n+m+2\},*} : \Gamma_{\Delta}(\QC{X^{n+m+2}}) \to \Gamma_{\Delta}(\QC{X^{m+1}})\]
  \[\tilde{\pi}_{\{1,\ldots,n+1\}}^{\times} : \Gamma_{\Delta}(\QC{X^{n+1}}) \to \Gamma_{\Delta}(\QC{X^{n+m+2}})\]
  with the obvious transition functors. Therefore, $F_{\opposite{D_X}}$ is assembled from the functors
  \[F_{\opposite{D_X}}^{(n)} \cong \tilde{\pi}_{\hat{1},*}\tilde{\pi}^{\times}_1\]
\end{theorem}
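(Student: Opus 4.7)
The plan is to extend the computation of Remark \ref{explicit1} (which handled the case $m=0$) to all $m$. Viewing the identity functor as a map from the colimit presentation to the limit presentation, it suffices to describe, for every pair $(n,m)$, the composite functor
\[G^{(n,m)}: \Gamma_{\Delta}(\QC{X^{n+1}}) \xrightarrow{F^{(n)}} \Mod{\opposite{D_{X/S}}} \xrightarrow{G^{(m)}} \Gamma_{\Delta}(\QC{X^{m+1}}),\]
together with the coherent compatibility data in $n$ and $m$. Under the anti-equivalence between $Pr_{St}^L$ and $Pr_{St}^R$ used to pass between the colimit and limit pictures (following \cite{DGCat} Lemma 1.3.3, as invoked earlier in the section), the projection $G^{(m)}$ is realized as the right adjoint of the colimit structure map $F^{(m)}$, so the task reduces to computing this composite for arbitrary $n,m$.

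To compute $G^{(n,m)}$, I would first generalize the resolution strategy of Remark \ref{explicit1}. Using Proposition \ref{resolve} and iterating Lemma \ref{tensorlemma}, one obtains a simplicial resolution of $\Gamma_{\Delta}(\QC{X^{m+1}})$, viewed as a left $\Gamma_{\Delta}(\QC{X \times X})$-module via the first coordinate, whose $n$-th term is $\Gamma_{\Delta}(\QC{X^{n+m+2}})$. Tensoring this resolution over $\Gamma_{\Delta}(\QC{X \times X})$ with $\QC{X}$ on the left expresses the inclusion of $\Gamma_{\Delta}(\QC{X^{m+1}})$ as a factor of $\Mod{\opposite{D_{X/S}}}$, and up to the identifications of Corollary \ref{keycoro} identifies $F^{(m)}$ termwise with $\tilde{\pi}_{\{n+2,\ldots,n+m+2\},*}$ (the pushforward onto the last $m+1$ coordinates). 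Its right adjoint $G^{(m)}$ is therefore computed termwise by the corresponding upper cross $\tilde{\pi}_{\{1,\ldots,n+1\}}^{\times}$, and composing with the structure map $F^{(n)}$ yields
\[G^{(n,m)} \cong \tilde{\pi}_{\{n+2,\ldots,n+m+2\},*}\tilde{\pi}_{\{1,\ldots,n+1\}}^{\times},\]
as desired. The dual formula $F_{\opposite{D_X}}^{(n)} \cong \tilde{\pi}_{\hat{1},*}\tilde{\pi}_1^{\times}$ then follows by symmetry (reversing the order of the copies of $X$, as in the derivation of Equation (\ref{equb})) combined with the $m=0$ specialization.

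The main obstacle will be bookkeeping: keeping straight which coordinate of each product $X^{\bullet}$ plays which role, and, more seriously, checking that the natural isomorphisms witnessing compatibility of these $G^{(n,m)}$ with the face and degeneracy structures on both $\opposite{\bm{\Delta_s}}$ and $\bm{\Delta_s}$ are coherent. These coherences reduce to standard base-change compatibilities among the $\tilde{\pi}_*$ and $\tilde{\pi}^{\times}$ functors along projections of powers of $X$, which are available in our setting because $p_X: X \to S$ is separated, locally almost of finite presentation, and finite tor-amplitude, exactly the hypotheses already used to invoke Lemma \ref{tensorlemma} and Proposition \ref{resolve} above.
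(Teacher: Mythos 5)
Your first paragraph sets things up exactly as the paper does: the identity functor is determined by the composites $G^{(m)}\circ F^{(n)}$, and the projection $G^{(m)}$ is the right adjoint of the colimit structure map $F^{(m)}$ under the $Pr_{St}^L$/$Pr_{St}^R$ anti-equivalence. The gap is in your middle computation, which does not typecheck and hides the actual content. Tensoring your resolution of $\Gamma_{\Delta}(\QC{X^{m+1}})$ with $\QC{X}$ over $\Gamma_{\Delta}(\QC{X \times X})$ computes $\QC{X}\otimes_{\Gamma_{\Delta}(\QC{X \times X})}\Gamma_{\Delta}(\QC{X^{m+1}})$, which by Lemma \ref{tensorlemma} merely cancels one coordinate (it is $\Gamma_{\Delta}(\QC{X^{m}})$ for $m \ge 1$); it is not $\Mod{\opposite{D_{X/S}}}$ and does not encode the structure map $F^{(m)}$. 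Moreover $F^{(m)}$ has \emph{source} $\Gamma_{\Delta}(\QC{X^{m+1}})$, so it cannot be identified ``termwise'' with $\tilde{\pi}_{\{n+2,\ldots,n+m+2\},*}$, whose \emph{target} is $\Gamma_{\Delta}(\QC{X^{m+1}})$. Most seriously, the adjunction step fails: the right adjoint of the pushforward onto the last $m+1$ coordinates is the upper cross along that \emph{same} projection, not $\tilde{\pi}^{\times}_{\{1,\ldots,n+1\}}$. In the theorem's formula the push and the pull are along complementary coordinate sets; they are not an adjoint pair, so the formula cannot be obtained by resolving once and passing to a right adjoint.

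What is actually needed (and what the paper does) is a two-sided argument. A single application of Lemma \ref{tensorlemma} identifies $\Gamma_{\Delta}(\QC{X^{m+1}}) \cong \QC{X}\otimes_{\Gamma_{\Delta}(\QC{X \times X})}\Gamma_{\Delta}(\QC{X^{m+2}})$ and $F^{(m)}$ with $\id \otimes \tilde{\pi}_{1,*}$; its right adjoint is $\id \otimes \tilde{\pi}_{1}^{\times}$, which requires the $\Gamma_{\Delta}(\QC{X \times X})$-linearity of $\tilde{\pi}_1^{\times}$ (via $\tilde{\pi}_1^{\times}(\mathcal{F}) \cong \Gamma_{\Delta}(\mathcal{F} \boxtimes \omega_X)$, as in Section 1). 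One then resolves the \emph{other} (left) copy of $\QC{X}$ as a right $\Gamma_{\Delta}(\QC{X \times X})$-module so that the source is realized termwise as $\Gamma_{\Delta}(\QC{X^{n+2}})\otimes_{\Gamma_{\Delta}(\QC{X \times X})}\QC{X} \cong \Gamma_{\Delta}(\QC{X^{n+1}})$ with $F^{(n)}$ given by $\tilde{\pi}_{n+1,*}\otimes\id$, and it is only the functoriality of the relative tensor product — the two operations act on different tensor factors — that combines $(\id\otimes\tilde{\pi}_1^{\times})\circ(\tilde{\pi}_{n+1,*}\otimes\id)$ into $\tilde{\pi}_{\{n+2,\ldots,n+m+2\},*}\tilde{\pi}^{\times}_{\{1,\ldots,n+1\}}$ through the intermediate category $\Gamma_{\Delta}(\QC{X^{n+m+2}})$. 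That interplay of the two resolutions is the heart of the proof and is exactly what your sketch replaces by an adjunction that is not there. Finally, the claim about $F_{\opposite{D_X}}$ is simply the specialization of the general formula to source index $0$ (take $n=0$ and rename $m$); the ``$m=0$ case plus symmetry'' you invoke would instead reproduce the formula for $G_{\opposite{D_X}}$ from Remark \ref{explicit1}, not the one for $F_{\opposite{D_X}}$.
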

\begin{proof}
  Analogous to equation (\ref{equa}), the inclusion functor
  \[i_m: \Gamma_{\Delta}(\QC{X^{m+1}} \to \colim_{\opposite{\bm{\Delta_s}}}(\Gamma_{\Delta}(\QC{X^{m+1}}),*) \]
  can be written also as
  \[i_m \cong \id \otimes \tilde{\pi}_{1,*} : \QC{X} \otimes_{\Gamma_{\Delta}(\QC{X \times X})} \Gamma_{\Delta}(\QC{X^{m+2}}) \to \QC{X} \otimes_{\Gamma_{\Delta}(\QC{X \times X})} \QC{X}\]
  Hence, its right adjoint is
  \[\id \otimes \tilde{\pi}_1^{\times} : \QC{X} \otimes_{\Gamma_{\Delta}(\QC{X \times X})} \QC{X} \to \QC{X} \otimes_{\Gamma_{\Delta}(\QC{X \times X})} \Gamma_{\Delta}(\QC{X^{m+2}})\]
  Now, we can resolve the left $\QC{X}$ in the tensor
  \[\QC{X} \otimes_{\Gamma_{\Delta}(\QC{X \times X})} \QC{X}\]
  as a right $\Gamma_{\Delta}(\QC{X \times X})$ modules (analogously to Proposition \ref{resolve}). 
  Using this resolution the right adjoint of $i_m$ can be written as the assembly of
  \[(\id \otimes \tilde{\pi}_1^{\times}) \circ (\tilde{\pi}_{n+1,*} \otimes \id)\]
  from
  \[\Gamma_{\Delta}(\QC{X^{n+2}}) \otimes_{\Gamma_{\Delta}(\QC{X \times X})} \QC{X}\]
  to
  \[\QC{X} \otimes_{\Gamma_{\Delta}(\QC{X \times X})}\Gamma_{\Delta}(\QC{X^{m+2}})\]
  By the functoriality of the tensor product, this is also the same as
  \[\tilde{\pi}_{\{n+2,\ldots,n+m+2\},*}\tilde{\pi}_{\{1,\ldots,n+1\}}^{\times} : \Gamma_{\Delta}(\QC{X^{n+1}}) \to \Gamma_{\Delta}(\QC{X^{m+1}})\]
  where the transition isomorphisms are obvious. As taking the right adjoint of $i_m$ yields
  also the identity functor in the theorem composed with the projection to the $m$-th component of the limit, we recover the theorem.
\end{proof}

\begin{remark} \label{motivate}
  Theorem/Definition \ref{rightdref} shows an equivalence between right
  $D$-modules and costratifications (the name commonly given to the category on the right)
  \[\Mod{\opposite{D_X}} \cong \lim_{\bm{\Delta}}(\Gamma_{\Delta}(\QC{X^{n+1}}),\times)\]
  We describe how to arrive at this equivalence naturally.
  Suppose $M$ is a right $D_X$ module, so that there is a map
  \[\tilde{\pi}_{2,*}(\pi_1^{\times}(M)) \to M\]
  By adjunction this is the same as a map
  \[\phi : \tilde{\pi}_1^{\times}M \to \tilde{\pi}_2^{\times}M\]
  which can also be written as
  \[\phi : \Gamma_{\Delta}(M \boxtimes \omega_X) \to \Gamma_{\Delta}(\omega_X \boxtimes M)\]
  Being a right $D_X$ module includes also higher compatibilities. These include things such as the commutativity of the following diagram
  \[
    \begin{tikzcd}
      \Gamma_{\Delta}(\omega_X \boxtimes \omega_X \boxtimes M) \arrow{rr}{\tilde{\pi}_{2,3}^{\times}(\phi)} \arrow{dd} &&
      \Gamma_{\Delta}(\omega_X \boxtimes M \boxtimes \omega_X) \arrow{dd}{\tilde{\pi}_{1,2}^{\times}(\phi)} \\ \\
      \Gamma_{\Delta}(\omega_X \boxtimes \omega_X \boxtimes M) \arrow{rr}{\tilde{\pi}_{1,3}^{\times}(\phi)} &&
      \Gamma_{\Delta}(M \boxtimes \omega_X \boxtimes \omega_X)\\
    \end{tikzcd}
  \]
  where the left unlabeled map is the identity. All the maps above are also required to be isomorphisms
  upon cross pullback to $\QC{X}$ along the diagonal map.
  Because upper cross pullback along the diagonal is conservative (for quasicoherent sheaves supported on the diagonal)
  all the above maps are isomorphisms. The above discussion explains the first three terms of the limit
  \[\QC{X} \righttwoarrow \Gamma_{\Delta}(\QC{X \times X}) \rightthreearrow \Gamma_{\Delta}(\QC{X \times X \times X}) \ldots \]
  which we showed was equivalent to the category of right $D$ modules.
\end{remark}


\section{$D_X$-Modules and Left-Right Switch}
In this section we discuss left $D$-modules and the isomorphism between the category of right $D$-modules
and left $D$-modules, which is called the left-right switch.

We have seen that $\opposite{D}_X$ is an algebra in $\Gamma_{\Delta}(\QC{X \times X})$. There is an involution on
$\Gamma_{\Delta}(\QC{X \times X})$ which switches the two copies of $X$. This is an auto-equivalence of the category
which reverses the monoidal structure. We endow $D_X$ with an algebra structureusing this involution.
As an algebra in $\Gamma_{\Delta}(\QC{X \times X})$ (with convolutional monoidal product),
$D_X$ also defines a monad on $\QC{X}$. We refer to modules over this monad as $D_X$-modules.

If we think of quasicoherent sheaves on $\Gamma_{\Delta}(\QC{X \times X})$ as endofunctors on $\QC{X}$,
then by Proposition 3.8 of \cite{previous}, we know that the involution above
is equivalent to the left-right duality on endofunctors of $\QC{X}$ (we remind the reader that $\QC{X}$ is naturally self-dual). Hence we see that the monad
$D_X$ is the left-right dual to $\opposite{D_X}$. We note that at the affine level, this is just the statement that
left-right duality takes an associative ring to its opposite.

By Corollary \ref{appendA}, we know that left-right duality switches
left and right $D_X$-modules. Namely,
\[(\Mod{\opposite{D_X}})^{\vee} \cong \Mod{D_X}\]
where $^{\vee}$ denotes duality in $\Mod{\QC{S}}^L$.
Moreover, by Corollary \ref{appendB} the adjunction
\[F_{\opposite{D_X}} \dashv G_{\opposite{D_X}}\]
becomes, under left-right duality, the adjunction
\[G_{D_X} \vdash F_{D_X}\]
We know from the last section that
\[\Mod{\opposite{D}_X} \cong \colim_{\opposite{\bm{\Delta_s}}}(\Gamma_{\Delta}(\QC{X^{n+1}}),*)\]
Applying the $2$-functor
\[\Hom_{\QC{S}}(\_,\QC{S})\]
(the $\Hom$ is taken inside $\Mod{\QC{S}}^L$) to the equation above, we get the isomorphism
\[\Mod{D_X} \cong \lim_{\bm{\Delta}_s}(\Gamma_{\Delta}(\QC{X^{n+1}}),*)\]
where the transition maps are tilde of quasicoherent pullbacks. This is because, by Proposition 3.10 of \cite{previous},
$\Gamma_{\Delta}(\QC{X^{n+1}})$ is (canonically) self-dual for all $n$ and therefore
\[\Hom_{\QC{S}}(\Gamma_{\Delta}(\QC{X^{n+1}}),\QC{S}) \cong \Gamma_{\Delta}(\QC{X^{n+1}})\]
Let's record our observations in

\begin{theorem} \label{bigtheorem2}
  $D_X$ is naturally an algebra in the monoidal category $\Gamma_{\Delta}(\QC{X \times_S X})$,
  corresponding to the monad $\tilde{\pi}_{1,\times}\tilde{\pi}_2^*$ and
  \begin{equation*}
    \begin{split}
      \Mod{D_X}
      &\cong \lim_{\bm{\Delta_s}}(\Gamma_{\Delta}(\QC{X^{n+1}}),*) \\
      &\cong \colim_{\opposite{\bm{\Delta_s}}}(\Gamma_{\Delta}(\QC{X^{n+1}}),\times) \\
      &\cong \Hom_{\Gamma_{\Delta}(\QC{X \times X})}(\QC{X},\QC{X}) \\
    \end{split}
  \end{equation*}
  Also $\Mod{D_X}$ satisfies étale descent with respect to $X$ and fpqc descent with respect to $S$.
\end{theorem}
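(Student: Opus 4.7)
The plan is to obtain Theorem \ref{bigtheorem2} by dualizing Theorem \ref{rightdref} and Theorem \ref{bigtheorem} in $\Mod{\QC{S}}^L$. The discussion immediately preceding the statement already sketches how to pass from $\opposite{D_X}$ to $D_X$ and from the $(*,\times)$-diagrams to their duals; the job is to turn that sketch into a proof and then address the Hom presentation and descent.

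First I would establish the monad. The swap involution on $\Gamma_{\Delta}(\QC{X \times_S X})$ is a monoidal anti-auto-equivalence, so it carries the algebra $\opposite{D_X}$ to an algebra which we call $D_X$. Under the identification of $\Gamma_{\Delta}(\QC{X \times_S X})$ with colimit-preserving $\QC{S}$-linear endofunctors of $\QC{X}$, this involution corresponds to left-right duality of endofunctors by Proposition 3.8 of \cite{previous}. Combined with the identification $\opposite{D_X} \cong \tilde{\pi}_{1,*}\tilde{\pi}_2^{\times}$ from Theorem \ref{bigtheorem} and the fact that left-right duality exchanges the adjoint pair $(\tilde{\pi}_{i,*},\tilde{\pi}_i^{\times})$ with its dual pair, I obtain the monad description $D_X \cong \tilde{\pi}_{1,\times}\tilde{\pi}_2^*$.

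Next, for the three presentations of $\Mod{D_X}$, I would apply the contravariant $2$-functor $\Hom_{\QC{S}}(\_,\QC{S})$ to the three presentations in Theorem \ref{rightdref}. By Corollary \ref{appendA}, this dualization converts $\Mod{\opposite{D_X}}$ to $\Mod{D_X}$; by self-duality of $\Gamma_{\Delta}(\QC{X^{n+1}})$ (Proposition 3.10 of \cite{previous}), and the fact that dualization turns the right adjoint of a transition functor into the left adjoint of its dual, it converts a colimit over $\opposite{\bm{\Delta_s}}$ with lower-star transitions into the limit over $\bm{\Delta_s}$ with upper-star transitions, and converts the limit with upper-cross transitions into the colimit with lower-cross transitions. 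The Hom presentation then follows from the general principle that dualizing in $\Mod{\QC{S}}^L$ a relative tensor product $\QC{X} \otimes_{\Gamma_{\Delta}(\QC{X \times X})} \QC{X}$ produces $\Hom_{\Gamma_{\Delta}(\QC{X \times X})}(\QC{X},\QC{X})$, using self-duality of $\QC{X}$ and compatibility of duality with tensoring over a monoidal category (Appendix A of \cite{previous}).

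Finally, for descent I would mirror the argument in Theorem \ref{rightdref}, but applied to the limit presentation with upper-star transitions: each term $\Gamma_{\Delta}(\QC{X^{n+1}})$ satisfies étale descent in $X$ (variant of Corollary 1.6 of \cite{previous}) and upper-star transitions base-change correctly (variant of Proposition 1.8 of \cite{previous}), so interchanging the two limits yields descent for $\Mod{D_X}$; fpqc descent in $S$ follows from Proposition 6.2.3.1 of \cite{SAG} together with the split-exact description of $\Gamma_{\Delta}$. The main obstacle I anticipate is the bookkeeping around left-right duality: ensuring that lower-star gets exchanged with upper-cross consistently at every level of the simplicial diagram, and that the self-duality isomorphisms for $\Gamma_{\Delta}(\QC{X^{n+1}})$ intertwine the transition functors appearing in both the simplicial and cosimplicial presentations. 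Once these identifications are pinned down, the theorem falls out formally from the $\opposite{D_X}$ case.
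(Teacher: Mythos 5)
Your proposal is correct and follows essentially the same route as the paper: the monad and the first presentation come from left-right duality (Proposition 3.8/3.10 of \cite{previous}, Corollaries \ref{appendA} and \ref{appendB}) applied to Theorem \ref{rightdref}, the Hom presentation is exactly the paper's second proof via $\Hom_{\QC{S}}(\QC{X} \otimes_{\Gamma_{\Delta}(\QC{X \times X})} \QC{X},\QC{S})$ and self-duality of $\QC{X}$, and descent is proved identically to Theorem \ref{rightdref}. The only cosmetic difference is that you get the $\colim(\ldots,\times)$ presentation by dualizing the second presentation of Theorem \ref{rightdref}, while the paper passes from the first presentation by the colimit/limit adjoint exchange (Lemma 1.3.3 of \cite{DGCat}); these amount to the same bookkeeping.
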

\begin{proof}
  $\opposite{D_X}$ corresponds to the monad $\tilde{\pi}_{2,*}\tilde{\pi}_1^{\times}$
  by Theorem/Definition \ref{rightdref}. Therefore by left-right duality, $D_X$
  corresponds to the monad $\tilde{\pi}_{1,\times}\tilde{\pi}_2^*$
  \footnote{Tilde lower-cross means the left adjoint of tilde upper-star and is the left-right
    switch of tilde upper-cross, see Section 4 of \cite{previous}},
  the first isomorphism is already proven. The second isomorphism comes from the equivalence between
  colimits and limits in the form of Lemma 1.3.3 in \cite{DGCat}. For the third isomorphism, we give two proofs. 

  \textit{Proof 1. }
  By the resolution of $\QC{X}$ as a left $\Gamma_{\Delta}(\QC{X \times X})$ module category
  \[\ldots \Gamma_{\Delta}(\QC{X \times X \times X}) \righttwoarrow \Gamma_{\Delta}(\QC{X \times X}) \to \QC{X}\]
  (see Proposition \ref{resolve}), one can directly check that
  \[\Hom_{\Gamma_{\Delta}(\QC{X \times X})}(\QC{X},\QC{X}) \cong \lim_{\bm{\Delta_s}}(\Gamma_{\Delta}(\QC{X^{n+1}}),*) \]
  analogously to Corollary \ref{keycoro}.

  \textit{Proof 2.}
  \begin{equation*}
    \begin{split}
      \Mod{D_X}
      &\cong \Hom_{\QC{S}}(\Mod{\opposite{D_X}},\QC{S}) \\
      &\cong \Hom_{\QC{S}}(\QC{X} \otimes_{\Gamma_{\Delta}(\QC{X \times X})} \QC{X},\QC{S}) \\ 
      &\cong \Hom_{\Gamma_{\Delta}(\QC{X \times X}}(\QC{X}, \Hom_{\QC{S}}(\QC{X},\QC{S})) \\
      &\cong \Hom_{\Gamma_{\Delta}(\QC{X \times X})}(\QC{X},\QC{X}) \\
    \end{split}
  \end{equation*}

  The descent result is proven identically as in Theorem \ref{rightdref}.
\end{proof}

\begin{remark}
  The reader is encouraged to compare this result with Remark 1.8.4 in \cite{Ber2}
\end{remark}

\begin{remark} \label{explicit2}
  We can ask for an explicit description of the functor
  \[F_{D_X} : \QC{X} \to \lim_{\bm{\Delta_s}}(\Gamma_{\Delta}(\QC{X^{n+1}}),*)\]
  as a compatible system of functors
  \[F_{D_X}^{(n)}:\QC{X} \to \Gamma_{\Delta}(\QC{X^{n+1}},*)\]
  In fact, we have
  \[F_{D_X}^{(n)} \cong \tilde{\pi}_{\widehat{n+2},\times}\tilde{\pi}_{n+2}^* \]
  by left-right duality applied to Remark \ref{explicit1}. Here $\pi_{\widehat{n+2}}$ means
  projection to all but the $n+2$-th component.

  Additonally, using the isomorphism
  \[\Mod{D_X} \cong \Hom_{\Gamma_{\Delta}(\QC{X \times X})}(\QC{X},\QC{X}) \]
  we have the descriptions
  \[F_{D_X} \cong \Hom(\tilde{\pi}_1^{\times},\id)\]
  \[G_{D_X} \cong \Hom(\tilde{\pi}_{1,*},\id)\]
  which we can prove using left-right duality, the \textit{Proof 2.} above, and equations (\ref{equa}) and (\ref{equaa}).
\end{remark}

\begin{remark}
  The limit we gave for the category of $D_X$-modules
  \[\lim_{\bm{\Delta}}(\Gamma_{\Delta}(\QC{X^{n+1}}),*)\]
  can be seen to be the category of quasicoherent crystals on the stratifying site of $X$.
  If $X$ is a smooth variety over a field, we can use descent to see that this category
  is equivalent to the category of quasicoherent sheaves on the de Rham stack, via the
  Čech nerve of the map
  \[X \to X_{dR}\]
  In fact, $D_X$-modules are the same as quasicoherent sheaves on $X_{dR}$ in more generality.
  In characteristic zero this is Proposition 3.4.3 in \cite{CryD}. In our paper, 
  we will show this as a consequence Kashiwara's equivalence in Section \ref{cryssec}.
\end{remark}

\begin{remark}
  By expressing the category of $D_X$-modules as the cosimplicial limit above, we can see that $\Mod{D_X}$ is a symmetric monoidal category.
\end{remark}

Now it's time to discuss the left-right switch. We can construct an explicit functor $Q$
from $\Mod{\opposite{D_X}}$ to $\Mod{D_X}$ as follows. Recall
\[\Mod{\opposite{D_X}} \cong \QC{X} \otimes_{\Gamma_{\Delta}(\QC{X \times X})} \QC{X}\]
and 
\[\Mod{D_X} \cong \Hom_{\Gamma_{\Delta}(\QC{X \times X})}(\QC{X},\QC{X})\]
Therefore, the functor
\[\Gamma_{\Delta} \otimes \id : 
  \QC{X \times X} \otimes_{\Gamma_{\Delta}(\QC{X \times X})} \QC{X} \to \QC{X}\]
which can also be written as 
\[\Gamma_{\Delta} \otimes \id : \QC{X} \otimes \QC{X} \otimes_{\Gamma_{\Delta}(\QC{X \times X})} \QC{X} \to \QC{X}\]
 is $\Gamma_{\Delta}(\QC{X \times X})$-linear ($\Gamma_{\Delta}(\QC{X \times X})$ acts by convolution on the leftmost $\QC{X}$)
and therefore induces a functor
\[Q: \Mod{\opposite{D_X}} \to \Mod{D_X}\]

Since $Q$ is colimit-preserving, $Q$ can be represented by a $(D_X,\opposite{D_X})$-bimodule.
We can determine which bimodule it is by calculating $G_{D_X}QF_{\opposite{D_X}}$. By chasing through the definitions
and using equation (\ref{equa}) and its left-right dual, we can calculate
\[G_{D_X}QF_{\opposite{D_X}} \cong (\tilde{\pi}_2^* \otimes \id) \circ (\id \otimes \tilde{\pi}_{1,*})\]
This has domain
\[\QC{X} \cong \QC{X} \otimes_{\Gamma_{\Delta}(\QC{X \times X})} \Gamma_{\Delta}(\QC{X \times X})\]
and codomain
\[\QC{X} \cong \Gamma_{\Delta}(\QC{X \times X}) \otimes_{\Gamma_{\Delta}(\QC{X \times X})} \QC{X}\]
Hence by Theorem A.1 of \cite{previous}, 
\[G_{D_X}QF_{\opposite{D_X}} \cong \tilde{\pi}_{1,*}\tilde{\pi}_2^* \]
and the relevant $(D_X,\opposite{D}_X)$ bimodule is
\[\tilde{\pi}_1^*\mathcal{O}_X \cong \tilde{\pi}_2^*\mathcal{O}_X \cong \Gamma_{\Delta}(\mathcal{O}_{X \times X}) \cong \Gamma_{\Delta}(\mathcal{O}_X \boxtimes \mathcal{O}_X)\]
in $\Gamma_{\Delta}(\QC{X \times_S X})$.
\begin{remark}
  Strictly speaking, we have not defined what it means to be a $(D_X,\opposite{D_X})$-bimodule.
  $\Gamma_{\Delta}(\QC{X \times_S X})$ is a $\Gamma_{\Delta}(\QC{X \times_S X})$-bimodule category, therefore
  there is a monad obtained by combining the $D_X$ monad on the left with the $\opposite{D_X}$ monad on the right.
  A $(D_X,\opposite{D_X})$-bimodule is defined to be a module over that monad.
  $\mathcal{O}_X$ is naturally a $D_X$ module. So $\Gamma_{\Delta}(\mathcal{O} \boxtimes \mathcal{O})$
  has a natural structure of a $(D_X,\opposite{D_X})$-bimodule.
\end{remark}

\begin{remark} \label{explicitq}
  We can also define $Q$ by assembling the functors
  \[Q^{(m,n)} := \tilde{p}_{1,*}\tilde{p}_2^* : \Gamma_{\Delta}(\QC{X^{m+1}}) \to \Gamma_{\Delta}(\QC{X^{n+1}})\]
  into the functor
  \[Q : \colim_{\bm{\opposite{\Delta}}_s}(\Gamma_{\Delta}(\QC{X^{m+1}}),*) \to \lim_{\bm{\Delta}_s}(\Gamma_{\Delta}(\QC{X^{n+1}}),*)\]
  where $p_1,p_2$ are the two projection maps of
  \[X^{m+n+2} \cong X^{n+1} \times X^{m+1}\]
  so that we have the functors
  \[\tilde{p}_{1,*} : \Gamma_{\Delta}(\QC{X^{m+n+2}}) \to \Gamma_{\Delta}(\QC{X^{n+1}})\]
  \[\tilde{p}_2^* : \Gamma_{\Delta}(\QC{X^{m+1}}) \to \Gamma_{\Delta}(\QC{X^{m+n+2}})\]
  One can see gives the same functor as above for instance by computing the associated bimodule.
\end{remark}
Now we would like to construct an inverse to $Q$. Consider the following functor
\[R: \lim_{\bm{\Delta}_s}(\Gamma_{\Delta}(\QC{X^{n+1}}),*) \to \lim_{\bm{\Delta}_s}(\Gamma_{\Delta}(\QC{X^{n+1}}),\times)\]
which we define by assembling
\[R^{(n)}(\mathcal{F}) := \mathcal{F} \otimes_{\mathcal{O}_{X^{n+1}}} \omega_X^{\boxtimes n+1}\]
This obviously commute with the transition maps by Theorem 2.9 of \cite{previous}
because we have thrown away the degeneracy maps (by restricting to $\bm{\Delta_s}$).
We note that $R$ is a colimit-preserving functor with associated bimodule
\[\Gamma_{\Delta}(\omega_X \boxtimes \omega_X)\]
By inspection of the associated bimodules, we have
\begin{proposition}
  $R$ and $Q$ are self-dual under $\Mod{\QC{S}}^L$ duality (left-right duality).
\end{proposition}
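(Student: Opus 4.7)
The plan is to verify self-duality by comparing the $(D_X, \opposite{D_X})$-bimodules associated with $Q$ and $R$ with those of their left-right duals. Since colimit-preserving $\QC{S}$-linear functors between dualizable module categories are classified by objects in the appropriate bimodule category, and since the text has already identified the bimodule of $Q$ as $\Gamma_{\Delta}(\mathcal{O}_X \boxtimes \mathcal{O}_X)$ and that of $R$ as $\Gamma_{\Delta}(\omega_X \boxtimes \omega_X)$ inside $\Gamma_{\Delta}(\QC{X \times_S X})$, the whole statement will reduce to identifying the bimodules associated with $Q^\vee$ and $R^\vee$.

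First I would invoke the fact, already used when defining $D_X$ from $\opposite{D_X}$, that under the self-duality identifications $(\Mod{\opposite{D_X}})^\vee \cong \Mod{D_X}$ and $(\Mod{D_X})^\vee \cong \Mod{\opposite{D_X}}$ (coming from Corollary \ref{appendA} and the self-duality of each $\Gamma_{\Delta}(\QC{X^{n+1}})$ in Proposition 3.10 of \cite{previous}), left-right duality of functors between these categories is implemented by the canonical involution on $\Gamma_{\Delta}(\QC{X \times_S X})$ which swaps the two copies of $X$; this is the same involution that sends $\opposite{D_X}$ to $D_X$ and which is identified with left-right duality on endofunctors of $\QC{X}$ via Proposition 3.8 of \cite{previous}. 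Consequently the bimodule associated with $Q^\vee$ is the image of $\Gamma_{\Delta}(\mathcal{O}_X \boxtimes \mathcal{O}_X)$ under this swap, and similarly for $R^\vee$.

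Next I would observe that the two bimodules in question are manifestly fixed by the swap: both $\mathcal{O}_X \boxtimes \mathcal{O}_X$ and $\omega_X \boxtimes \omega_X$ are symmetric in their two factors (they are exterior squares of a single sheaf on $X$), and applying $\Gamma_{\Delta}$ preserves this symmetry because the diagonal is itself invariant under the swap. Hence the bimodules for $Q^\vee$ and $R^\vee$ equal those for $Q$ and $R$, yielding $Q^\vee \cong Q$ and $R^\vee \cong R$.

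The main obstacle is the bookkeeping step of the second paragraph: matching left-right duality of functors between the colimit/limit presentations of $\Mod{\opposite{D_X}}$ and $\Mod{D_X}$ with the swap involution on bimodules. This requires tracing the duality through each $\Gamma_{\Delta}(\QC{X^{n+1}})$ and confirming that the transition functors behave compatibly, using the basic identities $\tilde g_*^\vee \cong \tilde g^*$ and $(\tilde g^\times)^\vee \cong \tilde g_\times$ from Section 4 of \cite{previous}. Alternatively one could bypass this by using the explicit assembly of $Q$ from $\tilde p_{1,*}\tilde p_2^*$ (Remark \ref{explicitq}) and of $R$ from termwise tensoring with $\omega_X^{\boxtimes n+1}$, dualize each piece directly, and use that $p_1 \leftrightarrow p_2$ under the symmetry $X^{n+1} \times X^{m+1} \cong X^{m+1} \times X^{n+1}$.
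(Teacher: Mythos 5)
Your proposal is correct and follows essentially the same route as the paper: the paper identifies the associated bimodules of $Q$ and $R$ as $\Gamma_{\Delta}(\mathcal{O}_X \boxtimes \mathcal{O}_X)$ and $\Gamma_{\Delta}(\omega_X \boxtimes \omega_X)$ and concludes the proposition ``by inspection of the associated bimodules,'' which is precisely your argument that left-right duality acts on bimodules by the swap involution on $\Gamma_{\Delta}(\QC{X \times_S X})$ and that both bimodules are manifestly swap-symmetric. Your write-up merely makes explicit the bookkeeping the paper leaves implicit.
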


The left-right switch is the following theorem.
\begin{theorem} \label{leftrightswitch}
  $R$ is the inverse functor of $Q$, and therefore
  \[\Mod{D_X} \cong \Mod{\opposite{D_X}}\]
\end{theorem}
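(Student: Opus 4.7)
The plan is to verify both composition identities $R \circ Q \cong \id_{\Mod{\opposite{D_X}}}$ and $Q \circ R \cong \id_{\Mod{D_X}}$, which together establish that $R$ and $Q$ are mutually inverse equivalences. By the preceding proposition, $R$ and $Q$ are self-dual under left-right duality, so only one of the two identities needs to be checked directly; the other follows by taking the left-right dual.

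I would verify $R \circ Q \cong \id$ by comparing bimodules. Since both functors are colimit-preserving, each is determined by its representing bimodule: $Q$ corresponds to the $(D_X, \opposite{D_X})$-bimodule $M_Q := \Gamma_{\Delta}(\mathcal{O}_X \boxtimes \mathcal{O}_X)$, already extracted in the discussion above Theorem \ref{leftrightswitch}, while the analogous calculation using the termwise description $R^{(n)}(\mathcal{F}) = \mathcal{F} \otimes \omega_X^{\boxtimes n+1}$ together with equations (\ref{equa}) and (\ref{equb}) identifies $R$ with the $(\opposite{D_X}, D_X)$-bimodule $M_R := \Gamma_{\Delta}(\omega_X \boxtimes \omega_X)$. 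Composition of colimit-preserving functors corresponds to the relative tensor product of bimodules, so $R \circ Q$ is represented by $M_R \otimes_{D_X} M_Q$ as an $(\opposite{D_X}, \opposite{D_X})$-bimodule. The task then is to identify this tensor product with the diagonal bimodule $\opposite{D_X} \cong \Gamma_{\Delta}(\omega_X \boxtimes \mathcal{O}_X)$.

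To perform this identification, I would use the resolution of $\QC{X}$ as a left $\Gamma_{\Delta}(\QC{X \times X})$-module from Proposition \ref{resolve}, together with its right-module analogue, to express $M_R \otimes_{D_X} M_Q$ as a cosimplicial colimit whose terms are computable levelwise. On each level, the required comparison reduces to the identity $\tilde{g}^{\times}(-) \cong \tilde{g}^*(-) \otimes \omega_{g}$ relating cross-pullback and star-pullback along the forget-a-coordinate projections $g : X^{n+1} \to X^{m+1}$ by their relative dualizing complex, combined with the projection formula (Theorem 2.9 of \cite{previous}). This is precisely the compatibility already invoked to show that $R$ is well-defined, so once the setup is in place the calculation is essentially combinatorial.

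The main obstacle will be the careful bookkeeping of the bimodule structures during the tensor-product computation, and in particular keeping track of which $X$-factor carries the $\omega$-twist at each step, since $D_X$ and $\opposite{D_X}$ differ precisely by swapping $\omega_X$ between the two factors. A more hands-on alternative, should the bimodule manipulation prove cumbersome, is to evaluate the candidate counit $QR \to \id$ (equivalently the unit $\id \to RQ$) on the generating family $F_{\opposite{D_X}}(\mathcal{F})$ for $\mathcal{F} \in \QC{X}$; using the explicit descriptions in equations (\ref{equa}) and (\ref{equb}) together with the assembly of $Q$ from Remark \ref{explicitq} and the termwise formula for $R$, this too reduces to the projection formula and the $\omega$-twist identity for cross-versus-star pullback.
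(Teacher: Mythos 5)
Your overall strategy is sound, and your reduction --- prove only $RQ \cong \id$ and obtain the other composite from the self-duality of $R$ and $Q$ under left-right duality --- is exactly the paper's. Moreover your ``hands-on alternative'' \emph{is} the paper's proof: the paper regards $RQ$ as assembled from the functors $(RQ)^{(m,n)}(\mathcal{F}) \cong \omega_X^{\boxtimes n+1} \otimes_{\mathcal{O}_{X^{n+1}}} \tilde{p}_{1,*}\tilde{p}_2^*\mathcal{F}$ (via Remark \ref{explicitq} and the termwise formula for $R$), identifies these with $\tilde{p}_{1,*}\tilde{p}_2^{\times}\mathcal{F}$ by precisely the $\omega$-twist/projection-formula identity you name, and then concludes by matching against Theorem \ref{explicit3}, which says that the identity functor from the colimit presentation to the limit presentation is assembled from $\tilde{\pi}_{\{n+2,\ldots,n+m+2\},*}\tilde{\pi}_{\{1,\ldots,n+1\}}^{\times}$. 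So you have isolated the correct key identity; what you leave implicit is that Theorem \ref{explicit3} is the device that converts the levelwise identification into $RQ \cong \id$.

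Your primary route --- compute the bimodule of $RQ$ as $M_R \otimes_{D_X} M_Q$ and identify it with the diagonal bimodule $\opposite{D_X} \cong \Gamma_{\Delta}(\omega_X \boxtimes \mathcal{O}_X)$ --- is reasonable in principle (composition of colimit-preserving functors does correspond to relative tensor product of bimodules, granting the Eilenberg--Watts-type dictionary the paper uses informally), but the execution you sketch conflates categorical levels: Proposition \ref{resolve} resolves the \emph{category} $\QC{X}$ as a module over the monoidal category $\Gamma_{\Delta}(\QC{X \times X})$, one level above the algebra $D_X$, whereas $M_R \otimes_{D_X} M_Q$ is computed by a bar construction over $D_X$ inside $\Gamma_{\Delta}(\QC{X \times X})$ with the convolution product, whose terms are not disposed of by a levelwise projection-formula argument of the kind you describe. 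Since your fallback avoids this entirely and coincides with the paper's argument, the proposal as a whole goes through; but if you wanted the bimodule-tensor route as the main proof you would need either to carry out the convolution bar construction honestly or to deduce the bimodule identification from the functor-level computation, which is circular unless you do that computation anyway.
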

\begin{proof}
  We show that $RQ \cong \id$, then the result will follow by duality.
  We will show this by directly computing $RQ$ using Remark \ref{explicitq}.

  Consider $RQ$ as a functor
  \[RQ:\colim_{\bm{\opposite{\Delta}}_s}(\Gamma_{\Delta}(\QC{X^{m+1}}),*) \to \lim_{\bm{\Delta}_s}(\Gamma_{\Delta}(\QC{X^{n+1}}),\times)\]
  then we can regard it as assembled from functors
  \[(RQ)^{(m,n)} :\Gamma_{\Delta}(\QC{X^{m+1}}) \to \Gamma_{\Delta}(\QC{X^{n+1}})\]
  which are defined by
  \[(RQ)^{(m,n)}(\mathcal{F}) \cong \omega^{\boxtimes n+1} \otimes_{\mathcal{O}_{X^{n+1}}} \tilde{p}_{1,*}\tilde{p}_2^*\mathcal{F}\]
  where
  \[\tilde{p}_{1,*} : \Gamma_{\Delta}(\QC{X^{m+n+2}}) \to \Gamma_{\Delta}(\QC{X^{n+1}})\]
  \[\tilde{p}_2^* : \Gamma_{\Delta}(\QC{X^{m+1}}) \to \Gamma_{\Delta}(\QC{X^{m+n+2}})\]
  then the claim follows from
  \[\omega^{\boxtimes n+1} \otimes_{\mathcal{O}_{X^{n+1}}} \tilde{p}_{1,*}\tilde{p}_2^*\mathcal{F} \cong \tilde{p}_{1,*}\tilde{p}_2^{\times}\mathcal{F}\]
  together with Theorem \ref{explicit3}.
\end{proof}

\section{Pushforward and Pullback of $D$-Modules} \label{pushpullsection}
We discuss in this section how to pushforward and pullback $D$-modules, both left and right.
We take the perspective of defining the functors on the category of $D$-modules first, and then
subsequently defining the transfer bimodules using those functors. Therefore, transfer modules take a back-seat in our story,
and we approach these functors as for crystals.

Suppose $f : X \to Y$ is a map of spectral schemes over $S$, both finite tor-amplitude, locally almost of finite presentation and separated over $S$.
Let us define pullback of $D_X$ modules, using the presentation of $\Mod{D_X}$ as a cosimplicial limit.
We define the functors
\[f^{+,(n)} : \Gamma_{\Delta}(\QC{Y^{n+1}}) \to \Gamma_{\Delta}(\QC{X^{n+1}})\]
by
\[f^{+,(n)}:= \Gamma_{\Delta}(f^*)^{n+1}i_{\Delta} = \widetilde{(f^*)^{n+1}}\]
where the second equality is just a notation.
These functors are obviously compatible with the transition maps, so they assemble into the functor
\[f^+ : \lim_{\bm{\Delta}_s}(\Gamma_{\Delta}(\QC{Y^{n+1}}),*) \to \lim_{\bm{\Delta}_s}(\Gamma_{\Delta}(\QC{X^{n+1}}),*)\]
or equivalently
\[f^+ : \Mod{D_Y} \to \Mod{D_X}\]
Which is what we call pullback of $D_X$ modules.

Left-right duality takes the pullback functor of $D_X$ modules to
the pushforward of $\opposite{D_X}$ modules, which we can also easily define directly. Consider the functors
\[f_+^{(n)} : \Gamma_{\Delta}(\QC{X^{n+1}}) \to \Gamma_{\Delta}(\QC{Y^{n+1}})\]
defined by 
\[f_+^{(n)}:= \Gamma_{\Delta}(f_*)^{n+1}i_{\Delta}=\widetilde{(f_*)^{n+1}}\]
where as before the second equality is a notation.
These assembles into the functor
\[f_+ : \colim_{\bm{\opposite{\Delta}}_s}(\Gamma_{\Delta}(\QC{X^{n+1}}),*) \to \colim_{\bm{\opposite{\Delta}}_s}(\Gamma_{\Delta}(\QC{Y^{n+1}}),*)\]
or equivalently
\[f_+ : \Mod{\opposite{D_X}} \to \Mod{\opposite{D_Y}}\]

Now we will define the transfer module to compare with the classical story.
As $f_+$ and $f^+$ are guaranteed to be colimit-preserving, these
functors have corresponding transfer modules. The transfer module for $f^+$ and the one
for $f_+$ will be the same (up to swapping the order of $X$ and $Y$)
because they are related by left-right duality. As in the section on the left-right switch,
we find the transfer module by considering the composition
\[G_{D_X}f^+F_{D_Y}\]
which simplifies to (by Remark \ref{explicit2})
\[f^*\tilde{\pi}^{(Y \times Y)}_{1,\times}\tilde{\pi}_2^{(Y \times Y),*}\]
Define
\[\delta_f : X \to X \times_S Y\]
to be the graph of $f$. Let us denote by $\Gamma_{f}$
the local cohomology functor on $\QC{X \times_S Y}$ relative to this subset.
Consider the split-exact sequence of presentable stable categories
\[\Gamma_{\Delta}(\QC{Y \times Y}) \to \QC{Y \times Y} \to \QC{U}\]
for the closed subset $\Delta$ in $Y \times Y$, where $U$ is the complement of $\Delta$.
We can apply the functor $\QC{X} \otimes_{\QC{Y}} \_$ to the above (where $\QC{Y}$ acts on the left) to 
get the split-exact sequence (see Remark B.7 of \cite{previous})
\[\QC{X} \otimes_{\QC{Y}} \Gamma_{\Delta}(\QC{Y \times Y}) \to \QC{X \times Y} \to \QC{V}\]
where $V$ is the complement of the graph of $f$ in $X \times Y$.
Therefore, we have the result
\begin{lemma}
  \[\Gamma_f(\QC{X \times_S Y}) \cong \QC{X} \otimes_{\QC{Y}} \Gamma_{\Delta}(\QC{Y \times_S Y})\]
  where $\QC{Y}$ acts on $\Gamma_{\Delta}(\QC{Y \times_S Y})$ via $\tilde{\pi}_1^*$.
\end{lemma}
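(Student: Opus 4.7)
The plan is to build on the split-exact sequence discussion immediately preceding the lemma. My proof would proceed as follows.

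First, I would start from the defining split-exact sequence
\[\Gamma_{\Delta}(\QC{Y \times_S Y}) \to \QC{Y \times_S Y} \to \QC{U},\]
with $U = Y \times_S Y \setminus \Delta$. Here $\Gamma_\Delta$ is characterized as the kernel of restriction to the open complement, which is precisely the content of the preceding paragraph. I would then apply the functor $\QC{X} \otimes_{\QC{Y}} (-)$, where $\QC{Y}$ acts on $\QC{Y \times_S Y}$ through the first projection. By the cited Remark B.7 of \cite{previous}, tensoring with a presentable stable category preserves split-exact sequences, so we get
\[\QC{X} \otimes_{\QC{Y}} \Gamma_{\Delta}(\QC{Y \times_S Y}) \to \QC{X} \otimes_{\QC{Y}} \QC{Y \times_S Y} \to \QC{X} \otimes_{\QC{Y}} \QC{U}.\]

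Next, I would identify the middle and right terms via base change for $\QC{}$. Since $X \times_S Y \cong X \times_Y (Y \times_S Y)$ using the first projection $\pi_1 : Y \times_S Y \to Y$, we have $\QC{X} \otimes_{\QC{Y}} \QC{Y \times_S Y} \cong \QC{X \times_S Y}$. Similarly, letting $V := X \times_Y U \subset X \times_S Y$, we get $\QC{X} \otimes_{\QC{Y}} \QC{U} \cong \QC{V}$. The key geometric observation is that $V$ is precisely the complement of the graph $\delta_f(X) \subset X \times_S Y$: a point $(x,y)$ lies over $\Delta$ under the map $X \times_S Y \to Y \times_S Y$ sending $(x,y) \mapsto (f(x),y)$ exactly when $f(x) = y$, i.e., when $(x,y) \in \delta_f(X)$.

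Thus the split-exact sequence becomes
\[\QC{X} \otimes_{\QC{Y}} \Gamma_{\Delta}(\QC{Y \times_S Y}) \to \QC{X \times_S Y} \to \QC{V},\]
and by the same characterization of local cohomology as the kernel of restriction to the open complement, the left term is identified with $\Gamma_f(\QC{X \times_S Y})$, giving the desired isomorphism.

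The steps here are all essentially formal once the right framework is in place; the only point to be careful about is matching up the closed-subset identification under base change (namely that the preimage of $\Delta \subset Y \times_S Y$ under $(\id, f) \times \id : X \times_S Y \to Y \times_S Y$ is indeed the graph of $f$), and I do not expect any serious obstacle.
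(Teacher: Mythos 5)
Your proposal is correct and follows essentially the same route as the paper: the paper also tensors the split-exact sequence $\Gamma_{\Delta}(\QC{Y \times_S Y}) \to \QC{Y \times_S Y} \to \QC{U}$ with $\QC{X}$ over $\QC{Y}$ (citing Remark B.7 of \cite{previous}), identifies the middle term with $\QC{X \times_S Y}$ and the last term with $\QC{V}$ for $V$ the complement of the graph of $f$, and reads off the kernel as $\Gamma_f(\QC{X \times_S Y})$. Your added remark checking that the preimage of the diagonal under $(f \times \id)$ is the graph of $f$ is exactly the point the paper leaves implicit.
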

With the description of $\Gamma_f(\QC{X \times_S Y})$ above, we have (by comparing their right adjoints)
\footnote{Note here both $\tilde{\pi}_1^{\times}$'s are colimit-preserving because their left adjoints are compact object preserving.}
\[\tilde{\pi}^{X \times Y}_{1,\times} \cong \id_{\QC{X}} \otimes \tilde{\pi}^{(Y \times Y)}_{1,\times}\]
Consider the diagram
\[
  \begin{tikzcd}
    X \times_S Y  \arrow{r}{f \times \id} \arrow{dd}{\pi^{(X \times Y)}_1} & Y \times_S Y \arrow{dd}{\pi_1^{(Y \times Y)}} \\ \\
    X \arrow{r}{f} & Y\\
  \end{tikzcd}
\]
Using Theorem A.1 of \cite{previous}, we have

\begin{equation*}
  \begin{split}
    f^*\tilde{\pi}^{(Y \times Y)}_{1,\times}\tilde{\pi}_2^{(Y \times Y),*}
    &\cong \tilde{\pi}^{(X \times Y)}_{1,\times}\widetilde{(f \times \id)}^*\tilde{\pi}^{(Y \times Y),*}_2\\
    &\cong \tilde{\pi}^{(X \times Y)}_{1,\times}\tilde{\pi}_2^{(X \times Y),*} \\
  \end{split}
\end{equation*}
where
\[\tilde{\pi}^{(X \times Y)}_{1,\times} : \Gamma_f(\QC{X \times_S Y}) \to \QC{X}\]
is defined as before (as the left adjoint of $\tilde{\pi}^{(X \times Y),*}_1$).
Hence the bimodule for the pullback functor $f^+$ is the one corresponding to the functor
\[\tilde{\pi}^{(X \times Y)}_{1,\times}\tilde{\pi}_2^{(X \times Y),*}\]
which is (by the left-right duals of Theorem 2.8 and Theorem 2.5 of \cite{previous})
\[\Gamma_f(\mathcal{O}_X \boxtimes \omega_Y)\]
Hence,
\begin{theodef} \label{transfermod}
  The transfer module $D_{X \to Y}$ for $f^+$ (and also $f_+$) is 
  \[D_{X \to Y/S} := \Gamma_f(\mathcal{O}_X \boxtimes \omega_Y) \cong \tilde{\pi}_1^{X \times Y, \times}\mathcal{O}_X \in \Gamma_f(\QC{X \times_S Y})\]
\end{theodef}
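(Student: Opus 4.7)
The plan is to identify the $(D_X, D_Y)$-bimodule representing the colimit-preserving functor $f^+ : \Mod{D_Y} \to \Mod{D_X}$ by computing the composite $G_{D_X} f^+ F_{D_Y} : \QC{Y} \to \QC{X}$ and matching it with the functor represented by the proposed sheaf. Since $f^+$ was built levelwise from colimit-preserving constituents, it is itself colimit-preserving and therefore corresponds to a unique such bimodule under the standard identification of $\Gamma_f(\QC{X \times_S Y})$ with a suitable $\Hom$ category.

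First, using Remark \ref{explicit2} at level $n=0$, write $F_{D_Y}^{(0)} \cong \tilde{\pi}_{\hat{2},\times} \tilde{\pi}_2^*$ and take $G_{D_X}$ to be the projection to $\QC{X}$. Combining with the definition $f^{+,(0)} = \widetilde{(f^*)^{2}}$ and tracking the two copies of $Y$, one obtains the identification $G_{D_X} f^+ F_{D_Y} \cong f^* \tilde{\pi}^{(Y \times Y)}_{1,\times} \tilde{\pi}_2^{(Y \times Y),*}$. Second, establish the lemma $\Gamma_f(\QC{X \times_S Y}) \cong \QC{X} \otimes_{\QC{Y}} \Gamma_{\Delta}(\QC{Y \times_S Y})$ by applying $\QC{X} \otimes_{\QC{Y}} -$ to the split-exact sequence $\Gamma_{\Delta}(\QC{Y \times Y}) \to \QC{Y \times Y} \to \QC{U}$ and invoking Remark B.7 of \cite{previous} on tensor products of split-exact sequences. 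Comparing right adjoints yields $\tilde{\pi}^{(X \times Y)}_{1,\times} \cong \id_{\QC{X}} \otimes \tilde{\pi}^{(Y \times Y)}_{1,\times}$, and base change for the Cartesian square with horizontal arrows $f \times \id$ and $f$ (Theorem A.1 of \cite{previous}) converts the composite into $\tilde{\pi}^{(X \times Y)}_{1,\times} \tilde{\pi}_2^{(X \times Y),*}$ on $\Gamma_f(\QC{X \times_S Y})$.

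Third, identify which sheaf in $\Gamma_f(\QC{X \times_S Y})$ represents $\tilde{\pi}^{(X \times Y)}_{1,\times} \tilde{\pi}_2^{(X \times Y),*}$. By the left-right duals of Theorems 2.5 and 2.8 of \cite{previous}, this endofunctor corresponds to convolution against $\Gamma_f(\mathcal{O}_X \boxtimes \omega_Y)$; the second presentation $\tilde{\pi}_1^{X \times Y, \times} \mathcal{O}_X$ follows from the formula $\tilde{\pi}_1^{\times}(\mathcal{F}) \cong \Gamma(\mathcal{F} \boxtimes \omega)$ (Proposition 2.6 of \cite{previous}) specialized to $\mathcal{F} = \mathcal{O}_X$ on $X \times_S Y$. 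The analogous claim for $f_+$ is obtained by left-right duality throughout, since $f_+$ is the left-right dual of $f^+$ and the transfer bimodules swap accordingly.

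The main technical point is ensuring that base change passes cleanly to the supported subcategories, i.e. that the tilde-variants of lower-star and upper-cross intertwine with $f^*$ on $\Gamma_f$ as they do on the ambient $\QC{X \times_S Y}$. This is precisely where the compact-object-preservation results and the split-exact sequence formalism of \cite{previous} do the real work: they guarantee that the local cohomology functors $\Gamma_f$ and $\Gamma_\Delta$ commute with the relevant tensor products and adjunctions, which is what makes the tilde-decorated base change identity $\widetilde{(f \times \id)}^* \tilde{\pi}_2^{(Y \times Y),*} \cong \tilde{\pi}_2^{(X \times Y),*}$ hold in the supported setting. Once these compatibilities are in place, everything else is a mechanical assembly of already established isomorphisms.
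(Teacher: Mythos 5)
Your proposal is correct and follows essentially the same route as the paper: computing $G_{D_X}f^+F_{D_Y}$ via Remark \ref{explicit2}, identifying $\Gamma_f(\QC{X \times_S Y}) \cong \QC{X} \otimes_{\QC{Y}} \Gamma_{\Delta}(\QC{Y \times_S Y})$ through the split-exact sequence, base-changing with Theorem A.1 of \cite{previous}, and then reading off the bimodule from the left-right duals of Theorems 2.5 and 2.8. The only difference is presentational (you spell out the level-zero computation that the paper leaves to a citation), so nothing further is needed.
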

\begin{corollary}
  \[D_{X \to Y/S} \cong \widetilde{(f \times \id)}^*D_{Y/S}\]
  where
  \[\widetilde{(f \times \id)}^* : \Gamma_{\Delta}(\QC{Y \times_S Y}) \to \Gamma_f(\QC{X \times_S Y})\]
  is induced from the pullback functor $(f \times \id)^*$.
\end{corollary}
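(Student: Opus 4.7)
The plan is to unwind both sides of the claimed isomorphism using the explicit formula $\tilde{\pi}_1^\times \mathcal{F} \cong \Gamma_\Delta(\mathcal{F} \boxtimes \omega_Y)$ (Proposition 2.6 of \cite{previous}, already invoked in the text just before equation (\ref{equaa})). This gives $D_{Y/S} \cong \Gamma_\Delta(\mathcal{O}_Y \boxtimes \omega_Y)$ and, applying the analogous formula to the first projection $p_1 : X \times_S Y \to X$, $D_{X \to Y/S} \cong \Gamma_f(\mathcal{O}_X \boxtimes \omega_Y)$. The corollary is thus equivalent to the identity
\[
\widetilde{(f \times \id)}^* \Gamma_\Delta(\mathcal{O}_Y \boxtimes \omega_Y) \cong \Gamma_f(\mathcal{O}_X \boxtimes \omega_Y),
\]
i.e. the base change $\widetilde{(f \times \id)}^* \tilde{\pi}_1^\times \cong \tilde{p}_1^\times f^*$ along the Cartesian square on page containing Theorem/Definition \ref{transfermod} (whose top and bottom horizontals are $f \times \id$ and $f$, and whose verticals are $\pi_1$ and $p_1$), evaluated on $\mathcal{O}_Y$, whose pullback along $f$ is $\mathcal{O}_X$.

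For the displayed isomorphism I would reuse the very split-exact-sequence argument appearing immediately above in the derivation of Theorem/Definition \ref{transfermod}. Applying $\QC{X \times_S Y} \otimes_{\QC{Y \times_S Y}} (-)$ to the split-exact sequence $\Gamma_\Delta(\QC{Y \times Y}) \to \QC{Y \times Y} \to \QC{U_\Delta}$ and noting that $(f \times \id)^{-1}(\Delta) = \Gamma_f$, one identifies $\widetilde{(f \times \id)}^*$ on $\Gamma_\Delta(\QC{Y \times Y})$ with ordinary pullback $(f \times \id)^*$ that automatically lands in $\Gamma_f(\QC{X \times_S Y})$. Equivalently, applying $(f \times \id)^*$ to the fiber sequence $\Gamma_\Delta(\mathcal{F}) \to \mathcal{F} \to j_* j^* \mathcal{F}$ with $\mathcal{F} = \mathcal{O}_Y \boxtimes \omega_Y$ and invoking flat base change along the open immersion $j$ (which restricts to the complementary open immersion $j' : (X \times_S Y) \setminus \Gamma_f \hookrightarrow X \times_S Y$ under $f \times \id$) produces the defining fiber sequence of $\Gamma_f\bigl((f \times \id)^*(\mathcal{O}_Y \boxtimes \omega_Y)\bigr)$. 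Since $(f \times \id)^*(\mathcal{O}_Y \boxtimes \omega_Y) \cong \mathcal{O}_X \boxtimes \omega_Y$ by compatibility of external product with pullback, the result follows.

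I do not anticipate a genuine obstacle here: the only ingredients are a formal split-exact-sequence (equivalently flat base change for the open immersion complementary to $\Delta$) and a tensorial identification of external products under pullback, both of which are used in the immediately preceding derivation of Theorem/Definition \ref{transfermod}. The proof is essentially a bookkeeping exercise given Proposition 2.6 of \cite{previous} and the already-established equivalence $\Gamma_f(\QC{X \times_S Y}) \cong \QC{X} \otimes_{\QC{Y}} \Gamma_\Delta(\QC{Y \times_S Y})$.
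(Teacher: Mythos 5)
Your proposal is correct and follows essentially the same (implicit) argument the paper intends: the corollary is stated without proof precisely because it reduces to the identification $D_{Y/S}\cong\Gamma_{\Delta}(\mathcal{O}_Y\boxtimes\omega_{Y/S})$, the compatibility of $\widetilde{(f\times\id)}^*$ with the supported-sections fiber sequence (equivalently the split-exact sequence / base-change along the open complement of the diagonal, already used in the derivation of Theorem/Definition \ref{transfermod} and in the lemma identifying $\Gamma_f(\QC{X\times_S Y})\cong\QC{X}\otimes_{\QC{Y}}\Gamma_{\Delta}(\QC{Y\times_S Y})$), and the isomorphism $(f\times\id)^*(\mathcal{O}_Y\boxtimes\omega_Y)\cong\mathcal{O}_X\boxtimes\omega_Y$, which is exactly your bookkeeping. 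No gap.
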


It is clear that $D_{X \to Y/S}$ naturally carries a left $D_{X/S}$ action and a right $D_{Y/S}$ action.
As the plus pullback functors compose well, also the transfer modules must compose well.
\begin{theorem}
  \[D_{X \to Z} \cong D_{X \to Y} \star_{D_Y} D_{Y \to Z}\]
\end{theorem}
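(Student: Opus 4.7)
The plan is to deduce the isomorphism from the functorial identity $f^{+}g^{+}\cong (gf)^{+}$---which is immediate from the termwise definition $f^{+,(n)}=\widetilde{(f^{*})^{n+1}}$ and functoriality of quasicoherent pullback---by invoking the uniqueness of the transfer bimodule representing a colimit-preserving plus-pullback. Just as in Theorem/Definition \ref{transfermod}, where the bimodule for $f^{+}$ was identified by computing $G_{D_{X}}f^{+}F_{D_{Y}}$, the transfer bimodule associated to any composite is determined by an analogous composition; this is the same device used in the left-right switch section.

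Concretely, the convolution $\star_{D_{Y/S}}$ must be defined so that it matches composition of the represented functors: given bimodules $\mathcal{M}\in\Gamma_{f}(\QC{X\times_{S}Y})$ and $\mathcal{N}\in\Gamma_{g}(\QC{Y\times_{S}Z})$, their convolution is realized as a relative tensor product over the $\Gamma_{\Delta}(\QC{Y\times_{S}Y})$-action on both sides, coming from the resolution in Proposition \ref{resolve} applied to $\QC{Y}$. With this definition, the identity $D_{X\to Z/S}\cong D_{X\to Y/S}\star_{D_{Y/S}}D_{Y\to Z/S}$ is forced by $f^{+}g^{+}\cong (gf)^{+}$ together with uniqueness of the bimodule representing a colimit-preserving functor---an enriched form of the endofunctor--bimodule correspondence that was already used implicitly when associating a bimodule to $f^{+}$ in the first place.

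For a more direct verification I would exploit the corollary $D_{X\to Y/S}\cong\widetilde{(f\times\id)}^{*}D_{Y/S}$ and its analogues for $g$ and $gf$. The identity then reduces to base change (Theorem A.1 of \cite{previous}) applied to the cartesian squares built from $X\times_{S}Y\times_{S}Z$ projecting to $X\times_{S}Z$ and to $Y\times_{S}Y$, combined with the projection formula and the compatibility of $\Gamma$-local-cohomology with the relevant pullbacks. The main obstacle is not the abstract composition argument but pinning down a clean and correct definition of $\star_{D_{Y/S}}$ between bimodule categories attached to different product schemes; once this bookkeeping is in place---and checked to agree with the two equivalent descriptions above---the result is essentially formal.
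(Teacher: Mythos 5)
Your argument is correct and coincides with the paper's own (very brief) justification: the paper simply observes that the plus-pullbacks compose termwise, $f^{+}g^{+}\cong (gf)^{+}$, and concludes that the representing transfer bimodules must compose under $\star_{D_{Y}}$, which it explains is just the tensor over $D_{Y}$. Your additional remarks on defining the star product as a relative tensor over the $D_{Y}$-action and the alternative check via $D_{X\to Y}\cong\widetilde{(f\times\id)}^{*}D_{Y/S}$ only elaborate on details the paper leaves implicit.
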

\begin{remark}
  The star product is used here to recall
  that the algebra structure on $D$ is with respect to the convolution tensor product; but this can just be thought of as a tensor over $D_Y$
\end{remark}

\begin{remark} \label{adjointremark}
  Suppose additionally that $f$ is finite tor-amplitude. In this situation, we can define
  the functor
  \[f_{\dagger} : \Mod{D_X} \to \Mod{D_Y}\]
  as a functor
  \[f_{\dagger} : \colim_{\bm{\opposite{\Delta}_s}}(\QC{X^{m+1}},\times) \to \colim_{\bm{\opposite{\Delta}_s}}(\QC{Y^{m+1}},\times)\]
  which is given simply by assembling
  \[f_{\dagger}^{(n)} \cong \widetilde{f^{n+1}_{!}} : \QC{X^{m+1}} \to \QC{Y^{m+1}}\]
  and dually we can define the functor
  \[f^{\dagger} : \Mod{\opposite{D_Y}} \to \Mod{\opposite{D_X}}\]
  as a functor
  \[f^{\dagger} : \lim_{\bm{\Delta}_s}(\QC{Y^{m+1}},\times) \to \lim_{\bm{\Delta}_s}(\QC{X^{m+1}},\times)\]
  given simply given by assembling
  \[f^{\dagger,(n)} \cong \widetilde{f^{n+1,!}} : \QC{Y^{m+1}} \to \QC{X^{m+1}}\]
  If $f$ is in addition proper, then $f_{\dagger}$ is left adjoint to $f^+$ and
  $f^{\dagger}$ is right adjoint to $f_+$.
  The $!$-functors are defined in \cite{previous}.
  We note that these constructions are made easier because we restricted to $\bm{\Delta}_s$.
\end{remark}

\begin{remark}
  The dagger functors above are exactly the left-right switches (not dual!) of the plus functors.
  We must warn the reader here that the pullback and pushforward functors described above differ
  from the standard presentations, even when the notation is the same!
  For example, if the reader is comparing to the \cite{HTT2} book, the translation goes as follows for a
  map $f: X \to Y$ between smooth varieties
  \[\int_{f} \cong f_+[\dim{X} - \dim{Y}]\]
  and 
  \[f^{\dagger}_{HTT} \cong f^+[\dim{X} - \dim{Y}]\]
  where the left hand side is the in notation of \cite{HTT2}.
\end{remark}

\section{Kashiwara's Equivalence}
In this section, we prove a version of Kashiwara's equivalence in our context. In particular it shows that the category
of $D_Z$-modules on a singular variety $Z$ embedded in a smooth variety $X$ is equivalent to the subcategory of $D_X$-modules
supported on $Z$.

Suppose $z : Z \to X$ is a finite tor-amplitude closed subscheme, where $p_X : X \to S$ and $p_Z :Z \to S$ satisfy the our standing assumptions.
For any $n$, consider the split-exact sequence of presentable stable categories in $\Mod{\QC{S}}^L$
\begin{equation} \label{eeb}
  \Gamma_{\Delta_Z}(\QC{X^{n+1}}) \xrightarrow{i^{(n)}_Z} \Gamma_{\Delta_X}(\QC{X^{n+1}}) \xrightarrow{j^{(n),*}} \Gamma_{\Delta_U}(\QC{U^{n+1}})
\end{equation}

whose right adjoints $\Gamma^{(n)}_Z$ and $j^{(n)}_*$ (for $(i_Z^{(n)})_*$ and $j^{(n),*}$ respectively) are colimit-preserving.

In fact, using lower star (quasicoherent pushforward) as the transition maps, each of the above categories fits into a simplicial diagram.
Each of the functors $i^{(n)},j^{(n),*},\Gamma_Z^{(n)},j^{(n)}_*$ commute with the transition maps, therefore
after taking colimits of the simplicial diagrams of categories with $n$-th objects as described above, we recover a split-exact
sequence by Lemma B.6 of \cite{previous}.
\begin{equation} \label{eea}
\colim_{\opposite{\bm{\Delta_s}}}\Gamma_{\Delta_Z}(\QC{X^{n+1}})
  \xrightarrow{i_Z} \colim_{\opposite{\bm{\Delta_s}}}\Gamma_{\Delta_X}(\QC{X^{n+1}})
  \xrightarrow{j^*} \colim_{\opposite{\bm{\Delta_s}}}\Gamma_{\Delta_U}(\QC{U^{n+1}})
\end{equation}

Note that to check the last condition in the lemma, it suffices to check the exactness for compact objects in $\colim_{\opposite{\bm{\Delta_s}}}\Gamma_{\Delta}(\QC{X^{n+1}})$
where it follows from the split-exactness of (\ref{eeb}).
Let us define
\[\Gamma_Z(\Mod{\opposite{D_X}}):= \colim_{\bm{\opposite{\Delta}}}\Gamma_{\Delta_Z}(\QC{X^{n+1}})\]
The above sequence (\ref{eea}) can then be written simply as
\[\Gamma_Z(\Mod{\opposite{D_X}})
  \xrightarrow{i_Z} \Mod{\opposite{D_X}} 
  \xrightarrow{j^*} \Mod{\opposite{D_U}}\]
Therefore, $\Gamma_Z(\Mod{\opposite{D_X}})$ is the category of $\opposite{D}_X$ modules supported on $Z$ (as it is exactly those
that vanish after pulling back to $U$--note that pulling back right $D_X$ modules
to open subsets is well-defined). 

By modifying the proof of Theorem \ref{bigtheorem} we can see that the natural map
\[\Gamma_Z(\QC{X}) \to \Gamma_Z(\Mod{\opposite{D_X}})\]
is the left adjoint in a monadic adjunction where the monad in question is 
\[\tilde{\pi}_{1,*}\tilde{\pi}_2^{\times}\]
The functors 
\[\tilde{\pi}_2^{\times}: \Gamma_Z(\QC{X}) \to \Gamma_{\Delta_Z}(\QC{X \times X})\]
and
\[\tilde{\pi}_{1,*}: \Gamma_{\Delta_Z}(\QC{X \times X}) \to \Gamma_Z(\QC{X}) \]
are defined analogously to before, but restricted only to quasicoherent sheaves supported over $Z$.

We would like to compare the category $\Gamma_Z(\Mod{\opposite{D_X}})$
with
\[\Mod{\opposite{D_Z}} \cong \colim_{\bm{\opposite{\Delta}}}(\Gamma_{\Delta_Z}(\QC{Z^{n+1}}),*) \]
The functor
\[z_+ : \Mod{\opposite{D_Z}} \to \Mod{\opposite{D_X}}\]
naturally induces (and in fact factors through) the functor
\[\tilde{z}_+: \Mod{\opposite{D_Z}} \to \Gamma_Z(\Mod{\opposite{D_X}})\]
This is the functor which we will shows is an equivalence.
By Remark \ref{adjointremark}, $\tilde{z}_+$ has a colimit-preserving right adjoint, $\tilde{z}^{\dagger}$.
We wish to show that $\tilde{z}^{\dagger}\tilde{z}_+ \cong \id$ and $\tilde{z}^{\dagger}\tilde{z}_+\cong \id$.
First, notice that
\begin{lemma}
  The adjunction $\tilde{z}_+ \dashv \tilde{z}^{\dagger}$ is monadic.
\end{lemma}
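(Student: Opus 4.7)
The plan is to apply Lurie-Barr-Beck (Theorem 4.7.3.5 of \cite{HA}) to the adjunction $\tilde{z}_+ \dashv \tilde{z}^\dagger$. This requires two verifications: that $\tilde{z}^\dagger$ is colimit-preserving and that it is conservative.

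Colimit preservation will follow essentially from the observation in Remark \ref{adjointremark}. Using the cosimplicial limit descriptions
\[\Gamma_Z(\Mod{\opposite{D_X}}) \cong \lim_{\bm{\Delta_s}}(\Gamma_{\Delta_Z}(\QC{X^{n+1}}),\times), \qquad \Mod{\opposite{D_Z}} \cong \lim_{\bm{\Delta_s}}(\Gamma_{\Delta_Z}(\QC{Z^{n+1}}),\times),\]
the functor $\tilde{z}^\dagger$ is assembled termwise from $\widetilde{z^{n+1,!}}$. Since $z$ is a finite tor-amplitude closed immersion, so is each $z^{n+1}$; hence $z^{n+1}_*$ preserves compact objects (a mild variant of Theorem 1.3 of \cite{previous}), so each $z^{n+1,!}$ is colimit-preserving, and their assembly $\tilde{z}^\dagger$ inherits this property.

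For conservativity, I would follow the same pattern as the conservativity argument in the proof of Theorem \ref{bigtheorem}. Given $\mathcal{F} \in \Gamma_Z(\Mod{\opposite{D_X}})$ with $\tilde{z}^\dagger \mathcal{F} = 0$, the $n = 0$ component of $\tilde{z}^\dagger \mathcal{F}$ equals $z^! \mathcal{F}_0$, where $\mathcal{F}_0 \in \Gamma_Z(\QC{X})$ denotes the $n = 0$ projection of $\mathcal{F}$; so $z^! \mathcal{F}_0 = 0$. Granting the technical claim that $z^! : \Gamma_Z(\QC{X}) \to \QC{Z}$ is conservative, one concludes $\mathcal{F}_0 = 0$. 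The weakly-initial property of $[0] \in \bm{\Delta_s}$ then forces $\mathcal{F} = 0$ in the cosimplicial limit, exactly as used to show conservativity of $G_{\opposite{D_X}}$ in the proof of Theorem \ref{bigtheorem}: every higher projection is obtained from the $[0]$-projection by applying a transition functor, and these preserve the zero object.

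The main obstacle is therefore conservativity of $z^! : \Gamma_Z(\QC{X}) \to \QC{Z}$. By the adjunction $z_* \dashv z^!$, vanishing of $z^!\mathcal{F}$ is equivalent to $\mathcal{F}$ being right-orthogonal to the essential image of $z_*$, so it suffices to verify that this image generates $\Gamma_Z(\QC{X})$ as a localizing subcategory. For a finite tor-amplitude closed immersion of Noetherian schemes, $\Gamma_Z(\QC{X})$ is compactly generated by perfect complexes set-theoretically supported on $Z$, and locally any such perfect complex lies in the thick closure of $\{z_*\mathcal{G} : \mathcal{G} \in \QC{Z}^{\omega}\}$ via Koszul-type resolutions of $\mathcal{O}_X/I_Z$. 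This is a classical fact and is the only nontrivial input beyond the assembly formalism already developed in the paper.
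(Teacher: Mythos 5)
Your proof is correct and follows essentially the same route as the paper: Barr--Beck--Lurie, colimit-preservation read off from the termwise description in Remark \ref{adjointremark}, and conservativity by projecting to the $[0]$-component and using that $[0]$ is weakly initial in $\bm{\Delta}_s$ --- which is exactly the paper's isomorphism $G_{\opposite{D}_Z}\tilde{z}^{\dagger} \cong \tilde{z}^{\times}G_{\Gamma_Z(\opposite{D}_X)}$ together with conservativity of the two right-hand functors. The only difference is cosmetic: where you sketch the generation-by-$z_*$ (Koszul/d\'evissage) argument for conservativity of $\tilde{z}^{\times}$ on $\Gamma_Z(\QC{X})$, the paper simply cites Lemma C.2 of \cite{previous}.
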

\begin{proof}
  By Barr-Beck-Lurie it suffices to show that $\tilde{z}^{\dagger}$ is conservative and colimit-preserving. The functor
  is colimit-preserving directly from the definition given in Remark \ref{adjointremark}. For conservativeness, notice that
  \[G_{\opposite{D}_Z}\tilde{z}^{\dagger} \cong \tilde{z}^{\times}G_{\Gamma_Z(\opposite{D}_X)}\]
  and both of the functors on the right are conservative (see Lemma C.2 of \cite{previous}).
\end{proof}
Therefore we have shown that $\Gamma_Z(\Mod{\opposite{D}_X})$ is monadic over $\Mod{\opposite{D}_Z}$ which is itself monadic over $\QC{Z}$.
Hence the equivalence can be shown by showing that the map of monads
\[G_{\opposite{D_Z}}F_{\opposite{D_Z}} \to
G_{\opposite{D_Z}}\tilde{z}^{\dagger}\tilde{z}_+F_{\opposite{D_Z}}\] is an isomorphism.
Consider the endofunctor
\[G_{\opposite{D_Z}}\tilde{z}^{\dagger}\tilde{z}_+F_{\opposite{D_Z}}\]
Because of the commutative diagram
\[
  \begin{tikzcd}
    \QC{Z} \arrow{r}{\tilde{z}_*} \arrow{dd}{F_{\opposite{D_Z}}} & \Gamma_Z(\QC{X}) \arrow{dd}{F_{\Gamma_Z(\opposite{D_X})}}  \\ \\
    \Mod{\opposite{D_Z}} \arrow{r}{\tilde{z}_+} & \Gamma_Z(\Mod{\opposite{D_X}}) \\
  \end{tikzcd}
\]
The above is also
\[\tilde{z}^{\times}G_{\Gamma_Z(\opposite{D_X})}F_{\Gamma_Z(\opposite{D_X})}\tilde{z}_*\]
or
\[\tilde{z}^{\times}\tilde{\pi}_{1,*}\tilde{\pi}_2^{\times}\tilde{z}_*\]

By base-changing the split-exact sequence 
\begin{equation}
  \Gamma_{\Delta}(\QC{X \times X}) \to \QC{X} \to \QC{U}
\end{equation}
where $U$ is the complement of the diagonal, we can show
\[\Gamma_{\Delta_Z}(\QC{X \times Z}) \cong \Gamma_{\Delta_X}(\QC{X \times X}) \otimes_{\QC{X}} \QC{Z}\]
Now because the action of $\QC{X}$ on $\QC{Z}$ factors through $\Gamma_Z(\QC{X})$, we have
\[\Gamma_{\Delta_Z}(\QC{X \times Z}) \cong \Gamma_{\Delta_Z}(\QC{X \times X}) \otimes_{\Gamma_Z(\QC{X})} \QC{Z} \]
since
\[\Gamma_{\Delta_X}(\QC{X \times X}) \otimes_{\QC{X}} \Gamma_Z(\QC{X}) \cong \Gamma_{\Delta_Z}(\QC{X \times X})\]
Looking at the diagram
\[
  \begin{tikzcd}
    \QC{Z} \arrow{r}{\tilde{z}_*} \arrow{dd}{\tilde{\pi}_2^{(X \times Z),\times}} & \Gamma_Z(\QC{X}) \arrow{dd}{\tilde{\pi}_2^{\times}}  \\ \\
    \Gamma_{\Delta_Z}\QC{X \times Z} \arrow{r}{\widetilde{\id \times z}_*} & \Gamma_{\Delta_Z}(\QC{X \times X}) \\
  \end{tikzcd}
\]
we see that, since the $\tilde{\pi}_2^{\times}$ on the left is the base-change of the $\tilde{\pi}_2^{\times}$ on the right (by comparing their
left adjoints), this diagram commutes by Theorem A.1 of \cite{previous}. Hence
\[\tilde{\pi}_2^{\times}\tilde{z}_* \cong \widetilde{(\id \times z)}_*\tilde{\pi}_2^{(X \times Z),\times}\]
On the other side, we have the analogous commutative diagram
\[
  \begin{tikzcd}
    \Gamma_{\Delta_Z}\QC{X \times X} \arrow{r}{\widetilde{(z \times \id)}^{\times}} \arrow{dd}{\tilde{\pi}_{1,*}}
    & \Gamma_{\Delta_Z}(\QC{Z \times X}) \arrow{dd}{\tilde{\pi}^{(Z \times X)}_{1,*}}  \\ \\
    \Gamma_Z(\QC{X}) \arrow{r}{\tilde{z}^{\times}} & \QC{Z} \\
  \end{tikzcd}
\]
By Theorem A.1 of \cite{previous}, we have (this result is basically right adjoint to above)
\[\tilde{z}^{\times}\tilde{\pi}_{1,*}\cong \tilde{\pi}^{(Z \times X)}_{1,*}\widetilde{(z \times \id)}^{\times}\]
Hence
\[\tilde{z}^{\times}\tilde{\pi}_{1,*}\tilde{\pi}_2^{\times}\tilde{z}_*
\cong \tilde{\pi}^{(Z \times X)}_{1,*}\widetilde{(z \times \id)}^{\times}\widetilde{(\id \times z)}_*\tilde{\pi}_2^{(X \times Z),\times}\]
One can check that the natural map above
\[G_{\opposite{D_Z}}F_{\opposite{D_Z}} \to
  G_{\opposite{D_Z}}\tilde{z}^{\dagger}\tilde{z}_+F_{\opposite{D_Z}}\] is the same as the natural map
\[\tilde{\pi}^{(Z \times X)}_{1,*}\widetilde{(\id \times z)}_*\widetilde{(z \times \id)}^{\times}\tilde{\pi}_2^{(X \times Z),\times} \to 
\tilde{\pi}^{(Z \times X)}_{1,*}\widetilde{(z \times \id)}^{\times}\widetilde{(\id \times z)}_*\tilde{\pi}_2^{(X \times Z),\times}\]
coming from adjunction. Now consider the diagram
\[
  \begin{tikzcd}
    \Gamma_{\Delta_Z}\QC{X \times Z} \arrow{r}{\widetilde{(z \times \id)}^{\times}} \arrow{dd}{\widetilde{(\id \times z)}_*}
    & \Gamma_{\Delta_Z}(\QC{Z \times Z}) \arrow{dd}{\widetilde{(\id \times z)}_*}  \\ \\
    \Gamma_Z(\QC{X \times X}) \arrow{r}{\widetilde{(z \times \id)}^{\times}} & \Gamma_Z(\QC{Z \times X}) \\
  \end{tikzcd}
\]
Using the fact that $Z$ is a closed subscheme, we can show that set-theoretically, 
\[(Z \times X) \cap (X \times Z) = (Z \times Z)\]
inside $X \times X$. Therefore, we have the categorical isomorphism
\[\Gamma_{\Delta_Z}\QC{Z \times Z} \cong \Gamma_{\Delta_Z}\QC{X \times Z} \otimes_{\Gamma_{\Delta_Z}\QC{X \times X}}\Gamma_{\Delta_Z}\QC{Z \times X}\]
(where the action is via pullback not convolution). Now the isomorphism
\[\widetilde{(z \times \id)}^{\times}\widetilde{(\id \times z)}_* \cong \widetilde{(\id \times z)}_*\widetilde{(z \times \id)}^{\times}\]
follows from the the diagram above via Theorem A.1 of \cite{previous}.
Hence we have shown
\begin{theorem}[Kashiwara's Equivalence] \label{kashi}
  Suppose $z: Z \to X$ is a finite tor-amplitude closed subscheme, and $p_X$, $p_Z$ satisfy
  our standing assumptions, then
  \[\tilde{z}_+: \Mod{\opposite{D_Z}} \to \Gamma_Z(\Mod{\opposite{D_X}})\]
  is an equivalence of categories with inverse $\tilde{z}^{\dagger}$.
\end{theorem}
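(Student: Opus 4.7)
The plan is to prove the equivalence by a monadic reduction: both the source and target are monadic over $\QC{Z}$, so it suffices to identify the two monads. First I would set up the candidate adjoint pair. The pushforward $\tilde{z}_+$ on the simplicial categories is manifestly supported on $Z$ diagonally in $X^{n+1}$, so it factors through $\Gamma_Z(\Mod{\opposite{D_X}})$, which I will define as $\colim_{\opposite{\bm{\Delta_s}}} \Gamma_{\Delta_Z}(\QC{X^{n+1}})$. Working term-by-term and applying Lemma B.6 of \cite{previous} to the split-exact sequence
\[
\Gamma_{\Delta_Z}(\QC{X^{n+1}}) \to \Gamma_{\Delta_X}(\QC{X^{n+1}}) \to \Gamma_{\Delta_U}(\QC{U^{n+1}}),
\]
I get that $\Gamma_Z(\Mod{\opposite{D_X}})$ really is the full subcategory of $\Mod{\opposite{D_X}}$ whose restriction to $U = X \setminus Z$ vanishes. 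The right adjoint $\tilde{z}^{\dagger}$ of the resulting functor $\tilde{z}_+$ is built exactly as in Remark \ref{adjointremark} (assembled from $z^{n+1,!}$), and is colimit-preserving.

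Next I would verify the monadic conditions. A mild variant of the proof of Theorem \ref{bigtheorem} shows $\Gamma_Z(\Mod{\opposite{D_X}})$ is monadic over $\Gamma_Z(\QC{X})$ via $\tilde{\pi}_{1,*}\tilde{\pi}_2^{\times}$ (restricted to sheaves supported on $Z$). Combined with the standard monadic adjunction $\Gamma_Z(\QC{X}) \leftrightarrows \QC{Z}$, conservativity of $\tilde{z}^{\dagger}$ follows from conservativity of $G_{\Gamma_Z(\opposite{D_X})}$ and of $\tilde{z}^{\times}$ (Lemma C.2 of \cite{previous}). Applying Barr--Beck--Lurie, the adjunction $\tilde{z}_+ \dashv \tilde{z}^{\dagger}$ is monadic, so the theorem reduces to showing the natural map of monads on $\QC{Z}$,
\[
G_{\opposite{D_Z}} F_{\opposite{D_Z}} \longrightarrow G_{\opposite{D_Z}} \tilde{z}^{\dagger} \tilde{z}_+ F_{\opposite{D_Z}},
\]
is an isomorphism.

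The remaining task is a base-change computation. Both monads can be unwound, using the commutative squares relating $F, G$ on $Z$ with $F, G$ on $\Gamma_Z(\QC{X})$ together with Theorem A.1 of \cite{previous}, into explicit composites of tilde lower-star and upper-cross functors along the two legs of the square $Z \times X \leftarrow X \times Z$ sitting inside $X \times X$. The key geometric input is the set-theoretic identity
\[
(Z \times X) \cap (X \times Z) = Z \times Z \quad \text{inside } X \times X,
\]
which, via the base-change formalism for $\Gamma_{\Delta_Z}$ of tensor products of quasicoherent sheaves, yields the commutation
\[
\widetilde{(z \times \id)}^{\times} \widetilde{(\id \times z)}_* \cong \widetilde{(\id \times z)}_* \widetilde{(z \times \id)}^{\times}.
\]
This is the step I expect to be the main obstacle: verifying that the abstract $(\infty,1)$-categorical base-change really applies to the supported categories $\Gamma_{\Delta_Z}$, and that the natural comparison map one obtains from adjunction coincides with the monad morphism above rather than some twist of it. Once this commutation is in hand, the two composites defining the two monads agree, completing the proof; duality or a symmetric argument then gives the corresponding statement for $z^+$ on left $D$-modules.
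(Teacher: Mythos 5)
Your proposal follows the paper's own proof essentially step for step: the same colimit definition of $\Gamma_Z(\Mod{\opposite{D_X}})$ via Lemma B.6 applied to the termwise split-exact sequences, the same monadicity reduction via Barr--Beck--Lurie (with conservativity of $\tilde{z}^{\dagger}$ checked through $G_{\opposite{D_Z}}\tilde{z}^{\dagger} \cong \tilde{z}^{\times}G_{\Gamma_Z(\opposite{D_X})}$ and Lemma C.2), and the same final base-change computation hinging on the identity $(Z \times X) \cap (X \times Z) = Z \times Z$ inside $X \times X$ together with Theorem A.1 of \cite{previous}. The step you flag as the main obstacle is indeed where the paper does the work, resolving it exactly as you anticipate via the categorical isomorphism $\Gamma_{\Delta_Z}(\QC{Z \times Z}) \cong \Gamma_{\Delta_Z}(\QC{X \times Z}) \otimes_{\Gamma_{\Delta_Z}(\QC{X \times X})} \Gamma_{\Delta_Z}(\QC{Z \times X})$.
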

By left-right duality, we also have
\begin{corollary} \label{kashi2}
  Suppose $z: Z \to X$ is a finite tor-amplitude closed subscheme, and $p_X$,$p_Z$ satisfy
  our standing assumptions, then
  \[\tilde{z}^+: \Gamma_Z(\Mod{D_X}) \to \Mod{D_Z}\]
  is an equivalence of categories with inverse $\tilde{z}_{\dagger}$.
\end{corollary}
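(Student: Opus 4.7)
The statement is flagged in the text as following ``by left-right duality'' from Theorem \ref{kashi}, so the strategy is to apply $\Mod{\QC{S}}^L$-duality to the equivalence $\tilde{z}_+ : \Mod{\opposite{D_Z}} \to \Gamma_Z(\Mod{\opposite{D_X}})$ and check that everything dualizes to the claimed statement about left $D$-modules. Concretely, I would proceed in three steps: (1) identify the dual of the category $\Gamma_Z(\Mod{\opposite{D_X}})$ as $\Gamma_Z(\Mod{D_X})$; (2) identify the dual of the functor $\tilde{z}_+$ as $\tilde{z}^+$ and the dual of $\tilde{z}^{\dagger}$ as $\tilde{z}_{\dagger}$; (3) conclude that the dualized equivalence is the asserted one.

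\textbf{Step 1.} For each $n$, the split-exact sequence of Equation (\ref{eeb}), namely $\Gamma_{\Delta_Z}(\QC{X^{n+1}}) \to \Gamma_{\Delta_X}(\QC{X^{n+1}}) \to \Gamma_{\Delta_U}(\QC{U^{n+1}})$, is preserved (up to swapping the exact directions) by $\Mod{\QC{S}}^L$-duality because each of the three categories is self-dual (Proposition 3.10 of \cite{previous}) and duality swaps left and right adjoints. Taking the limit over $\bm{\Delta_s}$ with upper-star transition maps (the left-right dual of the colimit with lower-star transitions used to build $\Mod{\opposite{D_X}}$), one obtains a compatible definition $\Gamma_Z(\Mod{D_X}) := \lim_{\bm{\Delta_s}} \Gamma_{\Delta_Z}(\QC{X^{n+1}})$, and this is the dual of $\Gamma_Z(\Mod{\opposite{D_X}}) = \colim_{\opposite{\bm{\Delta_s}}} \Gamma_{\Delta_Z}(\QC{X^{n+1}})$ under the anti-equivalence $Pr_{St}^L \simeq Pr_{St}^R$ (Lemma 1.3.3 of \cite{DGCat}). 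Equivalently, $\Gamma_Z(\Mod{D_X})$ is the full subcategory of $\Mod{D_X}$ of objects whose underlying quasicoherent sheaf is supported on $Z$, and duality identifies this with the appropriate subcategory of $\Mod{\opposite{D_X}}$.

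\textbf{Step 2.} The functors $\tilde{z}_+$ and $\tilde{z}^+$ are defined levelwise (Section \ref{pushpullsection}) via $\widetilde{(z_*)^{n+1}}$ and $\widetilde{(z^*)^{n+1}}$ respectively, and these pass through the support-restriction since $z$ is a closed immersion. Since $z_*$ and $z^*$ are left-right dual as $\QC{S}$-linear functors (Section 4 of \cite{previous}), the functors $\tilde{z}_+$ and $\tilde{z}^+$ are each other's duals; similarly $\tilde{z}^{\dagger}$ and $\tilde{z}_{\dagger}$ are duals, by the definitions in Remark \ref{adjointremark} and the fact that $z^!$ and $z_!$ are left-right dual.

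\textbf{Step 3 and main obstacle.} Apply the $2$-functor $\Hom_{\QC{S}}(\_, \QC{S})$ to the adjoint equivalence $\tilde{z}_+ \dashv \tilde{z}^{\dagger}$ from Theorem \ref{kashi}. By Steps 1 and 2, this yields an adjoint equivalence $\tilde{z}_{\dagger} \dashv \tilde{z}^+$ between $\Mod{D_Z}$ and $\Gamma_Z(\Mod{D_X})$, which is exactly what is required. The main subtle point is ensuring that the duality really does identify $\Gamma_Z(\Mod{\opposite{D_X}})^{\vee}$ with $\Gamma_Z(\Mod{D_X})$ compatibly with the dualized functors; this is the content of Step 1 and follows from the fact that the split-exact sequence defining $\Gamma_Z$ on each level is self-dual and that $\Mod{\QC{S}}^L$-duality commutes with the relevant (co)limits since the transition maps are compact-object preserving (by the use of $\bm{\Delta_s}$, as emphasized in Section \ref{Dmod}).
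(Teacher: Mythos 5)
Your proposal is correct and follows exactly the paper's route: the paper deduces Corollary \ref{kashi2} from Theorem \ref{kashi} simply ``by left-right duality,'' and your three steps are precisely the elaboration of that one-line argument (identifying $\Gamma_Z(\Mod{D_X})$ as the dual of $\Gamma_Z(\Mod{\opposite{D_X}})$ via the levelwise self-duality and the $Pr_{St}^L \simeq Pr_{St}^R$ anti-equivalence, and matching $\tilde{z}_+,\tilde{z}^{\dagger}$ with $\tilde{z}^+,\tilde{z}_{\dagger}$ under duality). No gap; you have just written out details the paper leaves implicit.
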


\begin{remark}
  In the case that $X = S = \Spec{R}$ for $R$ a discrete ring,
  the ring of differential operators on $Z=\Spec{R/I}$ is simply
  $D_Z = Hom_R(R/I,R/I)$. Theorem \ref{kashi} then follows from derived Morita theory
  since $R/I$ is a compact generator of
  the category of $I$-nilpotent $R$ modules (In the sense of Theorem 7.1.1.6 of \cite{SAG}).
\end{remark}  

\section{Base Change and Proper Finite Tor-amplitude Descent}
In this section, we prove descent of the category of $D$-modules with respect to 
proper, finite tor-amplitude, surjective morphisms. As a consequence, we deduce fppf
descent for $D$-modules for nonderived schemes. In this section we will denote by $X^{(n)}_T$ the $n$-fold
product of $X$ over $T$ for a scheme $X$ over $T$.

\begin{proposition} \label{descend1}
  Suppose $S$ is a homologically bounded spectral Noetherian scheme and $X$ and $Y$
  are locally almost of finite presentation, finite tor-amplitude, and separated over $S$. Also suppose
  that there is a map $f: X \to Y$ which is finite tor-amplitude. Then,
  there is a natural isomorphism
  \[\Gamma_X(\QC{X \times_S Y}) \cong \lim_{\bm{\Delta_s}}(\Gamma_X(\QC{X \times_S (X^{(n+1)}_Y)}))\]
  with the transition maps either upper star or upper cross.
\end{proposition}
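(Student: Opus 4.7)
The plan is to recognize $[n] \mapsto X \times_S X^{(n+1)}_Y$ as the Čech nerve of the finite tor-amplitude map $\id \times f \colon X \times_S X \to X \times_S Y$. A short computation shows $(X \times_S X) \times_{X \times_S Y} (X \times_S X) \cong X \times_S X^{(2)}_Y$, with face and degeneracy maps given by the evident projection-and-partial-diagonal maps, and this iterates for all $n$. The proposition is therefore a descent statement for $\Gamma_X(\QC{-})$ along this Čech nerve.

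To handle the descent, I would apply the base-change lemma preceding Theorem/Definition \ref{transfermod}, but with the roles relabeled, to each term of the diagram. Treating $X \hookrightarrow X \times_S X^{(n+1)}_Y$ as the graph of the diagonal $\delta \colon X \to X^{(n+1)}_Y$ (a closed immersion since the standing assumptions force $f$ separated, and then iteratively closed), the lemma gives $\Gamma_X(\QC{X \times_S X^{(n+1)}_Y}) \cong \QC{X} \otimes_{\QC{X^{(n+1)}_Y}} \Gamma_\Delta(\QC{X^{(n+1)}_Y \times_S X^{(n+1)}_Y})$. An analogous identification applies to the left-hand side. The proposition thus reduces to showing that the resulting cosimplicial diagram of tensor products has the claimed limit.

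The crucial input is that $f$ base-changed along itself is the first projection $\pi_1 \colon X \times_Y X \to X$, which admits the diagonal as a section. The Čech nerve of a morphism with a section has extra degeneracies, making its cosimplicial $\QC$-diagram split; this gives $\lim_{\bm{\Delta_s}} \QC{X^{(n+2)}_Y} \cong \QC{X}$. Combining this collapse with the base-change identifications above, and commuting the tensor $\QC{X} \otimes_{\QC{Y}} (-)$ past the cosimplicial limit (legitimate because the transitions are compact-object preserving as in Theorem 1.3 of \cite{previous}, allowing the limit to be recomputed as a colimit in the opposite presentable category, over which the left-adjoint tensor commutes), yields the desired equivalence.

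The main obstacle is carefully tracking the support conditions through the relabelings, specifically verifying that the diagonal $X \subset X \times_S X^{(n+1)}_Y$ matches consistently with the ``graph of $\delta$'' viewpoint across all simplicial levels so that the cosimplicial face and degeneracy maps translate correctly. The statement must also be verified for both the upper-star and upper-cross transition conventions, but this is routine via the left-right duality between $Pr_{St}^L$ and $Pr_{St}^R$ established in \cite{previous}.
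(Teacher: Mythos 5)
Your identification of $[n]\mapsto X\times_S X^{(n+1)}_Y$ as the Čech nerve of $\id\times f\colon X\times_S X\to X\times_S Y$ is correct, and the splitting $\lim_{\bm{\Delta}}\QC{X^{(n+2)}_Y}\cong\QC{X}$ coming from the diagonal section of $X\times_Y X\to X$ is true. But the reduction you propose does not go through as written. First, the support condition does not survive the tensor manipulation: pulling the fixed functor $\QC{X}\otimes_{\QC{Y}}(-)$ out of the limit produces at level $n$ the category $\Gamma_X(\QC{X\times_S Y})\otimes_{\QC{Y}}\QC{X^{(n+1)}_Y}\cong\Gamma_{X\times_Y X^{(n+1)}_Y}(\QC{X\times_S X^{(n+1)}_Y})$, i.e.\ sheaves supported on the larger closed subset $X\times_Y X^{(n+1)}_Y$, not on the small diagonal copy of $X$; identifying the limit of that diagram with the limit of the $\Gamma_X$-diagram is essentially the content of the proposition, not bookkeeping. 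The decomposition you actually write down, $\Gamma_X(\QC{X\times_S X^{(n+1)}_Y})\cong\QC{X}\otimes_{\QC{X^{(n+1)}_Y}}\Gamma_{\Delta}(\QC{X^{(n+1)}_Y\times_S X^{(n+1)}_Y})$, has a tensor base varying with $n$, so there is no single functor to commute past the limit at all. Second, the justification for the exchange is incorrect for the upper-star transitions: preservation of compact objects makes the \emph{right} adjoints colimit-preserving, which lets one rewrite a colimit along the transitions as a limit along their right adjoints, not a limit along the transitions as a colimit; the upper-star transition functors do not in general admit left adjoints, so the limit cannot be ``recomputed as a colimit'' in the way you claim. (For upper-cross transitions the switch to a colimit along lower-star is available, but that is a different diagram, and the star and cross statements are related by a twist by relative dualizing complexes rather than by a formal $Pr^L_{St}$/$Pr^R_{St}$ duality.)

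More fundamentally, your outline never uses the one place where the hypothesis of support on $X$ does real work. Since $f$ is only finite tor-amplitude --- not flat, proper, or surjective --- plain $\QC$ has no descent along it, and splitness after base change along $f$ only helps if pullback along $\id\times f$ detects equivalences on the relevant categories. That conservativity of $\Gamma_X(\QC{X\times_S Y})\to\Gamma_X(\QC{X\times_S X})$, which holds precisely because the support (the graph of $f$) factors through $X\times_S X$, is Lemma C.2 of \cite{previous} and is condition (3) in the paper's proof. The paper runs \cite{HA} Corollary 4.7.5.3 directly on the augmented cosimplicial diagram: condition (1) is automatic (finite tor-amplitude of $f$ enters for the upper-cross variant), the Beck--Chevalley adjointability in condition (2) is verified by a split-exact-sequence tensor decomposition of $\Gamma_X(\QC{X\times_S X^{(n+2)}_Y})$ exhibiting the vertical functor as $u\otimes\id$, and condition (3) is the conservativity just mentioned. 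Your Čech-nerve-with-section picture can be folded into such an argument, but the adjointability and conservativity checks are exactly the missing substance; the tensor-commutation shortcut does not replace them.
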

\begin{proof}
  We will assume the transition maps are upper star, the proof for upper cross is entirely analogous.
  We will apply \cite{HA} Corollary 4.7.5.3. Condition (1) is automatic (for upper cross it follows from
  the fact that the map $f: X \to Y$ is finite tor-amplitude). For (2), we need to check that, for any map $[m] \to [n]$,
  the square below is left adjointable. Here the horizontal arrow is upper star along the map induced by projection map
  to all but the first component for the product over $Y$ and the vertical maps are upper star along the map
  induced by the map $[m] \to [n]$. 
  \[
    \begin{tikzcd}
      \Gamma_X(\QC{X \times_S (X^{(m+1)}_Y)}) \arrow{r}{\widetilde{\id \times \pi_{\hat{1}}}^*} \arrow{d}{u} & \Gamma_X(\QC{X \times_S (X^{(m+2)}_Y)}) \arrow{d}{v} \\
      \Gamma_X(\QC{X \times_S (X^{(n+1)}_Y)}) \arrow{r}{\widetilde{\id \times \pi_{\hat{1}}}^*} & \Gamma_X(\QC{X \times_S (X^{(n+2)}_Y)})
    \end{tikzcd}
  \]
  We need to check the map
  \[(\widetilde{\id \times \pi_{\hat{1}}})_{\times}v \to (\widetilde{\id \times \pi_{\hat{1}}})_{\times}v(\widetilde{\id \times \pi_{\hat{1}}})^*(\widetilde{\id \times \pi_{\hat{1}}})_{\times}
    \cong (\widetilde{\id \times \pi_{\hat{1}}})_{\times}(\widetilde{\id \times \pi_{\hat{1}}})^*u(\widetilde{\id \times \pi_{\hat{1}}})_{\times} \to u(\widetilde{\id \times \pi_{\hat{1}}})_{\times}\]
  is an isomorphism.
  We have the isomorphisms (which can again be verified using split-exact sequences)
  \begin{equation*}
    \begin{split}
      \Gamma_X(\QC{X \times_S (X^{(n+2)}_Y)})
      &\cong \Gamma_X(\QC{X \times_S X \times_Y X^{(n+1)}_Y}) \\
      &\cong \Gamma_X(\Gamma_X(\QC{X \times_S X^{(n+1)}_Y}) \otimes_{\QC{X \times_S X^{(m+1)}_Y}} \QC{X \times_S X^{(m+2)}_Y}) \\
      &\cong \Gamma_X(\QC{X \times_S X^{(m+1)}_Y}) \otimes_{\Gamma_X(\QC{X \times_S X^{(n+1)}_Y})} \Gamma_X(\QC{X \times_S X^{(n+2)}_Y}) \\
    \end{split}
  \end{equation*}
  using these isomorphisms we have
  \[v \cong u \otimes \id\]
  and we can therefore easily see the map above is an isomorphism (because we can separate out the vertical and horizontal maps
  and hence the isomorphism is coming from the adjunction data).

  Lastly, we need to check that the map
  \[\Gamma_X(\QC{X \times_S Y}) \to \Gamma_X(\QC{X \times_S X})\]
  is conservative, but that follows from Lemma C.2 of \cite{previous}.
\end{proof}

\begin{corollary}
  Suppose $S$ is a homologically bounded spectral Noetherian scheme and $X$ and $Y$
  are locally almost of finite presentation, finite tor-amplitude, and separated over $S$. Also suppose
  that there is a map $f: X \to Y$ which is finite tor-amplitude. Then,
  there is a natural isomorphism
  \[\Gamma_{\Delta}(\QC{X \times_S X}) \otimes_{\Gamma_{\Delta}(\QC{X \times_Y X})} \QC{X} \cong \Gamma_{X}(\QC{X \times_S Y})\]
\end{corollary}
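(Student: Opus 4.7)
The plan is to combine Proposition \ref{descend1} with a bar-type resolution of $\QC{X}$ over $\Gamma_{\Delta}(\QC{X \times_Y X})$ and several applications of Lemma \ref{tensorlemma}. First, running the argument of Proposition \ref{resolve} but with the base $Y$ in place of $S$ provides a resolution
\[\ldots \Gamma_{\Delta}(\QC{X^{(3)}_Y}) \righttwoarrow \Gamma_{\Delta}(\QC{X \times_Y X}) \to \QC{X}\]
of $\QC{X}$ as a left $\Gamma_{\Delta}(\QC{X \times_Y X})$-module category. Tensoring on the left with $\Gamma_{\Delta}(\QC{X \times_S X})$ over $\Gamma_{\Delta}(\QC{X \times_Y X})$ rewrites the left-hand side of the claim as the geometric realization
\[\colim_{\opposite{\bm{\Delta}}} \Gamma_{\Delta}(\QC{X \times_S X}) \otimes_{\Gamma_{\Delta}(\QC{X \times_Y X})} \Gamma_{\Delta}(\QC{X^{(n+2)}_Y}).\]

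Next I would simplify each term. By the $Y$-relative version of Lemma \ref{tensorlemma},
\[\Gamma_{\Delta}(\QC{X^{(n+2)}_Y}) \cong \Gamma_{\Delta}(\QC{X \times_Y X}) \otimes_{\QC{X}} \Gamma_{\Delta}(\QC{X^{(n+1)}_Y}),\]
so the above term collapses to $\Gamma_{\Delta}(\QC{X \times_S X}) \otimes_{\QC{X}} \Gamma_{\Delta}(\QC{X^{(n+1)}_Y})$. Now the key geometric input is the identity
\[X \times_S X^{(n+1)}_Y \cong (X \times_S X) \times_X X^{(n+1)}_Y,\]
together with the observation that the graph $X \hookrightarrow X \times_S X^{(n+1)}_Y$ is set-theoretically the intersection of the preimages of the two diagonals of $X \times_S X$ and of $X^{(n+1)}_Y$. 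Repeating the support-chasing technique from the proof of Lemma \ref{tensorlemma} then identifies
\[\Gamma_{\Delta}(\QC{X \times_S X}) \otimes_{\QC{X}} \Gamma_{\Delta}(\QC{X^{(n+1)}_Y}) \cong \Gamma_X(\QC{X \times_S X^{(n+1)}_Y}).\]

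Finally, the colimit over $\opposite{\bm{\Delta}}$ (equivalently over $\opposite{\bm{\Delta_s}}$ by cofinality) of the $\Gamma_X(\QC{X \times_S X^{(n+1)}_Y})$ with lower-star transition maps equals $\Gamma_X(\QC{X \times_S Y})$: this is the $\Mod{\QC{S}}^L$-colimit counterpart of the $Pr^R_{St}$-limit produced by Proposition \ref{descend1}, obtained via the anti-equivalence between $Pr^L_{St}$ and $Pr^R_{St}$ (Lemma 1.3.3 of \cite{DGCat}). The main obstacle I anticipate is the middle support-chase: one must verify cleanly that combining the two diagonal-support conditions behaves well under the tensor product $\otimes_{\QC{X}}$, which amounts to applying tensor products to the appropriate split-exact sequences of \cite{previous} Appendix B, exactly as in Lemma \ref{tensorlemma}. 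Once that identification is in hand, all remaining steps are formal manipulations of colimits and adjoints.
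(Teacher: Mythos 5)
Your proposal is correct and follows essentially the same route as the paper: resolve $\QC{X}$ over $\Gamma_{\Delta}(\QC{X \times_Y X})$ via (the $Y$-relative form of) Proposition \ref{resolve}, collapse each term using Lemma \ref{tensorlemma} to $\Gamma_X(\QC{X \times_S X^{(n+1)}_Y})$, and then pass from the colimit to the limit of right adjoints and invoke Proposition \ref{descend1}. The only cosmetic difference is that you split the tensor-lemma step into a $Y$-relative cancellation followed by a separate mixed-base support chase, whereas the paper performs this identification in a single application of Lemma \ref{tensorlemma}.
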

\begin{proof}
  Using Proposition \ref{resolve}, we have the isomorphism
  \[\Gamma_{\Delta}(\QC{X \times_S X}) \otimes_{\Gamma_{\Delta}(\QC{X \times_Y X})} \QC{X} \]
  \[\cong \colim_{\bm{\Delta}_s}(\Gamma_{\Delta}(\QC{X \times_S X}) \otimes_{\Gamma_{\Delta}(\QC{X \times_Y X})} \Gamma_X(\QC{X^{(n+2)}_Y}))\]
  By applying Lemma \ref{tensorlemma}, we also have the isomorphism
  \[\Gamma_{\Delta}(\QC{X \times_S X}) \otimes_{\Gamma_{\Delta}(\QC{X \times_Y X}} \Gamma_X(\QC{X^{(n+2)}_Y}) \]
  \[\cong \Gamma_X(\QC{X \times_S X}) \otimes_{\QC{X}} \Gamma_X(\QC{X^{(n+1)}_Y}) \cong \Gamma_X(\QC{X \times_S X^{(n+1)}_Y})\]
  Hence the result follows by switching the colimit to a limit (by taking right adjoints of the transition maps) and applying proposition \ref{descend1}. 
\end{proof}

\begin{proposition}
  Suppose $S$ is a homologically bounded spectral Noetherian scheme and $X$ and $Y$
  are locally almost of finite presentation, finite tor-amplitude, and separated over $S$. Also suppose
  that there is a map $f: X \to Y$ which is finite tor-amplitude. Then,
  there is a natural isomorphism
  \[(\widetilde{\id \times f})_*\left(\Gamma_X(\mathcal{O}_X \boxtimes \omega_{X/S}) \otimes_{D_{X/Y}} \mathcal{O}_X\right) \cong \Gamma_X(\mathcal{O}_X \boxtimes \omega_{Y/S})\]
  of objects in $\Gamma_{X}(\QC{X \times_S Y})$ respecting the $(D_X,D_Y)$-module structure. Note that the left hand side is a bit of an abuse of notation because the object
  \[\Gamma_X(\mathcal{O}_X \boxtimes \omega_{X/S}) \otimes_{D_{X/Y}} \mathcal{O}_X\]
  has no $\mathcal{O}_X$ action on the right, so the pushforward is really only as Zariski sheaves.
\end{proposition}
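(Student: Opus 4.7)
The plan is to deduce this object-level identity from the category-level equivalence
\[
\Gamma_\Delta(\QC{X \times_S X}) \otimes_{\Gamma_\Delta(\QC{X \times_Y X})} \QC{X} \cong \Gamma_X(\QC{X \times_S Y})
\]
established in the immediately preceding Corollary. The pushforward along $\id \times f$ is already built into this equivalence, so the assertion becomes that, under this equivalence, the object $D_{X/S} \otimes_{D_{X/Y}} \mathcal{O}_X$—viewed as the elementary tensor of the right $D_{X/Y}$-module $D_{X/S}$ in $\Gamma_\Delta(\QC{X \times_S X})$ with the trivial left $D_{X/Y}$-module $\mathcal{O}_X$—corresponds to $D_{X \to Y/S}$. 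The Zariski-sheaves caveat is accounted for by the fact that the equivalence only records a $\QC{X}$-module structure on the second tensor factor, so a full right $\QC{Y}$-structure is not a priori present on the LHS.

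To identify the image explicitly, trace $D_{X/S} \otimes \mathcal{O}_X$ through the three steps in the Corollary's proof. Step one uses Proposition \ref{resolve} applied to $X/Y$ to resolve $\QC{X}$ as a left $\Gamma_\Delta(\QC{X \times_Y X})$-module category, with $n$-th simplicial term $\Gamma_\Delta(\QC{X^{(n+2)}_Y})$. Step two uses Lemma \ref{tensorlemma} to rewrite the tensor at each level as $\Gamma_X(\QC{X \times_S X^{(n+1)}_Y})$. Step three applies Proposition \ref{descend1} to assemble these into $\Gamma_X(\QC{X \times_S Y})$. At each simplicial level, combining $D_{X/S} \cong \tilde{\pi}_1^{(X \times_S X), \times}\mathcal{O}_X$ with the lift of $\mathcal{O}_X$ along $\tilde{\pi}_1^\times$ (via the adjunction $\tilde{\pi}_{1,*} \dashv \tilde{\pi}_1^\times$ applied to the augmentation of the resolution) and invoking the compatibility of tilde upper cross with Cartesian products (Theorem A.1 of \cite{previous}), the tensor identifies with $\tilde{\pi}_1^{(X \times_S X^{(n+1)}_Y), \times}\mathcal{O}_X$. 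These assemble into $\tilde{\pi}_1^{(X \times_S Y), \times}\mathcal{O}_X = D_{X \to Y/S}$.

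The $(D_{X/S}, D_{Y/S})$-bimodule structure is preserved automatically: the left $D_{X/S}$-action is carried by the first tensor factor throughout, while the right $D_{Y/S}$-action on the RHS—coming from $D_{X \to Y/S} \cong \widetilde{(f \times \id)}^* D_{Y/S}$—matches the induced one on the LHS after $(\id \times f)_*$. The main obstacle is making the algebra tensor $\otimes_{D_{X/Y}}$ match the categorical tensor over $\Gamma_\Delta(\QC{X \times_Y X})$ for these specific objects; this rests on the monadic description of $D_{X/Y}$-modules, under which the trivial module $\mathcal{O}_X$ corresponds to an elementary tensor in $\QC{X} \otimes_{\Gamma_\Delta(\QC{X \times_Y X})} \QC{X}$, together with routine tracking through the adjunctions involved.
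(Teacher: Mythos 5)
Your proposal is correct and follows essentially the same route as the paper: realize the bar-construction tensor $\Gamma_X(\mathcal{O}_X \boxtimes \omega_{X/S}) \otimes_{D_{X/Y}} \mathcal{O}_X$ as an object of $\Gamma_{\Delta}(\QC{X \times_S X}) \otimes_{\Gamma_{\Delta}(\QC{X \times_Y X})} \QC{X} \cong \Gamma_X(\QC{X \times_S Y})$, identify its image with the pushforward on the left-hand side, and then compare both sides levelwise in the limit presentation of Proposition \ref{descend1}, where each term is $\tilde{\pi}_1^{\times}\mathcal{O}_X \cong \Gamma_X(\mathcal{O}_X \boxtimes \omega_{X^{(n+1)}_Y/S})$, assembling to $\Gamma_X(\mathcal{O}_X \boxtimes \omega_{Y/S}) \cong D_{X \to Y/S}$. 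Your levelwise "direct calculation" and bimodule-compatibility remarks are at the same level of detail as the paper's own proof, so there is no substantive gap relative to it.
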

\begin{proof}
  We can make sense of an element 
  \[\Gamma_X(\mathcal{O}_X \boxtimes \omega_{X/S}) \otimes_{D_{X/Y}} \mathcal{O}_X \in \Gamma_X(\QC{X \times X}) \otimes_{\Gamma_X(\QC{X \times_Y X})} \QC{X}\]
  as follows. The right hand side is the colimit of the simplicial diagram
  \[\ldots \Gamma_X(\QC{X \times X}) \otimes \Gamma_X(\QC{X \times_Y X}) \otimes \QC{X} \righttwoarrow \Gamma_X(\QC{X \times X}) \otimes \QC{X}\]
  Therefore, the element
  \[\Gamma_X(\mathcal{O}_X \boxtimes \omega_{X/S}) \otimes \mathcal{O}_X \in \Gamma_X(\QC{X \times X}) \otimes \QC{X}\]
  maps to an element in $\Gamma_X(\QC{X \times X}) \otimes_{\Gamma_X(\QC{X \times_Y X})} \QC{X})$ and so does the element
  \[\Gamma_X(\mathcal{O}_X \boxtimes \omega_{X/S}) \otimes D_{X/Y} \otimes \mathcal{O}_X \in \Gamma_X(\QC{X \times X}) \otimes \Gamma_X(\QC{X \times_Y X}) \otimes \QC{X}\]
  and so on.
  As $D_{X/Y}$ acts on $\Gamma_X(\mathcal{O}_X \boxtimes \omega_{X/S})$ on the right and $\mathcal{O}_X$ on the right, there is a simplicial diagram
  \[\ldots \Gamma_X(\mathcal{O}_X \boxtimes \omega_{X/S}) \otimes D_{X/Y} \otimes \mathcal{O}_X \righttwoarrow \Gamma_X(\mathcal{O}_X \boxtimes \omega_{X/S}) \otimes \mathcal{O}_X\]
  whose colimit we call 
  \[\Gamma_X(\mathcal{O}_X \boxtimes \omega_{X/S}) \otimes_{D_{X/Y}} \mathcal{O}_X \in \Gamma_X(\QC{X \times X}) \otimes_{\Gamma_X(\QC{X \times_Y X})} \QC{X}\]
  We can check directly that the image in $\Gamma_X(\QC{X \times Y})$ of this element is
  \[(\widetilde{\id \times f})_*\left(\Gamma_X(\mathcal{O}_X \boxtimes \omega_{X/S}) \otimes_{D_{X/Y}} \mathcal{O}_X\right)\]
  and hence we just need to show this is isomorphic to
  \[\Gamma_X(\mathcal{O}_X \boxtimes \omega_{Y/S})\]
  Under the isomorphism
  \[\Gamma_X(\QC{X \times_S Y}) \cong \lim_{\bm{\Delta_s}}(\Gamma_X(\QC{X \times_S (X^{(n+1)}_Y)}),\times)\]
  the object
  \[\Gamma_X(\mathcal{O}_X \boxtimes \omega_{Y/S})\]
  is given under the isomorphism by the compatible system
  \[\left(\Gamma_X(\mathcal{O}_X \boxtimes \omega_{X^{(n+1)}_Y/S})\right)_n \in \lim_{\bm{\Delta_s}}(\Gamma_X(\QC{X \times_S (X^{(n+1)}_Y)}), \times )\]
  Now, using the isomorphism
  \[\Gamma_{\Delta}(\QC{X \times_S X}) \otimes_{\Gamma_{\Delta}(\QC{X \times_Y X})} \QC{X} \cong \lim_{\bm{\Delta_s}}(\Gamma_X(\QC{X \times_S (X^{(n+1)}_Y)}),\times)\]
  which is induced by the compatible system of functors
  \[\id \otimes \tilde{\pi}_1^{\times} : \Gamma_{\Delta}(\QC{X \times_S X}) \otimes_{\Gamma_{\Delta}(\QC{X \times_Y X})} \QC{X} \to \Gamma_X(\QC{X \times_S (X^{(n+1)}_Y)})\]
  the object
  \[\Gamma_X(\mathcal{O}_X \boxtimes \omega_{X/S}) \otimes_{D_{X/Y}} \mathcal{O}_X \in \Gamma_X(\QC{X \times X}) \otimes_{\Gamma_X(\QC{X \times_Y X})} \QC{X}\]
  corresponds to (following a direct calculation)
  \[\left(\Gamma_X(\mathcal{O}_X \boxtimes \omega_{X^{(n+1)}_Y/S})\right)_n \in \lim_{\bm{\Delta_s}}(\Gamma_X(\QC{X \times_S (X^{(n+1)}_Y)}), \times )\]
  and so the proof is complete.
\end{proof}
\begin{corollary}
  Suppose $T$ and $Y$ are spectral schemes over a homologically bounded base spectral Noetherian scheme $S$, such that the structure maps
  are finite tor-amplitude, locally almost of finite presentation and separated. 
  Let $f: T \to Y$ be proper and finite tor-amplitude. Then, the diagram below commutes
  \[
    \begin{tikzcd}
      \Mod{\opposite{D_{T/S}}} \arrow{r}{g_{+}} \arrow{d}{\Phi_{T \to Y}} & \Mod{\opposite{D_{Y/S}}} \arrow{d}{G_{\opposite{D_Y}}} \\
      \Mod{\opposite{D_{T/Y}}} \arrow{r}{g_{+}} & \Mod{\opposite{D_{Y/Y}}}
    \end{tikzcd}
  \]
  in the sense the natural map 
  \[g_+\Phi_{T \to Y} \to g_+ \Phi_{T \to Y} g^{\dagger} g_+ \cong g_+ g^{\dagger} G_{\opposite{D_Y}} g_{+} \to G_{\opposite{D_Y}} g_+\]
  is an isomorphism (this is an example of a Beck-Chevalley condition)
  where
  \[\Phi_{T \to Y} : \Mod{D_{T/S}} \to \Mod{D_{T/Y}}\]
  can be defined by assemblying the quasicoherent upper cross maps
  \[\Gamma_{T}(\QC{T^{(n+1)}_S}) \to \Gamma_{T}(\QC{T^{(n+1)}_Y})\]
  using the isomorphisms
  \[\Mod{\opposite{D_{T/S}}} \cong \lim_{\bm{\Delta}_s}(\Gamma_{\Delta}(\QC{T^{(n+1)}_S}),\times)\]
  and
  \[\Mod{\opposite{D_{T/Y}}} \cong \lim_{\bm{\Delta}_s}(\Gamma_{\Delta}(\QC{T^{(n+1)}_Y}),\times)\]
  Note we have abused notation to use $g_{+}$ to denote two maps.
\end{corollary}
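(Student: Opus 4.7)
The plan is to verify the Beck-Chevalley isomorphism by passing to the cosimplicial limit presentations of the four categories and reducing to base change along a Cartesian square at each simplicial level. Using the identifications
\[\Mod{\opposite{D_{T/S}}} \cong \lim_{\bm{\Delta_s}}(\Gamma_T(\QC{T^{(n+1)}_S}), \times)\]
and similarly for the other three categories (noting that $\Mod{\opposite{D_{Y/Y}}} \cong \QC{Y}$ since $Y^{(n+1)}_Y = Y$), the four functors admit explicit level-wise descriptions: $\Phi_{T \to Y}$ is assembled from the upper-cross maps $\widetilde{i^{(n),\times}}$ along the natural closed immersions $i^{(n)}: T^{(n+1)}_Y \hookrightarrow T^{(n+1)}_S$; both copies of $g_+$ are assembled from the pushforwards $\widetilde{(g^{n+1})_*}$; and $G_{\opposite{D_Y}}$ is the canonical projection to the $n=0$ component of the limit.

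The crucial geometric input is that for every $n$ the square
\[
\begin{tikzcd}
T^{(n+1)}_Y \arrow[r, "i^{(n)}"] \arrow[d, "g^{n+1}"'] & T^{(n+1)}_S \arrow[d, "g^{n+1}"] \\
Y \arrow[r, "\delta^{(n)}"'] & Y^{(n+1)}_S
\end{tikzcd}
\]
is Cartesian: a tuple $(t_0, \ldots, t_n) \in T^{(n+1)}_S$ lies over the diagonal copy of $Y$ in $Y^{(n+1)}_S$ precisely when all of the $g(t_i)$ coincide, which is the defining condition for $T^{(n+1)}_Y$. Since $g$ is proper and finite tor-amplitude, so is each $g^{n+1}$, and Theorem A.1 of \cite{previous} then yields the level-wise Beck-Chevalley isomorphism $\widetilde{\delta^{(n),\times}}\widetilde{(g^{n+1})_*} \cong \widetilde{(g^{n+1})_*}\widetilde{i^{(n),\times}}$. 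Note that at $n=0$ the maps $i^{(0)}$ and $\delta^{(0)}$ are identities, so the isomorphism there is tautological; the substantive content is the compatibility of these isomorphisms with the cosimplicial transition maps.

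The remaining task is to assemble these level-wise isomorphisms into a natural transformation between $g_+ \circ \Phi_{T \to Y}$ and $G_{\opposite{D_Y}} \circ g_+$ that agrees with the abstract Beck-Chevalley map built from the units and counits of the $g_+ \dashv g^{\dagger}$ adjunctions. I expect the main obstacle to be this last identification, since the Beck-Chevalley map is constructed from adjunction data while our level-wise construction is geometric; a careful diagram chase is needed to match them. However, since both maps are natural, it suffices to check agreement on objects in the image of $F_{\opposite{D_{T/S}}}$, where the explicit formulas for $F$ and $G$ given by Theorem \ref{explicit3} and Remark \ref{explicit1} render the comparison a routine (if bookkeeping-heavy) check.
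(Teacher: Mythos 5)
There is a genuine gap, and it sits exactly where you place the ``routine bookkeeping'': your claimed level-wise description of $g_+$ is false. The pushforward $g_+$ is defined termwise by $\widetilde{(g^{n+1})}_*$ only on the \emph{colimit} presentation $\colim_{\opposite{\bm{\Delta_s}}}(\Gamma_{\Delta}(\QC{T^{(n+1)}_S}),*)$, where it visibly commutes with the lower-star transition functors. To run your argument you need $g_+$ to act termwise on the \emph{limit} presentation $\lim_{\bm{\Delta_s}}(\Gamma_{\Delta}(\QC{T^{(n+1)}_S}),\times)$, i.e.\ you need commutation isomorphisms $\tilde{\pi}_Y^{\times}\circ\widetilde{(g^{n+1})}_* \cong \widetilde{(g^{m+1})}_*\circ\tilde{\pi}_T^{\times}$ for the upper-cross transition maps. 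The underlying squares $T^{(m+1)}_S \to T^{(n+1)}_S$ over $Y^{(m+1)}_S \to Y^{(n+1)}_S$ are \emph{not} Cartesian (unlike the square you do check), and the supported statement fails: take $S=\Spec{k}$, $Y=\mathbb{A}^1_k$, $T$ the origin, $M=k$. Then $\widetilde{(g\times g)}_*\tilde{\pi}_{1,T}^{\times}(k)$ is a one-dimensional skyscraper on the diagonal of $Y\times Y$, while $\tilde{\pi}_{1,Y}^{\times}(g_*k)\cong\Gamma_{\Delta}(g_*k\boxtimes\omega_{Y})$ is the infinite-dimensional module of distributions supported at the origin (this is exactly why the pushforward of the trivial $D$-module on a point is the $\delta$-module, not the skyscraper). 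So one of your four ``explicit level-wise descriptions'' is wrong, and the statement that $G_{\opposite{D_Y}}g_+$ can be computed by such a level-wise recipe is essentially equivalent to the corollary itself; assuming it is circular. The Cartesian square $T^{(n+1)}_Y \cong Y\times_{Y^{(n+1)}_S}T^{(n+1)}_S$ you identify is correct and relevant, but it cannot be fed into a termwise comparison of the composites, because the composites do not decompose termwise on the limit side.

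For contrast, the paper proves the corollary by decategorifying: after reducing to the affine case, the Beck--Chevalley transformation is rewritten as a morphism of transfer bimodules, $\mathcal{O}_T\otimes_{\opposite{D_{T/Y}}}\opposite{D_{T/S}} \to \opposite{D_{Y/S}}\otimes_{\opposite{D_{Y/S}}}\Gamma_T(\omega_{Y/S}\boxtimes\mathcal{O}_T)$, and the isomorphism is then exactly the content of the proposition immediately preceding the corollary, whose proof in turn uses the auxiliary limit presentation $\Gamma_X(\QC{X\times_S Y})\cong\lim_{\bm{\Delta_s}}(\Gamma_X(\QC{X\times_S X^{(n+1)}_Y}),\times)$ of Proposition \ref{descend1}. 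If you want to salvage a simplicial-level argument, you would have to work with the assembled (mixed lower-star/upper-cross) formulas as in Theorem \ref{explicit3} rather than pretending $g_+$ is termwise in the limit presentation, at which point you are re-deriving the transfer-module computation the paper already isolates.
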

\begin{proof}
  Without loss of generality assume all the schemes are affine. Now let us express the natural map as a morphism of transfer modules, namely,
  \[\mathcal{O}_T \otimes_{\opposite{D_{T/Y}}} \opposite{D_{T/S}} \to
    \mathcal{O}_T \otimes_{\opposite{D_{T/Y}}} \opposite{D_{T/S}} \otimes_{\opposite{D_{T/S}}} \Gamma_T(\omega_{T/S} \boxtimes \mathcal{O}_Y)
    \otimes_{\opposite{D_{Y/S}}} \Gamma_T(\omega_{Y/S} \boxtimes \mathcal{O}_T)\]
  \[\cong \mathcal{O}_T \otimes_{\opposite{D_{T/Y}}} \omega_{T/Y} \otimes_{\mathcal{O}_Y} \opposite{D_{Y/S}} \otimes_{\opposite{D_{Y/S}}} \Gamma_T(\omega_{Y/S} \boxtimes \mathcal{O}_T) \to
    \opposite{D_{Y/S}} \otimes_{\opposite{D_{Y/S}}} \Gamma_T(\omega_{Y/S} \boxtimes \mathcal{O}_T)\]
  which boils down to the above proposition.

\end{proof}
  
\begin{proposition}
  Suppose $S$ is a homologically bounded spectral Noetherian scheme and $X$ and $Y$
  are locally almost of finite presentation, finite tor-amplitude, and separated over $S$. Also suppose
  that there is a map $f: X \to Y$ which is finite tor-amplitude. Then,
  there is a natural isomorphism
  \[(\widetilde{\id \times f})_*\left(\Gamma_X(\omega_{X/S} \boxtimes \mathcal{O}_{X}) \otimes_{\opposite{D_{X/Y}}} \omega_{X/Y}\right) \cong \Gamma_X(\omega_{X/S} \boxtimes \mathcal{O}_Y)\]
  of objects in $\Gamma_{X}(\QC{X \times_S Y})$ respecting the right $D$-module structures. Note that the left hand side is a bit of an abuse of notation as before.
\end{proposition}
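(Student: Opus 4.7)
The plan is to mirror the proof of the preceding proposition, swapping $\mathcal{O}$ and $\omega$ in their respective slots on both sides and replacing the left $D$-module presentations by the corresponding right $\opposite{D}$-module presentations. (Alternatively, one could try to apply the left-right switch of Theorem \ref{leftrightswitch} bimodule-by-bimodule to the previous proposition, but pushing the switch through a tensor product of bimodules is more bookkeeping than simply redoing the argument.) As a first step, I interpret
\[\Gamma_X(\omega_{X/S} \boxtimes \mathcal{O}_X) \otimes_{\opposite{D_{X/Y}}} \omega_{X/Y} \in \Gamma_X(\QC{X \times X}) \otimes_{\Gamma_X(\QC{X \times_Y X})} \QC{X}\]
exactly as in the previous proof, as the colimit of the bar simplicial diagram
\[\ldots \Gamma_X(\omega_{X/S} \boxtimes \mathcal{O}_X) \otimes \opposite{D_{X/Y}} \otimes \omega_{X/Y} \righttwoarrow \Gamma_X(\omega_{X/S} \boxtimes \mathcal{O}_X) \otimes \omega_{X/Y},\]
using the right $\opposite{D_{X/Y}}$-action on $\Gamma_X(\omega_{X/S} \boxtimes \mathcal{O}_X)$ and the left $\opposite{D_{X/Y}}$-action on $\omega_{X/Y}$.

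Next, I invoke the variants of Proposition \ref{descend1} and its corollary using upper-star transition maps, giving
\[\Gamma_X(\QC{X \times_S Y}) \cong \lim_{\bm{\Delta_s}}\bigl(\Gamma_X(\QC{X \times_S X^{(n+1)}_Y}), *\bigr)\]
and, by the same cobar-type argument,
\[\Gamma_{\Delta}(\QC{X \times_S X}) \otimes_{\Gamma_{\Delta}(\QC{X \times_Y X})} \QC{X} \cong \lim_{\bm{\Delta_s}}\bigl(\Gamma_X(\QC{X \times_S X^{(n+1)}_Y}), *\bigr),\]
the latter induced by the compatible system of functors $\id \otimes \tilde{\pi}^*_{\hat 1}$. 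Under the first isomorphism, $\Gamma_X(\omega_{X/S} \boxtimes \mathcal{O}_Y)$ is represented by the compatible system $\bigl(\Gamma_X(\omega_{X/S} \boxtimes \mathcal{O}_{X^{(n+1)}_Y})\bigr)_n$, because upper-star pullback preserves $\mathcal{O}$ while $\omega_{X/S}$ sits in the frozen leftmost factor and is transported unchanged.

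The main task is then a direct computation showing that the image of $\Gamma_X(\omega_{X/S} \boxtimes \mathcal{O}_X) \otimes_{\opposite{D_{X/Y}}} \omega_{X/Y}$ under the second isomorphism is the same compatible system $\bigl(\Gamma_X(\omega_{X/S} \boxtimes \mathcal{O}_{X^{(n+1)}_Y})\bigr)_n$. This uses the base-change identity from Theorem A.1 of \cite{previous} to move the upper-star past the external tensor, together with the fact that $\tilde{\pi}^*_{\hat 1}$ of $\omega_{X/Y}$ produces the pieces needed to absorb the bar resolution and yield $\mathcal{O}_{X^{(n+1)}_Y}$ on the right. The step requiring the most care, and the main obstacle, is keeping the left/right $\opposite{D_{X/Y}}$-actions correctly aligned with the factors of the external product as one traces through $\id \otimes \tilde{\pi}^*_{\hat 1}$ applied to the bar construction; once this is set up, the computation collapses as in the previous proposition. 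Compatibility with the right $(D_X, D_Y)$-bimodule structures is automatic by naturality of all the constructions.
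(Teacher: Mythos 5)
Your overall frame matches the paper's up to a point: you interpret the left-hand side as the bar-construction element of $\Gamma_X(\QC{X \times X}) \otimes_{\Gamma_X(\QC{X \times_Y X})} \QC{X}$ exactly as in the preceding proposition, and you represent the right-hand side in the upper-star limit of Proposition \ref{descend1} as the compatible system $\bigl(\Gamma_X(\omega_{X/S} \boxtimes \mathcal{O}_{X^{(n+1)}_Y})\bigr)_n$; both of these steps agree with the paper. The gap is in the claim that ``by the same cobar-type argument'' the relative tensor product is identified with $\lim_{\bm{\Delta_s}}\bigl(\Gamma_X(\QC{X \times_S X^{(n+1)}_Y}), *\bigr)$ via the compatible system $\id \otimes \tilde{\pi}_1^*$. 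The argument you are importing (the corollary to Proposition \ref{descend1}, obtained by dualizing the bar resolution of Proposition \ref{resolve} and Lemma \ref{tensorlemma}) produces the limit with \emph{upper-cross} transition maps and projections $\id \otimes \tilde{\pi}_1^{\times}$, because the face maps of the bar resolution are lower-star pushforwards and their right adjoints are upper-cross, not upper-star. An upper-star presentation of the tensor product with plain $\id \otimes \tilde{\pi}_1^*$ projections does not come for free from that argument; producing and normalizing it requires precisely the $\mathcal{O}$-versus-$\omega$ bookkeeping (a termwise twist by relative dualizing complexes) that is the content of the proposition. Consequently your assertion that ``$\tilde{\pi}^*$ of $\omega_{X/Y}$ absorbs the bar resolution and yields $\mathcal{O}$, and the computation collapses as in the previous proposition'' is not a collapse but the statement to be proven: in the previous proposition both sides met in the same (upper-cross) presentation, whereas here they do not, and that asymmetry is exactly what distinguishes this proposition from its predecessor.

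The paper handles this asymmetry explicitly: it computes the left-hand side in the established upper-cross presentation, where it is the system $\bigl(\Gamma_X(\omega_{X/S} \boxtimes \omega_{X^{(n+1)}_Y/S})\bigr)_n$ in $\lim_{\bm{\Delta_s}}\bigl(\Gamma_X(\QC{X \times_S X^{(n+1)}_Y}), \times\bigr)$, computes the right-hand side in the upper-star presentation as you do, and then concludes via the comparison isomorphism $\lim(*) \cong \lim(\times)$ (the termwise dualizing-complex twist, in the spirit of the functor $R$ from the proof of Theorem \ref{leftrightswitch}), under which the $\mathcal{O}$-system corresponds to the $\omega$-system. If you wish to keep your all-upper-star route, you must first prove that the functors $\id \otimes \tilde{\pi}_1^*$ assemble to an equivalence onto the upper-star limit with exactly these projections --- for instance by applying the left-right switch to the bar resolution, or by composing the upper-cross presentation with the twist comparison and verifying that the resulting projections are untwisted upper-star functors --- and that verification, together with the resulting computation of the image of the bar element, is where the actual work of the proposition lives; as written, your plan defers it to an assertion.
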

\begin{proof}
  The element
  \[(\widetilde{\id \times f})_*\left(\Gamma_X(\omega_{X/S} \boxtimes \mathcal{O}_{X}) \otimes_{\opposite{D_{X/Y}}} \omega_{X/Y}\right)\]
  can be expressed by the tensor
  \[\Gamma_X(\omega_{X/S} \boxtimes \mathcal{O}_{X}) \otimes_{\opposite{D_{X/Y}}} \omega_{X/Y} \in \Gamma_X(\QC{X \times X}) \otimes_{\Gamma_X(\QC{X \times_Y X})} \QC{X}\]
  and under the isomorphism of Proposition \ref{descend1}
  is given by the compatible system
  \[\left(\Gamma_X(\omega_{X/S} \boxtimes \omega_{X^{(n+1)}_Y/S})\right)_n \in \lim_{\bm{\Delta_s}}(\Gamma_X(\QC{X \times_S (X^{(n+1)}_Y)}), \times )\]
  The right hand side is given by the element
  \[\left(\Gamma_X(\omega_{X/S} \boxtimes \mathcal{O}_{X^{(n+1)}_Y/S})\right)_n \in \lim_{\bm{\Delta_s}}(\Gamma_X(\QC{X \times_S (X^{(n+1)}_Y)}), *)\]
  which also corresponds to the element
  \[\left(\Gamma_X(\omega_{X/S} \boxtimes \omega_{X^{(n+1)}_Y/S})\right)_n \in \lim_{\bm{\Delta_s}}(\Gamma_X(\QC{X \times_S (X^{(n+1)}_Y)}), \times )\]
  under the isomorphism
  \[\lim_{\bm{\Delta_s}}(\Gamma_X(\QC{X \times_S (X^{(n+1)}_Y)}), *) \cong \lim_{\bm{\Delta_s}}(\Gamma_X(\QC{X \times_S (X^{(n+1)}_Y)}), \times )\]
\end{proof}

\begin{corollary} \label{pushinvar}
  Suppose $T$ and $Y$ are spectral schemes over a homologically bounded base spectral Noetherian scheme $S$, such that the structure maps
  are finite tor-amplitude, locally almost of finite presentation and separated. 
  Let $f: T \to Y$ be proper and finite tor-amplitude. Then, the diagram below commutes
  \[
    \begin{tikzcd}
      \Mod{D_{T/S}} \arrow{r}{g_{\dagger}} \arrow{d}{\Phi_{T \to Y}} & \Mod{D_{Y/S}} \arrow{d}{G_{D_Y}} \\
      \Mod{D_{T/Y}} \arrow{r}{g_{\dagger}} & \Mod{D_{Y/Y}}
    \end{tikzcd}
  \]
  in the sense the natural map 
  \[g_{\dagger}\Phi_{T \to Y} \to g_{\dagger} \Phi_{T \to Y} g^+ g_{\dagger} \cong g_{\dagger} g^+ G_{D_Y} g_{\dagger} \to G_{D_Y} g_{\dagger}\]
  is an isomorphism (this is an example of a Beck-Chevalley condition)
  where
  \[\Phi_{T \to Y} : \Mod{D_{T/S}} \to \Mod{D_{T/Y}}\]
  can be defined by assemblying the quasicoherent pullback maps
  \[\Gamma_{T}(\QC{T^{(n+1)}_S}) \to \Gamma_{T}(\QC{T^{(n+1)}_Y})\]
  using the isomorphisms
  \[\Mod{D_{T/S}} \cong \lim_{\bm{\Delta}_s}(\Gamma_{\Delta}(\QC{T^{(n+1)}_S}),*)\]
  and
  \[\Mod{D_{T/Y}} \cong \lim_{\bm{\Delta}_s}(\Gamma_{\Delta}(\QC{T^{(n+1)}_Y}),*)\]
  Note we have abused notation to use $g_{\dagger}$ to denote two maps.
\end{corollary}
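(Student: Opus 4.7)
The plan is to follow the same strategy as the preceding corollary (the Beck--Chevalley statement for $g_+$ and $\opposite{D}$-modules), but with the first transfer-module proposition replaced by the second one proved just above, namely the isomorphism
\[
(\widetilde{\id \times f})_*\left(\Gamma_X(\omega_{X/S} \boxtimes \mathcal{O}_{X}) \otimes_{\opposite{D_{X/Y}}} \omega_{X/Y}\right) \cong \Gamma_X(\omega_{X/S} \boxtimes \mathcal{O}_Y).
\]
As in that proof, I would first reduce to the affine case so that each of the four functors in the square can be written as tensor product against an explicit transfer bimodule.

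Next, I would unwind the four arrows as bimodules. Since $f$ is proper and finite tor-amplitude, $g_\dagger$ is left adjoint to $g^+$ by Remark \ref{adjointremark}, so its transfer bimodule is the left-right swap of the bimodule $\Gamma_g(\mathcal{O}_T \boxtimes \omega_{Y/S})$ of $g^+$ from Theorem/Definition \ref{transfermod}; the functor $\Phi_{T \to Y}$ corresponds to base-change of the $D$-action along the natural map $D_{T/Y} \to D_{T/S}$ and is represented by $D_{T/S}$ viewed as a $(D_{T/Y}, D_{T/S})$-bimodule; the forgetful $G_{D_Y}$ is represented by $D_{Y/S}$; and the bottom $g_\dagger$ over $Y$ is represented by $\omega_{T/Y}$. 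Composing the arrows, the two routes around the square produce, as $(\mathcal{O}_Y, D_{T/S})$-bimodules, expressions that match term by term the two sides of the preceding proposition after pushing forward along $\widetilde{\id \times f}$.

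The Beck--Chevalley natural transformation $g_\dagger \Phi_{T \to Y} \to G_{D_Y} g_\dagger$ then becomes identified with the canonical morphism appearing in the proposition, and hence is an isomorphism. The main obstacle is the bookkeeping of left versus right $D$-actions and checking that the morphism of bimodules one writes down really is the Beck--Chevalley unit-counit map and not a twist of it; once this identification is secured, the proposition delivers the conclusion at once. A potentially cleaner alternative, which one might prefer for brevity, is to deduce the corollary directly from the preceding one by applying the left-right switch equivalence of Theorem \ref{leftrightswitch}: that equivalence intertwines $g_+$ with $g_\dagger$ and $G_{\opposite{D_Y}}$ with $G_{D_Y}$, while matching up the two versions of $\Phi_{T \to Y}$, so the Beck--Chevalley square for $\opposite{D}$-modules transports term by term to the one in the present statement.
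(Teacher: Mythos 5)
Your main argument is essentially the paper's own proof: reduce to the affine case, rewrite the Beck--Chevalley map as a morphism of transfer bimodules beginning with $\omega_{T/Y}\otimes_{D_{T/Y}}D_{T/S}$ and ending with $D_{Y/S}\otimes_{D_{Y/S}}\Gamma_T(\mathcal{O}_Y\boxtimes\omega_{T/S})$, and observe that the key identification is exactly the proposition proved immediately before the corollary. One caveat about your optional shortcut via Theorem \ref{leftrightswitch}: the left-right switch does not intertwine $G_{\opposite{D_Y}}$ with $G_{D_Y}$ on the nose (indeed $G_{D_Y}QF_{\opposite{D_Y}}\cong\tilde{\pi}_{1,*}\tilde{\pi}_2^*$, not the forgetful monad $\tilde{\pi}_{1,*}\tilde{\pi}_2^{\times}$), so transporting the previous corollary's square would require additional $\omega$-twist bookkeeping of the vertical arrows and is not as immediate as stated.
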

\begin{proof}
  Without loss of generality assume all the schemes are affine. Now let us express the natural map as a morphism of transfer modules, namely,
  \[\omega_{T/Y} \otimes_{D_{T/Y}} D_{T/S} \to
    \omega_{T/Y} \otimes_{D_{T/Y}} D_{T/S} \otimes_{D_{T/S}} \Gamma_T(\mathcal{O}_T \boxtimes \omega_{Y/S}) \otimes_{D_{Y/S}} \Gamma_T(\mathcal{O}_Y \boxtimes \omega_{T/S})\]
  \[\cong \omega_{T/Y} \otimes_{D_{T/Y}} \mathcal{O}_T \otimes_{D_{Y/Y}} D_{Y/S} \otimes_{D_{Y/S}} \Gamma_T(\mathcal{O}_Y \boxtimes \omega_{T/S}) \to
    D_{Y/S} \otimes_{D_{Y/S}} \Gamma_T(\mathcal{O}_Y \boxtimes \omega_{T/S})\]
  which boils down to the above proposition.
\end{proof}

\begin{remark}
  The core of the two corollaries above is the statement that $D$-module pushforward is in some sense invariant of the choice of the base $S$.
\end{remark}
\begin{proposition} \label{pushpullforDmod}
  Suppose $X$, $Y$, and $T$ are spectral schemes over a base homologicaly bounded spectral Noetherian scheme $S$, such that the structure maps
  are finite tor-amplitude, locally almost of finite presentation and separated. 
  Let $g: T \to Y$ be proper and finite tor-amplitude and $f: X \to Y$ be
  a morphism. Then, the diagram below commutes
  \[
    \begin{tikzcd}
      \Mod{D_{T/S}} \arrow{r}{g_{\dagger}} \arrow{d}{(f')^+} & \Mod{D_{Y/S}} \arrow{d}{f^+} \\
      \Mod{D_{T \times_Y X/S}} \arrow{r}{(g')_{\dagger}} & \Mod{D_{X/S}}
    \end{tikzcd}
  \]
  in the sense that the natural map
  \[(g')_{\dagger}(f')^+ \to (g')_{\dagger}(f')^+g^+g_{\dagger} \cong (g')_{\dagger}(g')^+f^+g_{\dagger} \to f^+g_{\dagger}\]
  is an isomorphism (this is an example of a Beck-Chevalley condition).
\end{proposition}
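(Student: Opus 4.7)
The plan is to verify the Beck--Chevalley isomorphism levelwise, using the cosimplicial presentations of $f^{+}$ and $g_{\dagger}$ given in Remark \ref{adjointremark}. Under the identification
\[
\Mod{D_{Y/S}} \cong \lim_{\bm{\Delta_s}}(\Gamma_{\Delta}(\QC{Y^{(n+1)}_S}),*),
\]
$f^{+}$ is assembled from the termwise quasicoherent pullbacks $\widetilde{(f^{n+1})^{*}}$, and $g_{\dagger}$ is assembled from the termwise lower-shriek functors $\widetilde{g^{n+1}_{!}}$; similarly for $(f')^{+}$ and $(g')_{\dagger}$.

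First I would observe that forming the $(n+1)$-fold fiber product over $S$ carries the defining cartesian square of $T\times_Y X$ to a cartesian square
\[
\begin{tikzcd}
(T \times_Y X)^{(n+1)}_S \arrow{r}{(g')^{n+1}} \arrow{d}{(f')^{n+1}} & X^{(n+1)}_S \arrow{d}{f^{n+1}} \\
T^{(n+1)}_S \arrow{r}{g^{n+1}} & Y^{(n+1)}_S,
\end{tikzcd}
\]
in which $g^{n+1}$ remains proper and of finite tor-amplitude. From this the claim reduces at each $n$ to the quasicoherent Beck--Chevalley isomorphism
\[
(f^{n+1})^{*}\, g^{n+1}_{!} \xrightarrow{\sim} (g')^{n+1}_{!}\, (f')^{n+1,\,*}.
\]
This base change in the derived setting is standard given the properness and finite tor-amplitude of $g^{n+1}$, and is precisely the sort of input recorded in Section 4 and Theorem A.1 of \cite{previous}.

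Two additional compatibilities must be checked before these levelwise isomorphisms produce an isomorphism on the cosimplicial limits. The first is that the Beck--Chevalley square descends through the $\Gamma_{\Delta}$-localization; this is automatic since all four maps in the square send the diagonal to the diagonal, so their tilde modifications inherit the base change from the unlocalized functors. The second is that the isomorphism commutes with the transition functors $\tilde{h}_{*}$ as $[n]$ varies in $\bm{\Delta_s}$; this reduces to a further Beck--Chevalley exchange between the transition maps and the $f^{*}$, $g_{!}$ appearing at each level, which again holds because the relevant squares are cartesian.

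The main obstacle I anticipate is not any single base change identity but the coherent assembly: promoting a family of levelwise natural isomorphisms to a genuine natural isomorphism of functors on the cosimplicial limits requires a $\bm{\Delta_s}$-indexed datum with higher coherences. If this bookkeeping becomes unwieldy, an alternative is to parallel Corollary \ref{pushinvar} by rewriting both $(g')_{\dagger}(f')^{+}$ and $f^{+} g_{\dagger}$ as the action of explicit transfer $(D_{X/S},D_{T/S})$-bimodules on $X \times_S T$, each obtained from the transfer bimodule $D_{T \to Y/S}$ of Theorem/Definition \ref{transfermod} by pulling back along $\widetilde{(f\times \id)}^{*}$, and comparing the two bimodules directly via the cartesian square. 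This transfer-bimodule approach bypasses the cosimplicial coherences entirely and reduces the whole statement to an identity of bimodules that follows from base change for $\Gamma_{f'}$ on $(T \times_Y X) \times_S X$ inside $T \times_S X$.
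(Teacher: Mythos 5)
Your reduction to a levelwise statement does not work as set up, and the failure is at a specific point. You treat both $f^{+}$ and $g_{\dagger}$ as if they were assembled termwise in the \emph{same} presentation $\lim_{\bm{\Delta_s}}(\Gamma_{\Delta}(\QC{Y^{(n+1)}_S}),*)$. In the paper, $f^{+}$ is termwise upper-star in the star-pullback limit presentation, but $g_{\dagger}$ (Remark \ref{adjointremark}) is assembled termwise only in the \emph{cross} presentation $\colim_{\opposite{\bm{\Delta_s}}}(\Gamma_{\Delta}(\QC{T^{(m+1)}_S}),\times)$; it has no termwise lower-shriek description compatible with the star-pullback transition maps. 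Indeed, the compatibility you invoke ("a further Beck--Chevalley exchange between the transition maps and the $f^{*}$, $g_{!}$ \ldots which again holds because the relevant squares are cartesian") is false: the square with horizontal arrows $g^{(n+1)}\colon T^{(n+1)}_S\to Y^{(n+1)}_S$, $g^{(m+1)}\colon T^{(m+1)}_S\to Y^{(m+1)}_S$ and vertical arrows the simplicial projections is \emph{not} cartesian, since
\[
T^{(n+1)}_S\times_{Y^{(n+1)}_S}Y^{(m+1)}_S\;\cong\;T^{(n+1)}_S\times_S Y^{(m-n)}_S\;\not\cong\;T^{(m+1)}_S
\]
unless $T\to Y$ is an isomorphism in the relevant formal neighborhood. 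This is exactly why shriek/star pushforwards only assemble against cross transitions (right-$D$-module-style presentations) and pullbacks against star transitions. Consequently the composite $(g')_{\dagger}(f')^{+}$ is not termwise $(g')^{(n+1)}_{!}\circ (f')^{(n+1),*}$: to compose the two functors one must insert the nontrivial identification between the star-limit and cross-colimit presentations (the functors of Theorem \ref{explicit3}), and your levelwise square, while true as a quasicoherent base-change statement for the cartesian square of $(n+1)$-fold products over $S$, does not compute the Beck--Chevalley map in the statement.

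Your fallback suggestion (compare transfer bimodules) is closer to a workable argument, but as written it skips the step that actually carries the content. The paper's proof checks the map after postcomposing with the conservative functor $G_{D_X}$, then uses Corollary \ref{pushinvar} twice to replace $G_{D_X}$ and $G_{D_Y}$ by the functors $\Phi$ to the relative theories $\Mod{D_{T\times_Y X/X}}$ and $\Mod{D_{T/Y}}$, and finally reduces to the statement that the relative theory base-changes along $X\to Y$, via
\[
\Mod{D_{T\times_Y X/X}}\;\cong\;\Mod{D_{T/Y}}\otimes_{\QC{Y}}\QC{X},
\qquad
\Mod{D_{X/X}}\;\cong\;\Mod{D_{Y/Y}}\otimes_{\QC{Y}}\QC{X},
\]
with $(g')_{\dagger}$ the base change of $g_{\dagger}$. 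If you want to pursue the bimodule route you would need to supply comparisons of this kind (or an equivalent identification of the two transfer bimodules after applying the conservative forgetful functor), not merely the cartesianness of $(T\times_Y X)^{(n+1)}_S$ over $Y^{(n+1)}_S$, which by itself does not control the mixed-variance composites.
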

\begin{proof}
  We can check the natural map is an isomorphism after post-composition with
  \[G_{D_X} : \Mod{D_{X/S}} \to \QC{X}\]
  since $G_{D_X}$ is conservative.
  By Corollary \ref{pushinvar} the following diagram commutes
  \[
    \begin{tikzcd}
      \Mod{D_{T \times_Y X/S}} \arrow{r}{(g')_{\dagger}} \arrow{d}{\Phi_{T \times_Y X}} & \Mod{D_{X/S}} \arrow{d}{G_{D_X}} \\
      \Mod{D_{T \times_Y X/X}} \arrow{r}{(g')_{\dagger}} & \Mod{D_{X/X}}
    \end{tikzcd}
  \]
  in the sense the natural map
  \[(g')_{\dagger}\Phi_{T \times_Y X} \to (g')_{\dagger} \Phi_{T \times_Y X} (g')^+ (g')_{\dagger} \cong
    (g')_{\dagger}(g')^+G_{D_X}(g')_{\dagger} \to G_{D_X}(g')_{\dagger}\]
  is an isomorphism where
  \[\Phi_{T \times_Y X} : D_{T \times_Y X/S} \to D_{T \times_Y X/X}\]
  is assembled from the quasicoherent pullback maps
  \[\Gamma_{T \times_Y X}(\QC{(T \times_Y X)^{(n+1)}_S}) \to \Gamma_{T \times_Y X}(\QC{(T \times_Y X)^{(n+1)}_X})\]
  using the isomorphisms
  \[\Mod{D_{T \times_Y X/S}} \cong \lim_{\bm{\Delta}_s}(\QC{\Gamma_{T \times_Y X}(\QC{(T \times_Y X)^{(n+1)}_S})},*)\]
  and
  \[\Mod{D_{T \times_Y X/X}} \cong \lim_{\bm{\Delta}_s}(\QC{\Gamma_{T \times_Y X}(\QC{(T \times_Y X)^{(n+1)}_X})},*)\]
  Hence it suffices to show that the following diagram commutes (with the Beck-Chevalley map)
  \[
    \begin{tikzcd}
      \Mod{D_{T/S}} \arrow{r}{(g')_{\dagger}} \arrow{d}{\Phi_{T \times_Y X}(f')^+} & \Mod{D_{Y/S}} \arrow{d}{G_{D_X}f^+} \\
      \Mod{D_{T \times_Y X/X}} \arrow{r}{(g')_{\dagger}} & \Mod{D_{X/X}}
    \end{tikzcd}
  \]
  Since the diagram
  \[
    \begin{tikzcd}
      \Mod{D_{T/S}} \arrow{r}{(g')_{\dagger}} \arrow{d}{\Phi_{T}} & \Mod{D_{Y/S}} \arrow{d}{G_{D_Y}} \\
      \Mod{D_{T/Y}} \arrow{r}{(g')_{\dagger}} & \Mod{D_{Y/Y}}
    \end{tikzcd}
  \]
  commutes (with the Beck-Chevalley maps) by Corollary \ref{pushinvar}, it suffices to show that the diagram
  \[
    \begin{tikzcd}
      \Mod{D_{T/Y}} \arrow{r}{(g')_{\dagger}} \arrow{d}{(f')^*} & \Mod{D_{Y/Y}} \arrow{d}{f^*} \\
      \Mod{D_{T \times_Y X/X}} \arrow{r}{(g')_{\dagger}} & \Mod{D_{X/X}}
    \end{tikzcd}
  \]
  commutes with the Beck-Chevalley maps. This holds since the entire theory base-changes well
  with respect to the base. More precisely, there are isomorphisms
  \[\Mod{D_{T \times_Y X/X}} \cong \Mod{D_{T/Y}} \otimes_{\QC{Y}} \QC{X}\]
  \[\Mod{D_{X/X}} \cong \Mod{D_{Y/Y}} \otimes_{\QC{Y}} \QC{X}\]
  and the functor $(g')_{\dagger}$ on the bottom is the base-change of the functor $(g')_{\dagger}$
  on top.

\end{proof}
  
\begin{theorem}\label{pftdescent}
  The category of $D$-modules satisfies descent along proper finite tor-amplitude surjective maps.
\end{theorem}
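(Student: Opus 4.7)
The plan is to apply the cosimplicial descent criterion used in the proof of Proposition \ref{descend1} (namely Corollary 4.7.5.3 of \cite{HA}) to the augmented cosimplicial diagram
\[\Mod{D_{Y/S}} \to \Mod{D_{X^{(n+1)}_Y/S}},\]
whose transition maps are the $D$-module pullbacks along the face maps of the Čech nerve of $f: X \to Y$. Two conditions must be verified: left adjointability (Beck--Chevalley) of each face map, and conservativity of the augmentation $f^+$.

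The Beck--Chevalley condition is a direct consequence of Proposition \ref{pushpullforDmod}: for every Cartesian square $X^{(m+1)}_Y \times_Y X \simeq X^{(m+2)}_Y$ arising within the Čech nerve, that proposition provides exactly the required interchange of $f^+$ with its left adjoint $f_\dagger$. For conservativity of $f^+$, suppose $f^+M \simeq 0$. Applying $G_{D_X}$ and using the commutativity $G_{D_X}\circ f^+ \cong f^*\circ G_{D_Y}$ (visible from the termwise description of $f^+$ on the limit presentation, at the $[0]$-component) yields $f^*(G_{D_Y}M)\simeq 0$ in $\QC{X}$. Since $G_{D_Y}$ is conservative by Theorem \ref{bigtheorem2}, the question reduces to conservativity of $f^*:\QC{Y}\to\QC{X}$.

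The main obstacle is therefore conservativity of $f^*$ for proper, surjective, finite tor-amplitude morphisms --- the geometric input beyond the formal categorical machinery. One approach is a local Nakayama-style argument: given a nonzero $N\in\QC{Y}$, pass to a coherent subquotient whose lowest nonzero cohomology sheaf has support containing some $y$, pick $x\in X$ over $y$ using surjectivity, and use finite tor-amplitude to control negative Tors so that the lowest nonzero cohomology of $N_y$ is detected after derived tensor with $\mathcal{O}_{X,x}$ and further base change to $\kappa(x)$. Granted this input, Corollary 4.7.5.3 of \cite{HA} identifies $\Mod{D_{Y/S}}$ with $\lim_{\bm{\Delta}_s}\Mod{D_{X^{(n+1)}_Y/S}}$, which is the desired descent. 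The analogous statement for $\opposite{D_{X/S}}$-modules follows either by the same argument applied to the adjunction $f_+\dashv f^\dagger$ or directly via the left-right switch of Theorem \ref{leftrightswitch}.
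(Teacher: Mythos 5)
Your skeleton coincides with the paper's proof: the paper also applies Corollary 4.7.5.3 of \cite{HA} to the cosimplicial diagram of $D$-module categories with $^+$-pullback transition maps, verifies the left-adjointability condition exactly by Proposition \ref{pushpullforDmod}, and reduces conservativity of $f^+$ to a statement at the level of $\QC{-}$ using the conservative forgetful functors. The divergence is in the one step that carries the geometric content, and there your argument has a genuine gap. The paper does \emph{not} prove conservativity of $f^*$ by a local Nakayama argument: it deduces it from $h$-descent for quasicoherent sheaves along the derived \v{C}ech nerve of a proper, surjective, finite tor-amplitude map (\cite{AGrass} Theorem 11.12), after reducing to the discrete setting via Lemma C.2 of \cite{previous}. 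That external descent theorem is precisely the input your sketch tries to replace.

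The sketched Nakayama argument does not go through in the unbounded, non-coherent setting of $\QC{Y}$. First, an arbitrary object need not have a lowest (or highest) nonvanishing cohomology sheaf, and finite tor-amplitude does not isolate one: in any fixed degree of $f^*N$ the terms $L_sf^{0*}$ of several neighboring cohomology sheaves interfere, while at the extremal degree one sees only the top Tor $L_df^{0*}$, which can vanish even on a nonzero sheaf. Second, even for a single discrete quasicoherent sheaf the point-and-Nakayama detection fails without coherence: for $N=\mathbb{Z}(p^{\infty})$ over $\Spec{\mathbb{Z}_{(p)}}$ one has $N\otimes\kappa(y)=0$ for \emph{every} point $y$, so no choice of $y$ and $x$ above it detects $N$ fiberwise; nonvanishing of $f^*N$ in such situations comes from higher Tor terms (e.g.\ $\Tor_1^{\mathbb{Z}_{(p)}}(N,\mathbb{F}_p)\neq 0$), not from the degree you propose to look at. Nor does ``pass to a coherent subquotient'' repair this: underived pullback is only right exact, so nonvanishing on a subobject says nothing about $N$, and nonzero modules such as $\mathbb{Q}$ over $\mathbb{Z}$ admit no nonzero coherent quotients at all. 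So conservativity of $f^*$ for proper surjective finite tor-amplitude maps on all of $\QC{Y}$ genuinely requires a theorem of descent-type strength; once you replace your sketched step by the citation the paper uses, the rest of your argument agrees with the paper's proof. (Your closing remark about right modules is harmless --- the paper proves the statement for $f^+$ --- but if you invoke the left-right switch of Theorem \ref{leftrightswitch} you should verify its compatibility with the pullback functors rather than assert it.)
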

\begin{proof}
  Suppose $f : T \to X $ is locally almost of finite presentation, proper, finite tor-amplitude, and surjective over some base $S$ so that the situation satisfies the standing assumptions. We wish to show the map
  \[\Mod{D_X} \to \lim_{\bm{\Delta}}(\Mod{D_{T^{(n)}_X}})\]
  is an isomorphism, where the transition maps are $D$-module pullback ($^+$-pullback) and $T^{(n)}_X$ is the
  $n$-fold (derived) cartesian product of $T$ over $X$. We will apply Corollary 4.7.5.3 of \cite{HA}. We need to check three conditions
  \begin{enumerate}
  \item $\Mod{D_X}$ admits geometric realizations of $f^+$-split simplicial objects and those geometric realizations are
    preserved by $f^+$.
  \item For every morphism $[m] \to [n]$ in $\bm{\Delta}_+$, the diagram
    \[
      \begin{tikzcd}
        \Mod{D_{T^{(m)}_X}} \arrow{r}{d^0} \arrow{d} & \Mod{D_{T^{(m+1)}_X}} \arrow{d} \\
        \Mod{D_{T^{(n)}_X}} \arrow{r}{d^0} & \Mod{D_{T^{(n+1)}_X}}
      \end{tikzcd}
    \]
    is left-adjointable (see \cite{HA} 4.7.4.13)
    where $d^0 : [N] \to [N+1]$ denotes the map which sends $k$ to $k+1$ for $k \in [N]$.

  \item $f^+$ is conservative.
  \end{enumerate}

  (1) is automatic. (2) is a direct application of Proposition \ref{pushpullforDmod}. (3)
  follows from Lemma C.2 of \cite{previous} (reducing to the discrete setting) and \cite{AGrass} Theorem 11.12 (\textit{h}-descent with derived
  Čech nerve).
\end{proof}
\begin{corollary}
  Suppose $S$ is an underived Noetherian scheme, then the category of $D$-modules on finite-type,
  finite tor-amplitude, separated $S$-schemes (relative to $S$) satisfies fppf-descent.
\end{corollary}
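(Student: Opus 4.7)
The plan is to deduce fppf descent by combining the two descent results already established: étale descent with respect to $X$ from Theorem/Definition \ref{rightdref}, and proper finite tor-amplitude surjective descent from Theorem \ref{pftdescent}. The bridge between the two is the classical algebraic-geometric fact that, in the underived Noetherian setting, the fppf topology is generated by étale covers together with finite faithfully flat covers.

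The first observation is that any finite faithfully flat morphism $Y \to X$ between underived Noetherian schemes is automatically proper (finiteness implies properness), of tor-amplitude zero (by flatness), locally of finite presentation (finite type over Noetherian is locally almost of finite presentation in our conventions), separated, and surjective. Hence Theorem \ref{pftdescent} applies directly and yields descent of $D$-modules along every finite faithfully flat surjection. Moreover, if $X$ is finite-type, finite tor-amplitude, and separated over $S$, the same holds for any scheme obtained from $X$ by first an étale base change and then a finite faithfully flat surjection, since étale and finite flat morphisms have tor-amplitude zero and compose well with finite tor-amplitude. Thus the standing hypotheses on the source and the fibre products are preserved under the refinements we shall perform.

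Given an arbitrary fppf cover $f: T \to X$ in our class, I would invoke the classical refinement lemma (originating in Raynaud--Gruson's platification par éclatement and reproduced in the Stacks Project) stating that, étale-locally on $X$, the morphism $f$ admits a refinement by a finite faithfully flat surjection; equivalently, for every point of $X$ one can find an étale neighborhood $X' \to X$ and a finite faithfully flat surjection $Y \to X'$ factoring through $T \times_X X' \to X'$. Having such a refinement, the fppf descent for $\Mod{D_{X/S}}$ follows formally: first descend along the étale cover $X' \to X$ via Theorem \ref{rightdref}, and then descend along the finite faithfully flat surjection $Y \to X'$ via Theorem \ref{pftdescent}; since any $\bm{\Delta}$-limit diagram corresponding to descent along $T \to X$ is dominated by the combined Čech nerves of these two refinements, descent along $f$ is forced.

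The main obstacle will be verifying that the classical refinement lemma dovetails cleanly with our derived categorical framework---in particular that the Čech nerve of the refinement $Y \to X' \to X$ is compatible, via the Beck--Chevalley conditions already verified in the proof of Theorem \ref{pftdescent}, with the Čech nerve of $T \to X$. Once this bookkeeping is in place, no new categorical input is required, and the corollary will follow as a purely topological-refinement consequence of the two descent theorems already proven.
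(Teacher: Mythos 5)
Your argument is essentially the paper's own proof: the corollary is deduced by refining an fppf cover, Zariski/étale-locally, by a finite faithfully flat surjection and then combining étale descent (Theorem \ref{bigtheorem2}) with the proper finite tor-amplitude descent of Theorem \ref{pftdescent}. The only quibble is the attribution of the refinement step: it is the standard structure result for fppf coverings (the paper cites \cite{stacks} Lemma 0DET, which rests on the local structure of quasi-finite flat maps) rather than Raynaud--Gruson platification, but the mathematical content you invoke is the same.
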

\begin{proof}
  Follows from étale descent (Theorem \ref{bigtheorem2}) and finite-flat descent (Theorem \ref{pftdescent}) together with \cite{stacks} Lemma 0DET.
\end{proof}

\section{Comparison with the De Rham Stack for Underived Noetherian Schemes} \label{cryssec}
In this section, we discuss the relationship between $D$-modules as defined in the previous sections and
the more classical story of quasicoherent sheaves on the de Rham stack. The latter is the same thing
as quasi-coherent crystals on the (big) infinitesimal site. Over characteristic zero, all the results below appear in \cite{CryD}.

Suppose $S$ is a underived Noetherian scheme.
Let us denote by $SCH^{ft}_{/S}$ the category of all finite-type separated underived schemes over $S$.
For any finite-type morphism $X \to Y$ in $SCH^{ft}_{/S}$, we can define 
\begin{definition} \label{derhamstack}
  The relative de Rham stack $(X/Y)_{dR}$ is the presheaf on $SCH^{ft}_{/S}$ defined by
  \[(X/Y)_{dR}(U) := \Hom(U_{red},X) \times_{\Hom(U_{red},Y)} \Hom(U,Y)\]
  where $U_{red}$ is the reduced subscheme $U$ and the Hom's are computed in $SCH^{ft}_{/S}$.
\end{definition}
In other words, it is the presheaf of maps from $U$ to $Y$ such that on $U_{red}$ the map lifts to $X$.
This presheaf is in fact a sheaf on the Zariski (or étale) topology.
A reminder to the reader that we use the terms presheaf/sheaf to mean presheaf/sheaf of spaces, in the sense of \cite{HTT}.

The (contravariant) functor taking a scheme to its
category of quasicoherent sheaves
\[QCoh : \opposite{SCH^{ft}_{/S}} \to \widehat{Cat_{\infty}}\]
is a sheaf of categories on $SCH^{ft}_{/S}$, with respect to
either the Zariski or étale topology. 
Hence, we can define $QCoh$ for any presheaf on $SCH^{ft}_{/S}$ by
\[QCoh(\mathcal{F}) := \Hom(\mathcal{F},QCoh)\]
where the Hom is taken in the category of presheaves
of categories on $SCH_{/S}$. Alternatively, we can think of this as defining $QCoh$
via Kan extension. We note that this agrees with the definition given in \cite{stacks} Tag 0H0H,
as the difference in the choice of sites does not matter here.

\begin{proposition} \label{closedstackequi}
  Suppose $X \to Y$ is a closed immersion and $Y$ is finite-type over $S$, then
  \[QCoh((X/Y)_{dR}) \cong \Gamma_X(\QC{Y})\]
\end{proposition}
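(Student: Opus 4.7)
My plan is to explicitly identify $(X/Y)_{dR}$ as a filtered colimit of infinitesimal neighborhoods of $X$ in $Y$, convert this via the Kan extension definition of $QCoh$ into a limit of quasicoherent categories, and then identify this limit with $\Gamma_X(\QC{Y})$.

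For the first step, let $\mathcal{I} \subset \mathcal{O}_Y$ denote the ideal of the closed immersion $X \hookrightarrow Y$ and write $X_n := \Spec_Y(\mathcal{O}_Y/\mathcal{I}^{n+1})$ for the $n$-th infinitesimal neighborhood. For any finite-type $S$-scheme $U$, a map $f\colon U \to Y$ lies in $(X/Y)_{dR}(U)$ iff $f|_{U_{red}}$ factors through $X \hookrightarrow Y$, which (since $X \hookrightarrow Y$ is a closed immersion) is equivalent to $f$ having set-theoretic image contained in $X$, and (by quasicompactness of $U$) to $f$ factoring through some $X_n$. Hence pointwise $(X/Y)_{dR}(U) = \colim_n X_n(U)$, so $(X/Y)_{dR} \cong \colim_n X_n$ in presheaves. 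By the Kan extension definition of $QCoh$, which converts presheaf colimits to limits of categories, we obtain
\[QCoh((X/Y)_{dR}) \cong \lim_n \QC{X_n},\]
with transition functors the pullback along the closed immersions $X_n \hookrightarrow X_{n+1}$.

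It remains to identify $\lim_n \QC{X_n}$ with $\Gamma_X(\QC{Y})$. The natural candidate functor is
\[\Phi \colon \Gamma_X(\QC{Y}) \to \lim_n \QC{X_n}, \qquad \mathcal{F} \mapsto (i_n^* \mathcal{F})_n,\]
where $i_n \colon X_n \hookrightarrow Y$. I would verify $\Phi$ is an equivalence by constructing an adjoint and checking the unit and counit are isomorphisms. Since the right adjoint $i_{n,*}$ of $i_n^*$ lands in $\Gamma_X(\QC{Y})$, one candidate for the inverse is the functor $(M_n)_n \mapsto R\Gamma_X(\lim_n i_{n,*} M_n)$ sending a compatible system first to its inverse limit in $\QC{Y}$ and then to its local cohomology along $X$.

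The main obstacle is checking that $\Phi$ is an equivalence. This is essentially a derived-completion / Greenlees--May-style statement in the Noetherian setting. I would reduce by Zariski locality to the affine case $Y = \Spec A$, $X = V(I)$, where both sides are compactly generated and $\Phi$ is colimit-preserving (the functors $i_n^*$ commute with colimits, so taking $(i_n^* \mathcal{F})_n$ does too in the limit category). It then suffices to check $\Phi$ is conservative and essentially surjective on compact objects. Conservativeness follows from derived Nakayama for $I$-torsion complexes over a Noetherian ring: if $\mathcal{F} \in \Gamma_X(\QC{Y})$ and $\mathcal{F} \otimes^L_A A/I = 0$, then $\mathcal{F} = 0$. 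For essential surjectivity on compact generators, it suffices to realize $A/I \in \lim_n \QC{X_n}$ (as the compatible system $(A/I^{n+1} \otimes^L_A A/I)_n$, which is a finite Koszul computation in the Noetherian case) in the image of $\Phi$; but $A/I = i_0^* (i_0)_* (A/I)$ with $(i_0)_*(A/I) \in \Gamma_X(\QC{Y})$ provides the desired preimage, so this is immediate once the compatibilities are unpacked.
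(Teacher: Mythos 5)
Your first two steps --- identifying $(X/Y)_{dR}$ with $\colim_n X_n$ and converting this into $QCoh((X/Y)_{dR}) \cong \lim_n \QC{X_n}$ --- are exactly the paper's argument. The gap is in your third step. The paper identifies $\lim_n \QC{X_n}$ with $\Gamma_X(\QC{Y})$ by citing the identification of this limit with derived $\mathcal{I}$-complete modules (\cite{stacks}, Tag 0H0L) and then invoking the Greenlees--May equivalence between complete and torsion objects, plus Zariski descent; this is precisely the content you are trying to reprove directly, and your verification does not close it. For a colimit-preserving functor between compactly generated stable categories, being conservative and essentially surjective on compact objects does \emph{not} imply being an equivalence: the functor $D(k \times k) \to D(k)$, $(M,N) \mapsto M \oplus N$, is colimit-preserving, conservative, and hits every perfect complex, yet is not fully faithful. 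What your argument omits is exactly full faithfulness of $\Phi$, i.e.\ the comparison of mapping spectra computed in $\Gamma_X(\QC{Y})$ with those computed in $\lim_n \QC{X_n}$; that comparison is the completion/torsion statement (Greenlees--May) and is the real mathematical content of the proposition, so it cannot be bypassed by a generation argument.

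There is a second problem with the generation step itself: the proposition assumes only that $X \to Y$ is a closed immersion (no finite tor-amplitude hypothesis), so $A/I$ need not be perfect over $A$ and hence is in general not a compact object of $\Gamma_X(\QC{Y})$, nor of the limit category; the compact objects there are perfect complexes over $A$ supported on $V(I)$, generated by the Koszul complex on a system of generators of $I$, not by $A/I$. Your conservativity argument via derived Nakayama for torsion complexes is fine, and your candidate inverse $(M_n)_n \mapsto \Gamma_X(\lim_n i_{n,*}M_n)$ has the right shape --- but verifying that the unit and counit of that adjunction are equivalences is essentially a restatement of the completion--torsion equivalence that the paper imports from \cite{stacks} and Greenlees--May, so as written the key step remains unproved.
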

\begin{proof}
  Let $\mathcal{I}$ be the ideal sheaf corresponding to the subscheme $X$ in $Y$. Then we can define
  \[X^{(n)}:= \underline{\Spec_Y{\mathcal{O}_Y/\mathcal{I}^n}}\]
  the $n$-th level thickening of $X$ in $Y$. Then we can see that as sheaves on $SCH^{ft}_{/S}$ with the Zariski or étale topology,
  \[(X/Y)_{dR} \cong \colim_n X^{(n)}\]
  Hence,
  \[QCoh((X/Y)_{dR}) \cong \lim_n \QC{X^{(n)}}\]
  By Proposition 0H0L in \cite{stacks}, we know this category is affine-locally naturally
  isomorphic to the category of derived $\mathcal{I}$-complete modules. By the Greenlees-May
  isomorphism, it is also locally naturally isomorphic to the category of derived $\mathcal{I}$-torsion modules.
  Hence, it is true globally as well by Zariski descent.
\end{proof}

\begin{theorem} \label{stackequi} 
  Suppose $X \to Y$ is a finite tor-amplitude and separated map in $SCH^{ft}_{/S}$, then
  there is a natural isomorphism
  \[\Mod{D_{X/Y}} \cong QCoh((X/Y)_{dR})\]
\end{theorem}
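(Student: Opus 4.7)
The plan is to realize both sides as a limit, over $\bm{\Delta}$, of the same simplicial diagram of categories built from formal completions of $X^{(n+1)}_Y$ along its diagonal, and then to invoke Theorem \ref{bigtheorem2} on the $D$-module side and Proposition \ref{closedstackequi} on the stack side to match the individual terms.

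First, I would compute the Čech nerve of the tautological map $X \to (X/Y)_{dR}$ directly from Definition \ref{derhamstack}: for any test scheme $U$, elements of the fiber product $X \times_{(X/Y)_{dR}} \cdots \times_{(X/Y)_{dR}} X$ valued in $U$ are tuples of maps $U \to X$ over $Y$ whose restrictions to $U_{red}$ all coincide, which identifies the $(n+1)$-fold product with the formal completion $(X^{(n+1)}_Y)^{\wedge}_{\Delta}$. Since $X \to Y$ is separated, the diagonal $X \hookrightarrow X^{(n+1)}_Y$ is a closed immersion, so Proposition \ref{closedstackequi} yields a natural isomorphism
\[QCoh((X^{(n+1)}_Y)^{\wedge}_{\Delta}) \cong \Gamma_{\Delta}(\QC{X^{(n+1)}_Y})\]
functorial in $[n] \in \bm{\Delta}$, with transition maps given by tilde of upper-star pullback. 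Granting effective descent of $QCoh$ along the cover $X \to (X/Y)_{dR}$, this assembles into
\[QCoh((X/Y)_{dR}) \cong \lim_{[n] \in \bm{\Delta}} \Gamma_{\Delta}(\QC{X^{(n+1)}_Y}) \cong \Mod{D_{X/Y}},\]
the latter equivalence being Theorem \ref{bigtheorem2}, combined with cofinality of $\opposite{\bm{\Delta_s}} \hookrightarrow \opposite{\bm{\Delta}}$ as in Section \ref{Dmod}.

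The subtle step is the effective descent above. My approach is to reduce first via Zariski descent on $X$, which holds for both sides (the $D$-module side by Theorem \ref{bigtheorem2}; the stack side since any Zariski open $U \hookrightarrow X$ induces an open immersion $(U/Y)_{dR} \hookrightarrow (X/Y)_{dR}$ and $QCoh$ satisfies Zariski descent), to the case where $X$ admits a closed embedding $z: X \hookrightarrow Z$ with $Z \to Y$ smooth, e.g.\ $Z = \mathbb{A}^n_Y$. For such smooth $Z/Y$, the descent of $QCoh$ along $Z \to (Z/Y)_{dR}$ is more tractable: the infinitesimal thickenings of $Z^{(n+1)}_Y$ along the diagonal are pro-representable, and the descent follows in a manner analogous to the argument of \cite{CryD} in characteristic zero. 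One then transfers from $Z$ to $X$ via Kashiwara's equivalence: Corollary \ref{kashi2} (in its relative form over $Y$) identifies $\Mod{D_{X/Y}}$ with $\Gamma_X(\Mod{D_{Z/Y}})$, and the closed substack $(X/Y)_{dR} \hookrightarrow (Z/Y)_{dR}$ analogously identifies $QCoh((X/Y)_{dR})$ with the full subcategory of $QCoh((Z/Y)_{dR})$ supported on $X$.

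The main obstacle is this effective descent claim, equivalently the smooth case $Z/Y$, together with the compatibility of the Kashiwara identification on the two sides. Once those are in hand, naturality of the resulting equivalence follows from the naturality of Proposition \ref{closedstackequi}, of Theorem \ref{bigtheorem2}, and of the Čech-nerve description at each stage.
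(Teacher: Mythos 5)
Your reduction to the smooth embedding and your use of Proposition \ref{closedstackequi} point in the right direction, but as written the argument has two genuine gaps. First, the termwise identification of the \v{C}ech nerve of $X \to (X/Y)_{dR}$ with $\Gamma_{\Delta}(\QC{X^{(n+1)}_Y})$ conflates classical and derived fibre products. The de Rham stack is a presheaf on \emph{underived} test schemes, so the $(n+1)$-fold fibre product $X \times_{(X/Y)_{dR}} \cdots \times_{(X/Y)_{dR}} X$ is the completion along the diagonal of the \emph{classical} product $\pi_0(X \times_Y \cdots \times_Y X)$, and Proposition \ref{closedstackequi} then produces $\Gamma_{\Delta}$ of the classical product; Theorem \ref{bigtheorem2}, by contrast, presents $\Mod{D_{X/Y}}$ as a limit of $\Gamma_{\Delta}$ of the \emph{derived} products. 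When $X \to Y$ is not flat these differ: for the closed point $X = \Spec{k} \hookrightarrow Y = \mathbb{A}^1_k$ all classical products are $\Spec{k}$ and your cosimplicial limit collapses to $\QC{X}$, whereas both $\Mod{D_{X/Y}}$ and $QCoh((X/Y)_{dR})$ are $\Gamma_X(\QC{\mathbb{A}^1_k})$ (this is Proposition \ref{closedstackequi} itself, resp.\ Kashiwara). Second, the effective-descent claim you flag as the subtle step is precisely what fails for non-smooth $X \to Y$: without infinitesimal lifting, $X \to (X/Y)_{dR}$ is not an effective epimorphism of Zariski sheaves, so \v{C}ech descent along it is not available; and your proposed repair replaces it with the unproved assertion that $QCoh((X/Y)_{dR})$ is the full subcategory of $QCoh((Z/Y)_{dR})$ supported on $X$, which is a de Rham-stack form of Kashiwara of essentially the same difficulty as the theorem itself.

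The paper's proof avoids both issues by never descending along $X \to (X/Y)_{dR}$. After reducing to the affine case and choosing a closed embedding $X \hookrightarrow Y' = \mathbb{A}^n_Y$, it descends along $(X/Y')_{dR} \to (X/Y)_{dR}$, which \emph{is} an effective epimorphism of Zariski sheaves because $Y' \to Y$ is smooth (surjectivity is checked on $\pi_0$). The terms of that \v{C}ech nerve are de Rham stacks of closed immersions of $X$ into $Y' \times_Y \cdots \times_Y Y'$, which are flat over $Y$, so classical and derived products agree and Proposition \ref{closedstackequi} applies verbatim, giving $\Gamma_X(\QC{Y' \times_Y \cdots \times_Y Y'})$; the matching limit presentation of $\Mod{D_{X/Y}}$ comes from Corollary \ref{kashi2} applied to $X \hookrightarrow Y'$ (after checking this immersion is finite tor-amplitude). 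The comparison map itself is built beforehand using Theorem 11.12 of \cite{AGrass}, i.e.\ descent along the derived \v{C}ech nerve of $Z_{red} \to Z$ for each test scheme, which is where the derived structure is correctly accounted for. If you want to salvage your outline, you would need to run your descent along $(X/Y')_{dR} \to (X/Y)_{dR}$ rather than along $X \to (X/Y)_{dR}$, at which point it becomes the paper's argument.
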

\begin{proof}
  Let us construct a natural map in the forward direction which we will then show is an isomorphism.
  We can write the right hand side explicitly as
  \[\lim_{\{(Z,f)| Z \in SCH^{ft}_{/S}, f: Z \to (X/Y)_{dR}\}}{\QC{Z}}\]
  Now, by Theorem 11.12 in \cite{AGrass}, this is also
  \[\lim_{\{(Z,f)| Z \in SCH^{ft}_{/S}, f: Z \to (X/Y)_{dR}\}}{\big(\lim(\QC{Z_{red}} \righttwoarrow \QC{Z_{red} \times_Z Z_{red}} \rightthreearrow \ldots)\big)}\]
  where here the fibre products are derived. Given a map
  \[f : Z \to (X/Y)_{dR}\]
  we can extract from it maps
  \[f_{red}: Z_{red} \to X\]
  \[f_{base}: Z \to Y\]
  and hence create maps
  \[g^{(n)}: Z_{red} \times_Z \ldots \times_Z Z_{red} \to X \times_Y \ldots \times_Y X\]
  Therefore as
  \[\Mod{D_{X/Y}} \cong \lim(\QC{X} \righttwoarrow \Gamma_{\Delta}(\QC{X \times_Y X}) \rightthreearrow \ldots)\]
  We obtain a map (depending on the pair $(Z,f)$)
  \[\Mod{D_{X/Y}} \to {\lim(\QC{Z_{red}} \righttwoarrow \QC{Z_{red} \times_Z Z_{red}} \rightthreearrow \ldots)}\]
  which are compatible with varying $(Z,f)$. This gives the desired map in the forward direction.

  It remains to show the map constructed above is an isomorphism. By Zariski descent we can reduce to when $X$, $Y$, and $S$ are all affine. In this case, we can view $X$ as
  a closed subscheme of $Y' \cong \mathbb{A}^n_Y$. Now, the map
  \[(X/Y')_{dR} \to (X/Y)_{dR}\]
  is surjective as discrete Zariski sheaves as $Y'$ is smooth over $Y$. Hence, it is an effective epimorphism
  of Zariski sheaves on $SCH^{ft}_{/S}$ by Proposition 7.2.1.14 of \cite{HTT} (effective epimorphisms can be detected on $\pi_0$)
  and the fact that epimorphisms of discrete sheaves are effective (Thm IV.7.8 in \cite{sheafgeo}).
  Therefore, there's a natural isomorphism
  \[QCoh((X/Y)_{dR}) \cong \lim((QCoh(X/Y')_{dR}) \righttwoarrow QCoh((X/(Y' \times_Y Y'))_{dR}) \rightthreearrow \ldots)\]
  We can check that the closed immersion $X \to Y'$ is finite tor-amplitude by Proposition 6.1.2.3 of \cite{SAG},
  reducing to the case where $S$ and $Y$ are both fields. In this case, $Y'$ is Spec of a polynomial algebra over a field. Hence
  as $X$ is homologically bounded (finite tor-amplitude over $Y$), the map $X \to Y'$ is finite tor-amplitude. Therefore,
  we also have the isomorphism
  \[\Mod{D_{X/Y}} \cong \lim(\Gamma_X(\QC{Y'}) \righttwoarrow \Gamma_X(\QC{Y' \times_Y Y'}) \rightthreearrow \ldots)\]
  coming from Kashiwara's equivalence (Corollary \ref{kashi2}) and the limit presentation of the category of $D$-modules
  in Theorem \ref{bigtheorem2}. Now, we can see that the map constructed above
  \[\Mod{D_{X/Y}} \to QCoh((X/Y)_{dR})\]
  is induced from the following diagram by taking limits horizontally.
  \[
    \begin{tikzcd}
      \Gamma_X(\QC{Y'}) \arrow[r, yshift=0.7ex] \arrow[r, yshift=-0.7ex] \arrow{d}
      & \Gamma_X(\QC{Y' \times_Y Y'}) \arrow[r, yshift=1.2ex] \arrow[r] \arrow[r, yshift=-1.2ex] \arrow{d} & \ldots \\
      QCoh((X/Y')_{dR}) \arrow[r, yshift=0.7ex] \arrow[r, yshift=-0.7ex]
      & QCoh((X/(Y' \times_Y Y'))_{dR}) \arrow[r, yshift=1.2ex] \arrow[r] \arrow[r, yshift=-1.2ex] & \ldots \\
    \end{tikzcd}
  \]
  All the vertical arrows are isomorphisms by Proposition \ref{closedstackequi} and the claim follows.
\end{proof}
\begin{remark}
  It is an interesting question to ask if analogous results hold if we enlarge the site
  $SCH^{ft}_{/S}$ to include spectral schemes and allow all the schemes to spectral.
  We do not presently know the answer to this question.
\end{remark}

\section{Universal Homeomorphisms and Relation with \cite{Cusp}} \label{cuspsec}
In this section, we discuss an analogous result to Kashiwara's Equivalence for universal homeomorphisms
and describe an application of our work to recover some results of \cite{Cusp}.
Let $S$ be a underived Noetherian scheme and $X$ be a finite-type underived $S$-scheme
such that $p_X : X \to S$ is finite tor-amplitude and separated. We additionally suppose that there is a
universal homeomorphism $\tau: \tilde{X} \to X$ of separated underived Noetherian schemes, such that the composition
$p_{\tilde{X}} := p_X\tau$ is finite tor-amplitude.

Let us denote by ${\tilde{X}}^{(m+1)_X}$ the $(m+1)$-fold (derived) product of $\tilde{X}$ over $X$. We'll need the following lemma.
(Note that we can still define $\Gamma_{\Delta}$ on homologically unbounded schemes, so that the following statement is well-defined)
\begin{lemma} \label{extendedAG}
  For any $n \ge 0$,
  \begin{equation*}
    \lim_{[m] \in \bm{\Delta}}(\Gamma_{\Delta}(\QC{({\tilde{X}}^{(m+1)_X})^{(n+1)}}),*) \cong \Gamma_{\Delta}(\QC{X^{(n+1)}})
  \end{equation*}
\end{lemma}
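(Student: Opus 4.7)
The plan is to leverage h-descent of QCoh along the universal homeomorphism $\tau$, using that Cartesian powers over $S$ commute with (derived) fiber products over $X$. The key observation is that the simplicial scheme $[m] \mapsto ({\tilde{X}}^{(m+1)_X})^{(n+1)}$ is precisely the Čech nerve of the map
\[\tau^{(n+1)}: \tilde{X}^{(n+1)} \to X^{(n+1)}\]
(where both $(n+1)$-fold powers are over $S$), since the $(n+1)$-fold Cartesian power over $S$ commutes with derived fiber products over $X$. Because $\tau$ is a universal homeomorphism, so is $\tau^{(n+1)}$, and in particular it is an h-cover.

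By \cite{AGrass} Theorem 11.12 (h-descent for QCoh with derived Čech nerve) applied to $\tau^{(n+1)}$, we obtain an equivalence
\[\QC{X^{(n+1)}} \cong \lim_{[m] \in \bm{\Delta}}(\QC{({\tilde{X}}^{(m+1)_X})^{(n+1)}}, *)\]
with $*$-pullback as transition maps. To restrict this equivalence to sheaves supported on the diagonal, I would use the split-exact sequence
\[\Gamma_{\Delta}(\QC{Y}) \to \QC{Y} \to \QC{U_Y}\]
where $U_Y$ is the complement of the diagonal. Since universal homeomorphisms are topological homeomorphisms and this property is preserved under base change, the preimage of $\Delta \subset X^{(n+1)}$ in $({\tilde{X}}^{(m+1)_X})^{(n+1)}$ is exactly the diagonal of $({\tilde{X}}^{(m+1)_X})^{(n+1)}$, so the open subscheme $U_m$ is the pullback of $U \subset X^{(n+1)}$. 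Applying the same h-descent to the open restriction $\tilde{X}^{(n+1)} \setminus \Delta \to X^{(n+1)} \setminus \Delta$ (which remains a universal homeomorphism, with Čech nerve $[m] \mapsto U_m$ since open immersions commute with fiber products), we obtain
\[\QC{U} \cong \lim_{[m] \in \bm{\Delta}}(\QC{U_m}, *).\]

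To conclude, the restriction functors $\QC{Y} \to \QC{U_Y}$ are natural in $Y$ and hence assemble into a map of cosimplicial diagrams whose termwise kernel is $\Gamma_{\Delta}(\QC{Y})$. Applying Lemma B.6 of \cite{previous} (preservation of split-exact sequences under limits of presentable stable categories), the termwise limit of the kernels is the kernel of the limit map, giving
\[\lim_{[m] \in \bm{\Delta}} \Gamma_{\Delta}(\QC{({\tilde{X}}^{(m+1)_X})^{(n+1)}}) \cong \ker\bigl(\QC{X^{(n+1)}} \to \QC{U}\bigr) \cong \Gamma_{\Delta}(\QC{X^{(n+1)}})\]
as desired.

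The main obstacle will be verifying that h-descent applies cleanly in this derived, potentially homologically unbounded setting: specifically, (a) that the derived fiber products $\tilde{X}^{(m+1)_X}$ sit in a category where \cite{AGrass} Theorem 11.12 applies even without homological boundedness (which is why the statement of the lemma flags this), and (b) that the identification of $U_m$ with the pullback of $U$ really holds at the derived level, i.e.\ that the open immersion interacts correctly with the derived Čech nerve. Both points should reduce to general properties of h-covers and open immersions, but they are the technical heart of the argument.
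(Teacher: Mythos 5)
Your proposal is correct and takes essentially the same route as the paper: apply \cite{AGrass} Theorem 11.12 to the cover $\tilde{X}^{(n+1)} \to X^{(n+1)}$, whose Čech nerve is exactly the simplicial scheme $[m] \mapsto ({\tilde{X}}^{(m+1)_X})^{(n+1)}$, and then use that each map $({\tilde{X}}^{(m+1)_X})^{(n+1)} \to X^{(n+1)}$ is a homeomorphism to identify the subcategories supported on the diagonal. The paper handles this last step by directly matching the full subcategories of sheaves vanishing away from the diagonal, rather than via your second descent argument on the open complements, but that is only a cosmetic difference.
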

\begin{proof}
  We start with Theorem 11.12 in \cite{AGrass}, which implies
  \[\lim_{[m] \in \bm{\Delta}}(\QC{({\tilde{X}}^{(m+1)_X})^{(n+1)}},*) \cong \QC{X^{(n+1)}}\]
  via the cover $\tilde{X}^{(n+1)} \to X^{(n+1)}$. Now, the right hand side of the lemma is the full subcategory of
  \[\QC{X^{(n+1)}}\]which vanishes away from the diagonal.
  Then, the result follows from the fact that
  \[({\tilde{X}}^{(m+1)_X})^{(n+1)} \to  X^{(n+1)}\]
  is a homeomorphism and
  \[\Gamma_{\Delta}(\QC{({\tilde{X}}^{(m+1)_X})^{(n+1)}})\]
  is the full subcategory of 
  \[\QC{({\tilde{X}}^{(m+1)_X})^{(n+1)}})\]
  which vanishes away from the diagonal.
\end{proof}

\begin{theorem} \label{homeokashi}
  The functor
  \[\tau^+ : \Mod{D_X} \to \Mod{D_{\tilde{X}}}\]
  is an equivalence of categories.
\end{theorem}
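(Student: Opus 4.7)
The plan is to combine the cosimplicial presentation of $\Mod{D_X}$ from Theorem~\ref{bigtheorem2} with the $h$-descent result of Lemma~\ref{extendedAG}. Writing
\[
  \Mod{D_X} \cong \lim_{[n] \in \bm{\Delta_s}} \Gamma_{\Delta}(\QC{X^{n+1}}), \qquad \Mod{D_{\tilde{X}}} \cong \lim_{[n] \in \bm{\Delta_s}} \Gamma_{\Delta}(\QC{\tilde{X}^{n+1}}),
\]
the functor $\tau^+$ is assembled from the termwise quasicoherent pullbacks $\tau^{n+1,*}$. First I would apply Lemma~\ref{extendedAG} inside each term of the limit for $\Mod{D_X}$, rewriting $\Gamma_{\Delta}(\QC{X^{n+1}})$ as $\lim_{[m] \in \bm{\Delta}} \Gamma_{\Delta}(\QC{(\tilde{X}^{(m+1)_X})^{n+1}})$. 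Interchanging the two cosimplicial limits then yields
\[
  \Mod{D_X} \cong \lim_{[m] \in \bm{\Delta}} \lim_{[n] \in \bm{\Delta_s}} \Gamma_{\Delta}(\QC{(\tilde{X}^{(m+1)_X})^{n+1}}),
\]
whose $[m]=[0]$ term is canonically $\Mod{D_{\tilde{X}}}$ and whose projection to this factor, under the identifications above, is precisely $\tau^+$.

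The remaining step is to show that the outer cosimplicial diagram in $[m]$ collapses to its $[0]$-term. Equivalently, pullback along the diagonal $\Delta_m \colon \tilde{X} \to \tilde{X}^{(m+1)_X}$, which is itself a universal homeomorphism and a closed immersion, should induce an equivalence $\Mod{D_{\tilde{X}}} \xrightarrow{\sim} \lim_{[n] \in \bm{\Delta_s}} \Gamma_{\Delta}(\QC{(\tilde{X}^{(m+1)_X})^{n+1}})$, compatibly across the cosimplicial structure. I would attempt this by applying Lemma~\ref{extendedAG} once more to each $\Delta_m$ and analyzing the resulting iterated derived fiber products $\tilde{X} \times_{\tilde{X}^{(m+1)_X}} \cdots \times_{\tilde{X}^{(m+1)_X}} \tilde{X}$, exploiting that $\Gamma_\Delta$ only records information about the underlying topological space and the formal neighborhood of the diagonal, both of which are preserved under universal homeomorphisms.

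The principal obstacle is precisely this collapse. A naive induction on $m$ is circular: for $m \geq 1$ the required equivalence is essentially the theorem itself applied to $\Delta_m$, and moreover $\tilde{X}^{(m+1)_X}$ need not satisfy the standing finite tor-amplitude hypothesis over $S$, so the target category must be treated via its cosimplicial definition rather than as literal $D$-modules. A workable route is to simultaneously prove a strengthened statement that allows (possibly derived) schemes with only those descent-theoretic hypotheses needed to invoke Lemma~\ref{extendedAG}, bootstrapping from the trivial Čech face at $[m]=[0]$; alternatively, one could directly construct a quasi-inverse to $\tau^+$ using the explicit assembly formulas for $G_{D_X}$ from Remark~\ref{explicit2} and verify its compatibility with the cosimplicial structure.
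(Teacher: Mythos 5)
Your setup (the double-limit presentation of $\Mod{D_X}$ obtained by applying Lemma \ref{extendedAG} termwise and exchanging limits, with the projection to the $[m]=[0]$ column identified with $\tau^+$) is sound, but the proof has a genuine gap at exactly the point you flag: the collapse of the outer cosimplicial direction. Showing that the projection $\lim_{[m]}\bigl(\lim_{[n]}\Gamma_{\Delta}(\QC{(\tilde{X}^{(m+1)_X})^{n+1}})\bigr) \to \Mod{D_{\tilde{X}}}$ is an equivalence amounts to proving that pullback along the face maps of the \v{C}ech nerve (which are universal homeomorphisms between schemes that may be derived and need not be finite tor-amplitude over $S$) induces equivalences on the inner limits --- which is the statement of the theorem itself in a setting outside the standing hypotheses. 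You acknowledge the circularity, but neither of your proposed escapes is carried out: the ``strengthened statement'' is not formulated, and the ``direct construction of a quasi-inverse'' is only named. As written, the argument does not close.

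The paper avoids the collapse question altogether by building the inverse explicitly and in the opposite direction. Starting from $\Mod{D_{\tilde{X}}} \cong \lim_{\bm{\Delta}}(\Gamma_{\Delta}(\QC{\tilde{X}^{n+1}}),*)$, it restricts along the lexicographic product functor $\bm{\Delta}\times\bm{\Delta}\to\bm{\Delta}$ to get a bicosimplicial limit $\lim_{\bm{\Delta}\times\bm{\Delta}}\Gamma_{\Delta}(\QC{\tilde{X}^{(m+1)(n+1)}})$, pulls back along the maps $(\tilde{X}^{(m+1)_X})^{(n+1)} \to \tilde{X}^{(m+1)(n+1)}$, and only then applies Lemma \ref{extendedAG} in the $[m]$-direction for each fixed $n$ to land in $\lim_{\bm{\Delta}}(\Gamma_{\Delta}(\QC{X^{(n+1)}}),*) \cong \Mod{D_X}$; this composite is the functor $\tau_-$. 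The identities $\tau_-\tau^+\cong\id$ and $\tau^+\tau_-\cong\id$ are then checked by computing the transfer bimodules of the composites (their images of $D_X$ and $D_{\tilde{X}}$), a verification that takes place over the original schemes $X$ and $\tilde{X}$, where the standing hypotheses hold --- which is precisely how the circularity you ran into is sidestepped. If you want to rescue your outline, the realistic path is your second suggestion, but you would need to actually produce $\tau_-$ (the $\bm{\Delta}\times\bm{\Delta}\to\bm{\Delta}$ device, or an equivalent construction) and verify the two composites, e.g.\ at the level of associated bimodules.
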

\begin{proof}
  We first exhibit a functor in the reverse direction. We start from the isomorphism
  \begin{equation}\label{startingpoint}
    \Mod{D_{\tilde{X}}} \cong \lim_{\bm{\Delta}}(\Gamma_{\Delta}(\QC{{\tilde{X}}^{n+1}}),*)
  \end{equation}
  Now, we have a functor
  \[\times : \bm{\Delta} \times \bm{\Delta} \to \bm{\Delta}\]
  such that
  \[\times ([m],[n]) = [m] \times [n] \cong [mn+m+n]\]
  where we order $[m] \times [n]$ by lexicographic ordering. Hence, we have a functor
  \begin{equation}\label{secondpoint}
    \lim_{\bm{\Delta}}(\Gamma_{\Delta}(\QC{{\tilde{X}}^{n+1}}),*) \to \lim_{\bm{\Delta} \times \bm{\Delta}}(\Gamma_{\Delta}(\QC{{\tilde{X}}^{(m+1)(n+1)}}),*)
  \end{equation}
  Because of the natural map
  \[{\tilde{X}}^{(m+1)_X} \to {\tilde{X}}^{(m+1)}\]
  where the product on the right hand side is over $S$, there is a natural pull-back map
  \[\QC{{\tilde{X}}^{(m+1)(n+1)}} \to \QC{({\tilde{X}}^{(m+1)_X})^{(n+1)}} \]
  This induces a functor
  \begin{equation}\label{thirdpoint}
    \lim_{\bm{\Delta} \times \bm{\Delta}}(\Gamma_{\Delta}(\QC{{\tilde{X}}^{(m+1)(n+1)}}),*) \to \lim_{\bm{\Delta} \times \bm{\Delta}}(\Gamma_{\Delta}(\QC{({\tilde{X}}^{(m+1)_X})^{(n+1)}}),*)
  \end{equation}
  Finally, by Lemma \ref{extendedAG}, we have 
  \begin{equation}\label{fourthpoint}
    \lim_{\bm{\Delta} \times \bm{\Delta}}(\QC{({\tilde{X}}^{(m+1)_X})^{(n+1)}},*) \cong \lim_{\bm{\Delta}}(\QC{X^{(n+1)}},*)
  \end{equation}
  Combining (\ref{startingpoint}), (\ref{secondpoint}), (\ref{thirdpoint}), and (\ref{fourthpoint}), we can construct a functor
  \begin{equation}
    \tau_{-} : \Mod{D_{\tilde{X}}} \to \Mod{D_X}
  \end{equation}
  Now we can check that $\tau_{-}\tau^+ \cong \id$ and $\tau^+\tau_{-} \cong \id$ by computing
  transfer modules of the composites (by computing the image of $D_X$ and $D_{\tilde{X}}$ respectively).
\end{proof}
\begin{remark}
  The transfer module of $\tau_{-}$ is $\Gamma_{\Delta}(\mathcal{O}_X \boxtimes \omega_{\tilde{X}})$ and
  the transfer module of $\tau^+$ is $\Gamma_{\Delta}(\mathcal{O}_{\tilde{X}} \boxtimes \omega_X)$.
\end{remark}
\begin{remark}
  For underived Noetherian schemes, it is also possible to prove a stronger version of Kashiwara's equivalence for non
  finite tor-amplitude closed immersions using a similar approach.
\end{remark}

By left-right duality, we also have
\begin{corollary}
  \[z_+ : \Mod{\opposite{D_{\tilde{X}}}} \to \Mod{\opposite{D_X}}\]
  is an equivalence of categories.
\end{corollary}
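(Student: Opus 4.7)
The plan is to deduce this corollary as a direct application of left-right duality to Theorem \ref{homeokashi}. Recall from Section 2 that $\Mod{\opposite{D_X}}$ and $\Mod{D_X}$ are $\Mod{\QC{S}}^L$-duals of each other (with the duality realized explicitly by the bimodules $\Gamma_{\Delta}(\mathcal{O}_X \boxtimes \mathcal{O}_X)$ and $\Gamma_{\Delta}(\omega_{X/S} \boxtimes \omega_{X/S})$, giving Theorem \ref{leftrightswitch}), and analogously for $\tilde{X}$. Applying the $2$-functor $\Hom_{\QC{S}}(-, \QC{S})$ to the equivalence $\tau^+ : \Mod{D_X} \xrightarrow{\sim} \Mod{D_{\tilde{X}}}$ produces an equivalence $\Mod{\opposite{D_{\tilde{X}}}} \xrightarrow{\sim} \Mod{\opposite{D_X}}$, since dualizing preserves isomorphisms.

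The substantive step is to identify the dualized functor with $\tau_+$. This is already discussed at the start of Section 3: when the categories of $D$-modules are presented via the (co)simplicial diagrams $[n] \mapsto \Gamma_{\Delta}(\QC{X^{n+1}})$, the $+$-pullback is assembled termwise from the $\widetilde{(\tau^*)^{n+1}}$, while the $+$-pushforward is assembled termwise from the $\widetilde{(\tau_*)^{n+1}}$. Since each $\Gamma_{\Delta}(\QC{X^{n+1}})$ is self-dual (Proposition 3.10 of \cite{previous}) with duality interchanging lower-star and upper-star on transition maps, the assembly of $\widetilde{(\tau^*)^{n+1}}$ in the limit/cosimplicial presentation dualizes to the assembly of $\widetilde{(\tau_*)^{n+1}}$ in the colimit/simplicial presentation, which is exactly $\tau_+$.

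Once this identification is made, the conclusion is immediate: the dual of an equivalence is an equivalence, hence
\[\tau_+ : \Mod{\opposite{D_{\tilde{X}}}} \to \Mod{\opposite{D_X}}\]
is an equivalence of categories. The only subtlety — and the one genuine obstacle — is checking that the specific equivalence $\tau^+$ constructed in the proof of Theorem \ref{homeokashi} (which is built from the diagonal functor $\bm{\Delta} \times \bm{\Delta} \to \bm{\Delta}$ and Lemma \ref{extendedAG}) is compatible with left-right duality in the termwise sense described above; but this is guaranteed because all the constituent functors (pullbacks along projections, and the descent isomorphism of Lemma \ref{extendedAG} itself) are manifestly self-dual under the identifications of Section 3, since they arise from maps of schemes by applying quasicoherent upper-star termwise.
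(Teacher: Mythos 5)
Your proposal is correct and follows the paper's own route: the paper deduces the corollary in one line "by left-right duality" applied to Theorem \ref{homeokashi}, using the identification (already made at the start of Section \ref{pushpullsection}) that duality exchanges $\tau^+$ on left $D$-modules with $\tau_+$ on right $D$-modules. Your closing worry about compatibility with the specific inverse $\tau_-$ is unnecessary — once $\tau^+$ is known to be an equivalence, its dual is automatically one — but this does not affect correctness.
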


To compare our results with those of \cite{Cusp}, let us recall their setup. Assuming for the rest of this section that
$S = \Spec{k}$ where $k$ is a field, $X$ and $\tilde{X}$ are Cohen-Macaulay $k$-varieties of dimension $d$, and finally that
\[H^1(\Gamma_{\Delta}(M \boxtimes \omega_{\tilde{X}})) = 0\]
and
\[H^1(\Gamma_{\Delta}(M \boxtimes \omega_{X})) = 0\]
for all $M \in \QC{X}^{[0,0]}$, so that $\tau$ is a \textit{good} cuspidal quotient between
\textit{good} Cohen-Macaulay varieties in the terminology of \textit{loc. cit.}
\begin{lemma}
  In the above situation,
  \[H^i(\Gamma_{\Delta}(M \boxtimes \omega_{\tilde{X}})) = 0\]
  and
  \[H^i(\Gamma_{\Delta}(M \boxtimes \omega_{X})) = 0\]
  for all $i \neq 0 $ and $M \in \QC{X}^{[0,0]}$
\end{lemma}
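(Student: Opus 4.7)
The plan is to establish concentration in cohomological degree $0$ for $F(M) := \Gamma_{\Delta}(M \boxtimes \omega_X)$ (and analogously for the complex involving $\omega_{\tilde{X}}$) via a two-input argument: the Cohen-Macaulay hypothesis supplies automatic vanishing on one side of $0$ and bounds the overall cohomological amplitude of $F$, while the hypothesis $H^1 F(M) = 0$ is used via dévissage to close up the remaining side. The arguments for $X$ and $\tilde{X}$ are entirely parallel, so I focus on $F$.

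First, I would rewrite $F(M) \cong \tilde{\pi}_1^{\times}(M)$ using the formula recalled in Section 1. Since $X \times X$ is Cohen-Macaulay of dimension $2d$ and $\Delta$ has codimension $d$, Grothendieck's vanishing theorem bounds the degrees in which local cohomology $H^j_{\Delta}$ can be nonzero. Combined with the fact that $\omega_X$ is concentrated in a single cohomological degree for Cohen-Macaulay $X$ (with its unshifted version maximal Cohen-Macaulay over $\mathcal{O}_X$), this confines $F(M)$ to have cohomology in a finite window $[a,b]$ with one endpoint at $0$, giving automatic vanishing on one side via the balance between depth and codimension for maximal Cohen-Macaulay sheaves on Cohen-Macaulay ambients. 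The depth calculation uses additivity of depth under the external tensor product.

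Next, to close the remaining side, I would invoke the hypothesis. The vanishing of $H^1 F(M)$ on all discrete $M$, combined with the long exact sequence attached to any short exact sequence in $\QC{X}^{[0,0]}$, forces $H^0 F$ to be right exact on the heart; combined with left exactness inherited from $F$ being a right adjoint composed with $-\boxtimes \omega_X$, the functor $H^0 F$ is in fact exact on discrete sheaves. For any coherent $M$ and short exact sequence $0 \to K \to P \to M \to 0$ with $P$ a finite direct sum of copies of $\mathcal{O}_X$ on an affine chart, the long exact sequence then yields injections $H^i F(M) \hookrightarrow H^{i+1} F(K)$ at each degree $i$ on the non-automatic side. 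Iterating these injections and invoking the bounded amplitude from the first step terminates the process in finitely many steps, forcing $H^i F(M) = 0$ for all $i \neq 0$.

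The main obstacle I anticipate is running this iteration without circularity: the dévissage relies on $H^{i+1} F(P) = 0$ for the chosen $P$, which reduces to the seed case $P = \mathcal{O}_X$, where $F(\mathcal{O}_X) \cong \opposite{D}_{X/k}$. I expect to verify this seed case directly by Zariski descent to the affine setting, then computing local cohomology using the maximal Cohen-Macaulay property of $\omega_X$; the good Cohen-Macaulay hypothesis should produce exactly the $\Ext$-vanishing needed in degrees $i \neq 0$. An alternative route, possibly cleaner, is to use Grothendieck duality to translate the local cohomology question into a depth statement on the Grothendieck dual, where the Cohen-Macaulay hypotheses apply directly without the circularity worry.
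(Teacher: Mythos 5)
Your plan inverts the paper's dévissage, and the inversion is where it breaks. The paper's proof reduces to the affine case $X=\Spec{R}$, $\tilde{X}=\Spec{\tilde{R}}$, rewrites $\Gamma_{\Delta}(M \boxtimes \omega_{\tilde{X}}) \cong \tilde{\pi}_1^{\times}M \cong \colim_n \Hom_R\bigl((R\otimes_k\tilde{R})/I^n, M\bigr)$, observes that for an \emph{injective} discrete $M$ this is discrete (each term is coconnective since $(R\otimes_k\tilde{R})/I^n$ is connective, and injectivity kills the higher Ext's), and then coresolves an arbitrary discrete $M$ by injectives: writing $F(-)=\Gamma_{\Delta}(-\boxtimes\omega_{\tilde{X}})$ and $0 \to M \to I \to Q \to 0$, the long exact sequence shows $H^iF(M)$ is a quotient of $H^{i-1}F(Q)$ for $i \ge 2$, so induction starting from the hypothesis $H^1F(-)=0$ on \emph{all} discrete sheaves gives vanishing in all positive degrees, while negative degrees are free from connectivity. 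You instead resolve $M$ by free sheaves and push the problem upward into $H^{i+1}F(K), H^{i+2}F(K'),\dots$; this would terminate by the finite amplitude of $R\Gamma_{\Delta}$, but only if the seed case $H^iF(\mathcal{O}_X)=0$ for $i\ge 1$ is available. That seed is precisely the lemma at $M=\mathcal{O}_X$ (concentration of the transfer module, resp.\ of $\opposite{D}$, in degree $0$), and it does \emph{not} follow from Cohen--Macaulayness: vanishing of $H^j_{\Delta}$ above the codimension is governed by the local cohomological dimension of the diagonal (resp.\ the graph of $\tau$) in the product, not by depth, and for singular Cohen--Macaulay varieties it can fail --- this is exactly why \cite{Cusp} imposes the extra ``good'' condition. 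Grothendieck/local duality does not convert it into a depth statement either, since depth only controls vanishing \emph{below} the codimension. Meanwhile, in your scheme the $H^1$-hypothesis is spent only on making $H^0F$ exact, a fact that never enters the iteration, so the one genuine input of the lemma is left unused while the unproved seed carries all the weight.

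A smaller problem: your ``automatic vanishing on one side'' via additivity of depth applies only to coherent $M$ of maximal depth, whereas a general $M \in \QC{X}^{[0,0]}$ is neither coherent nor of positive depth, so the depth bound gives nothing there. The conclusion you want on that side is nevertheless true for every discrete $M$, but the painless route is the colimit formula above: each $\Hom_R\bigl((R\otimes_k\tilde{R})/I^n,M\bigr)$ is coconnective because its source is connective, and filtered colimits are exact; no Cohen--Macaulay input is needed anywhere in this lemma.
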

\begin{proof}
  Without loss of generality, we can assume that $X$ and $\tilde{X}$ are affine. Namely, $X = \Spec{R}$ and $\tilde{X} = \Spec{\tilde{R}}$.
  Let $\pi_1 : X \times \tilde{X} \to X$ be the projection to the first component. Then, there is an isomorphism (by Theorem 2.6 in \cite{previous})
  \[\Gamma_{\Delta}(M \boxtimes \omega_{\tilde{X}}) \cong \tilde{\pi}_1^{\times}M\]
  We can rewrite this as
  \[\colim_n{\Hom_{R \otimes_k \tilde{R}}((R \otimes_k \tilde{R})/I^n,\Hom_R(R \otimes_k \tilde{R},M))} \cong
    \colim_n{\Hom_R((R \otimes_k \tilde{R})/I^n,M)}\]
  where $I$ is the kernel of the surjection $R \otimes_k \tilde{R} \to \tilde{R}$. Hence, we can see that
  for injective (discrete) $M$, $\Gamma_{\Delta}(M \boxtimes \omega_{\tilde{X}})$ is discrete. Using the assumptions,
  we can then conclude using injective resolutions that for all discrete $M$, $\Gamma_{\Delta}(M \boxtimes \omega_{\tilde{X}})$ is discrete. The
  second claim follows similarly.
\end{proof}

\begin{theorem}[Theorem 1.2 in \cite{Cusp}] \label{recover}
  In the above situation, there is a Morita equivalence between the (sheaf of) algebras
  $H^0(D_{\tilde{X}})$ and $H^0(D_X)$ induced by
  \[H^0(D_{\tilde{X} \rightarrow X}) \cong H^0(\Gamma_{\Delta}(\mathcal{O}_{\tilde{X}} \boxtimes \omega_X))\]
  and
  \[H^0(D_{\tilde{X} \leftarrow X}) := H^0(\Gamma_{\Delta}(\mathcal{O}_{X} \boxtimes \omega_{\tilde{X}}))\]
\end{theorem}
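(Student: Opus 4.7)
The idea is to deduce the Morita equivalence from the derived category equivalence of Theorem \ref{homeokashi} by verifying that all four relevant objects (the two rings and the two transfer bimodules) sit in cohomological degree zero.

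First I would invoke Theorem \ref{homeokashi} and its corollary to obtain mutually inverse equivalences
\[\tau^+ : \Mod{D_X} \leftrightarrows \Mod{D_{\tilde{X}}} : \tau_-\]
implemented respectively by the transfer bimodules $D_{\tilde{X} \rightarrow X} \cong \Gamma_{\Delta}(\mathcal{O}_{\tilde{X}} \boxtimes \omega_X)$ and $D_{\tilde{X} \leftarrow X} \cong \Gamma_{\Delta}(\mathcal{O}_X \boxtimes \omega_{\tilde{X}})$ (as noted in the remark following that theorem). From this derived equivalence we obtain bimodule isomorphisms
\[D_{\tilde{X} \rightarrow X} \otimes_{D_X} D_{\tilde{X} \leftarrow X} \cong D_{\tilde{X}}, \qquad D_{\tilde{X} \leftarrow X} \otimes_{D_{\tilde{X}}} D_{\tilde{X} \rightarrow X} \cong D_X,\]
and the desired Morita equivalence is obtained by applying $H^0$ to these.

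Next I would show that all four objects $D_X$, $D_{\tilde{X}}$, $D_{\tilde{X} \rightarrow X}$, $D_{\tilde{X} \leftarrow X}$ are concentrated in degree zero. For $D_X = \Gamma_{\Delta}(\mathcal{O}_X \boxtimes \omega_X)$ and $D_{\tilde{X} \leftarrow X} = \Gamma_{\Delta}(\mathcal{O}_X \boxtimes \omega_{\tilde{X}})$ this is immediate from the preceding lemma with $M = \mathcal{O}_X$. For $D_{\tilde{X} \rightarrow X}$ and $D_{\tilde{X}}$ one has to work slightly harder: since $\tau$ is a universal homeomorphism (in particular finite), $\tau_*\mathcal{O}_{\tilde{X}}$ is a discrete sheaf on $X$. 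Invoking base change for the upper-cross functor along finite tor-amplitude maps (a standard consequence of the results of \cite{previous}), applied to the cartesian squares with $\tau \times \id$ as left vertical arrow, one identifies the pushforwards
\[(\tau \times \id)_* D_{\tilde{X} \rightarrow X} \cong \Gamma_{\Delta}(\tau_*\mathcal{O}_{\tilde{X}} \boxtimes \omega_X), \qquad (\tau \times \id)_* D_{\tilde{X}} \cong \Gamma_{\Delta}(\tau_*\mathcal{O}_{\tilde{X}} \boxtimes \omega_{\tilde{X}}),\]
on $X \times X$ and $X \times \tilde{X}$ respectively. Both of these sit in degree zero by the preceding lemma applied to the discrete sheaf $M = \tau_*\mathcal{O}_{\tilde{X}}$. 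Since $\tau \times \id$ is in each case a finite universal homeomorphism, its pushforward is $t$-exact and conservative, which forces $D_{\tilde{X} \rightarrow X}$ and $D_{\tilde{X}}$ themselves to be concentrated in degree zero.

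Finally, once all four objects are known to live in degree zero, taking $H^0$ of the two displayed derived bimodule isomorphisms produces classical isomorphisms of sheaves of bimodules that are precisely the axioms exhibiting $H^0(D_{\tilde{X} \rightarrow X})$ and $H^0(D_{\tilde{X} \leftarrow X})$ as a Morita equivalence between the sheaves of algebras $H^0(D_X)$ and $H^0(D_{\tilde{X}})$. The hard part will be the second step: carefully running the base change for $\tilde{\pi}_1^{\times}$ along the finite map $\tau$, and confirming that the support conditions line up so that the lemma applies on the nose. Once the degree-zero concentration is in hand, the Morita conclusion is essentially formal.
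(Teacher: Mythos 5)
Your proposal matches the paper's proof in essentially every respect: the derived statement is exactly Theorem \ref{homeokashi} with the transfer bimodules identified in the remark following it, and the $H^0$'s are then shown to be redundant because all four objects are concentrated in degree $0$ by the preceding lemma. The only divergence is in how you handle $D_{\tilde{X}}$ and $D_{\tilde{X}\to X}$ (whose first factor lives on $\tilde{X}$ rather than $X$): the paper appeals to the Grothendieck--Sato formula (Corollary 2.11 of \cite{previous}) together with the lemma, while you reduce to the lemma with $M=\tau_*\mathcal{O}_{\tilde{X}}$ via $t$-exactness and conservativity of pushforward along the finite map $\tau\times\id$ --- a legitimate substitute that carries the same content.
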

\begin{proof}
  Without the $H^0$'s, this is simply Theorem \ref{homeokashi}. Hence, it suffices to show all the $H^0$'s above are redundant, because
  the objects are already in degree $0$ under our assumptions. But this follows from the Grothendieck-Sato formula (Corollary 2.11 of \cite{previous})
  and the above lemma.
\end{proof}

\section{Relation with Hochschild Cohomology} \label{hochc}
In this section, we discuss a decategorification of Corollary \ref{keycoro} in the case
$X = \Spec{A}$ is a smooth affine variety over $S = \Spec{k}$, which we assume to be affine and discrete (concentrated in $\pi_0$). Namely,
we will show a result of the form
\[D_A \cong A \otimes_{H} A\]
for $H$ being the $E_2$ ring of Hochschild cohomology of $A$, where $D_A$ is the ring of differential operators on $\Spec{A}$.
Corollary \ref{keycoro} has been known since the work of Beraldo, in \cite{Ber1} and \cite{Ber2}, and we are heavily influenced by
those works. We will also allow $A$ to be noncommutative in this section, as it will not affect our proofs and may even
be helpful psychologically.

Suppose $A$ is an $E_1$ ring over $k$ (which no longer needs to be concentrated in $\pi_0$), which is compact in
the category of $A$-bimodules, $\Mod{(A \otimes \opposite{A})}$. This is a condition
that we have not assumed in the previous sections and is some sort of generalization of smoothness.
In fact, it is equivalent to $\Mod{A}$ being a smooth category, using Definition 4.5 in \cite{Perry}.
The Hochschild cohomology of $A$ over $k$ is the $E_2$ ring defined by
\begin{equation} \label{HCH}
  \HCH(A/k):=\Hom_{\End_k(\Mod{A} \, )}(\id,\id)
\end{equation}
where $\End_k(\Mod{A})$ is the monoidal category of $k$-linear endomorphisms of $\Mod{A}$.
Notice that
\[\End_k(\Mod{A}) \cong \Mod{(A \otimes \opposite{A})}\]
and therefore we also have 
\[\HCH(A/k) \cong \Hom_{A \otimes \opposite{A}}(A,A)\]
although it is harder to see the $E_2$ structure this way.
We establish a convention for the $E_2$ ring $\HCH(A/k)$. Using equation (\ref{HCH}), we call
the $E_1$ algebra structure on $\HCH(A/k)$ induced from the monoidal structure of $\End_k(\Mod{A})$ the horizontal product--$\mu_1$, and the
$E_1$ algebra structure induced from composition of morphisms in $\End_k(\Mod{A})$ the vertical product--$\mu_2$.
For
\[f,g \in \HCH(A/k)\]
$\mu_2(f,g)$ is the composition $fg$ in $\Hom_{\End_k(\Mod{A})}(\id,\id)$ and will be denoted by $f$ above $g$.
These two $E_1$ structures are compatible and also they are noncanonically isomorphic.
In particular we have the following coherence diagram
\[
  \begin{tikzcd}
    \substack{\HCH(A/k) \otimes \HCH(A/k) \\ \otimes \, \, \, \, \, \, \, \, \, \, \otimes \\ \HCH(A/k) \otimes \HCH(A/k)}  \arrow{rr}{\mu_2 \otimes \mu_2}
    \arrow{dd}{\substack{\mu_1 \\ \otimes \\ \mu_1}} && \HCH(A/k) \otimes \HCH(A/k) \arrow{dd}{\mu_1} \\ \\
    \substack{\HCH(A/k) \\ \otimes \\ \HCH(A/k)} \arrow{rr}{\mu_2} && \HCH(A/k)\\
  \end{tikzcd}
\]
Let us explain the notation. The vertical tensor product mean the same as horizontal tensor, but
the author finds it clearer to reserve writing the tensor product vertically when applying the vertical product.
The upper left term is just the tensor product of four copies of $\HCH(A/k)$, denoted as a square
for the reasons we just mentioned. Normally, for a $E_1$ ring, we can define left and right modules over it.
Because $\HCH(A/k)$ has vertical multiplication, we can also define up and down modules over it similarly.
We denote the category of modules of left modules over $\HCH(A/k)$
by
\[\Mod{\HCH(A/k)^{left}}\]
and similarly for right, up, and down modules. Each of these is a monoidal category where the monoidal structure is taken in an orthogonal direction.
In particular left modules (the module is to the right of the ring) have downwards monoidal products, etc.

Let us think of the multiplication in $A$, $\mu_A$ as being horizontal, so that we can form
left modules, right modules, and bimodules over $A$ naturally.
Then $\Mod{A \otimes \opposite{A}}$, the category of bimodules over $A$, is naturally
a monoidal category by tensoring over $A$ (we think of the monoidal product as happening in the horizontal direction.
Let $\Gamma_{\Delta}(\Mod{(A \otimes \opposite{A})})$ denote the subcategory of $\Mod{(A \otimes_k \opposite{A})}$
generated under colimits by $A$.
We can think of $\HCH(A/k)$ as a one object monoidal category where the endomorphisms
of that object is $\HCH(A/k)$ with $\mu_2$ product (and $\mu_1$ is responsible for the monoidal structure).
Then this monoidal category naturally maps into $\Gamma_{\Delta}(\Mod{(A \otimes \opposite{A})})$
as a map of monoidal categories, basically by definition, where the object maps to $A$. This induces a map of monoidal categories
\[\Phi: \Mod{\HCH(A/k)^{down}} \to \Gamma_{\Delta}(\Mod{A \otimes \opposite{A}})\]
which is an isomorphism because $A$ is a compact generator whose ring of endomorphisms is $\HCH(A/k)$ with $\mu_2$ product.

For a down $\HCH(A/k)$ module $M$,
$\Phi$ sends $M$ to the $A \otimes \opposite{A}$ module given by
\[\scalebox{1.3}{%
$\substack{M \\ \otimes\\ A} \scriptstyle{\HCH(A/k)}$}\]
where the $A$ on the bottom has commutating up $\HCH(A/k)$ action and
left and right $A$ actions, i.e. a left $A \otimes \opposite{A}$ action.
This gives an $A \otimes \opposite{A}$-module structure on the tensor product.
(The vertical tensor is a normal tensor product over the $E_1$ ring $\HCH(A/k)$ with
the $\mu_2$ product). We can think of the monoidal-ness of the functor $\Phi$ as follows. 
First note $A$ is a up $\HCH(A/k)$ algebra, because the evaluation map
\[A \otimes \Hom_{A \otimes \opposite{A}}(A,A) \to A\]
(which we think of as an up action because composition of functions in $\HCH{A/k}$ is visualized upwards)
is compatible with the horizontal monoidal product--tensoring over $A$. Therefore, we have coherence diagrams such as
\[
  \begin{tikzcd}
    \substack{\HCH(A/k) \otimes \HCH(A/k) \\ \otimes \, \, \, \, \, \, \, \, \, \, \otimes \\ A \otimes A}  \arrow{rr}
    \arrow{dd}{\substack{\mu_1 \\ \otimes \\ \mu_A}} && A \otimes A \arrow{dd}{\mu_A} \\ \\
    \substack{\HCH(A/k) \\ \otimes \\ A} \arrow{rr} && A\\
  \end{tikzcd}
\]
where the horizontal maps are the structure maps of $A$ as a up $\HCH(A/k)$ module.
Now, suppose $M$ and $N$ are both down $\HCH(A/k)$ modules. We can consider the tensor product
\[\Phi(M) \otimes_A \Phi(n) \cong 
\scalebox{1.3}{%
    $ \left(
      \substack{M \\ \otimes\\ A} \scriptstyle{\HCH(A/k)}\right)
    \otimes_A
  \left(
      \substack{N \\ \otimes\\ A} \scriptstyle{\HCH(A/k)}\right)
    $}\]
We rewrite this as
\[\scalebox{1.3}{%
    $ \left(
      \substack{M \\ \otimes\\ A} \scriptstyle{\HCH(A/k)}\right)
    \otimes_{
  \left(
      \substack{\HCH(A/k) \\ \otimes\\ A} \scriptstyle{\HCH(A/k)}\right)}
  \left(
      \substack{N \\ \otimes\\ A} \scriptstyle{\HCH(A/k)}\right)
      $}\]
We can instead evaluate this tensor horizontally first to get
\[\scalebox{1.3}{%
    $ \left(
      \substack{M \otimes_{\HCH} N\\ \otimes\\ A} \scriptstyle{\HCH(A/k)}\right)
    $}\]
which captures the fact that the functor $\Phi$ is monoidal. We note that
the horizontal actions of $\HCH(A/k)$ on $M$ and $N$ are coming from the monoidal structure
on $\Mod{\HCH(A/k)^{down}}$.

In the reverse direction, for an $A \otimes \opposite{A}$-module $N$, the
down $\HCH(A/k)$ module corresponding to $N$ is
\[\Psi(N) := \Hom_{A \otimes \opposite{A}}(A,N)\]
as this is the right adjoint of $\Phi$. $\Psi$ is also monoidal since $A$
is the unit of the monoidal structure on $\Mod{A \otimes \opposite{A}}$.
Pictorially, we can write an element of $\Psi(N)$ as a vertical map
\[
  \begin{tikzcd}
    N \\
    A \arrow{u} 
  \end{tikzcd}
\]
and the monoidal structure of $\Psi$ is seen by tensoring horizontally over $A$
\begin{equation} \label{monoidalness}
  \left(\begin{tikzcd}
    N \\
    A \arrow{u} 
  \end{tikzcd}\right) \otimes
  \left(\begin{tikzcd}
    N' \\
    A \arrow{u}
  \end{tikzcd}\right) \to
  \left(\begin{tikzcd}
    N \otimes_A N' \\
    A \arrow{u} 
  \end{tikzcd}\right)
\end{equation}
In fact there is also a left and right
$\HCH(A/k)$ naturally on $\Psi(N)$, because we can tensor (over $A$) an $A$-bimodule map from $A$ to $N$ on the left
or right with a $A$-bimodule map from $A$ to $A$. The left, down and right actions are compatible, in the sense that any of these actions can induce the
others by rotating the $E_1$ structure on $\HCH(A/k)$, assuming that we never cross the direction which makes the action into an up action. We can represent these actions
by the following cartoon.
\begin{equation} \label{cartoon}
  \begin{gathered}
  \begin{tikzcd}
    A \\
    A \arrow{u} 
  \end{tikzcd}
  \begin{tikzcd}
    \arrow[loop left]
  \end{tikzcd}
  \begin{tikzcd}
    N \\
    A \arrow{u} 
  \end{tikzcd}
  \begin{tikzcd}
    \arrow[loop right]
  \end{tikzcd}
  \begin{tikzcd}
    A \\
    A \arrow{u} 
  \end{tikzcd}
  \\
  \begin{tikzcd}
    \arrow[loop below] \\
  \end{tikzcd}
  \\
  \begin{tikzcd}
    A \\
    A \arrow{u}
  \end{tikzcd}
  \end{gathered}
\end{equation}
The fact that the drawn actions are compatible follows from the fact that
we can fill in more copies of $\HCH(A/k)$ in the lower left and lower right
corners, whose actions on their neighboring $\HCH(A/k)$'s is compatible with
the actions on $\Hom_k(A,N)$ indicated in the diagram. This makes is clear that
the map in (\ref{monoidalness}) is compatible with the actions of $\HCH(A/k)$.

Inside $\Gamma_{\Delta}(\Mod{A \otimes \opposite{A}})$,
there is the natural ring $D_A$ which we've encountered,
\[D_A := \Gamma_{\Delta}(\Hom_k(A,A))\]
which is here thought of as a ring with horizontal multiplication.
$D_A$ is sent to a down $\HCH{A/k}$ algebra by $\Psi$. To see which, we compute
\begin{equation*}
  \begin{split}
    \Hom_{A \otimes \opposite{A}}(A,D_A)
    &\cong \Hom_{A \otimes \opposite{A}}(A, \Gamma_{\Delta}(\Hom(A,A))) \\
    &\cong \Hom_{A \otimes \opposite{A}}(A, \Hom(A,A)) \\
    &\cong \Hom_{A \otimes \opposite{A}}(A, \Hom_{A}(A \otimes A,A)) \\
  \end{split}
\end{equation*}
where the action of $A$ on $A \otimes A$ in the last line is on the left multiplication on the left $A$.
The right $A \otimes \opposite{A}$ action on $A \otimes A$ is via acting on the left $A$ on the right and the right $A$ on the left
(which is a right $\opposite{A}$ action), inducing a left $A \otimes \opposite{A}$
module structure on $\Hom_{\opposite{A}}(A \otimes \opposite{A},A)$. Therefore,
\begin{equation*}
  \begin{split}
    \Hom_{A \otimes \opposite{A}}(A,D_A)
    &\cong \Hom_{A \otimes \opposite{A}}(A, \Hom_{A}(A \otimes A,A)) \\
    &\cong \Hom_{A}((A \otimes A) \otimes_{A \otimes \opposite{A}}A,A) \\
    &\cong \Hom_{A}(A,A) \\
    &\cong \opposite{A} \\
  \end{split}
\end{equation*}
where a direct check shows that the algebra structure on the last line is indeed the opposite
of the algebra structure on $A$.
We would like to figure out the down $\HCH(A/k)$ action. But before we do that, let's streamline the computation above to just
\begin{equation*}
  \begin{split}
    \Hom_{A \otimes \opposite{A}}(A,D_A)
    &\cong \Hom_{A \otimes \opposite{A}}(A, \Gamma_{\Delta}(\Hom(A,A))) \\
    &\cong \Hom_{A \otimes \opposite{A}}(A, \Hom(A,A)) \\
    &\cong \Hom_{A}(A \otimes_{A}A,A) \\
    &\cong \Hom_{A}(A,A) \\
    &\cong \opposite{A} \\
  \end{split}
\end{equation*}
where from line two to three, we think of the isomorphism as
the application of a single ``enriched'' tensor-hom adjunction with the $(k,A)$ bimodule
$A$, both actions are on the left, where the left $A$ actions on both
sides of the $\Hom$ just come along for the ride. Tensor-hom adjunction in this form is probably well-known, but
one can think of the computations above as justification for this ``enriched'' tensor-hom adjunction as well.
Let us draw a picture of this isomorphism.
\[
  \begin{gathered}
    \begin{tikzcd}
      \Hom_k(A \xleftarrow{} \textcolor{red}{A}) \\
      A \arrow[u,"A","A"']
    \end{tikzcd}
    \mapsto
    \begin{tikzcd}
      A \\
      A \otimes_A \textcolor{red}{A}\arrow[u,"A"]
    \end{tikzcd}
  \end{gathered}
\]
where the left arrow is labeled on both sides to indicate that it is required to be $(A,A)$-bilinear whereas the right
diagram only requires that the map is left $A$-linear. From this diagram it is clear that the left $\HCH(A/k)$ action
will be the most convenient to work with, because it is unfazed by the tensor-hom adjunction. Namely, it is simply the action
\begin{equation} \label{leftaction}
  \begin{gathered}
  \begin{tikzcd}
    A \\
    A \arrow[u,"A","A"'] 
  \end{tikzcd}
  \begin{tikzcd}
    \arrow[loop left]
  \end{tikzcd}
  \begin{tikzcd}
    A \\
    A \arrow[u,"A"]
  \end{tikzcd}
  \end{gathered}
\end{equation}
Our diagram therefore shows the left $\HCH{A/k}$ action, in fact it shows a left $\HCH{A/k}$ algebra structure.
We would like to drag it to a down $\HCH{A/k}$ algebra and describe it. First, let us start with the standard
action of $\HCH(A/k)$ on $A$, namely $A$ as an up $\HCH(A/k)$ algebra. We can visualize it like so
\[
  \begin{gathered}
    \begin{tikzcd}
      A \\
      A \arrow[u,"A","A"'] \\
      A \arrow[loop above]  \\
      A \otimes_k A \arrow[u,"A","A"']
  \end{tikzcd}
\end{gathered}
\]
By writing it this way, we see that indeed there are compatible actions, as in the diagram (\ref{cartoon})
\[
  \begin{gathered}
    \begin{tikzcd}
      A \\
      A \arrow[u,"A","A"']
    \end{tikzcd}
    \\
    \begin{tikzcd}
      A \\
      A \arrow[u,"A","A"']
    \end{tikzcd}
    \begin{tikzcd}
      \arrow[loop left]
    \end{tikzcd}
    \begin{tikzcd}
      A \arrow[loop above]  \\
      A \otimes_k A \arrow[u,"A","A"']
    \end{tikzcd}
    \begin{tikzcd}
      \arrow[loop right]
    \end{tikzcd}
    \begin{tikzcd}
      A \\
      A \arrow[u,"A","A"']
    \end{tikzcd}
\end{gathered}
\]
Therefore, we can deduce that the action in diagram (\ref{leftaction}) is the standard up $\HCH(A/k)$ algebra $A$
rotated by $90^{\circ}$ counterclockwise.
We visualize $\HCH(A/k)$ staying still and the module rotating around it.
To get to the down $\HCH{A/k}$ algebra, we further rotate by $90^{\circ}$ counterclockwise. Therefore, in total we have
\begin{equation}
  \Psi(D_A) \cong \Hom_{A \otimes \opposite{A}}(A,D_A) \cong A_{180^{\circ}}
\end{equation}
meaning that we drag the standard up $\HCH(A/k)$ algebra $180^{\circ}$ degrees counterclockwise to obtain a down $\HCH(A/k)$ algebra.
Note that doing this naturally reverses the order of multiplication on the ring, making the underlying ring $\opposite{A}$.
We note that the order of the dragging matters, and we do not even get the same underlying module if we drag in the opposite direction.
Using the inverse functor to $\Psi$, we have
\begin{equation} \label{decat}
  D_A \cong \scalebox{1.3}{%
    $\substack{A_{180^{\circ}} \\ \otimes\\ A} \scriptstyle{\HCH(A/k)}$}
\end{equation}

We can also define the opposite ring $\opposite{D_A}$, and by rotating equation (\ref{decat}) by $180^{\circ}$ clockwise, we can see that
\[\opposite{D_A} \cong \scalebox{1.3}{%
$\substack{A_{-180^{\circ}} \\ \otimes\\ A} \scriptstyle{\HCH(A/k)}$}\]
Since in general $D_A$ and $\opposite{D_A}$ are not canonically isomorphic even as $A$-bimodules,
we must conclude that dragging $A$ as a down $\HCH(A/k)$ module counterclockwise by one full rotation should genuinely yields a different
$\HCH(A/k)$ module in general.

Denote by $A_{90^{\circ}}$ the left $\HCH(A/k)$ algebra and $A_{-90^{\circ}}$ the right $\HCH(A/k)$ algebra
obtained by draggin the standard up $\HCH(A/k)$ algebra by the corresponding angles.
Then, by rotating the isomorphism (\ref{decat}) above by $90^{\circ}$ clockwise, we get
\[D_{A,-90^{\circ}} \cong A_{-90^{\circ}} \otimes_{\HCH(A/k)} A_{90^{\circ}}\]
We can categorify the above to get
\[\Mod{\opposite{D_A}} \cong \Mod{\opposite{A}} \otimes_{\Mod{\HCH(A/k)^{down}}} \Mod{A}\]
which was indeed what we intended to decategorify.

\appendix

\section{Dualizability and Monads}
In this section, we record how left-right duality interacts with the category of modules
over colimit preserving monads. Let $\mathscr{V}$ be a symmetric monoidal compactly generated stable category,
such that the compact objects are the same as the dualizable objects. Let $\mathscr{X}$ be a dualizable
category in $\Mod{\mathscr{V}}^L$ and
\[T : \mathscr{X} \to \mathscr{X}\]
be a colimit-preserving $\mathscr{V}$-linear monad on $\mathscr{X}$.

\begin{theorem}
  The functor which takes the pair $(T,\mathscr{X})$
  to the category
  \[\Mod{T}(\mathscr{X})\]
  is symmetric monoidal.
\end{theorem}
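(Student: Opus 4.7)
The plan is to unpack the statement as the assertion that the assignment $(\mathscr{X}, T) \mapsto \Mod{T}(\mathscr{X})$, from pairs (with external tensor $(\mathscr{X}_1, T_1) \otimes (\mathscr{X}_2, T_2) := (\mathscr{X}_1 \otimes_{\mathscr{V}} \mathscr{X}_2,\, T_1 \otimes_{\mathscr{V}} T_2)$) to $\Mod{\mathscr{V}}^L$, admits coherent natural equivalences
\[\alpha \colon \Mod{T_1}(\mathscr{X}_1) \otimes_{\mathscr{V}} \Mod{T_2}(\mathscr{X}_2) \xrightarrow{\;\simeq\;} \Mod{T_1 \otimes T_2}(\mathscr{X}_1 \otimes_{\mathscr{V}} \mathscr{X}_2),\]
together with the unit equivalence $\Mod{\id_{\mathscr{V}}}(\mathscr{V}) \simeq \mathscr{V}$, which is immediate. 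The bulk of the work is thus to construct $\alpha$ and show it is an equivalence.

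First I would produce $\alpha$ from the tensor product of the two monadic adjunctions. Let $F_{T_i} \dashv G_{T_i} \colon \mathscr{X}_i \rightleftarrows \Mod{T_i}(\mathscr{X}_i)$ denote the free–forgetful adjunction. Since $\Mod{\mathscr{V}}^L$ is symmetric monoidal and tensoring preserves colimit-preserving adjunctions, we obtain a tensor adjunction $F_{T_1} \otimes F_{T_2} \dashv G_{T_1} \otimes G_{T_2}$ whose associated monad on $\mathscr{X}_1 \otimes_{\mathscr{V}} \mathscr{X}_2$ is canonically identified with $T_1 \otimes T_2$. The universal property of the Eilenberg–Moore category (Proposition 4.7.3.3 of \cite{HA}) then yields the canonical comparison functor $\alpha$ into $\Mod{T_1 \otimes T_2}(\mathscr{X}_1 \otimes_{\mathscr{V}} \mathscr{X}_2)$.

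The hard part is proving $\alpha$ is an equivalence. By Barr–Beck–Lurie (Theorem 4.7.3.5 of \cite{HA}), this reduces to showing that $G_{T_1} \otimes G_{T_2}$ is monadic. Colimit-preservation is automatic from that of the individual $G_{T_i}$, so the entire burden falls on conservativity of $G_{T_1} \otimes G_{T_2}$. This is the main obstacle: conservativity is not generally preserved under tensor product of functors, and the dualizability hypotheses on the $\mathscr{X}_i$ (together with the compactly-generated assumption on $\mathscr{V}$, which propagates to dualizability of the $\Mod{T_i}(\mathscr{X}_i)$) are precisely what is needed. To exploit them, I would present each $\Mod{T_i}(\mathscr{X}_i)$ as a cosimplicial limit—the Barr–Beck cobar $[n] \mapsto T_i^n(\mathscr{X}_i)$ taken in $\Mod{\mathscr{V}}^R$—and use left-right duality, as developed in Section 3 and Appendix A of \cite{previous}, to move the tensor product across this limit. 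Dualizability then ensures that the external tensor of the two cobar diagrams is the cobar diagram for $T_1 \otimes T_2$, and conservativity of $G_{T_1} \otimes G_{T_2}$ follows termwise from conservativity of each $G_{T_i}$.

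Once $\alpha$ is shown to be an equivalence, the remaining coherence data—higher associators, the symmetric braiding, and compatibility with the unit—follow formally from the naturality of the Eilenberg–Moore construction together with the symmetric monoidal structure on $\Mod{\mathscr{V}}^L$. I expect essentially all the real work to be concentrated in the conservativity step; the rest is standard $\infty$-categorical bookkeeping.
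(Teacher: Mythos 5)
Your overall skeleton matches the paper's: build the comparison functor from the tensor of the free--forgetful adjunctions (the paper packages this via $2$-functors out of $\mathfrak{mnd}$ and $\mathfrak{adj}$, which also supplies the coherence data you defer to ``bookkeeping''), then invoke Barr--Beck--Lurie so that everything rests on conservativity of $G_{T_1} \otimes G_{T_2}$. But your treatment of that crucial step has a genuine gap. The presentation you lean on --- ``the Barr--Beck cobar $[n] \mapsto T_i^n(\mathscr{X}_i)$'' with totalization recovering $\Mod{T_i}(\mathscr{X}_i)$ --- is not available: the Eilenberg--Moore category is a \emph{lax} (weighted) limit of the monad diagram, not the conical limit of a cosimplicial diagram whose terms are copies of $\mathscr{X}_i$ with cofaces given by $T_i$. (Test case: $\mathscr{V} = \mathscr{X} = \Mod{k}$, $T = A \otimes_k -$; the naive totalization imposes descent-type identifications $A \otimes M \simeq M$ rather than producing $\Mod{A}$.) The correct bar/cobar-type presentations have terms of the shape ``$\mathscr{X}$ tensored with powers of an algebra/monoidal category,'' as in the paper's own $\Gamma_{\Delta}(\QC{X^{n+1}})$ diagrams, and establishing such a presentation for an abstract colimit-preserving monad is not something you can take for granted. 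Moreover, even granting some limit presentation, ``moving the tensor product across this limit'' is exactly the hard point --- tensoring in $\Mod{\mathscr{V}}^L$ commutes with colimits, not limits --- and conservativity of a functor into a totalization does not ``follow termwise.'' Finally, your appeal to dualizability of the $\Mod{T_i}(\mathscr{X}_i)$ is circular in the paper's logical order: that statement is Corollary \ref{appendA}, whose proof uses the very theorem you are proving, so you would need an independent argument for it.

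For contrast, the paper resolves conservativity one factor at a time, with no cobar diagram at all: by Theorem 4.8.4.6 of \cite{HA} one writes $\Mod{T_1}(\mathscr{X}_1)$ as $\Mod{T_1}(\Hom_{\mathscr{V}}(\mathscr{X}_1,\mathscr{X}_1)) \otimes_{\Hom_{\mathscr{V}}(\mathscr{X}_1,\mathscr{X}_1)} \mathscr{X}_1$, and then Theorem 4.8.5.16 of \cite{HA} identifies $\Mod{T_1}(\mathscr{X}_1) \otimes_{\mathscr{V}} \Mod{T_2}(\mathscr{X}_2)$ with $\Mod{(T_1 \otimes \id)}(\mathscr{X}_1 \otimes_{\mathscr{V}} \Mod{T_2}(\mathscr{X}_2))$, whose forgetful functor is conservative for standard reasons; this reduces the problem to conservativity of $\id_{\mathscr{X}_1} \otimes G_{T_2}$, which is handled by running the same argument on the second factor. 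If you want to keep your outline, you should replace the cobar step with an argument of this kind (or some other concrete mechanism for commuting the tensor past the module construction); as written, the step that carries all the weight is unsupported.
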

\begin{proof}
  As in \cite{RVCat} and known in some form since \cite{SSCat}, monads in $\Mod{\mathscr{V}}^L$ are given by
  $2$-functors
  \[\mathfrak{mnd} \to \Mod{\mathscr{V}}^L\]
  There is another $2$-category $\mathfrak{adj}$, such that $2$-functors
  \[\mathfrak{adj} \to \Mod{\mathscr{V}}^L\]
  classify adjunctions. Therefore, as $\Mod{\mathscr{V}}^L$ is a symmetric monoidal $2$-category, it
  induces a symmetric monoidal product on monads and adjunctions in $\Mod{\mathscr{V}}^L$.
  Now because of the inclusion
  \[\mathfrak{mnd} \to \mathfrak{adj}\]
  there is a natural symmetric monoidal functor which associates to an adjunction a monad
  \[\Hom(\mathfrak{adj},\Mod{\mathscr{V}}^L) \to \Hom(\mathfrak{mnd},\Mod{\mathscr{V}}^L)\]
  This functor has a lax symmetric monoidal right adjoint
  \[\Hom(\mathfrak{mnd},\Mod{\mathscr{V}}^L) \to \Hom(\mathfrak{adj},\Mod{\mathscr{V}}^L)\]
  which associates to a monad its category of modules (see also Remark 5.7 in \cite{monad}).
  This is the functor we wish to show
  is symmetric monoidal. 
  
  It is obvious the functor preserves units. As there is clearly a map
  \[\bigotimes{\Mod{T_i}(\mathscr{X}_i)} \to \Mod{\left(\bigotimes{T_i}\right)}(\bigotimes{\mathscr{X}_i})\]
  coming from the fact that the functor is lax symmetric monoidal,
  it suffices to show this map is an isomorphism. By induction we reduce to showing 
  \[\Mod{T_1}(\mathscr{X}_1) \otimes_{\mathscr{V}} \Mod{T_2}(\mathscr{X}_2) \xrightarrow{\cong} \Mod{(T_1 \otimes T_2)}(\mathscr{X}_1 \otimes_{\mathscr{V}} \mathscr{X}_2)\]
  This can be shown by Lurie-Barr-Beck (Theorem 4.7.3.5 in \cite{HA})
  if we can show that the functor
  \[G_1 \otimes G_2 : \Mod{T_1}(\mathscr{X}_1) \otimes_{\mathscr{V}} \Mod{T_2}(\mathscr{X}_2) \to \mathscr{X}_1 \otimes_{\mathscr{V}} \mathscr{X}_2\]
  (where the $G_i$'s are the forgetful functors) is conservative.
  By Theorem 4.8.4.6 in \cite{HA}, we have
  \[\Mod{T_1}(\mathscr{X}_1) \cong \Mod{T_1}(\Hom_{\mathscr{V}}(\mathscr{X}_1,\mathscr{X}_1)) \otimes_{\Hom_{\mathscr{V}}(\mathscr{X}_1,\mathscr{X}_1)} \mathscr{X}_1\]
  Hence, we have the isomorphism (using Theorem 4.8.5.16 of \cite{HA})
  \[\Mod{T_1}(\mathscr{X}_1) \otimes_{\mathscr{V}} \Mod{T_2}(\mathscr{X}_2) \cong \Mod{(T_1 \otimes \id)}(\mathscr{X}_1 \otimes_{\mathscr{V}} \Mod{T_2}(\mathscr{X}_2))\]
  So it suffices to check that the functor
  \[\mathscr{X}_1 \otimes_{\mathscr{V}} \Mod{T_2}(\mathscr{X}_2) \to \mathscr{X}_1 \otimes_{\mathscr{V}} \mathscr{X}_2\]
  is conservative. But here we can apply the same argument again \footnote{This argument
  is adapted from the proof of Theorem 4.8.5.16 in \cite{HA}}.
\end{proof}
\begin{corollary} \label{appendA}
  If $\mathscr{X}$ is a dualizable $\mathscr{V}$-module category, then for any $T$
  a $\mathscr{V}$-linear colimit preserving monad on $\mathscr{X}$,
  \[\Mod{T}(\mathscr{X})\]
  is dualizable with dual
  \[\Mod{T^{\vee}}(\mathscr{X}^{\vee})\]
\end{corollary}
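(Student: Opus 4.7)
The plan is to deduce this directly from the symmetric monoidal functoriality established in the preceding theorem. Since a symmetric monoidal functor preserves dualizable objects, sending the dual of $x$ to the dual of $F(x)$, it suffices to exhibit $(T, \mathscr{X})$ as dualizable in the source symmetric monoidal $2$-category $\Hom(\mathfrak{mnd}, \Mod{\mathscr{V}}^L)$ with dual $(T^{\vee}, \mathscr{X}^{\vee})$. Note that the monoidal unit of the source is the pair $(\id_{\mathscr{V}}, \mathscr{V})$, which maps under the functor of the theorem to $\Mod{\id_{\mathscr{V}}}(\mathscr{V}) \cong \mathscr{V}$, the monoidal unit of the target, as required.

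To produce the duality data in the source, I would begin from the evaluation and coevaluation witnessing dualizability of $\mathscr{X}$ in $\Mod{\mathscr{V}}^L$, namely
\[\mathrm{ev} : \mathscr{X} \otimes_{\mathscr{V}} \mathscr{X}^{\vee} \to \mathscr{V}, \qquad \mathrm{coev} : \mathscr{V} \to \mathscr{X}^{\vee} \otimes_{\mathscr{V}} \mathscr{X}.\]
The left-right dual endofunctor $T^{\vee}$ of $T$ is characterized up to canonical isomorphism by $\mathrm{ev} \circ (T \otimes \id) \cong \mathrm{ev} \circ (\id \otimes T^{\vee})$; transporting the unit and multiplication of $T$ across this isomorphism endows $T^{\vee}$ with a monad structure on $\mathscr{X}^{\vee}$. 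That same coherence isomorphism is precisely the $2$-cell needed to promote $\mathrm{ev}$ to a $1$-morphism of monad-pairs $(T, \mathscr{X}) \otimes (T^{\vee}, \mathscr{X}^{\vee}) \to (\id_{\mathscr{V}}, \mathscr{V})$. An analogous argument, using the dual characterization of $T^{\vee}$ from the other side (equivalently, obtained from the former by applying the symmetry of the tensor product), promotes $\mathrm{coev}$ to a $1$-morphism of monad-pairs in the opposite direction. The triangle identities in the source then reduce to the triangle identities already satisfied by $(\mathrm{ev}, \mathrm{coev})$ in $\Mod{\mathscr{V}}^L$.

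The main obstacle is the $2$-categorical bookkeeping: one must verify that the $2$-cells promoting $\mathrm{ev}$ and $\mathrm{coev}$ to morphisms in $\Hom(\mathfrak{mnd}, \Mod{\mathscr{V}}^L)$ are compatible with the units and multiplications of both $T$ and $T^{\vee}$, and that the triangle identities lift to equalities of $2$-cells rather than merely of underlying functors. This is a routine but somewhat lengthy diagram chase using only the universal property defining $T^{\vee}$ together with the coherence data of $\mathfrak{mnd}$. Once the pair-level duality is established, applying the symmetric monoidal functor of the preceding theorem immediately yields the desired isomorphism $\Mod{T}(\mathscr{X})^{\vee} \cong \Mod{T^{\vee}}(\mathscr{X}^{\vee})$.
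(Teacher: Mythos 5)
Your reduction to dualizability of the pair $(T,\mathscr{X})$ in the source category $\Hom(\mathfrak{mnd},\Mod{\mathscr{V}}^L)$ is where the argument breaks: that pair is not dualizable there with dual $(T^{\vee},\mathscr{X}^{\vee})$ (indeed not dualizable at all unless $T$ is essentially invertible), and the $2$-cell you point to is not the datum a morphism of monad-pairs requires. To promote $\mathrm{ev}$ to a $1$-morphism $(T\otimes T^{\vee},\mathscr{X}\otimes_{\mathscr{V}}\mathscr{X}^{\vee})\to(\id_{\mathscr{V}},\mathscr{V})$ you must supply a comparison between $\mathrm{ev}\circ(T\otimes T^{\vee})$ and $\id_{\mathscr{V}}\circ\mathrm{ev}\simeq\mathrm{ev}$, compatible with units and multiplications; the characterizing equivalence $\mathrm{ev}\circ(T\otimes\id)\simeq\mathrm{ev}\circ(\id\otimes T^{\vee})$ only identifies $\mathrm{ev}\circ(T\otimes T^{\vee})$ with $\mathrm{ev}\circ((T\circ T)\otimes\id)$, which is not equivalent to $\mathrm{ev}$ (already for $\mathscr{X}=\mathscr{V}$ and $T=A\otimes -$ this would force $A\otimes A\simeq\mathbf{1}_{\mathscr{V}}$). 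Likewise the coevaluation would require the object $\mathrm{coev}(\mathbf{1}_{\mathscr{V}})\simeq\id_{\mathscr{X}}\in\Hom_{\mathscr{V}}(\mathscr{X},\mathscr{X})\simeq\mathscr{X}^{\vee}\otimes_{\mathscr{V}}\mathscr{X}$ to be fixed by the monad $T^{\vee}\otimes T$, which instead sends it to $T\circ T$. Passing to lax monad morphisms (inserting units of $T$) does not rescue this: the triangle identities would then have to hold including the non-invertible $2$-cell data, and in any case the symmetric monoidal functor of the preceding theorem is defined on the functor category with its ordinary morphisms. The decategorified shadow of your intermediate claim is that an associative algebra $A$ should be dualizable in $\mathrm{Alg}(\mathscr{V})$ with dual $\opposite{A}$; this holds essentially only for the unit algebra, even though $\Mod{A}$ is always dualizable with dual $\Mod{\opposite{A}}$.

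This is exactly why the duality must be exhibited at the level of module categories, which is what the paper does: symmetric monoidality of $(T,\mathscr{X})\mapsto\Mod{T}(\mathscr{X})$ is used only to rewrite $\Mod{(T\otimes T^{\vee})}(\mathscr{X}^{\vee}\otimes_{\mathscr{V}}\mathscr{X})$ as $\Mod{T^{\vee}}(\mathscr{X}^{\vee})\otimes_{\mathscr{V}}\Mod{T}(\mathscr{X})$, so that $T$ itself, viewed as a $(T,T)$-bimodule in $\Hom_{\mathscr{V}}(\mathscr{X},\mathscr{X})\simeq\mathscr{X}^{\vee}\otimes_{\mathscr{V}}\mathscr{X}$, supplies the coevaluation, while the evaluation is the relative tensor product $\otimes_T$ after identifying $\Mod{T^{\vee}}(\mathscr{X}^{\vee})$ with $\RMod{T}(\mathscr{X}^{\vee})$ via Theorem 4.8.4.6 of \cite{HA}. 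These unit and counit functors are bimodule-theoretic and are not images of monad morphisms under the module functor, so they cannot be obtained by transporting duality data from the source; if you enlarge the source to a Morita-style category whose $1$-cells are bimodules, your outline can be made to work, but at that point you are reconstructing the paper's proof rather than shortcutting it.
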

\begin{proof}
  As $T$ is a colimit preserving $\mathscr{V}$-linear monad on $\mathscr{X}$, we can write $T$ as 
  \[T \in \Hom_{\mathscr{V}}(\mathscr{X},\mathscr{X}) \cong \mathscr{X}^{\vee} \otimes_{\mathscr{V}} \mathscr{X}\]
  Clearly $T$ is a $(T,T)$-bimodule. Equivalently, $T$ is a $(T \otimes T^{\vee})$-module, and hence we can write
  \[T \in \Mod{(T \otimes T^{\vee})}(\mathscr{X}^{\vee} \otimes \mathscr{X}) \cong \Mod{T^{\vee}}(\mathscr{X}^{\vee}) \otimes_{\mathscr{V}} \Mod{T}(\mathscr{X})\]
  This defines a map
  \[T: \mathscr{V} \to \Mod{T^{\vee}}(\mathscr{X}^{\vee}) \otimes_{\mathscr{V}} \Mod{T}(\mathscr{X})\]
  Now, by Theorem 4.8.4.6 in \cite{HA}, we have the isomorphism
  \[\Mod{T^{\vee}}(\mathscr{X}^{\vee}) \cong \Mod{T^{\vee}}(\Hom_{\mathscr{V}}(\mathscr{X}^{\vee},\mathscr{X}^{\vee})) \otimes_{\Hom_{\mathscr{V}}(\mathscr{X}^{\vee},\mathscr{X}^{\vee})} \mathscr{X}^{\vee}\]
  However,
  \[\Hom_{\mathscr{V}}(\mathscr{X}^{\vee},\mathscr{X}^{\vee}) \cong \mathscr{X} \otimes \mathscr{X}^{\vee} \cong \Hom_{\mathscr{V}}(\mathscr{X},\mathscr{X})\]
  is an isomorphism of categories which reverses the monoidal structure and identifies $T^{\vee}$ with $T$. Therefore, we also have the isomorphism
  \[\Mod{T^{\vee}}(\mathscr{X}^{\vee}) \cong  \mathscr{X}^{\vee} \otimes_{\Hom_{\mathscr{V}}(\mathscr{X},\mathscr{X})} \RMod{T}(\Hom_{\mathscr{V}}(\mathscr{X},\mathscr{X})) \cong \RMod{T}(\mathscr{X}^{\vee})\]
  so it is isomorphic to the category of right modules over the monad $T$ on $\mathscr{X}^{\vee}$ ($\RMod{T}$ here means right $T$ modules).
  Hence, there is a map, coming from tensor product over the monad $T$, 
  \[\otimes_T: \Mod{T^{\vee}}(\mathscr{X}^{\vee}) \otimes_{\mathscr{V}} \Mod{T}(\mathscr{X}) \to \mathscr{V}\]
  By a standard argument these form unit and counit maps, witnessing the dualizability of $\Mod{T}(\mathscr{X})$.
\end{proof}
\begin{corollary} \label{appendB}
  Suppose
  \[F_T: \mathscr{X} \to \Mod{T}(\mathscr{X})\]
  is the free $T$-module functor and
  \[G_T : \Mod{T}(\mathscr{X}) \to \mathscr{X}\]
  is the forgetful functor.
  Then
  \[(F_T)^{\vee} \cong G_{T^{\vee}}\]
  and
  \[(G_T)^{\vee} \cong F_{T^{\vee}}\]
\end{corollary}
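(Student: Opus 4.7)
The plan is to deduce the corollary from the $2$-functoriality of dualization combined with the symmetric monoidal naturality established in the preceding theorem. First I would observe that $(F_T, G_T)$ is an adjunction in $\Mod{\mathscr{V}}^L$ with both functors colimit-preserving and $\mathscr{V}$-linear (the latter because $G_T$ is monadic and $F_T$ is its left adjoint). The dualization functor $(-)^\vee$, acting on the full sub-$2$-category of dualizable objects of $\Mod{\mathscr{V}}^L$, is a symmetric monoidal $2$-functor that reverses the direction of $1$-morphisms while preserving the $2$-morphisms that witness adjunctions. Applied to $F_T \dashv G_T$, this produces an adjunction $G_T^\vee \dashv F_T^\vee$ between $\mathscr{X}^\vee$ and $\Mod{T}(\mathscr{X})^\vee$, with left and right roles swapped.

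Next I would compute the induced monad on $\mathscr{X}^\vee$ as the composite $F_T^\vee \circ G_T^\vee \cong (G_T \circ F_T)^\vee \cong T^\vee$. The first isomorphism is the contravariance of $(-)^\vee$ on $1$-morphisms; the second uses that the preceding theorem establishes the symmetric monoidality (and hence compatibility with duality) of the assignment $(T, \mathscr{X}) \mapsto \Mod{T}(\mathscr{X})$, so the monad structure on $T = G_T \circ F_T$ dualizes to the monad structure on $T^\vee$.

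To conclude, I would apply Barr--Beck--Lurie (Theorem 4.7.3.5 of \cite{HA}) to show the adjunction $G_T^\vee \dashv F_T^\vee$ is monadic, thereby identifying $\Mod{T}(\mathscr{X})^\vee$ with $\Mod{T^\vee}(\mathscr{X}^\vee)$ in a way that sends $F_T^\vee \mapsto G_{T^\vee}$ and $G_T^\vee \mapsto F_{T^\vee}$. Colimit-preservation of $F_T^\vee$ is automatic in $\Mod{\mathscr{V}}^L$. For conservativity, I would use the explicit identification $\Mod{T}(\mathscr{X})^\vee \cong \RMod{T}(\mathscr{X}^\vee)$ from the proof of Corollary \ref{appendA}, under which $F_T^\vee$ becomes the forgetful functor from right $T$-modules on $\mathscr{X}^\vee$ to $\mathscr{X}^\vee$ itself, which is visibly conservative.

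The main obstacle will be the compatibility check: one must verify that the identification $\Mod{T}(\mathscr{X})^\vee \cong \Mod{T^\vee}(\mathscr{X}^\vee)$ furnished by Corollary \ref{appendA} genuinely intertwines the adjunction $G_T^\vee \dashv F_T^\vee$ with the standard free/forgetful adjunction on $\Mod{T^\vee}(\mathscr{X}^\vee)$, as opposed to some alternative adjunction on the same category whose composite endofunctor happens to coincide with $T^\vee$. Addressing this amounts to tracing the explicit unit $T \in \Mod{T^\vee}(\mathscr{X}^\vee) \otimes_{\mathscr{V}} \Mod{T}(\mathscr{X})$ and counit $\otimes_T$ from the proof of Corollary \ref{appendA} through the standard formula expressing the dual of a $1$-morphism as a pairing against these unit and counit maps.
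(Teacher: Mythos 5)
Your plan is correct, and its decisive step is exactly the paper's entire proof: the paper disposes of this corollary with ``direct calculation from the unit and counit maps above,'' i.e.\ one evaluates $(F_T)^{\vee}$ and $(G_T)^{\vee}$ against the explicit duality datum of Corollary \ref{appendA} (the coevaluation $T \in \Mod{T^{\vee}}(\mathscr{X}^{\vee}) \otimes_{\mathscr{V}} \Mod{T}(\mathscr{X})$ and the evaluation $\otimes_T$), finding $\langle (F_T)^{\vee}N, x\rangle \cong N \otimes_T (T\otimes x) \cong \langle N, x\rangle$ so that $(F_T)^{\vee}$ is the forgetful functor, and $\langle (G_T)^{\vee}\xi, M\rangle \cong \xi \otimes G_T M \cong (\xi\otimes T)\otimes_T M$ so that $(G_T)^{\vee}$ is the free functor, matching $G_{T^{\vee}}$ and $F_{T^{\vee}}$ under the identification $\Mod{T}(\mathscr{X})^{\vee}\cong \RMod{T}(\mathscr{X}^{\vee}) \cong \Mod{T^{\vee}}(\mathscr{X}^{\vee})$. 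What you add on top of this --- dualizing the adjunction $F_T \dashv G_T$ to $G_T^{\vee} \dashv F_T^{\vee}$, computing the induced monad as $T^{\vee}$, and invoking Barr--Beck--Lurie --- is sound but redundant: once the pairing computation identifying $F_T^{\vee}$ and $G_T^{\vee}$ is carried out (which you correctly flag as the real obstacle and propose to do in precisely the paper's way), both asserted isomorphisms are already in hand and no monadicity argument is needed. Be aware also that your conservativity step quietly presupposes part of that deferred compatibility check (you identify $F_T^{\vee}$ with the forgetful functor before having verified the identification), so in your write-up you should perform the unit/counit trace first and let the Barr--Beck detour drop out; with that reordering the argument is complete and agrees with the paper's.
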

\begin{proof}
  Direct calculation from the unit and counit maps above.
\end{proof}

\section{Crystals on Underived Noetherian Schemes} \label{cryss}
In this section, we recall the definition of a crystal on the infinitesimal site and prove an equivalence of categories between
quasi-coherent crystals on the big and small infinitesimal site.
Let $S$ be a underived Noetherian scheme. Denote by $SCH^{ft}_{/S}$ the category
of underived finite-type schemes over $S$. Suppose $X \in SCH^{ft}_{/S}$ and the structure map
$p_X : X \to S$ is finite tor-amplitude and separated.
\begin{definition}
  The big infinitesimal site $INF(X/S)$ has as objects diagrams
  \begin{equation} \label{diainf}
    \begin{tikzcd}
      U \arrow{r}{u} \arrow{d}{b} & X \arrow{d} \\
      T \arrow{r} & S\\
    \end{tikzcd}
  \end{equation}
  
  in $SCH^{ft}_{/S}$ such that $b$ is a thickening--a closed immersion of underived schemes inducing a homeomorphism.
  Morphisms in $INF(X/S)$ are defined in the obvious way.
  A family of morphisms in $INF(X/S)$,  $\{(U_i \to T_i) \to (U \to T)\}$ is a Zariski (resp. étale) covering
  if each
  \[
    \begin{tikzcd}
        U_i \arrow{r} \arrow{d} & U \arrow{d} \\
        T_i \arrow{r} & T\\
    \end{tikzcd}
  \]
  is a pullback square and the maps $\{T_i \to T\}$ is a Zariski (resp. étale) covering.
\end{definition}

The assignment $(U \to T) \mapsto \QC{T}$ defines a (Zariski or étale) sheaf of categories on $INF(X/S)$ where
the transition maps are given by quasicoherent pullback.

\begin{definition}
  The small infinitesimal site $Inf(X/S)$ is the full subcategory of $INF(X/S)$ consisting of those objects such
  that the map $u$ (in the notation of (\ref{diainf})) is an open immersion. It is also
  endowed with either the Zariski or étale topology induced from the big site.
\end{definition}

\begin{definition}
  A quasicoherent crystal on the big infinitesimal site $Inf(X/S)$ is an object of the category \[\lim_{\opposite{INF(X/S)}}\QC{T}\]
  We will call this category $CRYS(X/S)$. Similarly we can define the category of quasicoherent
  crystals on the small infinitesimal site
  \[Crys(X/S) := \lim_{\opposite{Inf(X/S)}} \QC{T}\]
\end{definition}
\begin{remark}
  Unwinding the definitions, it is clear that
  \[CRYS(X/S) \cong QCoh((X/S)_{dR})\]
  in the notation of Definition \ref{derhamstack}
\end{remark}

Note that the definition of a quasicoherent crystal does not make use of the topology at all.

\begin{theorem}
  There is an equivalence of categories
  \[Res: CRYS(X/S) \cong Crys(X/S)\]
  induced by the natural restriction functor.
\end{theorem}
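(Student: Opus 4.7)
The plan is to construct an inverse $E\colon Crys(X/S) \to CRYS(X/S)$ to $Res$ as the right Kan extension along the fully faithful inclusion $i\colon Inf(X/S) \hookrightarrow INF(X/S)$. Explicitly, for a small-site crystal $\mathcal{F}$ and a big-site object $(U \xrightarrow{u} X, U \xhookrightarrow{b} T) \in INF(X/S)$, we set
\[
E(\mathcal{F})_T := \lim_{(U \to T) \to (U',T') \in \mathcal{J}_{(U,T)}} f^{*}\mathcal{F}_{T'},
\]
where $\mathcal{J}_{(U,T)}$ is the category of small-site objects $(U',T')$ receiving a morphism from $(U \to T)$ in $INF$, and $f\colon T \to T'$ denotes the induced map of thickenings.

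The counit $Res \circ E \cong \id$ is essentially formal: when $(U \to T) \in Inf$ itself, the pair $((U,T), \id)$ is an initial object of $\mathcal{J}_{(U,T)}$, so the limit collapses to $\mathcal{F}_T$. For the unit $\id \cong E \circ Res$, given a big-site crystal $\mathcal{G}$, the canonical map $\mathcal{G}_T \to \lim_{\mathcal{J}_{(U,T)}} f^{*} \mathcal{G}_{T'}$ (induced by crystal compatibility on $\mathcal{G}$ itself) must be shown to be an isomorphism. My approach is to exhibit a cofinal subsystem in $\mathcal{J}_{(U,T)}$ indexed by open neighborhoods $U' \subset X$ of $u(U)$: for each such $U'$, produce a universal thickening $T'(U')$ of $U'$ extending $T$ over $U$, realized affine-locally as the ring pushout $B \times_A A'$ (with $T = \Spec{B}$, $U = \Spec{A}$, $U' = \Spec{A'}$) and globalized by gluing over a Zariski cover. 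The structural map $T \to T'(U')$ is initial among small-site extensions over $U'$, and the cofiltered limit of the $\mathcal{G}_{T'(U')}$'s recovers $\mathcal{G}_T$ by the crystal axiom applied to $\mathcal{G}$.

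The main obstacle is verifying this cofinality and initiality rigorously --- in particular, checking that the kernel of $B \times_A A' \twoheadrightarrow A'$ is locally nilpotent (which uses that $U \hookrightarrow T$ is a topological homeomorphism together with finite-typeness of $T$ over $S$) and gluing the affine pushouts coherently in the non-affine situation. As a fallback strategy that entirely bypasses the cofinality analysis, one can combine Zariski descent on $X$ (which both $CRYS$ and $Crys$ satisfy, since $QCoh$ is a Zariski sheaf of categories) with the identifications $CRYS(X/S) \cong QCoh((X/S)_{dR}) \cong \Mod{D_{X/S}}$ coming from the remark after Definition \ref{derhamstack} and Theorem \ref{stackequi}, and then match $Crys(X/S)$ to $\Mod{D_{X/S}}$ directly in the affine case using the classical presentation $D_A \cong \colim_n \Hom_A((A \otimes_k A)/I_{\Delta}^{n}, A)$, verifying at the end that $Res$ becomes the identity under these identifications.
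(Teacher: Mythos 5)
Your overall skeleton is essentially the paper's: since objects of $CRYS(X/S)$ and $Crys(X/S)$ are automatically Cartesian sections, both the unit isomorphism and the (unaddressed) Cartesian-ness of your right Kan extension $E(\mathcal{F})$ reduce to showing that, for every big-site object $(U,T)$, the comma category $\mathcal{J}_{(U,T)}$ of small-site objects under it is weakly contractible; the paper does exactly this, after first using Zariski descent to replace \emph{both} index categories by their full subcategories of affine pairs, and then checking cofiltered-ness directly. The genuine gap is in your mechanism for producing the cofinal subsystem. First, the pushout $T'(U') = T \sqcup_U U'$ and its affine-local description $B \times_A A'$ require the map $U \to U'$ to be affine; in the big site $U$ is an arbitrary finite-type $S$-scheme with an arbitrary map $u\colon U \to X$, so this fails unless you first reduce to affine objects $(U,T)$ --- your ``Zariski descent on $X$'' makes $X$ and $S$ affine but does not make the test objects affine. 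Second, and more seriously, even in the affine case $B \times_A A'$ need not be of finite type over $S$, so the ``universal thickening'' is typically not an object of the site at all: take $S=\Spec{k}$, $X = U' = \mathbb{A}^1$, $U = \mathbb{G}_m$ with $u$ the open immersion, and $T = U \times \Spec{k[\epsilon]/(\epsilon^2)}$; then $B \times_A A' \cong k[x] \oplus k[x,x^{-1}]\epsilon$, whose square-zero ideal $k[x,x^{-1}]\epsilon$ is not finitely generated over $k[x]$, so the ring is not Noetherian, hence not finite type over $k$. Thus the claimed initial objects of the categories of ``small-site extensions over $U'$'' generally do not exist inside $Inf(X/S)$, and the cofinality analysis collapses. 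This is precisely why the paper avoids universal objects and instead verifies the three cofiltered-ness conditions using finite-order thickenings: mapping $T$ into $V(I_X^m) \subset \mathbb{A}^n_S$ for nonemptiness, into a finite-order neighborhood of the diagonal in $D_1 \times_S D_2$ for binary bounds, and using equalizer subschemes. Your construction could likely be repaired by replacing $T \sqcup_U U'$ with finite-type approximations (finitely generated subalgebras surjecting onto $A'$ with kernel inside the nilpotent ideal), but that replacement and its cofinality is exactly the missing argument.

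The fallback does not bypass the work. The remark after Definition \ref{derhamstack} and Theorem \ref{stackequi} identify $\Mod{D_{X/S}}$ with $QCoh((X/S)_{dR}) \cong CRYS(X/S)$, i.e.\ with \emph{big}-site crystals only; nothing in the paper independently identifies $Crys(X/S)$ (the small site) with $\Mod{D_{X/S}}$, and producing such an identification ``directly in the affine case'' from the colimit formula for $D_A$ is essentially the content of the very statement being proved (and for singular $A$ that formula must moreover be read in the derived sense). As stated, the fallback is circular.
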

\begin{proof}
  By Zariski descent we may assume $X$ and $S$ are affine. Also by Zariski descent, we have the isomorphisms
  \[Crys(X/S) \cong \lim_{\opposite{(Inf(X/S)^{aff})}} \QC{T}\]
  \[CRYS(X/S) \cong \lim_{\opposite{(INF(X/S)^{aff})}} \QC{T}\]
  where the superscript $aff$ means we restrict to considering only pairs $(U,T)$ which are both affine.
  For the claim then, it is enough to show that the inclusion functor
  \[Inc : Inf(X/S)^{aff} \to INF(X/S)^{aff}\]
  is cofinal. By \cite{HTT} Theorem 4.1.3.1 
  it suffices to show that for any $(U,T) \in INF(X/S)^{aff}$, the comma category
  \[(U,T) \downarrow Inf(X/S)^{aff}\]
  is weakly contractible (its groupoid completion is contractible). From
  now on, we fix $(U,T) \in INF(X/S)^{aff}$.
  By Lemma 5.3.1.18 (and taking opposite categories), it suffices to show that the comma category is cofiltered.
  Because the category is discrete, it suffices to the following conditions.
  \begin{enumerate}
  \item The category is nonempty.
  \item For any two objects $c_1,c_2$, there is an object $c$ with maps to both $c_1$ and $c_2$.
  \item For any two morphisms $f_1,f_2 \in \Hom(c_1,c_2)$, there exists a map $h \in \Hom(c_2,c_3)$ (for some $c_3$) such that $hf_1=hf_2$.
  \end{enumerate}
  We may embed $X$ inside $Y' := \mathbb{A}^n_S$ as a closed subscheme. Hence, we can map $T$ to a finite thickening
  of $X$ inside $Y'$ showing the first condition.
  For the second condition, suppose there are two objects in $Inf(X/S)^{aff}$, $(V_1,D_1)$ and $(V_2,D_2)$,
  both receiving maps from $(U,T)$. Now, without loss of generality we can assume $V_1 = V_2 = V$ as open subschemes of $X$.
  Then it's clear that $T$ maps to the (underived) product $D_1 \times_S D_2$, and thus to a finite thickening of $V$ diagonally
  embedded in $D_1 \times_S D_2$, showing the second condition. For the third condition, consider two maps
  \[f_1,f_2 : (V_1,D_1) \to (V_2,D_2)\]
  which are equalized by a map
  \[g: (U,T) \to (V_1,D_1)\]
  Again we assume $V_1 = V_2 = V$.
  We can consider the equalizer $D_3$ of the two maps from $D_1$ to $D_2$, which is a closed subscheme of $D_1$.
  Then, $(V,D_3)$ with the obvious compatibilities proves the third condition. Hence, the theorem follows.
\end{proof}
\begin{remark}
  It is also possible to prove the above theorem by exhibiting a hypercovering in the small site and using the
  fact that $QCoh$ is a hypercomplete sheaf.
\end{remark}

\bibliography{ref}{}

\providecommand{\bysame}{\leavevmode\hbox to3em{\hrulefill}\thinspace}
\providecommand{\MR}{\relax\ifhmode\unskip\space\fi MR }
\providecommand{\MRhref}[2]{%
  \href{http://www.ams.org/mathscinet-getitem?mr=#1}{#2}
}
\providecommand{\href}[2]{#2}
\begin{thebibliography}{SVdB97}

\bibitem[Ber19]{Ber2}
Dario Beraldo, \emph{Sheaves of categories with local actions of {H}ochschild
  cochains}, Compos. Math. \textbf{155} (2019), no.~8, 1521--1567. \MR{3977319}

\bibitem[Ber21]{Ber1}
\bysame, \emph{The center of the categorified ring of differential operators},
  J. Eur. Math. Soc. (JEMS) \textbf{23} (2021), no.~6, 1999--2049. \MR{4244521}

\bibitem[BS17]{AGrass}
Bhargav Bhatt and Peter Scholze, \emph{Projectivity of the {W}itt vector affine
  {G}rassmannian}, Invent. Math. \textbf{209} (2017), no.~2, 329--423.
  \MR{3674218}

\bibitem[BZN04]{Cusp}
David Ben-Zvi and Thomas Nevins, \emph{Cusps and {$\mathscr{D}$}-modules}, J.
  Amer. Math. Soc. \textbf{17} (2004), no.~1, 155--179. \MR{2015332}

\bibitem[Gai12]{DGCat}
Dennis Gaitsgory, \emph{Generalities on dg categories}, 2012, Available for
  download at https://people.math.harvard.edu/~gaitsgde/GL/textDG.pdf.

\bibitem[Gai15]{1affine}
\bysame, \emph{Sheaves of categories and the notion of 1-affineness}, Stacks
  and categories in geometry, topology, and algebra, Contemp. Math., vol. 643,
  Amer. Math. Soc., Providence, RI, 2015, pp.~127--225. \MR{3381473}

\bibitem[GR14]{CryD}
Dennis Gaitsgory and Nick Rozenblyum, \emph{Crystals and {D}-modules}, Pure
  Appl. Math. Q. \textbf{10} (2014), no.~1, 57--154. \MR{3264953}

\bibitem[Hau21]{monad}
Rune Haugseng, \emph{On lax transformations, adjunctions, and monads in
  {$(\infty,2)$}-categories}, High. Struct. \textbf{5} (2021), no.~1, 244--281.
  \MR{4367222}

\bibitem[HTT08]{HTT2}
Ryoshi Hotta, Kiyoshi Takeuchi, and Toshiyuki Tanisaki, \emph{{$D$}-modules,
  perverse sheaves, and representation theory}, Progress in Mathematics, vol.
  236, Birkh\"{a}user Boston, Inc., Boston, MA, 2008, Translated from the 1995
  Japanese edition by Takeuchi. \MR{2357361}

\bibitem[Jef21]{jack}
Jack Jeffries, \emph{Derived functors of differential operators}, Int. Math.
  Res. Not. IMRN (2021), no.~7, 4920--4940. \MR{4241118}

\bibitem[Jia23]{previous}
Andy Jiang, \emph{Grothendieck duality via diagonally supported sheaves}, 2023.

\bibitem[Lur09]{HTT}
Jacob Lurie, \emph{Higher topos theory}, Annals of Mathematics Studies, vol.
  170, Princeton University Press, Princeton, NJ, 2009. \MR{2522659}

\bibitem[Lur17]{HA}
\bysame, \emph{Higher algebra}, 2017, Available for download at
  https://www.math.ias.edu/~lurie/.

\bibitem[Lur18]{SAG}
\bysame, \emph{Spectral algebraic geometry}, 2018, Available for download at
  https://www.math.ias.edu/~lurie/.

\bibitem[MLM94]{sheafgeo}
Saunders Mac~Lane and Ieke Moerdijk, \emph{Sheaves in geometry and logic},
  Universitext, Springer-Verlag, New York, 1994, A first introduction to topos
  theory, Corrected reprint of the 1992 edition. \MR{1300636}

\bibitem[Per19]{Perry}
Alexander Perry, \emph{Noncommutative homological projective duality}, Adv.
  Math. \textbf{350} (2019), 877--972. \MR{3948688}

\bibitem[RV16]{RVCat}
Emily Riehl and Dominic Verity, \emph{Homotopy coherent adjunctions and the
  formal theory of monads}, Adv. Math. \textbf{286} (2016), 802--888.
  \MR{3415698}

\bibitem[SS86]{SSCat}
Stephen Schanuel and Ross Street, \emph{The free adjunction}, Cahiers Topologie
  G\'{e}om. Diff\'{e}rentielle Cat\'{e}g. \textbf{27} (1986), no.~1, 81--83.
  \MR{845410}

\bibitem[{Sta}18]{stacks}
The {Stacks Project Authors}, \emph{\textit{Stacks Project}},
  \url{https://stacks.math.columbia.edu}, 2018.

\bibitem[SVdB97]{SVdB}
Karen~E. Smith and Michel Van~den Bergh, \emph{Simplicity of rings of
  differential operators in prime characteristic}, Proc. London Math. Soc. (3)
  \textbf{75} (1997), no.~1, 32--62. \MR{1444312}

\bibitem[Yan21]{Haiping}
Haiping Yang, \emph{On the derived ring of differential operators on a
  singularity}, 2021.

\end{thebibliography}
\bibliographystyle{amsalpha}
\end{document}